\def\E{\ifmmode{\mathbb E}\else{$\mathbb E$}\fi} 
\def\N{\ifmmode{\mathbb N}\else{$\mathbb N$}\fi} 
\def\R{\ifmmode{\mathbb R}\else{$\mathbb R$}\fi} 
\def\Q{\ifmmode{\mathbb Q}\else{$\mathbb Q$}\fi} 
\def\C{\ifmmode{\mathbb C}\else{$\mathbb C$}\fi} 
\def\H{\ifmmode{\mathbb H}\else{$\mathbb H$}\fi} 
\def\Z{\ifmmode{\mathbb Z}\else{$\mathbb Z$}\fi} 
\def\P{\ifmmode{\mathbb P}\else{$\mathbb P$}\fi} 
\def\T{\ifmmode{\mathbb T}\else{$\mathbb T$}\fi} 
\def\SS{\ifmmode{\mathbb S}\else{$\mathbb S$}\fi} 
\def\DD{\ifmmode{\mathbb D}\else{$\mathbb D$}\fi} 
\newcommand{\e}{\varepsilon}
\newcommand{\del}{\partial}
\newcommand{\Cont}{{\operatorname{Cont}}}
\newcommand{\ben}{\begin{enumerate}}
\newcommand{\een}{\end{enumerate}}
\newcommand{\be}{\begin{equation}}
\newcommand{\ee}{\end{equation}}
\newcommand{\bea}{\begin{eqnarray}}
\newcommand{\eea}{\end{eqnarray}}
\newcommand{\beastar}{\begin{eqnarray*}}
\newcommand{\eeastar}{\end{eqnarray*}}
\newcommand{\bc}{\begin{center}}
\newcommand{\ec}{\end{center}}
\theoremstyle{theorem}
\newtheorem{thm}{Theorem}[section]
\newtheorem{cor}[thm]{Corollary}
\newtheorem{lem}[thm]{Lemma}
\newtheorem{prop}[thm]{Proposition}
\theoremstyle{definition}
\newtheorem{defn}[thm]{Definition}
\newtheorem{rem}[thm]{Remark}
\newtheorem{hypo}[thm]{Hypothesis}
\newtheorem{choice}[thm]{Choice}
\newtheorem{cond}[thm]{Condition}
\newtheorem{situ}[thm]{Situation}
\newtheorem*{thm*}{Theorem}
\numberwithin{equation}{section}
\def\R{{\mathbb R}}
\def\osc{{\hbox{\rm osc }}}
\def\Crit{{\hbox{Crit}}}
\def\E{{\mathbb E}}
\def\Z{{\mathbb Z}}
\def\C{{\mathbb C}}
\def\R{{\mathbb R}}
\def\P{{\mathbb P}}
\def\N{{\mathbb N}}
\def\11{{\mathbb I}}
\def\delbar{{\overline \partial}}
\def\rank{{\text{\rm rank}}}
\def\id{{\text{\rm id}}}
\def\C{\mathbb{C}}
\def\Z{\mathbb{Z}}
\def\T{\mathbb{T}}
\def\Q{\mathbb{Q}}
\def\E{\ifmmode{\mathbb E}\else{$\mathbb E$}\fi} 
\def\N{\ifmmode{\mathbb N}\else{$\mathbb N$}\fi} 
\def\R{\ifmmode{\mathbb R}\else{$\mathbb R$}\fi} 
\def\Q{\ifmmode{\mathbb Q}\else{$\mathbb Q$}\fi} 
\def\C{\ifmmode{\mathbb C}\else{$\mathbb C$}\fi} 
\def\H{\ifmmode{\mathbb H}\else{$\mathbb H$}\fi} 
\def\Z{\ifmmode{\mathbb Z}\else{$\mathbb Z$}\fi} 
\def\P{\ifmmode{\mathbb P}\else{$\mathbb P$}\fi} 
\def\SS{\ifmmode{\mathbb S}\else{$\mathbb S$}\fi} 
\def\DD{\ifmmode{\mathbb D}\else{$\mathbb D$}\fi} 
\def\K{{\mathbb K}}
\def\R{{\mathbb R}}
\def\osc{{\hbox{\rm osc}}}
\def\Crit{{\hbox{Crit}}}
\def\E{{\mathbb E}}
\def\Z{{\mathbb Z}}
\def\C{{\mathbb C}}
\def\R{{\mathbb R}}
\def\N{{\mathbb N}}
\def\LL{{\mathcal L}}
\def\CL{{\mathcal L}}
\def\MM{{\mathcal M}}
\def\JJ{{\mathcal J}}
\def\FF{{\mathcal F}}
\def\ev{{\text{\rm ev}}}
\def\delbar{{\overline \partial}}
\def\e{\varepsilon}
\def\CA{{\mathcal A}}
\def\CC{{\mathcal C}}
\def\CF{{\mathcal F}}
\def\CH{{\mathcal H}}
\def\CJ{{\mathcal J}}
\def\CL{{\mathcal L}}
\def\CM{{\mathcal M}}
\def\CP{{\mathcal P}}
\def\CP{{\mathcal P}}
\def\CW{{\mathcal W}}
\def\supp{\operatorname{supp}}
\def\Dev{\operatorname{Dev}}
\def\coker{\operatorname{Coker}}
\def\span{\operatorname{span}}
\def\Image{\operatorname{Image}}
\def\uapp{u_{\text{\rm app}}}
\def\Glue{\operatorname{Glue}}
\def\PreG{\operatorname{PreG}}
\begin{document}
\quad \vskip1.375truein

\title[Gluing theory for contact instantons]
{Gluing theories of contact instantons and of
 pseudoholomoprhic curves in SFT}
\author{Yong-Geun Oh}
\address{Center for Geometry and Physics, Institute for Basic Science (IBS),
77 Cheongam-ro, Nam-gu, Pohang-si, Gyeongsangbuk-do, Korea 790-784
\& POSTECH, Gyeongsangbuk-do, Korea}
\email{yongoh1@postech.ac.kr}

\date{}

\begin{abstract} We develop the gluing theory of contact instantons in the
context of open strings and in the context of closed strings \emph{with vanishing charge},
for example in the symplectization context. This
is one of the key ingredients for the study of (virtually) smooth moduli space of
(bordered) contact instantons needed for the construction of contact instanton Floer
cohomology and more generally for the construction of Fukaya-type category of
Legendrian submanifolds in contact manifold $(M,\xi)$. As an application, we
apply the gluing theorem to give the construction of
(cylindrical) Legendrian contact instanton homology that enters in our solution
\cite{oh:entanglement1} to Sandon's question  for the nondegenerate case. We also
apply this gluing theory to that of moduli spaces of holomorphic buildings
arising in Symplectic Field Theory (SFT) by \emph{canonically} lifting
the former to that of the latter.
\end{abstract}

\keywords{contact instantons, Legendrian submanifolds, iso-speed Reeb trajectories,
(cylindrical) contact instanton Floer cohomology, holomorphic buildings,
symplectic field theory}

\subjclass[2010]{Primary 53D42; Secondary 53D40}

\thanks{This work is supported by the IBS project \# IBS-R003-D1}

\maketitle

\tableofcontents

\section{Introduction}
\label{sec:intro}

A contact manifold $(M,\xi)$ is a $2n-1$ dimensional manifold
equipped with a completely non-integrable distribution $\xi$ of rank $2n-2$,
called a contact structure. A Legendrian submanifold $R$ of $(M,\xi)$
is a submanifold of $M$ that is tangent to $\xi$ and of maximal dimension.
We mention that both notions of contact structure and Legendrian submanifolds
are scale-invariant.

When a contact form $\lambda$ with , i.e., $\ker \lambda = \xi$
is given, the complete non-integrability of $\xi$
can be expressed by the non-vanishing property
$$
\lambda \wedge (d\lambda)^{n-1} \neq 0.
$$
Furthermore $(\xi_x, d\lambda|_x)$ becomes a symplectic vector space at each $x \in M$, and
the Legendrian property of $R$ is then equivalent to
the Lagrangian property of $T_xR \subset (\xi_x,d\lambda|_x)$ for all $x \in R$.
We denote by
$$
\CC(\xi) = \CC(M,\xi)
$$
the set of contact forms of $\xi$.

The Reeb vector field $R_\lambda$ associated to the contact
form $\lambda$ is the unique vector field $X$ satisfying
\be\label{eq:Liouville} X \rfloor \lambda = 1, \quad X \rfloor
d\lambda = 0.
\ee
Therefore the tangent bundle $TM$ has the splitting $TM = \xi \oplus
\span\{R_\lambda\}$. We denote by
$$
\pi_\lambda: TM \to \xi
$$
the corresponding projection, and by $\Pi = \Pi_\lambda: TM \to TM$ the associated
idempotent.

\begin{defn}[CR-almost complex structure]\label{defn:adapted-J}
We call an endomorphism $J: TM \to TM$  an \emph{$\lambda$-adapted CR almost complex structure} if
it satisfies $J(\xi) \subset \xi$  and
$$
J(R_\lambda) = 0\, \quad J^2 = - \Pi _\lambda (= -id|_\xi \oplus 0).
$$
\end{defn}
In particular $J(TM) \subset \xi$. Following \cite{oh-wang:connection,oh-wang:CR-map1}, we call
a triple $(M, \lambda, J)$ a contact triad for the contact manifold $(M, \xi)$.
We call the  metric given by
$$
g=g_\xi+\lambda\otimes\lambda
$$
the \emph{triad metric} associated to the contact triad $(M,\lambda, J)$.

The purposes of the present paper is three-fold. One is to provide one of the key
ingredients, the gluing theory of the moduli spaces of contact instantons, especially
their relative counterparts, in the study of the moduli spaces of contact instantons with Legendrian boundary condition.
As usual in the symplectic Floer theory, the relevant gluing theorem is used in establishing various
algebraic relations that arise from a family of moduli spaces of contact instantons defined over punctured
Riemann surfaces. This gluing theorem, combined with the compactness result established in \cite{oh:contacton,oh-savelyev} and
in \cite{oh:entanglement1},
is  applied to our constructions of contact instanton Floer cohomology of Legendrian
submanifolds in \cite{oh-yso:J1B} and of a Fukaya-type
category of Legendrian submanifolds \cite{oh:entanglement2}. This is our second purpose.
The third purpose is
to apply this gluing theory to that of the moduli space of holomorphic buildings
appearing in Symplectic Field Theory (SFT) proposed by Eliashberg-Givental-Hofer
\cite{EGH}.

\subsection{Contact instanton equation and its analytic characteristics}

Analytic characteristics of the contact instanton equation are different from those of
well-known pseudoholomorphic curves in symplectic geometry, although it also
shares some. (Compare the form of the a priori coercive elliptic estimates established in \cite{oh-wang:CR-map1,oh-wang:CR-map2,oh:contacton-Legendrian-bdy} with that
of e.g., \cite{HWZ-asymptotics,bourgeois}.)
We will show that analytic characteristics of the linearized operator of
contact instanton equation are also different from those of pseudoholomorphic curves. Because of these
differences, we need to develop a new gluing theory that fits the contact instantons. Here we briefly
describe what kind of geometro-analytic equation we look at for
its gluing theory, and explain the differences between the analytic characteristic
of contact instantons and that of pseudoholomorphic curves
by examining the linearization of contact instantons.

Let
$$
\vec R = (R_1, \cdots, R_k)
$$
be an arbitrary tuple of pairwise disjoint connected Legendrian submanifolds. We call it a \emph{Legendrian link}
in general. We assume that the link is \emph{transversal} in the sense
that all Reeb chords for the tuples are nondegenerate. Such a transversality can be achieved
by a generic tuple $\vec R$. (See \cite[Appendix B]{oh:contacton-transversality}.)
We then consider the associated boundary value problem for a map $w: \dot \Sigma \to M$
satisfying the equation,
\be\label{eq:contacton-Legendrian-bdy-intro}
\begin{cases}
\delbar^\pi w = 0, \, \quad d(w^*\lambda \circ j) = 0\\
w(\overline{z_iz_{i+1}}) \subset R_i, \quad i = 1, \ldots, k
\end{cases}
\ee
which we call the contact instanton equation,
with the given asymptotics $\vec \gamma = \{\gamma_i\}_{i=1}^k$
on a boundary punctured Riemann surface $\dot \Sigma$

We consider the maps defining two equations in \eqref{eq:contacton-Legendrian-bdy-intro}
\be\label{eq:upsilon1}
\Upsilon_1: \CF \to \Omega^{(0,1)}(w^*\xi); \quad \Upsilon_1(w) = \delbar^\pi w
\ee
\be\label{eq:upsilon2}
\Upsilon_2: \CF \to \Omega^2(\dot \Sigma); \quad \Upsilon_2(w) = d(w^*\lambda \circ j)
\ee
and the $\Upsilon(w): = (\Upsilon_1,\Upsilon_2)$, where $\CF
= C^\infty((\dot \Sigma, \del \dot \Sigma),(M,\vec R))$ is the function space
$$
C^\infty((\dot \Sigma, \del \dot \Sigma),(M,\vec R)): = \{ w \in C^\infty(\dot \Sigma, M) \mid
w(\overline{z_iz_{i+1}}) \subset R_i, \, i = 1, \ldots, k\}.
$$

\emph{At the moment, we make our discussion in the formal level} working locally on
the domain $\dot \Sigma$ without being specific about the asymptotic conditions at the
punctures.
We have the tangent space
$$
T_w \CF = \{ Y \in \Gamma(w^*TM) \mid Y(\overline{z_iz_{i+1}}) \subset (\del w)^*TR_i\}
=: \Omega^0(w^*TM,(\del w)^*T\vec R).
$$
We compute the linearization which defines a linear map
$$
D\Upsilon(w): \Omega^0(w^*TM,(\del w)^*T\vec R) \to \Omega^{(0,1)}(w^*\xi) \oplus \Omega^2(\Sigma).
$$
We note
\beastar
\operatorname{rank}\Lambda^0(w^*TM) & = & 2n+1 \\
\operatorname{rank} \Lambda^{(0,1)}(w^*\xi) \oplus \Lambda^2(\Sigma) & = & 2n+1:
\eeastar
Recall that the equality of the two dimensions is a necessary condition for the linear operator
$D\Upsilon(w)$ to be an elliptic operator.

Using the contact triad connection $\nabla$ of $(M,\lambda,J)$ and the contact
Hermitian connection $\nabla^\pi$ for $(\xi,J)$ introduced in
\cite{oh-wang:connection,oh-wang:CR-map1}, the following linearization formula
is derived in \cite{oh:contacton} for the closed string case.
(See Appendix \ref{sec:connection} for a summary of
basic properties of the contact triad connection.)

We denote by $T = T_\nabla$ the torsion tensor of the connection $\nabla$.

\begin{thm}[Theorem 10.1 \cite{oh:contacton}]\label{thm:linearization-intro} In terms of the decomposition
$d\pi = d^\pi w + w^*\lambda\, R_\lambda$
and $Y = Y^\pi + \lambda(Y) R_\lambda$, we have
\bea
D\Upsilon_1(w)(Y) & = & \delbar^{\nabla^\pi}Y^\pi + B^{(0,1)}(Y^\pi) +  T^{\pi,(0,1)}_{dw}(Y^\pi)\nonumber\\
&{}& \quad + \frac{1}{2}\lambda(Y) (\CL_{R_\lambda}J)J(\del^\pi w)
\label{eq:DUpsilon1}\\
D\Upsilon_2(w)(Y) & = &  - \Delta (\lambda(Y))\, dA + d((Y^\pi \rfloor d\lambda) \circ j)
\label{eq:DUpsilon2}
\eea
where $B^{(0,1)}$ and $T_{dw}^{\pi,(0,1)}$ are the $(0,1)$-components of $B$ and
$T_{dw}^\pi$ respectively, where $B, \, T_{dw}^\pi: \Omega^0(w^*TM) \to \Omega^1(w^*\xi)$ are
 zero-order differential operators given by
$$
B(Y) := - \frac{1}{2}  w^*\lambda \left((\CL_{R_\lambda}J)J Y\right)
$$
and
$$
T_{dw}^\pi(Y) = \pi T(Y,dw)
$$
respectively.
\end{thm}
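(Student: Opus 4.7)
The plan is to linearize $\Upsilon_1$ and $\Upsilon_2$ separately, in each case decomposing a variation $Y \in \Omega^0(w^*TM,(\partial w)^*T\vec R)$ as $Y = Y^\pi + \lambda(Y)R_\lambda$ and performing the derivative using the contact triad connection $\nabla$ together with its restriction $\nabla^\pi$ to $\xi$. The compatibility properties of $\nabla$ (namely $\nabla \lambda = 0$, $\nabla R_\lambda = 0$, $\nabla^\pi J = 0$) and the normal form for its torsion $T$ summarized in Appendix \ref{sec:connection} will allow the principal part of $D\Upsilon_1$ to be identified as $\delbar^{\nabla^\pi}$ acting on $Y^\pi$, with all remaining contributions appearing as the prescribed zero-order terms.

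For $\Upsilon_2$, the calculation is essentially Cartan's formula. Take a smooth family $w_s$ with $\frac{d}{ds}\big|_{s=0} w_s = Y$. Then
$$
\frac{d}{ds}\bigg|_{s=0} w_s^*\lambda = w^*(\CL_Y\lambda) = d(\lambda(Y)) + w^*(Y \rfloor d\lambda).
$$
Composing with $j$ and applying $d$ splits the result into two pieces. The first, $d(d(\lambda(Y))\circ j)$, equals $-\Delta(\lambda(Y))\,dA$ by the standard identity $d\circ d^c = -\Delta\,dA$ on the Riemann surface $(\dot \Sigma, j)$. For the second, the identity $R_\lambda \rfloor d\lambda = 0$ eliminates the Reeb component of $Y$, leaving $d((Y^\pi \rfloor d\lambda)\circ j)$, which is exactly \eqref{eq:DUpsilon2}.

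For $\Upsilon_1$, write $\delbar^\pi w = \tfrac{1}{2}(d^\pi w + J\, d^\pi w \circ j)$ with $d^\pi w = dw - w^*\lambda\, R_\lambda$. The horizontal variation $Y^\pi$ contributes, by the standard connection-based computation for a Cauchy--Riemann-type operator,
$$
\delbar^{\nabla^\pi} Y^\pi + T^{\pi,(0,1)}_{dw}(Y^\pi),
$$
the torsion term arising from the asymmetry between $\nabla_{dw} Y^\pi$ and $\nabla_{Y^\pi}(dw)$ after taking the $(0,1)$-part with respect to $(j,J)$. The term $B^{(0,1)}(Y^\pi)$ comes from differentiating the correction $-w^*\lambda\, R_\lambda$ in the definition of $d^\pi w$: since $\nabla R_\lambda = 0$ and $J(R_\lambda) = 0$, the surviving contribution is algebraic in $Y^\pi$ and the Reeb flow's twisting of $J$, which packages as $-\tfrac{1}{2}w^*\lambda\,((\CL_{R_\lambda}J)J Y^\pi)$, i.e. exactly $B(Y^\pi)$, whose $(0,1)$-part enters $D\Upsilon_1$. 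Finally, the Reeb part $\lambda(Y)R_\lambda$ of the variation contributes $\tfrac{1}{2}\lambda(Y)(\CL_{R_\lambda}J)J(\partial^\pi w)$: since $J R_\lambda = 0$, only the change in $J$ along the Reeb direction matters, and rewriting $\frac{d}{dt}\big|_{t=0}(J\circ \phi_{R_\lambda}^t)$ as $\CL_{R_\lambda}J$ and isolating the $(0,1)$-component (the $(1,0)$-component being absorbed in $\partial^\pi w$) produces precisely the stated term.

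The main obstacle is the careful bookkeeping in $D\Upsilon_1$: one must disentangle three distinct sources of zero-order terms (the torsion of $\nabla$, the splitting $dw = d^\pi w + w^*\lambda\, R_\lambda$, and the non-invariance of $J$ under the Reeb flow) and show they decouple exactly as written. The symmetries $\nabla^\pi J = 0$, $\nabla R_\lambda = 0$, together with the normal form of the torsion recalled in Appendix \ref{sec:connection}, are what make this decoupling possible and fix the coefficients (in particular the factor $\tfrac{1}{2}$). Once those identities are invoked, the computation reduces to a line-by-line verification of \eqref{eq:DUpsilon1}.
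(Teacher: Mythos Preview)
Your outline for $D\Upsilon_2$ via Cartan's formula is correct and is the standard route. Note that this theorem is not proved in the present paper --- it is quoted from \cite{oh:contacton} --- so there is no in-paper argument to compare against directly.

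However, your $D\Upsilon_1$ computation rests on properties of the contact triad connection that are false. You assert $\nabla R_\lambda = 0$ and $\nabla\lambda = 0$, but the defining axioms in Appendix~\ref{sec:connection} give only $\nabla_{R_\lambda}R_\lambda = 0$ and $\nabla_Y R_\lambda \in \xi$ for $Y\in\xi$; Corollary~\ref{cor:connection} then records the nonzero formula $\nabla_Y R_\lambda = \tfrac{1}{2}(\CL_{R_\lambda}J)JY$. This is not a cosmetic slip: your explanation that $B^{(0,1)}$ arises as an ``algebraic remainder after $\nabla R_\lambda = 0$ kills the differential contributions'' is backwards. In fact $B(Y^\pi) = -w^*\lambda\otimes\nabla_{Y^\pi}R_\lambda$, so the term appears precisely \emph{because} $\nabla R_\lambda\neq 0$ when one differentiates the Reeb summand $w^*\lambda\,R_\lambda$ of $dw$. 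Likewise $\nabla\lambda\neq 0$: metricity gives $(\nabla_Z\lambda)(W) = g(\nabla_Z R_\lambda, W)$, again governed by $\CL_{R_\lambda}J$.

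If you try to carry out the line-by-line verification you describe under the wrong axioms, the terms will not organize as in \eqref{eq:DUpsilon1} and the origin of the factor $\tfrac{1}{2}$ will be lost. The fix is to use the actual identity $\nabla_Y R_\lambda = \tfrac{1}{2}(\CL_{R_\lambda}J)JY$ from Corollary~\ref{cor:connection} throughout, together with $\lambda(T|_\xi)=d\lambda$ and axiom~(5) of Theorem~\ref{thm:connection} to control the torsion contribution; the overall strategy you propose then goes through.
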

From the expression of the linearization map of
$\Upsilon(w) = (\Upsilon_1,\Upsilon_2)$, the map is an elliptic operator whose symbol is \emph{of mixed degree} given by the matrix
$$
\left(\begin{matrix} \frac{\eta + i \eta \circ j}2 Id & *_{12} \\ *_{21} & |\eta|^2 \end{matrix} \right)
\in \operatorname{Hom}(\pi^*E_1, \pi^*E_2)\Big|_\eta
$$
where $\eta \in w^*TM \setminus 0_{w^*TM}$ and $*_{12}$ has its homogeneous degree 0
and $*_{21}$ degree 1 in $\eta$.
(See \eqref{eq:matrixDUpsilon} for the matrix form of $D\Upsilon(w)$.) Here
$\pi: T^*\Sigma \setminus \{0_{T^*\dot \Sigma}\} \to \Sigma$
is the natural projection and $\eta \in T^*\Sigma \setminus \{0_{T^*\Sigma}\}$ and
$E_1, \, E_2$ are vector bundles on $\dot \Sigma$ given by
$$
E_1 = w^*TM = w^*\xi \oplus \R \langle R_\lambda(w) \rangle, \quad E_2 = \Lambda^{(0,1)}(w^*\xi)) \oplus \Lambda^2(T^*\Sigma)
$$
Therefore for any choice of
$k \geq 2, \, p > 2$, its completion becomes a Fredholm operator, when the domain
$\Sigma$ is a compact Riemann surface without boundary.
(See \cite[Appendix C]{oh:contacton} for the details. See also
 \cite{lockhart-mcowen} for detailed discussion of mixed type elliptic operators.)

In a more recent article \cite{oh:contacton-Legendrian-bdy}, the present author
studied the boundary valued problem for the  contact instanton equation and
established that the problem under the Legendrian boundary condition is a nonlinear elliptic boundary
value problem by deriving relevant a priori coercive estimates and proving the asymptotic
convergence results to Reeb chords on each boundary puncture of
a punctured Riemann surface $\dot \Sigma$:
We consider a given collection of Reeb chords as the asymptotic boundary conditions at the given punctures of
a punctured Riemann surface $\dot \Sigma$. Depending on the orientation of the strip-like coordinates at the punctures,
we decompose
$\vec \gamma = \overline \gamma \sqcup \underline \gamma$ into two subcollections
$$
\underline \gamma = \{\gamma_i^+\}_{i=1, \ldots, s^+}, \quad \overline \gamma = \{\gamma_j^-\}_{j=1, \ldots, s^-}
$$
with $k = s^- + s^+$. Then the linearization $D\Upsilon(w)$ associated to \eqref{eq:contacton-Legendrian-bdy-intro}
is an elliptic differential operator of mixed degree, and hence it extends to a bounded linear map
\be\label{eq:dUpsilon-intro}
D\Upsilon_{(\lambda,T)}(w): \Omega^0_{k,p}(w^*TM,(\del w)^*T\vec R;J;\overline \gamma,\underline \gamma) \to
\Omega^{(0,1)}_{k-1,p}(w^*\xi) \oplus \Omega^2_{k-2,p}(\Sigma)
\ee
which is a Fredholm operator. We refer readers to Section \ref{subsec:fredholm} for a detailed discussion.

\subsection{Gluing of contact instantons and its lifting to SFT}

Here we offer a bit more perspective and description of the lifting process of our gluing theory of contact instantons
to those of pseudoholomorphic curves on the symplectization and on symplectic buildings.

When the charge class $[w^*\lambda] = 0$ in $H^1(\dot \Sigma, \R)$ (see \cite{oh-savelyev} for the
precise definition), we can twist the equation
 \eqref{eq:contacton-Legendrian-bdy-intro} with the choice of
the primitive $f: \dot \Sigma \to \R$ satisfying $w^*\lambda \circ j = df$ and consider the combined equation
 for $(w,f)$
\be\label{eq:contacton-exact}
\begin{cases}
\delbar^\pi w = 0, \quad w^*\lambda \circ j = df,\\
w(\overline{z_iz_{i+1}}) \subset R_i, \quad i = 0, \ldots, k.
\end{cases}
\ee
This equation is indeed closely related to Hofer's pseudoholomorphic curve equation $u:\Sigma \to M \times \R$ on
the symplectization \cite{hofer:invent}, \emph{provided we ask $f$ to be of the form $f: = s \circ u$, i.e., if
we write $u = ( \pi_1 \circ u, \pi_2 \circ u) =: (w,f)$.}

A symplectization of $(M,\lambda)$ is the product $M \times \R$ with the symplectic form
$$
d(e^s \pi_1^*\lambda), \quad s := \pi_2, \,
$$
where $\pi_i$ with $i=1,\, 2$ are the projections of $M \times \R$ to the first and the second factors
respectively. Then following \cite{hofer:invent}, we equip $M \times \R$
with an almost complex structure $\widetilde J$ which is the one lifted from a
CR almost complex structure $J$ that satisfies
\be\label{eq:adapted-tildeJ}
\widetilde J|_{\xi} = J_\xi, \quad \widetilde J(R_\lambda) = - \frac{\del}{\del s}.
\ee
With respect to such $\widetilde J$, a $\widetilde J$-holomorphic curve $u$
on $(M \times \R, \widetilde J)$ is characterized by the equation
\be\label{eq:hofer-exact}
\delbar^\pi w = 0, \quad w^*\lambda \circ j = df.
\ee
This equation is precisely of the form of \eqref{eq:contacton-exact}
if we set $w = \pi_1 \circ u$ and $f = \pi_2 \circ u$ on $\dot \Sigma$.

We reduce the gluing theory of SFT, say a 2-story curve
$(u_1,u_2)$ in the 2-story cobordism $(W_1,W_2)$,
to that of contact instantons $w_1^-$ and $w_2^+$ where
$$
u_i|_{\del W_i} = w_i^- \cup w_i^+, \quad i =1, \, 2
$$
in two steps:
\begin{enumerate}
\item First lift the latter canonically to the \emph{asymptotic gluing theory}
of $u_1$ and $u_2$ near the ceiling of $W_1$ (= the floor of  $W_2$).
\item Then glue the asymptotically-glued piece to the finite parts of $u_1$ and
$u_2$.
\end{enumerate}
The former operation is purely canonical in that it corresponds just to
solving the equation for $f$
$$
df = w^*\lambda \circ j
$$
when $w$ is given, and the second operation is by now well-established
Taubes' gluing technique that is exercised ubiquitously in the literature of
Gromov-Witten-Floer theory. See \cite{ruan-tian}, \cite{mcduff-salamon-symplectic},
\cite{EES-jdg}, \cite{fooo:book2} , \cite{fooo:exponential}, \cite{oh-zhu:tropical}
to name a few. There are also many literature on the gluing construction on
the symplectization in various special circumstances. See
\cite{hutchings-taubes-gluing1,hutchings-taubes-gluing2}, \cite{CELN-cord},
\cite{bao-honda-kuranishi} to name a few.

In this regard, our gluing theory of contact instantons \emph{for the exact case}
\eqref{eq:hofer-exact} immediately solves the gluing problem in the symplectization.
In particular, our gluing theorem
provides an extension and simplification of the gluing results by Ekholm-Etnyre-Sullivan
from \cite{EES-jdg,EES-tams}, which concerns the \emph{symplectization} of
the case of $\R^{2n+1}$ and $P \times \R$ with symplectic $P$ respectively:
While Ekholm-Etnyre-Sullivan crucially uses the presence of the projection map
$\pi: M \to P$ to a symplectic manifold $P$ in their gluing construction, we handle arbitrary
tame contact manifold $(M,\xi)$.

It also solves the relevant gluing problem in SFT by reducing the full gluing problem
on the building to a soft topological problem of determining the radial
components $f$ of pseudoholomorphic curves $u = (w,f)$ on the
neck region of the glued building, and then applying the standard gluing theory
\emph{without involving scale-dependent gluing process} in the Gromov-Floer-Witten theory.
We refer readers to Part IV to see the simplicity of our discussion on the gluing
theory  for multi-story pseudoholomorphic curves in general symplectic buildings, \emph{with
the gluing theory of contact instantons under our disposal.}

We believe that our gluing theory will be particulary useful for the construction of
Kuranishi structures in SFT in that the abstract scheme laid out in the book \cite{fooo:book-kuranishi}
could be applicable whose study will be left elsewhere. Full construction of Kuranishi structures in SFT
verifying the axioms of SFT laid out in \cite{EGH} is announced by
Ishikawa \cite{ishikawa}. See also \cite{pardon-contact} for a relevant work on contact homology.

\begin{rem} We recall readers that one big difference of the gluing  of pseudoholomorphic curves
on a symplectic building from the standard gluing  in the Gromov-Witten-Floer theory on
a smooth symplectic manifold is that the irreducible components appearing in the
multi-story curves have different scales. Because of this, gluing two components in different stories
requires rescaling the targets, or stretching the neck, with a careful choice of scales for different irreducible components
similarly as used in relative Gromov-Witten theory. (See \cite{li-ruan}, \cite{fooo:chap10}, \cite{oh-zhu:scaled}
for the literature using such a scale-dependent gluing results via the target rescaling.)
Unfortunately, as far as we are aware, there is no reference of this scaled-gluing
scheme that can be easily used in the general framework of symplectic buildings.
An upshot of our treatment of the gluing theory in SFT is that \emph{once we have carried out the
gluing of contact instantons, which does not involve target rescaling,} the gluing result can be canonically lifted to the SFT setting
without involving any hard analysis \emph{on the symplectization} but only involves some soft topological step of solving the
equation $df = w^*\lambda \circ j$ for the primitive $f$ followed by the standard gluing theory in
the ordinary Gromov-Witten-Floer theory.
\end{rem}

\subsection{Construction of (cylindrical) contact instanton Legendrian Floer cohomology}

In \cite{oh:entanglement1}, we considered a two-component compact Legendrian link of the type
$$
(\psi(R),R)
$$
in any \emph{tame} contact manifolds $M$  for
contactomorphism $\psi$ such that
$$
\psi(R) \pitchfork Z_R \quad \& \quad \psi(R) \cap R = \emptyset:
$$
Here $Z_R$ is the union, called the Reeb trace of $R$ in \cite{oh:entanglement1},
$$
Z_R = \bigcup_{t \in \R} \phi_{R_\lambda}^t(R).
$$
(The notion of tame contact manifolds was also introduced therein.)
In the course of answering Sandon-Shelukhin's conjecture for \emph{nondegenerate}
$\psi$,  we considered the $\Z_2$-vector space
$$
CI_\lambda^*(\psi(R), R) : = \Z_2\langle\frak{Reeb}(\psi(R),R)\rangle
$$
and define a $\Z_2$-linear map
$$
\delta= \delta_{(\psi(R),R;H)} : CI_\lambda^*(\psi(R),R) \to  CI_\lambda^*(\psi(R),R)
$$
by counting its matrix element
$$
\langle \delta \gamma^-, \gamma_+ \rangle : = \#_{\Z_2}(\CM(\gamma_-,\gamma_+)).
$$
Here $CI_\lambda$ stands for \emph{contact instanton for $\lambda$} as well as the letter $C$ also
stands for \emph{complex} at the same time, and
$\CM(\gamma_-,\gamma_+)$ is the moduli space
$$
\CM(\gamma_-,\gamma_+) = \CM(M, J';R;\gamma_-,\gamma_+)
$$
of contact instanton Floer trajectories $u$ satisfying
$u(\pm\infty) = \gamma_\pm$ for a suitably chosen time-dependent family
$J'= \{J'_t\}_{0 \leq t \leq 1}$
as done in \cite[Section 6.2]{oh:entanglement1}.

\begin{choice}[CR almost complex structures] \label{choiceLJt}
Let $J_0 \in \CJ(\lambda)$. For given contact Hamiltonian $H = H(t,x)$,
we fix a time-dependent CR almost complex structure given by
\be\label{eq:Jt}
J' = \{J_t'\}_{0 \leq t \leq 1}, \quad J_t':= (\psi_H^t(\psi_H^1)^{-1})_*J_0.
\ee
of $\lambda$-adapted CR almost complex structures.
\end{choice}
\emph{We alert readers that for a general pair $(R_0,R_1)$ the above counting does not lead to the operator
$\delta_{(\psi(R),R;H)}$ satisfying $\delta_{(\psi(R),R;H)} \circ \delta_{(\psi(R),R;H)}  = 0$
when the pair $(M,R)$ or $(M,\psi(R))$ carry (nonconstant) bubbling similarly as in
symplectic geometry \cite{oh:cpam-obstruction}.} This will lead us to deform the map $\delta_{(\psi(R),R;H)}$
to
$$
\delta^{(\psi_*{\frak b},\frak b)} = \delta_{(\psi(R),R;H)}^{(\psi_*(\frak b),\frak b)}
$$
by the $\frak m_0$-term in \cite{oh:entanglement2} as in \cite{fooo:book1} by considering the one-punctured disc moduli space
$$
\CM_1((\H,\del \H),(M,R);J), \quad \CM_1((\H,\del \H),(M,\psi(R);\psi_*J).
$$
We recall that the dual version of this term corresponds to
the \emph{augmentation} in the literature of Legendrian contact homology.
(See \cite{chekanov:dga} and many other literature afterwards in contact topology.)
A full study of this construction and its DGA enhancement will be given in \cite{oh:entanglement2}.

In the mean time under the hypothesis
\be\label{eq:|H|-small}
\|H\| < T(M,\lambda)
\ee
considered in \cite{oh:entanglement1}, we can
 single out the `short' Reeb chords between $\psi(R)$ and $R$ that
are continued from the intersection
$$
R \cap Z_R = R
$$
where $Z_R$ is the Reeb trace defined by
$$
Z_R = \bigcup_{t \in \R} \phi_{R_\lambda}^t(R).
$$
(See \cite{chekanov:dmj}, \cite{oh:mrl} for a similar circumstance in the Lagrangian
Floer theory.)
Then we consider the moduli space of maps $w: \dot \Sigma \to M$ satisfying the equation
\be\label{eq:contacton-Legendrian-bdy}
\begin{cases}
\delbar^\pi w = 0 \, \quad d(w^*\lambda \circ j) = 0\\
w(\tau,0) \in \psi(R), \quad w(\tau,1) \in R
\end{cases}
\ee
when $\dot \Sigma = \R \times [0,1]$, i.e., for the case of cylindrical context.
The inequality \eqref{eq:|H|-small} and some energy bound
given in \cite[Theorem 1.24]{oh:entanglement1} also prevent the moduli spaces
from bubbling off.

One of the essential ingredients in the construction of Floer-type invariants involving
a family of moduli spaces is the gluing theorem that ensures some compatibility conditions for
the system of moduli spaces leading to a relevant algebraic structures, such as the
Floer homology or more generally Fukaya category, especially in the chain level.
In the present paper, we utilize the gluing result for the moduli spaces of \emph{cylindrical}
contact instantons and their relative versions with Legendrian boundary condition and provide
the gluing details that enters in
the author's proof of Sandon-Shelukhin's conjecture for the
nondegenerate case. (See \cite[Theorem 10.6]{oh:entanglement1}.)

As in the standard Floer theory \cite{floer-intersections},
the proof of the following theorem will be finished once the relevant
gluing theorem for the moduli spaces of contact instantons is established.

\begin{thm}[Theorem 10.6, \cite{oh:entanglement1}]\label{thm:definition-intro}
Suppose $(M,\xi)$ is tame and $R \subset M$ is a compact Legendrian submanifold.
Let $\lambda$ be a tame contact form such that
\begin{itemize}
\item $\psi = \psi_H^1$ and $\|H\| < T_\lambda(M,R)$.
\item the pair $(\psi(R), R)$ is transversal in the sense that $\psi(R) \pitchfork Z_R$,
\end{itemize}
Let $J' = \{J'_t\}$ be as above. Then
\be\label{eq:d2=0}
\delta \circ \delta = 0
\ee
and hence we define its homology which we denote by
$$
HI_\lambda^*((\psi(R),R;J').
$$
Furthermore for two different choices of such $J'$ (i.e., of $J_0$) or of $H$, the complex
$(CI_\lambda^*(\psi(R),R), \delta)$ are chain-homotopic to each other.
\end{thm}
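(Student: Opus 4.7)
The plan is to follow Floer's original scheme \cite{floer-intersections}, with the gluing theorem of the present paper supplying the decisive analytic input. In outline: first show that for generic $J'$ of the form \eqref{eq:Jt} the moduli spaces $\CM(\gamma_-,\gamma_+)$ are smooth manifolds of dimension $\operatorname{ind} D\Upsilon$ modulo the $\R$-translation action; next, show that their Gromov--Hofer compactifications $\overline{\widehat{\CM}}(\gamma_-,\gamma_+)$, with $\widehat{\CM} := \CM/\R$, are obtained by adjoining exactly the broken contact-instanton trajectories; and finally, show that the boundary of each one-dimensional compactified moduli space is in bijection with pairs of such broken trajectories. A mod-$2$ boundary count then delivers \eqref{eq:d2=0}, and an entirely parallel parametrized argument delivers the chain-homotopy invariance.

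In more detail, transversality of $\CM(\gamma_-,\gamma_+)$ for generic $J'$ is handled by the standard Sard--Smale argument in the mixed-degree Fredholm setting of \eqref{eq:dUpsilon-intro}, producing manifolds of the expected dimension. Under the hypothesis $\|H\| < T_\lambda(M,R)$ together with the energy bound \cite[Theorem~1.24]{oh:entanglement1}, the only degeneration available in a limit of contact-instanton trajectories is breaking at intermediate Reeb chords: sphere and disc bubbling are excluded by the short-chord selection, and so is the splitting off of a nonconstant cylindrical piece in the symplectization direction. Combining the compactness results of \cite{oh:contacton,oh-savelyev,oh:entanglement1} with the asymptotic convergence of \cite{oh:contacton-Legendrian-bdy} yields
\[
\del \overline{\widehat{\CM}}(\gamma_-,\gamma_+) \;=\; \bigsqcup_{\gamma} \widehat{\CM}(\gamma_-,\gamma) \times \widehat{\CM}(\gamma,\gamma_+)
\]
as a set. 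Applying the relative gluing theorem of the present paper to each pair $(u_1,u_2)$ on the right produces a one-parameter family in $\widehat{\CM}(\gamma_-,\gamma_+)$ Gromov-converging to $(u_1,u_2)$, and the gluing map realizes a half-neighborhood of $(u_1,u_2)$ in the compactification. For $\operatorname{ind} = 2$, this endows $\overline{\widehat{\CM}}(\gamma_-,\gamma_+)$ with the structure of a compact $1$-manifold with boundary, so
\[
\langle \delta^2 \gamma_-, \gamma_+\rangle \;=\; \sum_{\gamma} \#_{\Z_2}\widehat{\CM}(\gamma_-,\gamma)\cdot\#_{\Z_2}\widehat{\CM}(\gamma,\gamma_+) \;=\; 0.
\]

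For invariance, I would connect two choices $(H^0,J^0_t)$ and $(H^1,J^1_t)$ by a path with $\|H^s\| < T_\lambda(M,R)$ throughout, consider the associated $s$-dependent contact-instanton equation, and define a continuation map $CI^*_\lambda(\psi_{H^0}(R),R) \to CI^*_\lambda(\psi_{H^1}(R),R)$ by counting its index-zero solutions. Its chain-map property and the chain-homotopy between continuation maps associated to different homotopies follow from gluing applied to the one-dimensional strata of parametrized moduli spaces, by exactly the scheme above but now with an additional boundary stratum at the endpoints $s=0,1$ of the homotopy parameter. Composing the continuation maps in opposite directions and comparing with the identity via a further homotopy of homotopies yields the asserted chain-homotopy equivalence of complexes.

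The only substantive obstacle is the gluing step itself, which is precisely the content of the present paper. Concretely, one must verify that for each broken configuration the pre-gluing plus Newton-iteration construction converges inside the relative mixed-degree elliptic framework \eqref{eq:dUpsilon-intro}, and that the resulting gluing map is a local homeomorphism onto the corresponding boundary stratum. In particular, one must check that a uniformly bounded right-inverse of the linearization $D\Upsilon$ survives the pre-gluing and the attendant Banach-norm changes in the presence of the Legendrian boundary condition along $\psi(R)\cup R$. With those ingredients---assembled in the body of the present paper---at hand, everything above is a routine transcription of the Lagrangian Floer cohomology construction to the contact-instanton setting.
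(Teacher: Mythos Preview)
Your outline is essentially the paper's own scheme: generic transversality via Sard--Smale in the mixed-degree Fredholm setting, compactness from the energy bounds of \cite{oh:entanglement1} combined with the hypothesis $\|H\|<T_\lambda(M,R)$, and the gluing theorem of the present paper to identify the boundary of one-dimensional moduli spaces with broken trajectories, yielding $\delta^2=0$ by the usual mod-$2$ count. One small sharpening: your phrase ``short-chord selection'' is not quite the mechanism the paper invokes to exclude bubbling; rather, it is the energy inequalities (Propositions~\ref{prop:pienergy-bound} and~\ref{prop:lambdaenergy-bound}) together with a dimension count showing that half-plane bubbles $\CM((\H,\del\H),(M,R);\gamma)$ carry at least a two-dimensional automorphism group and hence cannot split off from an index-one trajectory space.

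For the invariance, your generic continuation-map argument is correct, but the paper takes a slightly different and more concrete route tailored to the application: instead of directly interpolating between two pairs $(H^0,J^0)$ and $(H^1,J^1)$, it introduces the gauge transformation $\Phi_H$ (see \eqref{eq:u-to-w}) to pass through the Morse--Bott reference complex $CI_\lambda^*(R,R)$, and then builds the chain homotopy via the cut-off Hamiltonian-perturbed moduli spaces $\CM_{[0,K_0]}^{\text{\rm para}}(M,R;J,H)$ on the capped domain $\Theta_{K_0+1}$ (Section~\ref{sec:cut-off}). This has the advantage of directly feeding into the computation $HI_\lambda^*(R,R)\cong H^*(R)$ needed for Sandon's conjecture, whereas your approach is the cleaner abstract statement of invariance. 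Both are standard Floer-theoretic maneuvers and rely on the same gluing input.
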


It is shown in \cite{oh:entanglement1} that $HI_\lambda^*((\psi(R),R;J')$ is isomorphic to
$H^*(R,\Z_2)$ in the given circumstance, and hence concludes Sandon-Shelukhin's conjecture
\cite{sandon-translated}, \cite{shelukhin-contactomorphism} for the
nondegenerate case.

To perform the aforementioned gluing theory, we first need to develop a relevant
Fredholm theory of the moduli spaces of contact instantons, especially for the
case of bordered contact instantons with Legendrian boundary.  This has been established
respectively for the closed string case in \cite{oh:contacton} and
for the bordered contact instantons  with Legendrian boundary conditions  in
\cite{oh:contacton-transversality} for the perturbations of $J$ or of Legendrian boundaries.

One point we would like to highlight in the study of generic nondegeneracy of Reeb chords is that
we  consider the chords \emph{in the sense of Moore paths} whose elements
 we represent by the pairs $(T,\gamma)$ such that
\be\label{eq:Moore-path}
T \geq 0, \quad \gamma:[0,T] \to M, \quad T = \int \gamma^*\lambda.
\ee
We emphasize here that we include the zero period $T= 0$ and consider the constant paths $(0,\gamma)$.
Such a path exists only when $\psi(R) \cap R \neq \emptyset$, i.e., $\psi = id$.

For the purpose of computation, we need to notice that the intersection $R \cap Z_R$
is Bott-Morse in a suitable sense.  In a way, this existence of many constant solutions is responsible
both for the existence and for the isomorphism property proved in \cite[Subsection 10.2]{oh:entanglement1}
which is nothing but the one stated in Theorem \ref{thm:definition-intro}.

\begin{rem} It is an interesting open problem to equip
Kuranishi structures on the compactified moduli spaces of contact instantons which are suitably
compatible so that they give rise to the (Legendrian) contact DGA that appears in the
case of trivial symplectic cobordism, i.e., the case of symplectization of contact manifolds.
(See \cite{bao-honda-kuranishi}, \cite{pardon-contact}, \cite{ishikawa}.) We believe that
the general abstract framework of the Kuranishi structure from \cite{fooo:book-kuranishi}
or some variation of that of \cite{pardon-contact}, \cite{bao-honda-kuranishi} applies to
the current case of contact instantons too. We hope to come back to this elsewhere.
\end{rem}

Some part of the present paper is written under the presumption that the readers are familiar with the
contents of \cite{oh-wang:CR-map1}, \cite{oh:contacton}, \cite{oh:contacton-Legendrian-bdy},
\cite{oh:contacton-transversality} and \cite{oh:entanglement1}
in addition to the fine details of the gluing theory of
pseudoholomorphic curves in symplectic geometry, more specifically in
the Gromov-Witten-Floer theory such as those presented in
\cite{floer-unregularized} and \cite{ruan-tian} (or \cite{mcduff-salamon-symplectic}),
for example. Therefore it will be useful for readers to consult them, whenever necessary,
while reading the present paper.
However the main constructions of approximate solutions and of
approximate right inverse and new elements entering in the gluing construction
of contact instantons are fully explained so that they can be read independently.

The research carried out in the present work was first presented in the online
seminar of IBS Center for Geometry and Physics and in the workshop
``Recent developments in Lagrangian Floer theory'' held in Simons Center for
Geometry and Physics in March 2022. We would like to thank
the organizers and participants of the workshop for the opportunity and especially
to Georgios Dimitroglou Rizell for some comment on
the invariance proof of Legendrian contact homology (see Remark \ref{rem:invariance}).
We also thank Tobias Ekholm for
the information on the literature involving gluing results of
the pseudoholomorphic curves on the symplectization in various special cases.

\bigskip

\noindent{\bf Convention and Notations:}

\medskip

\begin{itemize}
\item {(Contact Hamiltonian)} We define the contact Hamiltonian of a contact vector field $X$ to be
$$
- \lambda(X).
$$
\item
For given time-dependent function $H = H(t,x)$, we denote by $X_H$ the associated contact Hamiltonian vector field
whose associated Hamiltonian $- \lambda(X_t)$ is given by $H = H(t,x)$, and its flow by $\psi_H^t$.
\item When $\psi = \psi_H^1$, we say $H$ generates $\psi$ and write $H \mapsto \psi$.
\item {(Developing map)} $\Dev(t \mapsto \psi_t)$: denotes the time-dependent contact Hamiltonian generating
the contact Hamiltonian path $t \mapsto \psi_t$.
\item {(Reeb vector field)} We denote by $R_\lambda$ the Reeb vector field associated to $\lambda$
and its flow by $\phi_{R_\lambda}^t$.
\item {(Linearized contact instanton homology)} We denote by $CI_\lambda^*(R_0,R_1)$ the $\lambda$-linearized contact instanton complex and $HI_\lambda^*(R_0,R_1)$ its cohomology, when defined.
 \item $(\dot \Sigma,j)$: a punctured Riemann surface (with boundary) and $(\Sigma,j)$ the associated compact Riemann surface.
\item We always regard the tangent map $du$ as a $u^*TM$-valued one-form and write
$du = d^\pi u + u^*\lambda \otimes R_\lambda$ with respect to the decomposition $TM = \xi \oplus \R \langle R_\lambda \rangle$.
\item $T(M,\lambda)$: the infimum of the action of closed $\lambda$-orbits.
\item $T(M,\lambda;R)$: the infimum of the action of self $\lambda$-chords.
\item $T_\lambda(M,R) = \min\{ T(M,\lambda), T(M,\lambda;R)\}$.
\item $\vec R = (R_1, \cdots, R_k)$: a $k$-tuple of connected pairwise disjoint
Legendrian submanifolds, or called a Legendrian link with $k$ components.
\item $\Upsilon(w) = (\delbar^\pi w, d(w^*\lambda \circ j))$: the defining section of contact instantons.
\item $\CH^{(0,1)}(M,\lambda)|_w = \Omega^{(0,1)}(w^*\xi)
\bigoplus \Omega(\dot \Sigma) \otimes R_\lambda$: the formal domain of
of the covariant linearization $D\Upsilon(w)$ of $\Upsilon$ at $w$
\item $\CH^{(0,1)}(M,\lambda)$: the formal codomain of the map $\Upsilon$ defined by
$$
\CH^{(0,1)}(M,\lambda): = \bigcup_{w} \CH^{(0,1)}(M,\lambda)|_w.
$$
\item $\CH^{(0,1)}_{k-1,p}(M,\lambda)|_w$: the codomain
$$
\Omega^{(0,1)}_{k-1,p}(w^*\xi)
\bigoplus \Omega^2_{k-2,p} (\dot \Sigma) \otimes R_\lambda
$$
of the covariant linearization $D\Upsilon(w)$ of $\Upsilon$ at $w$ acting on its domain
$$
\Omega^0_{k,p}((\dot \Sigma, \del \dot \Sigma),(w^*TM,(\del w)^*T \vec R).
$$
\item {[{\bf Usage of constant $C$}]}: There are various constants denoted by $C> 0$ which vary
place by place, but which is a uniform constant independent of relevant parameters
that enter in its usage. The meaning of uniformity of the choice of this constant will be
clear when it is used.
\end{itemize}

\section{Review of basic analytical estimates of contact instantons}

Let a contact manifold $(M,\xi)$ be given.
In this section, we fix a contact triad
$$
(M,\lambda, J)
$$
where $J \in \CJ(\lambda)$ is a $\lambda$-adapted CR-almost complex structure.
All relevant estimate is in terms of the associated triad metric $g = g_{\lambda,J}$
given by
$$
g = d\lambda(\cdot, J\cdot) + \lambda\otimes \lambda.
$$
Recall a contact form $\lambda$ admits a decomposition $TM = \xi \oplus \R\langle R_\lambda \rangle$. We denote
the associated projection to $\xi$ by $\pi: TM \to \xi$ and decompose
$$
v = v^\pi + \lambda(v) \, R_\lambda, \quad v^\pi: = \pi(v).
$$

We consider the equation of (unperturbed) contact instantons
\be\label{eq:contacton-Legendrian-bdy}
\begin{cases}
\delbar^\pi w = 0, \quad d(w^*\lambda \circ j) = 0, \\
w(\overline{z_iz_{i+1}}) \subset R_i
\end{cases}
\ee
for a general Legendrian tuple $\vec R = (R_1, \ldots, R_k)$ on general contact manifold $(M,\lambda)$.

\subsection{Coercive elliptic estimates and subsequence convergence}
\label{subsec:coercive}

We collect some basic results on the analysis of the equation
established in \cite{oh-wang:CR-map1} or in \cite{oh:contacton-Legendrian-bdy}.

We start with the following local $W^{2,2}$-estimates.

\begin{thm}[Theorem 1.3 \cite{oh:contacton-Legendrian-bdy}]\label{thm:local-W22} Let $w: \R \times [0,1] \to M$ satisfy
\eqref{eq:contacton-Legendrian-bdy}.
Then for any relatively compact domains $D_1$ and $D_2$ in
$\dot\Sigma$ such that $\overline{D_1}\subset D_2$ with
$w(\del D_2) \subset R_0$ or $w(\del D_2) \subset R_1$,
 we have
$$
\|dw\|^2_{W^{1,2}(D_1)}\leq C_1 \|dw\|^2_{L^2(D_2)} + C_2 \|dw\|^4_{L^4(D_2)} + C_3 \|dw\|^3_{L^3(\del D_2)}
$$
where $C_1, \ C_2$ are some constants which
depend only on $D_1$, $D_2$ and $(M,\lambda, J)$ and $C_3$ is a
constant which also depends on $R_0$ or $R_1$.
\end{thm}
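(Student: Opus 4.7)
The plan is to treat the contact instanton equation as a nonlinearly perturbed elliptic system and run a standard cutoff-and-integration-by-parts argument, with the Legendrian condition $w(\del D_2) \subset R_i$ killing all but a cubic-in-$dw$ boundary remainder.

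First I would use the decomposition $dw = d^\pi w + w^*\lambda \otimes R_\lambda$ and view the system as producing elliptic control of each component separately: the equation $\delbar^\pi w = 0$ is a Cauchy--Riemann type equation for $d^\pi w$ with respect to $\nabla^\pi$, while $d(w^*\lambda) = w^*d\lambda$ (which is quadratic in $d^\pi w$) combined with $d(w^*\lambda \circ j) = 0$ constitutes an elliptic Hodge-type system for the one-form $\alpha := w^*\lambda$. Both give interior $W^{1,2}$-control of the first derivative at the cost of inhomogeneous terms that are quadratic in $|dw|$.

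Next I would introduce a cutoff $\chi \in C_c^\infty(D_2)$ with $\chi \equiv 1$ on $D_1$, apply a covariant derivative to each of the two equations using the contact triad connection $\nabla$, and test the resulting second-order system against $\chi^2 \nabla dw$. The left-hand side dominates $\|dw\|^2_{W^{1,2}(D_1)}$ up to terms of the form $\int |\nabla\chi|^2 |dw|^2$, which are absorbed into $C_1 \|dw\|^2_{L^2(D_2)}$. The curvature and torsion contributions produced by $\nabla$ (of the type $B$ and $T^\pi_{dw}$ already appearing in Theorem \ref{thm:linearization-intro}), together with the commutator $[\nabla,\delbar^\pi]$, are bounded pointwise by $C|dw|^2$; paired with $\chi^2 \nabla dw$ and treated with Cauchy--Schwarz-with-$\varepsilon$, they contribute the term $C_2 \|dw\|^4_{L^4(D_2)}$.

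Finally the boundary terms from integration by parts live on $\del D_2$ and involve products of one first derivative with the quadratic nonlinearity, so are at most cubic in $|dw|$. On the Legendrian arc, the relations $\lambda|_{TR_i} = 0$ and $TR_i \subset \xi$, combined with the $J$-adaptedness $J(TM) \subset \xi$, kill the tangential top-order boundary contributions; what remains is the normal-direction part, which reassembles into the bound $C_3 \|dw\|^3_{L^3(\del D_2)}$. The main obstacle I expect is exactly this last step: keeping careful track of which boundary integrands the Legendrian condition annihilates and verifying that the surviving remainder really is cubic rather than quartic in $dw$, since any quartic surface term would not be controllable on the right-hand side. Once this cancellation is isolated, the remainder of the argument reduces to routine local elliptic regularity for the mixed-degree operator $\Upsilon$.
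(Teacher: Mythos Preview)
The paper does not actually prove this theorem here; it is quoted from \cite{oh:contacton-Legendrian-bdy} as part of the review in Section~\ref{subsec:coercive}. What the paper does record, in Proposition~\ref{prop:equivalence} and Remark~\ref{rem:alternating-process}, is the specific device used in that reference: in isothermal coordinates the system is rewritten as a coupled pair
\[
\tfrac12(\nabla_x^\pi \zeta + J\nabla_y^\pi \zeta) + (\text{zero-order in }\zeta,\alpha)=0,\qquad \delbar\alpha = |\zeta|^2,
\]
for $\zeta=\pi\frac{\del w}{\del x}$ and $\alpha=\lambda(J\,\del_x w)+\sqrt{-1}\,\lambda(\del_y w)$, and the $W^{1,2}$ estimate on $dw$ is then obtained by an alternating bootstrap between the $\zeta$-equation and the $\alpha$-equation.

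Your outline is in the same family---cutoff, differentiate once, integrate by parts, and absorb the quadratic nonlinearities into the $L^4$ term---and should go through. The substantive difference is organizational: instead of working with the full $dw$ and the mixed-degree operator $\Upsilon$, the cited proof first passes to the $(\zeta,\alpha)$ variables, which decouples the principal parts into a genuine first-order CR operator on $\zeta$ and a scalar $\delbar$ on $\alpha$. This buys two things. First, the Legendrian boundary condition becomes the clean real condition $\operatorname{Im}\alpha|_{\del D_2}=0$ together with $\zeta|_{\del D_2}\in TR_i$, so the boundary cancellation you flag as the main obstacle is immediate rather than something to be extracted by hand from $\lambda|_{TR_i}=0$. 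Second, the source term $|\zeta|^2$ in the $\alpha$-equation is exactly the quadratic nonlinearity, so the $L^4$ and $L^3(\del D_2)$ contributions appear transparently. Your direct route trades this structural clarity for not having to set up the reformulation; either way the estimate has the same shape.
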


Once this $W^{2,2}$-estimate is established, we then proceed the following higher
regularity estimates.

\begin{thm}[Theorem 1.4 \cite{oh:contacton-Legendrian-bdy}]\label{thm:higher-regularity}
 Let $w$ be a contact instanton satisfying
\eqref{eq:contacton-Legendrian-bdy}.
Then for any pair of domains $D_1 \subset D_2 \subset \dot \Sigma$ such that $\overline{D_1}\subset D_2$
 with $w(\del D_2) \subset R_0$ or $w(\del D_2) \subset R_1$, we have
$$
\|dw\|_{C^{k,\alpha}(D_1)} \leq C \|dw\|_{W^{1,2}(D_2)}
$$
for some constant $C = C(k,\alpha) > 0$ depending on $J$, $\lambda$, $D_1, \, D_2$, $R_0$ or $R_1$,
and $(k,\alpha)$ but independent of $w$.
\end{thm}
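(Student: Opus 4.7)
The plan is a standard elliptic bootstrap built on top of the $W^{2,2}$-estimate of Theorem~\ref{thm:local-W22}. At the linearized level, the contact instanton system $\Upsilon(w)=(\delbar^\pi w, d(w^*\lambda\circ j))$ is a Douglis-Nirenberg mixed-degree elliptic operator (first order in the $\xi$-component $Y^\pi$, second order in $\lambda(Y)$ through the Laplacian $\Delta(\lambda(Y))$ appearing in Theorem~\ref{thm:linearization-intro}), and the Legendrian condition $w(\overline{z_iz_{i+1}})\subset R_i$ satisfies the corresponding Lopatinskii-Shapiro boundary condition, as is implicitly verified in the course of proving Theorem~\ref{thm:local-W22}. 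Consequently the interior and boundary $L^p$- and Schauder theory for Douglis-Nirenberg systems applies, and what remains is the usual bootstrap algorithm; the only wrinkle is that the two components of $\Upsilon$ carry different differential orders and must be estimated in matching scales of function spaces.

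First I would choose a finite telescoping sequence of relatively compact subdomains
\[
D_1 \Subset D_1^{(1)} \Subset D_1^{(2)} \Subset \cdots \Subset D_1^{(N)} \Subset D_2
\]
all still satisfying $w(\del D_1^{(j)})\subset R_0$ (or $R_1$), and apply Theorem~\ref{thm:local-W22} to $(D_1^{(N)},D_2)$ to obtain $\|dw\|_{W^{1,2}(D_1^{(N)})}\le C\|dw\|_{W^{1,2}(D_2)}$, after absorbing the $L^4$- and $L^3$-terms on the right-hand side via Sobolev interpolation on compactly nested pairs. In two real dimensions one has $W^{1,2}\hookrightarrow L^q$ for every $q<\infty$, so every polynomial expression in $dw$ that appears as a coefficient upon differentiation of the equation automatically lies in $L^q$. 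Differentiating the contact instanton system once tangentially then produces a linear mixed-degree elliptic system for $dw$ whose right-hand side is algebraic in $dw$ and the geometric data $(\lambda,J,\nabla)$; interior and boundary $L^p$-theory for Douglis-Nirenberg systems yields $dw\in W^{1,p}(D_1^{(N-1)})$ for any $p<\infty$, and Morrey's embedding promotes this to $dw\in C^{0,\alpha}(D_1^{(N-1)})$.

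From this point on the iteration is routine: at step $\ell$, the coefficients of the system satisfied by $\nabla^\ell(dw)$ lie in $C^{\ell-1,\alpha}$ by the preceding step, and interior plus boundary Schauder estimates for the mixed-order operator upgrade $dw$ to $C^{\ell,\alpha}$ on the next smaller domain in the telescope. After finitely many iterations one reaches $D_1$ with the asserted $C^{k,\alpha}$-control, and the quantitative bound takes the form $\|dw\|_{C^{k,\alpha}(D_1)}\le C\|dw\|_{W^{1,2}(D_2)}$ because every bootstrap step bounds the new norm by the previous one plus polynomial nonlinearities in quantities already controlled, all of which trace back to $\|dw\|_{W^{1,2}(D_2)}$.

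The main obstacle is not the bootstrap itself but setting up the boundary Schauder and $L^p$-theory for the mixed-order contact-instanton operator under the Legendrian condition. This is handled by the same totally-real reflection/doubling that underlies Theorem~\ref{thm:local-W22}: Legendrian submanifolds are totally real in $(\xi,J)$ and the Reeb direction can be reflected separately, so locally near $R_i$ one may double the triad metric and reflect both the map and the equation, reducing boundary regularity to interior regularity for a mixed-degree elliptic system whose coefficients have only slightly worse regularity than in the interior. Once this reflection is in place, the Douglis-Nirenberg higher-regularity theory applies verbatim at each stage of the bootstrap.
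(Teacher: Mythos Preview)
Your bootstrap strategy is sound and would produce a correct proof, but the paper's own route (described in Remark~\ref{rem:alternating-process}, which summarizes the proof in \cite{oh:contacton-Legendrian-bdy}) is different and more concrete. Rather than invoking the general Douglis--Nirenberg mixed-order elliptic theory with Lopatinskii--Shapiro boundary conditions, the paper rewrites the contact instanton equation in isothermal coordinates as the coupled system of Proposition~\ref{prop:equivalence}: a first-order Cauchy--Riemann--type equation \eqref{eq:main-eq-isothermal} for $\zeta = \pi\frac{\partial w}{\partial\tau}$ with coefficients depending on $\alpha$, together with the classical $\delbar$-equation $\delbar\alpha = |\zeta|^2$ for the complex-valued function $\alpha$. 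The bootstrap then \emph{alternates} between these two scalar-type equations: regularity of $\zeta$ feeds into the right-hand side of the $\alpha$-equation, improving $\alpha$; the improved $\alpha$ in turn improves the coefficients in the $\zeta$-equation, and so on. Each step uses only standard estimates for $\delbar$ and for perturbed Cauchy--Riemann operators with a totally-real boundary condition, bypassing any appeal to the general mixed-order theory. Your approach is more uniform and packages the ellipticity in one stroke, but the paper's decoupling keeps the boundary analysis elementary (both pieces carry standard boundary conditions) and stays entirely within classical first-order elliptic regularity.
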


Let $w: \R \times [0,1] \to M$ be any smooth map.
As in \cite{oh-wang:CR-map1}, we define the total $\pi$-harmonic energy $E^\pi(w)$
by
\be\label{eq:endenergy}
E^\pi(w) = E^\pi_{(\lambda,J)}(w) = \frac{1}{2} \int |d^\pi w|^2
\ee
where the norm is taken in terms of the given metric $h$ on $\dot \Sigma$ and the triad metric on $M$.

Next we study the asymptotic behavior of contact instantons $w$ satisfying the following
hypotheses.
\begin{hypo}\label{hypo:basic}
Assume $w:\R \times [0,1] \to M$ satisfies the contact instanton equation \eqref{eq:contacton-Legendrian-bdy}
and
\begin{enumerate}
\item $E^\pi_{(\lambda,J;\dot\Sigma,h)}(w)<\infty$ (finite $\pi$-energy);
\item $\|d w\|_{C^0(\dot\Sigma)} <\infty$.
\end{enumerate}
\end{hypo}

For any $w$ satisfying  Hypothesis \ref{hypo:basic}, we associate two
natural asymptotic invariants at $\tau \pm \infty$.
\begin{defn}\label{defn:T-Q} The \emph{asymptotic action} is defined to be
\be
T : =  \lim_{\tau \to \infty} \int_{\{\tau\}\times [0,1]}(w|_{\{0\}\times [0,1]})^*\lambda\label{eq:TM-T}
\ee
and the \emph{asymptotic charge} is by
\be
Q : = \lim_{\tau \to \infty} \int_{\{\tau\}\times [0,1]}(w|_{\{0\}\times [0,1] })^*\lambda\circ j.\label{eq:TM-Q}
\ee
provided they exist.
(Here we only look at the positive end. The case of negative end is similar.)
\end{defn}
The above finite $\pi$-energy and $C^0$ bound hypotheses imply
\be\label{eq:hypo-basic-pt}
\int_{[0, \infty)\times [0,1]}|d^\pi w|^2 \, d\tau \, dt <\infty, \quad \|d w\|_{C^0([0, \infty)\times [0,1])}<\infty.
\ee
\begin{rem}
In general there is no reason why these limits exist and even if the limits exist, they may also depend on the choice of
subsequences under Hypothesis \ref{hypo:basic}.
In the closed string case, \cite{oh-wang:CR-map1} shows that the asymptotic charge $Q$ may not vanish which
is the key obstacle to the compactification and the Fredholm theory of contact instantons for $Q \neq 0$.
\end{rem}

As in \cite{oh-wang:CR-map1}, \cite{oh:contacton-Legendrian-bdy}, we call $T$ the \emph{asymptotic contact action}
and $Q$ the \emph{asymptotic contact charge} of the contact instanton $w$ at the given puncture.

\begin{thm}[Vanishing asymptotic charge; Theorem 6.7
\cite{oh:contacton-Legendrian-bdy}]\label{thm:subsequence-convergence} Let $\vec R = (R_1, \ldots, R_k)$
be a $k$-component (ordered) link. Let $p \in \del \Sigma$ and $(\tau,t) \in [0,\infty) \times [0,1]$
be a strip-like coordinates
on a punctured neighborhood $U \setminus \{p\} \subset \dot \Sigma$.
Assume that $w:[0, \infty)\times [0,1]\to M$ satisfies the contact instanton equations
\eqref{eq:contacton-Legendrian-bdy}
and Hypothesis \ref{hypo:basic}.
Then for any sequence $s_k\to \infty$, there exists a subsequence, still denoted by $s_k$, and a
massless instanton $w_\infty(\tau,t)$ (i.e., $E^\pi(w_\infty) = 0$)
on the cylinder $\R \times [0,1]$ that satisfies the following:
\begin{enumerate}
\item On any given compact subset $K \subset [0,\infty)$, we have
$$
\lim_{k\to \infty}w(s_k + \tau, t) = w_\infty(\tau,t)
$$
in the $C^l(K \times [0,1], M)$ sense for any $l$.
\item $w_\infty$ has $Q = 0$ and the formula $w_\infty(\tau,t)= \gamma(T\, t)$, where $\gamma$ is some Reeb chord
joining $R_0$ and $R_1$ with action $T$, provided $T \neq 0$.
\end{enumerate}
\end{thm}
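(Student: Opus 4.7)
The plan is to extract a $C^l_{\text{loc}}$-convergent subsequence of the translates $w_k(\tau,t) := w(s_k + \tau, t)$, show the limit $w_\infty$ has vanishing $\pi$-energy, and then use the Legendrian boundary to force $w_\infty$ onto a single Reeb chord, from which the vanishing of $Q$ follows immediately.

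First, the uniform $C^0$-bound on $dw$ from Hypothesis \ref{hypo:basic}(2) gives uniform $L^\infty$-control of $dw_k$ on any fixed compact $D_2 \subset \R \times [0,1]$; Theorem \ref{thm:local-W22} upgrades this to a uniform $W^{1,2}$-bound on a slightly smaller $D_1$ (the boundary part of $D_2$ lies in the compact Legendrian $R_0 \cup R_1$, so the $L^3(\partial D_2)$ term is controlled), and Theorem \ref{thm:higher-regularity} bootstraps to uniform $C^{k,\alpha}(D_1)$-bounds for every $k$. Since $w_k(\tau,0) \in R_0$ (compact) and $|dw_k|$ is uniformly bounded, $w_k$ stays in a fixed compact subset of $M$. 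A diagonal Arzelà--Ascoli argument produces a subsequence converging in $C^l_{\text{loc}}(\R \times [0,1], M)$ to some $w_\infty$, and the $C^1$-passage of \eqref{eq:contacton-Legendrian-bdy} to the limit shows that $w_\infty$ is itself a contact instanton with the same Legendrian boundary.

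Next, finiteness of $E^\pi(w)$ makes the tails $\int_{[s,\infty) \times [0,1]} |d^\pi w|^2$ vanish as $s \to \infty$, hence for each compact $K \subset \R$ we have $\int_K \int_0^1 |d^\pi w_k|^2 \to 0$. The $C^0$-convergence forces $d^\pi w_\infty \equiv 0$, equivalently $dw_\infty = w_\infty^*\lambda \otimes R_\lambda(w_\infty)$. Picking $p := w_\infty(0,0)$ and exploiting the simple-connectedness of the strip, I would write $w_\infty(\tau,t) = \phi_{R_\lambda}^{s(\tau,t)}(p)$ with $w_\infty^*\lambda = ds$; the second contact instanton equation $d(ds \circ j) = 0$ then reads $\Delta s = 0$. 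On each boundary component, $\partial_\tau w_\infty(\tau,i) = s_\tau(\tau,i)\, R_\lambda(w_\infty(\tau,i))$ must be tangent to the Legendrian $R_i$, but $\lambda(R_\lambda) = 1$ while $\lambda|_{TR_i} = 0$ prevents this unless $s_\tau(\tau,i) \equiv 0$. Thus $s(\tau,0) \equiv c_0$ and $s(\tau,1) \equiv c_1$ are constants.

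A bounded-gradient harmonic function on the strip with constant Dirichlet data must be affine: subtracting $c_0 + (c_1-c_0)t$ yields a harmonic function with zero boundary data whose Fourier decomposition $\sum a_n(\tau) \sin(n \pi t)$ satisfies $a_n(\tau) = A_n e^{n \pi \tau} + B_n e^{-n \pi \tau}$, and the bounded-gradient hypothesis forces $A_n = B_n = 0$ for every $n \geq 1$. Setting $T := c_1 - c_0$ and $q := \phi_{R_\lambda}^{c_0}(p) \in R_0$ then gives $w_\infty(\tau,t) = \phi_{R_\lambda}^{Tt}(q) = \gamma(Tt)$ with $\gamma$ a Reeb chord from $R_0$ to $R_1$ of action $T$; since $s$ is independent of $\tau$, the formula $Q = \lim_{\tau \to \infty}\int_0^1 -s_\tau(\tau,t)\,dt$ gives $Q(w_\infty) = 0$ at once. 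The main obstacle is precisely the last rigidification: once $w_\infty$ is known to be massless, one must combine the Legendrian tangency obstruction on the two boundary components with a Liouville-type uniqueness for harmonic functions on the strip. It is exactly this rigidity---absent in the closed-string case of \cite{oh-wang:CR-map1}, where $Q$ can fail to vanish---that is specific to the Legendrian setting and is what makes a Fredholm theory possible here.
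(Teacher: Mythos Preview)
The paper does not contain a proof of this theorem; it is quoted verbatim from \cite{oh:contacton-Legendrian-bdy} as part of the review of analytical background, so there is no in-paper proof to compare against. Your argument is correct and is essentially the standard route to this result: the compactness/subsequence extraction via Theorems~\ref{thm:local-W22} and~\ref{thm:higher-regularity} is exactly as intended, the tail-vanishing of $E^\pi$ gives masslessness, and the decisive step---writing $w_\infty = \phi_{R_\lambda}^{s}(p)$ with $s$ harmonic, then using the Legendrian tangency obstruction $\lambda(R_\lambda)=1$ versus $\lambda|_{TR_i}=0$ to force $s_\tau|_{t=0,1}\equiv 0$, and finishing with the Liouville-type classification of bounded-gradient harmonic functions on the strip with constant Dirichlet data---is precisely the mechanism that singles out the open-string case from the closed-string one in \cite{oh-wang:CR-map1}. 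One cosmetic point: where you write ``the $C^0$-convergence forces $d^\pi w_\infty\equiv 0$'' you are of course using the $C^1_{\text{loc}}$-convergence already established, not merely $C^0$.
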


We mention that the asymptotic action $T$ could be either positive, 0 or negative.

\subsection{Exponential $C^\infty$ convergence}
\label{subsec:exponential-convergence}

In this section, we state the basic exponential decay results by combining the arguments used in
\cite[Section 7]{oh:contacton-Legendrian-bdy} and the scheme of the proof used
in \cite[Section 11-12]{oh-wang:CR-map2} for
the closed string case \emph{under the hypothesis} of
vanishing charge $Q = 0$. Since in the current open string case, this vanishing is
proved and so all the arguments used therein can be repeated verbatim after adapting them to the presence
of the boundary condition in the arguments.

The exponential decay results proved in \cite[Section 11-12]{oh-wang:CR-map2}
are based on the following three steps:
\begin{enumerate}
\item $L^2$-exponential decay of the Reeb component of $dw$,
\item $C^0$ exponential convergence,
\item $C^\infty$-exponential decay of $dw - R_\lambda(w) \, d\tau$.
\end{enumerate}
As mentioned in \cite{oh-wang:CR-map2}, this exponential decays result is a refinement of
that of the pseudoholomorphic curves in the symplectization given in such as
\cite{HWZ-asymptotics,bourgeois,bao1} in that our estimates do not involve the radial
component in the symplectization but depend only on the component of contact manifolds.

We start with the exponential decay of the Reeb component $w^*\lambda$.
We equip a punctured neighborhood around a puncture $z_i \in \del \Sigma$  with
strip-like coordinates $(\tau,t) \in [0,\infty) \times [0,1]$.
Then we  consider a complex-valued function
$$
\alpha(\tau,t) = \left(\lambda(\frac{\del w}{\del t})- T \right) + \sqrt{-1}\left(\lambda(\frac{\del w}{\del\tau})\right).
$$
By the Legendrian boundary condition, we know $\alpha(\tau, i) \in \R$ for $i =0, \, 1$.
The following lemma was proved in the closed string case in \cite{oh-wang:CR-map2} and in \cite{oh:contacton-Legendrian-bdy}
for the open string case of Legendrian boundary condition.

\begin{lem}[Lemma 7.1 \cite{oh:contacton-Legendrian-bdy}]\label{lem:exp-decay-lemma}
Suppose the complex-valued functions $\alpha$ and $\nu$ defined on $[0, \infty)\times [0,1]$
satisfy
\beastar
\begin{cases}
\delbar \alpha = \nu, \\
\text{\rm Im } \alpha(\tau, i) = 0 \, & \text{\rm for } i = 0,\, 1, \\
\lim_{\tau\rightarrow +\infty}\alpha(\tau,t) =0
\end{cases}
\eeastar
and
$$
\|\nu\|_{L^2([0,1])}+\left\|\nabla\nu\right\|_{L^2([0,1])}\leq Ce^{-\delta \tau}
\qquad \text{\rm for some constants } C, \delta>0.
$$
Then $\|\alpha_\tau \|_{L^2([0,1])}\leq \overline{C}e^{-\delta \tau}$
for some constant $\overline{C}$, where $\alpha_\tau \in L^2([0,1])$ is the map
defined by $\alpha_\tau(t): = \alpha(\tau,t)$.
\end{lem}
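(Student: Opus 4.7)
The plan is to prove this by Schwarz reflection in $t$ followed by Fourier expansion, which converts the boundary value problem for $\delbar$ on the strip into a decoupled family of first-order linear ODEs in $\tau$. First I would use the reality boundary condition $\text{\rm Im}\,\alpha(\tau,i)=0$ to extend $\alpha$ and $\nu$ across $t=0$ and $t=1$ by Schwarz reflection, setting $\widetilde{\alpha}(\tau,-t):=\overline{\alpha(\tau,t)}$ and then extending $2$-periodically in $t$; this yields functions on the half-cylinder $[0,\infty)\times(\R/2\Z)$ satisfying $\delbar\widetilde{\alpha}=\widetilde{\nu}$ pointwise and tending to $0$ in $L^2$ as $\tau\to\infty$, the extension being consistent precisely because $\text{\rm Im}\,\alpha$ vanishes on both boundary components.

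Next I would expand in Fourier modes $\widetilde{\alpha}(\tau,t)=\sum_n a_n(\tau)e^{i\pi n t}$ and $\widetilde{\nu}(\tau,t)=\sum_n b_n(\tau)e^{i\pi n t}$. The equation decouples into the scalar ODEs $a_n'(\tau)=\pi n\, a_n(\tau)+2b_n(\tau)$ for each $n\in\Z$, and the boundary condition $a_n(\infty)=0$ selects a unique integral representation: for $n>0$, $a_n(\tau)=-2\int_\tau^\infty e^{\pi n(\tau-s)}b_n(s)\,ds$; for $n=0$, $a_0(\tau)=-2\int_\tau^\infty b_0(s)\,ds$; and for $n<0$, $a_n(\tau)=a_n(0)e^{\pi n\tau}+2\int_0^\tau e^{\pi n(\tau-s)}b_n(s)\,ds$, where $a_n(0)$ is the Fourier coefficient of the initial slice.

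Using $\|\nu(\tau,\cdot)\|_{L^2([0,1])}\leq Ce^{-\delta\tau}$ together with Parseval on the extended cylinder, one obtains both the pointwise bound $|b_n(\tau)|\leq Ce^{-\delta\tau}$, uniform in $n$, and the square-summable bound $\sum_n |b_n(\tau)|^2\leq 2C^2 e^{-2\delta\tau}$. Applying Cauchy--Schwarz to each of the three integral representations then yields $|a_n(\tau)|^2\leq K_n\,e^{-2\delta\tau}$ with constants $K_n$ summable in $n$ provided $\delta\in(0,\pi)$, and Parseval produces the claimed bound $\|\alpha_\tau\|_{L^2([0,1])}^2\leq\overline{C}^2 e^{-2\delta\tau}$.

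The principal obstacle is the modes $n<0$: there the $\delbar$-ODE admits a decaying homogeneous part, but the convolution with the forcing $b_n$ does not obviously inherit the rate $\delta$ uniformly in $|n|$. This is handled by estimating $\int_0^\tau e^{\pi n(\tau-s)}|b_n(s)|\,ds=O\bigl((\pi|n|-\delta)^{-1}e^{-\delta\tau}\bigr)$ from the pointwise bound on $b_n$, which requires $\delta<\pi$ so that no resonance with the spectrum $\pi\Z$ of the associated boundary eigenvalue problem occurs. This mild restriction is automatic in our applications, where $\delta$ is chosen small from the spectral gap of the linearized Reeb flow at the asymptotic chord; the auxiliary $\|\nabla\nu\|_{L^2}$ bound in the hypothesis is not strictly needed for this $L^2$ conclusion but is held in reserve for the higher-regularity refinements carried out subsequently.
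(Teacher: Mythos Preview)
The paper does not prove this lemma here; it is quoted from \cite{oh:contacton-Legendrian-bdy} (and from \cite{oh-wang:CR-map2} in the closed-string case), so there is no in-paper argument to compare against. Your approach via Schwarz reflection to the doubled cylinder and Fourier decomposition into the ODEs $a_n'=\pi n\,a_n+2b_n$ is the standard route for such strip decay statements and is carried out correctly. A small comment: the estimates you actually perform use the pointwise bound $|b_n(\tau)|\le C'e^{-\delta\tau}$ together with direct integration of the exponential kernels rather than Cauchy--Schwarz per se, but the resulting $K_n\sim(\pi|n|\pm\delta)^{-2}$ are summable exactly as you claim.

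Your identification of the restriction $\delta<\pi$ is not merely a convenience but is in fact \emph{necessary}: the function $\alpha(\tau,t)=\tfrac{1}{2}e^{-\pi(\tau+it)}$ satisfies all hypotheses with $\nu\equiv 0$ (hence any $\delta$) yet decays only at rate $\pi$, so the lemma is false as literally stated once $\delta\ge\pi$. The condition $\delta<\pi$ is precisely the requirement that $\delta$ lie below the first nonzero eigenvalue of the transverse operator on the reflected cylinder, and, as you note, this is automatic in the applications where $\delta$ is chosen from the spectral gap of the asymptotic linearized operator. Your remark that the $\|\nabla\nu\|_{L^2}$ hypothesis is unused for the $L^2$ conclusion is also correct.
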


Now the $C^0$-exponential convergence of $w(\tau,\cdot)$ to some Reeb chord as $\tau \to \infty$
can be proved from the $L^2$-exponential estimates presented in previous sections by
the same argument as the proof of \cite[Lemma 11.22]{oh-wang:CR-map2} whose proof is omitted.

\begin{prop}[Proposition 7.2 \cite{oh:contacton-Legendrian-bdy}]\label{prop:C0-convergence}
Under Hypothesis \ref{hypo:basic}, for any contact instanton $w$ satisfying the Legendrian boundary condition,
there exists a unique Reeb orbit $z(\cdot)=\gamma(T\cdot):[0,1] \to M$ with period $T>0$, such that
\be\label{eq:C0-expdecay}
\|d(w(\tau, \cdot), z(\cdot))
\|_{C^0([0,1])}\rightarrow 0,
\ee
as $\tau\rightarrow +\infty$,
where $d$ denotes the distance on $M$ defined by the triad metric.
\end{prop}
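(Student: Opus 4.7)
The plan is to combine the subsequence convergence from Theorem~\ref{thm:subsequence-convergence} with the $L^2$-exponential decay of Lemma~\ref{lem:exp-decay-lemma} and the interior regularity of Theorems~\ref{thm:local-W22} and~\ref{thm:higher-regularity}, and then integrate in $\tau$ to upgrade subsequential convergence to genuine $C^0([0,1])$-convergence. As a first step I would fix the candidate limit: Theorem~\ref{thm:subsequence-convergence} supplies a sequence $s_k \to \infty$ and a Reeb chord $\gamma \colon [0,T] \to M$ with $w(s_k + \tau, t) \to \gamma(Tt)$ in $C^l_{\mathrm{loc}}$; set $z(t) := \gamma(Tt)$. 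What remains is to show $\|d(w(\tau,\cdot), z(\cdot))\|_{C^0([0,1])} \to 0$ along the entire end, not just on the subsequence.

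Next I would extract exponential $L^2$-decay on each cross-section $\{\tau\} \times [0,1]$ for both components of $dw$. For the Reeb component, apply Lemma~\ref{lem:exp-decay-lemma} to
\[
\alpha(\tau,t) = \bigl(\lambda(\del_t w) - T\bigr) + \sqrt{-1}\,\lambda(\del_\tau w);
\]
the Legendrian boundary condition guarantees $\operatorname{Im}\alpha(\tau,i) = 0$ for $i=0,1$, and $\lim_{\tau\to\infty}\alpha(\tau,t) = 0$ follows from the subsequential $C^l_{\mathrm{loc}}$-limit combined with the uniform $C^0$-bound on $dw$ in Hypothesis~\ref{hypo:basic}. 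Via the contact instanton equations and the structure formulas of the triad connection, $|\delbar \alpha|$ is controlled by a quadratic expression in $|d^\pi w|$, so the remaining hypothesis of Lemma~\ref{lem:exp-decay-lemma} reduces to $L^2$-exponential decay of $d^\pi w$. For this I would set
\[
E(\tau) = \frac12 \int_{[\tau,\infty) \times [0,1]} |d^\pi w|^2
\]
and integrate by parts using $\delbar^\pi w = 0$; because $TR_i \subset \xi$ and $J(\xi) \subset \xi$, the boundary integrals on $\{\tau\}\times\{0,1\}$ do not spoil the estimate, and one arrives at a differential inequality $-E'(\tau) \geq c\,E(\tau)$, hence $E(\tau) \leq C e^{-c\tau}$.

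Feeding this into Lemma~\ref{lem:exp-decay-lemma} yields $\|\alpha_\tau\|_{L^2([0,1])} \leq \overline{C}e^{-\delta\tau}$. Both $L^2$-exponential decays are then promoted to $C^0$-exponential decay via the local elliptic estimates of Theorems~\ref{thm:local-W22} and~\ref{thm:higher-regularity}, applied on unit rectangles $[\tau-1,\tau+1]\times[0,1]$; this delivers the pointwise bound
\[
\bigl|\, dw(\tau,t) - T\,R_\lambda(w(\tau,t))\, dt \,\bigr| \leq C e^{-\delta\tau}
\]
uniformly in $t$, and in particular $|\del_\tau w(\tau,t)| \leq C e^{-\delta\tau}$. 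Integrating in $\tau$ shows that $\{w(\tau,\cdot)\}_\tau$ is uniformly Cauchy in $C^0([0,1],M)$, so it converges to some $z_\infty(\cdot)$; by compatibility with the subsequential limit one identifies $z_\infty = \gamma(T\,\cdot)$, which is the desired conclusion, and the Legendrian boundary conditions pass to the limit to fix $\gamma(0) \in R_0$, $\gamma(T) \in R_1$ and thereby the uniqueness claim.

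The main obstacle I expect is the open-string analogue of the $L^2$-exponential decay of the $\pi$-component used above. The closed-string argument in \cite{oh-wang:CR-map2} integrates over closed cross-sections and so sees no boundary contributions, whereas in the present setting one must carefully verify that the boundary integrals over $\{\tau\}\times\{0,1\}$ produced by integration by parts either vanish or carry a favorable sign. This relies essentially on the combined use of the Legendrian property $TR_i \subset \xi$, the $J$-invariance $J(\xi)\subset\xi$, and the compatibility of $J$ with $d\lambda$ on $\xi$; this is the geometric input that distinguishes contact instantons with Legendrian boundary from the pseudoholomorphic curve theory on the symplectization, in which the radial factor absorbs such boundary effects.
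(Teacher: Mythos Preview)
The paper does not give a proof here: it states just before the proposition that the $C^0$-convergence ``can be proved from the $L^2$-exponential estimates presented in previous sections by the same argument as the proof of \cite[Lemma 11.22]{oh-wang:CR-map2} whose proof is omitted.'' Your overall architecture --- fix a subsequential limit via Theorem~\ref{thm:subsequence-convergence}, establish $L^2$-exponential decay of both the Reeb and $\pi$-components of $dw$, bootstrap to $C^0$ via Theorems~\ref{thm:local-W22}--\ref{thm:higher-regularity}, then integrate $|\del_\tau w|$ in $\tau$ to produce a Cauchy family --- is exactly the three-step scheme the paper alludes to at the beginning of Subsection~\ref{subsec:exponential-convergence}.

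There is, however, a genuine gap in your derivation of the $L^2$-exponential decay of $d^\pi w$. You claim that integrating by parts with $\delbar^\pi w = 0$ and the Legendrian boundary condition yields $-E'(\tau) \geq c\,E(\tau)$, and you locate the difficulty in the boundary integrals on $\{\tau\}\times\{0,1\}$. But the positive constant $c$ in such an inequality cannot come from structural features of the boundary condition alone: it is the spectral gap of the asymptotic operator $A_{(\gamma',T)} = \frac{D^\pi}{dt} - \frac{T}{2}(\CL_{R_\lambda}J)J$ of \eqref{eq:DUpsilon}, and its positivity is precisely the \emph{nondegeneracy} of the asymptotic Reeb chord $\gamma'$ (cf.\ \eqref{eq:gap} and Remark~\ref{rem:weighted-setting}). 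Without that hypothesis no exponential rate is available, so the inequality cannot follow from the Legendrian and $J$-compatibility conditions you list. The actual argument linearizes the $\delbar^\pi$-equation about $\gamma'$ and uses invertibility of $A_{(\gamma',T)}$ to force exponential decay of $\|\pi\,\del_\tau w(\tau,\cdot)\|_{L^2([0,1])}$; the Legendrian boundary condition enters only to make $A_{(\gamma',T)}$ self-adjoint on the appropriate space of sections. Your final paragraph correctly flags this step as the crux, but misidentifies what makes it work.
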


Then the following $C^0$-exponential convergence is also proved in the same way as that of
\cite[Proposition 11.23]{oh-wang:CR-map2}
\begin{prop}[Proposition 7.4 \cite{oh:contacton-Legendrian-bdy}]\label{prop:C0-expdecay}
There exist some constants $C>0$, $\delta>0$ and $\tau_0$ large such that for any $\tau>\tau_0$,
\beastar
\|d\left( w(\tau, \cdot), z(\cdot) \right) \|_{C^0([0,1])} &\leq& C\, e^{-\delta \tau}
\eeastar
\end{prop}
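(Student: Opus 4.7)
The plan is to derive the pointwise exponential rate by first obtaining $L^2$-exponential decay of the Reeb component of $dw$ along each slice $\{\tau\}\times[0,1]$ via Lemma~\ref{lem:exp-decay-lemma}, then upgrading to $L^2$-decay of the full quantity $dw - T\,R_\lambda(w)\,dt$, then bootstrapping to $C^0$ via Theorems~\ref{thm:local-W22} and \ref{thm:higher-regularity}, and finally integrating in $\tau$ to obtain the $C^0$-exponential decay of $d(w(\tau,\cdot), z(\cdot))$.

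First, I would verify the hypotheses of Lemma~\ref{lem:exp-decay-lemma} for the complex-valued function
\[
\alpha(\tau,t) = \bigl(\lambda(\del_t w) - T\bigr) + \sqrt{-1}\,\lambda(\del_\tau w).
\]
The identity $d(w^*\lambda \circ j) = 0$, combined with $\delbar^\pi w = 0$, forces $\delbar\alpha$ to be a quadratic expression $\nu$ in $d^\pi w$; the Legendrian boundary condition gives $\mathrm{Im}\,\alpha(\tau,i) = 0$ for $i = 0,1$; and Proposition~\ref{prop:C0-convergence} yields $\alpha(\tau,\cdot) \to 0$. Granted an $L^2$-exponential decay of $|d^\pi w|$ along the slices (established in parallel through a Bochner-type differential inequality for the $\pi$-energy density, as in the closed-string treatment of \cite[Section 11]{oh-wang:CR-map2} adapted to the Legendrian boundary), Lemma~\ref{lem:exp-decay-lemma} then provides
\[
\|dw - T\,R_\lambda(w)\,dt\|_{L^2(\{\tau\}\times[0,1])} \leq C\,e^{-\delta\tau}.
\]

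Next, to pass from $L^2$ to pointwise control, I would apply Theorems~\ref{thm:local-W22} and \ref{thm:higher-regularity} on the shifted strips $D_2(\tau) = [\tau-1,\tau+1]\times[0,1]$ and $D_1(\tau) = [\tau-\tfrac12, \tau+\tfrac12]\times[0,1]$, fed with $L^2$-data already exponentially small in $\tau$. The resulting uniform bound $|\del_\tau w(\tau,t)| \leq C'\,e^{-\delta'\tau}$, $t \in [0,1]$ (with a possibly reduced $\delta' > 0$), lets me integrate
\[
d\bigl(w(\tau,t), w(\tau_2,t)\bigr) \leq \int_\tau^{\tau_2} |\del_s w(s,t)|\, ds
\]
from $\tau$ to $\tau_2$; letting $\tau_2 \to \infty$ and invoking Proposition~\ref{prop:C0-convergence} to identify the limit with $z(t)$ yields the claimed $C^0$-exponential bound.

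The main technical obstacle I anticipate is the boundary term $C_3\,\|dw\|^3_{L^3(\del D_2)}$ appearing in Theorem~\ref{thm:local-W22}: to close the bootstrap, this trace must be absorbed into the interior $W^{1,2}$-norm on a slightly enlarged strip by exploiting that the Legendrian boundary condition constrains the boundary trace of $dw$ to lie in the Legendrian subbundle $TR_i$ on the relevant arc. Once this point is in hand, the scheme proceeds in complete parallel to the closed-string argument of \cite[Proposition 11.23]{oh-wang:CR-map2}, the only genuinely new ingredient being the careful bookkeeping of the Legendrian boundary contributions.
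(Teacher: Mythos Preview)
Your proposal is correct and follows essentially the same route the paper indicates: the paper does not give an independent proof here but simply defers to \cite[Proposition 11.23]{oh-wang:CR-map2}, whose three-step scheme ($L^2$-exponential decay of the Reeb component via Lemma~\ref{lem:exp-decay-lemma}, then $C^0$-exponential convergence, then $C^\infty$-decay of the derivative) is exactly what you outline, with the Legendrian boundary bookkeeping as the only new ingredient. Your identification of the boundary trace term in Theorem~\ref{thm:local-W22} as the place requiring care is apt, and your observation that the Legendrian condition forces the boundary trace of $\del_\tau w$ into $TR_i$ (hence $\lambda(\del_\tau w)|_{\del\dot\Sigma}=0$) is precisely what makes the closed-string argument go through.
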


So far, we have established the following:
\begin{itemize}
\item $W^{1,2}$-exponential decay of $w$,
\item $C^0$-exponential convergence of $w(\tau,\cdot) \to z(\cdot)$ as $\tau \to \infty$
for some Reeb chord $z$ between two Legendrians $R, \, R'$.
\end{itemize}
Combining the above two, we have obtained $L^2$-exponential estimates of the full derivative $dw$.

Then we consider the equation
$$
\delbar_{\pi} w = 0, \quad d(w^*\lambda \circ j) = 0
$$
back. By the bootstrapping argument using the local uniform a priori estimates
on the cylindrical region from Theorem \ref{thm:higher-regularity} (see also \cite{oh-wang:CR-map1}),  for the details),
the following $C^\infty$-exponential decay results are proved in \cite{oh-wang:CR-map1}, \
\cite[Section 11]{oh-wang:CR-map2}
for the closed string case, which can be easily adapted to the open string case of Reeb chords.

\begin{prop}[Corollary 6.5 \cite{oh-wang:CR-map1}]\label{prop:dw-expdecay}
Let $w:[0, \infty)\times S^1\to M$ satisfy the contact instanton equations \eqref{eq:contacton-Legendrian-bdy}
and Hypothesis \eqref{eq:hypo-basic-pt}. Then there exists some $R_0 > 0$, $C > 0$ and $\delta> 0$ such that
for all $\tau \geq R_0$
\be\label{eq:pidw-expdecay}
\left|\pi \frac{\del w}{\del\tau}(\tau, t)\right|\leq Ce^{-\delta |\tau|}, \quad
\left|\pi \frac{\del w}{\del t}(\tau, t)\right| \leq Ce^{-\delta |\tau|},
\ee
and
\be\label{eq:lambdadw-expdecay}
\left|\lambda(\frac{\del w}{\del\tau})(\tau, t)\right| \leq C e^{-\delta |\tau|}, \quad
\left|\lambda(\frac{\del w}{\del t})(\tau, t)- T\right| \leq C e^{-\delta |\tau|}
\ee
\be\label{eq:higher-derivatives}
|\nabla^l dw(s+\tau, t)| \leq C_\ell e^{- \delta |\tau|} \quad \text{for any}\quad l\geq 1.
\ee
\end{prop}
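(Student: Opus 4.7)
The plan is to derive the pointwise $C^\infty$-exponential decay entirely from the $L^2$-exponential decay of $dw$ and the $C^0$-convergence $w(\tau,\cdot)\to z(\cdot)$ already established in Propositions \ref{prop:C0-convergence} and \ref{prop:C0-expdecay}, by bootstrapping via the a priori regularity estimate of Theorem \ref{thm:higher-regularity}. The underlying idea is that the contact instanton equation \eqref{eq:contacton-Legendrian-bdy} together with the cylindrical metric is translation-invariant in the $\tau$-direction, so one and the same local elliptic estimate applies on every translate of a fixed unit cylinder with a constant that does not depend on the translation parameter.

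Concretely, for each $\tau \geq R_0+2$ I would consider the shifted map $w_\tau(\tau',t) := w(\tau+\tau',t)$ defined on the fixed unit piece $[-1,1]\times[0,1]$ (or $[-1,1]\times S^1$ in the closed string case), which still satisfies \eqref{eq:contacton-Legendrian-bdy} with the Legendrian boundary conditions $w_\tau(\cdot,0)\in R_0$, $w_\tau(\cdot,1)\in R_1$. Combining the interior and, where relevant, the two Legendrian-boundary cases of Theorem \ref{thm:higher-regularity} applied on nested sub-domains of $[-1,1]\times[0,1]$ (each $D_2$ chosen so that $w_\tau(\del D_2)$ meets at most one of $R_0,R_1$) yields a uniform estimate of the form $\|dw_\tau\|_{C^{k,\alpha}([-1/2,1/2]\times[0,1])} \leq C(k,\alpha)\,\|dw_\tau\|_{W^{1,2}([-1,1]\times[0,1])}$. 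The right-hand side is bounded by $\overline{C}\,e^{-\delta\tau}$ once one combines Lemma \ref{lem:exp-decay-lemma} on the $\lambda$-component of $dw$ with the $L^2$-exponential decay of the $\pi$-component extracted from the $\pi$-energy bound together with Proposition \ref{prop:C0-expdecay}. Pulling back to $w$ and evaluating at the midpoint of the unit piece gives the pointwise bounds \eqref{eq:pidw-expdecay} and \eqref{eq:lambdadw-expdecay}, where the subtraction of $T$ in the $t$-derivative estimate reflects the identity $\lambda(\dot z)\equiv T$ along the limiting Reeb chord/orbit $z(t)=\gamma(Tt)$. Iterating the same argument with $(k,\alpha)$ replaced by $(k+\ell,\alpha)$ produces \eqref{eq:higher-derivatives} for every $\ell \geq 1$.

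The principal point to verify is the $\tau$-uniformity of the constant $C(k,\alpha)$ in Theorem \ref{thm:higher-regularity}. This uniformity is built in by construction: since the cylindrical metric on $[R_0,\infty)\times[0,1]$, the contact triad $(M,\lambda,J)$, and the Legendrian pair $(R_0,R_1)$ are all translation-invariant in $\tau$, the hypotheses of Theorem \ref{thm:higher-regularity} hold on every translate with identical data, so the constant depends only on $(k,\alpha)$, on $(M,\lambda,J)$, and on $(R_0,R_1)$. A secondary delicacy comes from the mixed-order nature of the linearized operator $D\Upsilon(w)$ flagged in the introduction, which in principle forces the $\lambda$- and $\pi$-components to carry different analytic weights; however, Theorem \ref{thm:higher-regularity} as established in \cite{oh:contacton-Legendrian-bdy} already packages the correct Schauder theory for the full nonlinear system, so the bootstrap proceeds without revisiting that mixed-order analysis. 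The main obstacle I anticipate is therefore a bookkeeping one: tracking $(C,\delta)$ across the iteration in $\ell$ so that the exponential rate $\delta>0$ is preserved at every level. This is handled by the fact that each application of Theorem \ref{thm:higher-regularity} is a linear inequality in norms, and therefore never degrades the exponential rate provided by the base case $\ell=0$.
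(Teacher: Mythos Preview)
Your proposal is correct and follows essentially the same route the paper sketches: the paper does not give a self-contained proof here but simply states that, having assembled the $L^2$-exponential decay of $dw$ from the preceding propositions, one feeds the contact instanton equation back into the local uniform a priori estimates of Theorem~\ref{thm:higher-regularity} on translated unit cylinders and bootstraps to $C^\infty$-exponential decay, referring to \cite{oh-wang:CR-map1} and \cite[Section~11]{oh-wang:CR-map2} for the details. Your translation-invariance argument and iteration in $k$ are exactly this bootstrap made explicit; the only minor imprecision is that the $L^2$-exponential decay of the $\pi$-component is not literally ``extracted from the $\pi$-energy bound'' alone but requires the full package of $C^0$-convergence plus the $W^{1,2}$-type estimate of Theorem~\ref{thm:local-W22}, which the paper also bundles into the phrase ``$L^2$-exponential estimates of the full derivative $dw$''.
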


\section{Review of Fredholm theory of (relative) contact instantons}

We recall the framework for the study of generic
nondegeneracy results for the Reeb orbits and chords.

We first introduce the following definition
\begin{defn} \label{defn:Moore-path2}Let $T \geq 0$ and consider a curve $\gamma:[0,1] \to M$ be a
smooth curve. We say $(\gamma,T)$ an \emph{iso-speed  Reeb trajectory} if the pair satisfies
$$
\dot \gamma(t) = T R_\lambda(\gamma(t)), \, \int \gamma^*\lambda = T
$$
for all $t \in [0,1]$. If $\gamma(1) = \gamma(0)$, we call $(\gamma,T)$ an
iso-speed closed Reeb orbit and $T$ the \emph{action} of $\gamma$.
\end{defn}
We also consider the relative version thereof.

\begin{defn}[Iso-speed Reeb chords \cite{oh:entanglement1}]
 Let $(R_0,R_1)$ be a pair of Legendrian submanifolds of $(M,\xi)$ and $T\geq 0$.
For given contact form $\lambda$, we say $(\gamma, T)$ with $\gamma:[0,1] \to M$
is a Reeb chord from $R_0$ to $R_1$
if
$$
\dot \gamma(t) = T \dot R_\lambda(\gamma(t)), \quad \gamma(0) \in R_0, \, \gamma(1) \in R_1.
$$
We call any such $(\gamma, T)$ an \emph{iso-speed Reeb chord} and
\emph{nonnegative} if $T \geq 0$.
\end{defn}

 We alert readers that  any constant curve valued at
a point from $R_0 \cap R_1$ is a iso-speed Reeb chord with speed 0.
\begin{rem}  Note that when $R_0 = R_1$, we have `lots of iso-speed Reeb chords'
arising from the constant chords. We will show that this component of constant chords
is nondegenerate in the Bott-Morse sense.
This is important in the study of contact instanton Legendrian Floer homology we introduce
especially in its calculation when $R_1$ is contact isotopic to $R_0$ and $C^1$-close thereto
in our solution to Sandon's question from \cite{sandon-translated}.
This is the main reason why our generic nondegeneracy includes the constant trajectories over
the fixed domain $[0,1]$.
\end{rem}

\begin{defn} Let $(\gamma,T)$ be an iso-speed closed Reeb orbit in the sense as above.
When $|T| > 0$ is minimal among such that $\gamma(1) = \gamma(0)$ with $\int \gamma^*\lambda \neq 0$,
we call the pair $(\gamma,T)$ a \emph{simple} iso-speed closed Reed orbit.
\end{defn}

We now study the property of nondegeneracy of the pair $(\gamma, T)$ by formulating the notion
of nondegeneracy precisely including the case of constant trajectories, i.e., the case with $T = 0$.

Let $(\gamma, T)$ be a closed Reeb orbit of action $T$.
By definition, we can write $\gamma(T) = \phi^T_{R_\lambda}(\gamma(0))$
for the Reeb flow $\phi^T= \phi^T_{R_\lambda}$ of the Reeb vector field $R_\lambda$.
In particular $p = \gamma(0)$ is a fixed point of the diffeomorphism $\phi^T$.
Since $L_{R_\lambda}\lambda = 0$, $\phi^T$ is a contact diffeomorphism and so
induces an isomorphism
$$
\Psi_\gamma : = d\phi^T(p)|_{\xi_p}: \xi_p \to \xi_p
$$
which is the linearization restricted to $\xi_p$ of the Poincar\'e return map.

\begin{defn} Let $T> 0$. We say a $T$-closed Reeb orbit $(T,\lambda)$ is \emph{nondegenerate}
if $\Psi_\gamma:\xi_p \to \xi_p$ with $p = \gamma(0)$ has not eigenvalue 1.
\end{defn}
When $T=0$, it is well-known that the constant loop is nondegenerate in the Bott-Morse sense.

For $T > 0$, the following generic nondegeneracy result is well-known to the experts.
(See \cite[Appendix A]{ABW} for its proof.)

\begin{thm}[Albers-Braam-Wendl]\label{thm:ABW}
There exists a residual subset
$$
\CC^{\text{\rm reg}}(M,\xi) \subset \CC(M,\xi)
$$
such that for any $\lambda \in \CC^{\text{\rm reg}}(M,\xi)$ all the
closed Reeb orbits $\lambda$ are nondegenerate if $T> 0$.
\end{thm}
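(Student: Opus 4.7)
The plan is to adapt the standard Sard-Smale transversality template, viewing closed Reeb orbits as zeros of a section of a Banach bundle. Fix a reference contact form $\lambda_0 \in \CC(M,\xi)$ and introduce the Banach manifold $\CC^k(M,\xi) = \{e^h\lambda_0 : h \in C^k(M,\R)\}$ of $C^k$ contact forms for some $k$ large enough. For each period bound $T_0 > 0$, consider the universal moduli space
\[
\widetilde{\CM}(T_0) = \left\{(\lambda,\gamma,T) \in \CC^k(M,\xi) \times C^{k+1}(S^1,M) \times (0,T_0] : \dot\gamma = T\,R_\lambda(\gamma)\right\}/S^1,
\]
with $S^1$ acting by reparametrization. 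This is the zero locus of a smooth section $\widetilde\CF(\lambda,\gamma,T) = \dot\gamma - T R_\lambda(\gamma)$ of an appropriate Banach bundle.

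The key analytic input is surjectivity of the vertical linearization $D\widetilde\CF$ at every zero. The $\lambda$-fixed part, acting on variations $(\eta,\delta T)$, is a first-order ODE operator on $S^1$ of Fredholm index $0$, and duality identifies its cokernel with $\ker(\id - \Psi_\gamma^*) \subset \xi^*_{\gamma(0)}$; thus failure of surjectivity at a zero is exactly failure of nondegeneracy. To kill this finite-dimensional cokernel I would use variations $\delta h$ of the conformal factor supported in a small tubular neighborhood of an embedded arc of the image of $\gamma$. A direct computation of $\partial_h R_{e^h \lambda_0}|_{h=0}$, using the defining equations $\lambda'(R_{\lambda'}) = 1$ and $i_{R_{\lambda'}} d\lambda' = 0$, shows that the infinitesimal variation $\partial_h R_\lambda$ can realize any prescribed tangent vector along the arc as $h$ ranges over functions supported there. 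The crucial geometric point, which is where the principal technical obstacle lies, is that the underlying simple orbit of any closed Reeb orbit is an embedded loop in $M$; this allows one, even when $\gamma$ is a multiply covered orbit, to localize $\delta h$ on an embedded arc and pair it nontrivially with any prescribed cokernel element.

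Once surjectivity is verified, $\widetilde\CM(T_0)$ is a Banach manifold, and the projection $\pi_{T_0} : \widetilde\CM(T_0) \to \CC^k(M,\xi)$ is $C^{k-1}$ Fredholm of index $0$; the Sard-Smale theorem (valid for $k$ sufficiently large relative to the index) then produces a residual subset $\CC^{k,\text{reg}}(T_0) \subset \CC^k(M,\xi)$ whose elements have only nondegenerate closed Reeb orbits of period $\leq T_0$. Intersecting over $T_0 = n \in \N$ yields a residual subset of $\CC^k(M,\xi)$. Finally, the standard Taubes trick, combining openness of the finite-period nondegeneracy condition in a fixed $C^2$ topology with density of smooth contact forms in each $C^k$ norm, upgrades this to the desired residual subset $\CC^{\text{reg}}(M,\xi) \subset \CC(M,\xi)$ in the $C^\infty$ topology.
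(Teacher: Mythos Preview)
The paper does not supply its own proof of this statement; it attributes the result to Albers--Bramham--Wendl and simply cites \cite[Appendix A]{ABW}. Your outline is precisely the standard Sard--Smale transversality argument carried out in that reference (and elsewhere), and it is correct in its broad strokes, including your identification of the multiply covered case as the place where the real work lies.
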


The following generic nondegeneracy result
for Reeb chords which extends the above generic nondegeneracy results to
the case of open strings of Reeb chord and to the Bott-Morse situation of constant chords
is proved in \cite[Appendix B]{oh:contacton-transversality}.

\begin{thm} \label{thm:Reeb-chords}
Let $(M,\xi)$ be a contact manifold. Let  $(R_0,R_1)$ be a pair of Legendrian submanifolds
allowing the case $R_0 = R_1$.
\begin{enumerate}
\item For a given pair $(R_0,R_1)$, there exists a residual subset
$$
\CC^{\text{\rm reg}}(\xi;R_0,R_1) \subset \CC(M,\xi)
$$
such that for any $\lambda \in \CC^{\text{\rm reg}}(\xi;R_0,R_1) $ all
Reeb chords from $R_0$ to $R_1$ are nondegenerate for $T > 0$ and
Bott-Morse nondegenerate when $T = 0$.
\item For a given contact form $\lambda$, there exists a residual subset of pairs $(R_0,R_1)$
of Legendrian submanifolds such that  all Reeb chords from $R_0$ to $R_1$ are
nondegenerate for $T > 0$ and
Bott-Morse nondegenerate when $T = 0$.
\end{enumerate}
\end{thm}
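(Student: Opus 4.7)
The plan is to apply the Sard--Smale parametric transversality theorem to a universal moduli space of Reeb chords, with $\CC(M,\xi)$ (resp.\ the space of pairs of Legendrian submanifolds) as the ambient parameter space. For part (1), I would introduce
\[
\CN(R_0,R_1) := \{(\lambda,T,p) \in \CC(M,\xi) \times (0,\infty) \times R_0 : \phi^T_{R_\lambda}(p) \in R_1\},
\]
so that each $(\lambda,T,p) \in \CN(R_0,R_1)$ corresponds to the iso-speed Reeb chord $\gamma(t) = \phi^{tT}_{R_\lambda}(p)$ from $R_0$ to $R_1$, and verify that nondegeneracy of $(T,\gamma)$ in the sense of $\Psi_\gamma$ is equivalent to transversality, inside $M$, of the evaluation $\ev(\lambda,T,p) := \phi^T_{R_\lambda}(p)$ to $R_1$ at this point.

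The main step is to show that $\ev$ is transverse to $R_1$ along $\CN(R_0,R_1)$, so that $\CN(R_0,R_1)$ is cut out as a Banach submanifold. For this I would differentiate $\lambda \mapsto \phi^T_{R_\lambda}(p)$ in the direction of a variation $\mu \in T_\lambda \CC(M,\xi) \subset \Omega^1(M)$, using the two infinitesimal constraints
\[
\lambda(\dot R) = -\mu(R_\lambda), \qquad d\lambda(\dot R,\cdot)|_\xi = -d\mu(R_\lambda,\cdot)|_\xi
\]
that characterize the variation $\dot R$ of the Reeb vector field induced by $\mu$. Choosing $\mu$ supported in a small coordinate chart around an interior embedded point $\gamma(t_0) \notin R_0 \cup R_1$ (which exists for $T > 0$ since $R_\lambda$ is transverse to every Legendrian), a Duhamel variation-of-parameters argument should produce any prescribed direction of $\delta \phi^T_{R_\lambda}(p) \in T_{\phi^T(p)}M$, modulo the already-available contributions $\delta T \cdot R_\lambda(\phi^T(p))$, $d\phi^T_{R_\lambda}(T_p R_0)$, and $T_{\phi^T(p)} R_1$. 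Executing this infinitesimal surjectivity computation, while controlling the interaction of $\mu$ with the Reeb flow on the interior of the chord, is the main obstacle.

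Once $\CN(R_0,R_1)$ is established as a Banach manifold---working in a Floer $C^\infty_\varepsilon$ completion of $\CC(M,\xi)$ so that Sard--Smale applies---the projection $\pi : \CN(R_0,R_1) \to \CC(M,\xi)$ is Fredholm of index $\dim R_0 + 1 - \operatorname{codim}_M R_1 = 0$, and its residual set of regular values supplies the desired $\CC^{\text{\rm reg}}(\xi;R_0,R_1)$: at such a $\lambda$ every positive-action Reeb chord from $R_0$ to $R_1$ is nondegenerate. The $T = 0$ case is automatic, being vacuous when $R_0 \cap R_1 = \emptyset$ and Bott--Morse clean when $R_0 = R_1$, since $d\phi^0 = \id$ forces the kernel of the linearized chord equation at $(0,p)$ to equal $T_p R_0$, matching the tangent space to the critical manifold $\{0\} \times R_0$. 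Part (2) follows by the same template with the roles exchanged: one fixes $\lambda$ and perturbs the Legendrian pair through contact isotopies generated by time-dependent contact Hamiltonians, obtaining the analogous surjectivity of $d(\ev)$ via cutoff Hamiltonians supported near $\gamma(0)$ and $\gamma(1)$ which move the endpoints freely in their normal directions inside $\xi$ while preserving the Legendrian property.
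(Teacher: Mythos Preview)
Your proposal is correct and follows the standard Sard--Smale template, but it is organized differently from the paper's approach. The paper (via the cited reference \cite{oh:contacton-transversality}) works in the \emph{path space}: it regards the assignment
\[
\Phi(T,\gamma,\lambda) = \dot\gamma - T\,R_\lambda(\gamma)
\]
as a section of a Banach vector bundle over $(0,\infty)\times \CL^{1,2}(M;R_0,R_1)\times \CC(M,\xi)$, proves universal transversality of $\Phi$ at $0$, and then invokes Proposition~\ref{prop:nondeneracy-chords-text} to identify nondegeneracy of a chord with surjectivity of $d_{(\gamma,T)}\Phi_\lambda$. You instead use the \emph{finite-dimensional flow-map} formulation $\ev(\lambda,T,p)=\phi^T_{R_\lambda}(p)$ on $\CC(M,\xi)\times(0,\infty)\times R_0$ and cut out $\CN(R_0,R_1)=\ev^{-1}(R_1)$ directly. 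Your route is more elementary in that the fibres of the projection $\CN\to\CC(M,\xi)$ are finite-dimensional from the outset, avoiding the $W^{1,2}$ path space altogether; the paper's route is closer to the Floer-theoretic viewpoint in which chords are critical points of an action functional, and it makes the link between nondegeneracy and Fredholm surjectivity of the asymptotic operator explicit. Two small points: first, since $\CC(M,\xi)=\{f\lambda : f>0\}$, your tangent variations are of the form $\mu=h\lambda$ rather than arbitrary one-forms, so the formulae for $\dot R$ specialize accordingly (this does not affect the surjectivity argument). Second, when the chord lies on a closed Reeb orbit and revisits the support of your bump $\mu$ several times, the Duhamel integral picks up a finite sum of linearly independent pushforwards; you flag this as the ``main obstacle,'' and indeed it is the only place requiring care, handled exactly as in the closed-orbit case of \cite{ABW}. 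Your treatment of $T=0$ agrees with Proposition~\ref{prop:Bott-clean}.
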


\subsection{Off-shell description of moduli spaces}

For the exposition of this section, we adapt the one given in
\cite{oh:contacton} to the current context of bordered contact instantons by
incorporating the Legendrian boundary condition.

Let $(\Sigma, j)$ be a bordered compact Riemann surface, and let $\dot \Sigma$ be the
punctured Riemann surface with $\{z_1,\ldots, z_k \} \subset \del \Sigma$, we consider the moduli space
$$
\CM_k((\dot \Sigma,\del \dot \Sigma),(M,\vec R)), \quad \vec R = (R_1,\cdots, R_k)
$$
 of finite energy maps $w: \dot \Sigma \to M$ satisfying the equation
\eqref{eq:contacton-Legendrian-bdy}.

We will be mainly interested in the two cases:
\begin{enumerate}
\item A generic nondegenerate case of $R_1, \cdots, R_k$ which in particular
are mutually disjoint,
\item The case where $R_1, \cdots, R_k = R$.
\end{enumerate}

The  second case is transversal in the Bott-Morse sense both for the Reeb
chords and for the moduli space of contact instantons, which is
rather straightforward and easier to handle, and so omitted.

For the first case, all the asymptotic Reeb chords  are nonconstant and have nonzero
action $T \neq 0$. In particular, the relevant punctures $z_i$
are not removable. Therefore we have the decomposition of the finite energy moduli space
$$
\CM_k((\dot \Sigma,\del \dot \Sigma),(M,\vec R)) =
\bigcup_{\vec \gamma \in \prod_{i=0}^{k-1}\frak{Reeb}(R_i,R_{i+1})} \CM(\vec \gamma),
\quad \gamma = (\gamma_1, \ldots, \gamma_k).
$$
Depending on the choice of strip-like coordinates we divide the punctures
$$
\{z_1, \cdots, z_k\} \subset \del \Sigma
$$
into two subclasses
$$
p_1, \cdots, p_{s^+}, q_1, \cdots, q_{s^-} \in \del \Sigma
$$
as the positive and negative boundary punctures. We write $k = s^+ + s^-$.

Let $\gamma^+_i$ for $i =1, \cdots, s^+$ and $\gamma^-_j$ for $j = 1, \cdots, s^-$
be two given collections of Reeb chords at positive and negative punctures
respectively. Following the notations from \cite{behwz}, \cite{bourgeois},
we denote by $\underline \gamma$ and $\overline \gamma$ the corresponding
collections
\beastar
\underline \gamma & = & \{\gamma_1^+,\cdots, \gamma_{s^+}\} \\
\overline \gamma & = & \{\gamma_1^+,\cdots, \gamma_{s^-}\}.
\eeastar
For each $p_i$ (resp. $q_j$), we associate the strip-like
coordinates $(\tau,t) \in [0,\infty) \times S^1$ (resp. $(\tau,t) \in (-\infty,0] \times S^1$)
on the punctured disc $D_{e^{-2\pi K_0}}(p_i) \setminus \{p_i\}$
(resp. on $D_{e^{-2\pi R_0}}(q_i) \setminus \{q_i\}$) for some sufficiently large $K_0 > 0$.

\begin{defn}\label{defn:Banach-manifold} We define
\be\label{eq:offshell-space}
\CF(\dot \Sigma, \del \dot \Sigma),(M, \vec R);J;\underline \gamma,\overline \gamma)
\ee
to the set of smooth maps satisfying the boundary condition
\be\label{eq:bdy-condition}
w(z) \in R_i \quad \text{ for } \, z \in \overline{z_{i-1}z_i} \subset \del \dot \Sigma
\ee
and the asymptotic condition
\be\label{eq:limatinfty}
\lim_{\tau \to \infty}w((\tau,t)_i) = \gamma^+_i(T_i(t+t_i)), \qquad
\lim_{\tau \to - \infty}w((\tau,t)_j) = \overline \gamma_j(T_j(t-t_j))
\ee
for some $t_i, \, t_j \in S^1$, where
$$
T_i = \int_{S^1} (\gamma^+_i)^*\lambda, \, T_j = \int_{S^1} ( \gamma^-_j)^*\lambda.
$$
Here $t_i,\, t_j$ depends on the given analytic coordinate and the parameterizations of
the Reeb chords.
\end{defn}
We will fix $j$ and its associated K\"ahler metric $h$.
We regard the assignment
$$
\Upsilon: w \mapsto \left(\delbar^\pi w, d(w^*\lambda \circ j)\right), \quad
\Upsilon: = (\Upsilon_1,\Upsilon_2)
$$
as a section of the (infinite dimensional) vector bundle:
We first formally linearize and define a linear map
$$
D\Upsilon(w): \Omega^0(w^*TM,(\del w)^*T\vec R) \to \Omega^{(0,1)}(w^*\xi) \oplus \Omega^2(\Sigma)
$$
where we have the tangent space
$$
T_w \CF = \Omega^0(w^*TM,(\del w)^*T\vec R).
$$
For the optimal expression of the linearization map and its relevant
calculations, we use the contact triad connection $\nabla$ of $(M,\lambda,J)$ and the contact
Hermitian connection $\nabla^\pi$ for $(\xi,J)$ introduced in
\cite{oh-wang:connection,oh-wang:CR-map1}.

\begin{rem} When one deals with the case of Morse-Bott family of Reeb chords (or orbits),
one needs to use a weighted Sobolev spaces with weight $e^{\delta|\tau|}$ in
setting up the appropriate off-shell function spaces for a suitable choice of $\delta > 0$
which depends on the exponential decay result established for the finite $\pi$-energy contact
instantons in \cite{oh-wang:CR-map2}.
To describe the choice of exponential weight $\delta > 0$, we need to recall the covariant linearization of the map $
D\Phi_\lambda: W^{1,2}(\gamma^*\xi) \to L^2(\gamma^*\xi)
$
of the map
$$
\Phi_\lambda: (\gamma, T) \mapsto \dot \gamma - T\, R_\lambda(\gamma)
$$
for a given $T$-periodic Reeb orbit $(\gamma, T)$. The operator has the expression
\be\label{eq:DUpsilon}
D\Phi_\lambda = \frac{D^\pi}{dt} - \frac{T}{2}(\CL_{R_\lambda}J) J=: A_{(\gamma, T)}
\ee
where $\frac{D^\pi}{dt}$ is the covariant derivative
with respect to the pull-back connection $\gamma^*\nabla^\pi$ along the Reeb orbit
$\gamma$ and $(\CL_{R_\lambda}J) J$ is (pointwise) symmetric operator with respect to
the triad metric. (See \cite[Lemma 3.4]{oh-wang:CR-map1}.)
We choose $\delta> 0$ so that $0 < \delta/p < 1$ is smaller than the spectral gap
\be\label{eq:gap}
\text{gap}(\underline \gamma,\overline \gamma): = \min_{i,j}
\{d_{\text H}(\text{spec}A_{(T_i,\gamma_i)},0),\, d_{\text H}(\text{spec}A_{(T_j,\gamma_j)},0)\}.
\ee
\end{rem}

Let $k \geq 2$ and $p > 2$. We denote by
\be\label{eq:CWkp}
\CW^{k,p}: = \CW^{k,p}((\dot \Sigma, \del \dot \Sigma),(M, \vec R); \underline \gamma,\overline \gamma)
\ee
the completion of the space \eqref{eq:offshell-space}.
It has the structure of a Banach manifold modelled by the Banach space given by the following

\begin{defn}\label{defn:tangent-space} We define
$$
W^{k,p} (w^*TM, (\del w)^*T\vec R; \underline \gamma,\overline \gamma)
$$
to be the set of vector fields $Y = Y^\pi + \lambda(Y) R_\lambda$ along $w$ that satisfy
\be\label{eq:tangent-element}
\begin{cases}
Y^\pi \in W^{k,p}((\dot\Sigma, \del \dot \Sigma), (w^*\xi, (\del w)^*T\vec R),  \\
\lambda(Y) \in W^{k,p}((\dot \Sigma, \del \dot \Sigma),(\R, \{0\})),\\
Y^\pi(\del \dot \Sigma) \subset T\vec R, \qquad \lambda(Y)(\del \dot \Sigma) = 0.
\end{cases}
\ee
\end{defn}
Here we use the splitting
$$
TM = \xi \oplus \span_\R\{R_\lambda\}
$$
where $\span_\R\{R_\lambda\}: = \CL$ is a trivial line bundle and so
$$
\Gamma(w^*\CL) \cong C^\infty(\dot \Sigma, \del \dot \Sigma).
$$
The above Banach space is decomposed into the direct sum
\be\label{eq:tangentspace}
W^{k,p}((\dot\Sigma,\del \dot \Sigma),( w^*\xi, (\del w)^*T\vec R))
\bigoplus W^{k,p}((\dot \Sigma,\del \dot \Sigma), ( \R, \{0\})) \otimes R_\lambda :
\ee
by writing $Y = (Y^\pi, g R_\lambda)$ with a real-valued function $g = \lambda(Y(w))$ on $\dot \Sigma$.
Here we measure the various norms in terms of the triad metric of the triad $(M,\lambda,J)$.

Now for each given $w \in \CW^{k,p}((\dot \Sigma,\del \dot \Sigma), (M, \vec R);J;\underline \gamma,\overline \gamma)$,
we consider the Banach space
$$
\Omega^{(0,1)}_{k-1,p}(w^*\xi): = W^{k-1,p}(\Lambda^{(0,1)}(w^*\xi))
$$
the $W^{k-1,p} $-completion of $\Omega^{(0,1)}(w^*\xi) = \Gamma(\Lambda^{(0,1)}(w^*\xi)$ and form the bundle
\be\label{eq:CH01}
\bigcup_{w \in \CW^{k,p}} \Omega^{(0,1)}_{k-1,p}(w^*\xi)
\ee
over $\CW^{k,p}$.

\begin{defn}\label{defn:CHCM01} We associate the Banach space
\be\label{eq:CH01-w}
\CH^{(0,1)}_{k-1,p}(M,\lambda)|_w: = \Omega^{(0,1)}_{k-1,p}(w^*\xi) \oplus \Omega^2_{k-2,p}(\dot \Sigma)
\ee
to each $w \in \CW^{k,p}$ and form the bundle
$$
\CH^{(0,1)}_{k-1,p}(M,\lambda): = \bigcup_{w \in \CW^{k,p}} \CH^{(0,1)}_{k-1,p}(M,\lambda)|_w
$$
over $\CW^{k,p}$.
\end{defn}

Then we can regard the assignment
$$
\Upsilon_1: w \mapsto \delbar^\pi w
$$
as a smooth section of the bundle $\CH^{(0,1)}_{k-1,p}(M,\lambda) \to \CW^{k,p}$. Furthermore
the assignment
$$
\Upsilon_2: w \mapsto d(w^*\lambda \circ j)
$$
defines a smooth section of the trivial bundle
$$
\Omega^2_{k-2,p}(\Sigma) \times \CW^{k,p} \to \CW^{k,p}
$$
for the Banach manifold
$$
\CW^{k,p}:= \CW^{k,p}((\dot \Sigma,\del \dot \Sigma),(M, \vec R);J;\underline \gamma,\overline \gamma).
$$
We summarize the above discussion to the following lemma.

\begin{lem}\label{lem:Upsilon} Consider the vector bundle
$$
\CH^{(0,1)}_{k-1,p}(M,\lambda) \to \CW^{k,p}.
$$
The map $\Upsilon$ continuously extends to a continuous section
$$
\Upsilon: \CW^{k,p} \to \CH^{(0,1)}_{k-1,p} (\xi;\vec R).
$$
\end{lem}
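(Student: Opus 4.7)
The plan is to show $\Upsilon=(\Upsilon_1,\Upsilon_2)$ is a well-defined continuous section of the bundle $\CH^{(0,1)}_{k-1,p}(M,\lambda)\to \CW^{k,p}$ by standard Banach-bundle and Sobolev-multiplication arguments, modelled on the analogous facts in the theory of pseudoholomorphic curves.

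First I would build explicit local trivializations of both the base and the bundle. Around any reference map $w_0\in \CW^{k,p}$, use the contact triad connection $\nabla$ to define the pointwise exponential map and obtain a chart
\[
\exp_{w_0}: U_0 \subset W^{k,p}(w_0^*TM,(\del w_0)^*T\vec R;\underline\gamma,\overline\gamma) \xrightarrow{\cong} U \subset \CW^{k,p},
\]
where the Legendrian boundary condition is preserved because each $R_i$ is totally real for $J$ and $\nabla$ restricts nicely to $T\vec R$. Parallel transport along these geodesics identifies the fibres $\CH^{(0,1)}_{k-1,p}(M,\lambda)|_w$ with the reference fibre $\CH^{(0,1)}_{k-1,p}(M,\lambda)|_{w_0}$ and so endows $\CH^{(0,1)}_{k-1,p}(M,\lambda)$ with the structure of a Banach vector bundle.

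Next, for each fixed $w\in \CW^{k,p}$, well-definedness of the fibre-assignment follows from the Sobolev embedding $W^{k,p}\hookrightarrow C^1$ (valid since $k\geq 2$ and $p>2$): then $dw\in W^{k-1,p}$, and smoothness on $M$ of the fibrewise operations $v\mapsto \pi_\lambda(v)+J\pi_\lambda(v)\circ j$ and $v\mapsto\lambda(v)$ places $\delbar^\pi w \in \Omega^{(0,1)}_{k-1,p}(w^*\xi)$ and $d(w^*\lambda\circ j)\in \Omega^2_{k-2,p}(\dot\Sigma)$. On the strip-like ends the prescribed asymptotics \eqref{eq:limatinfty}, combined with the $C^\infty$-exponential decay established in Section~2, guarantee that both components lie in the appropriately weighted Sobolev spaces with weight $e^{\delta|\tau|}$ for any $\delta$ below the spectral gap \eqref{eq:gap}.

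For continuity, one writes $\Upsilon\circ\exp_{w_0}$ in the local chart as a nonlinear first-order differential expression in $Y\in U_0$ whose nonlinearity is smooth in $Y$ and $\nabla Y$ with coefficients smooth in the base variables. Standard Moser/Gagliardo--Nirenberg estimates for composition in $W^{k,p}$ then yield local Lipschitz (in fact $C^\infty$) dependence on $Y$ in the $W^{k,p}$-norm, from which continuity of the section is immediate. The main technical point, such as it is, will be the uniform-in-$\tau$ control of both the parallel-transport identifications and the Moser estimates on the \emph{weighted} Sobolev spaces along the strip-like ends; this is handled by observing that on tubular neighbourhoods of the limiting Reeb chords the triad metric, $J$, $R_\lambda$ and $\nabla^\pi$ are uniformly bounded with bounded derivatives, so multiplication by $e^{\delta|\tau|}$ commutes with the nonlinear operations up to bounded errors as long as $\delta$ respects the gap in \eqref{eq:gap}.
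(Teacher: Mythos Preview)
The paper does not actually give a proof of this lemma: it is stated as a summary of the preceding off-shell setup (``We summarize the above discussion to the following lemma'') and left without further argument. Your proposal supplies exactly the kind of standard Banach-bundle/Sobolev-multiplication argument that is implicit in that setup, so in substance you are doing what the paper tacitly assumes.

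One small caveat: in your third paragraph you appeal to the $C^\infty$-exponential decay results of Section~2 to place $\Upsilon(w)$ in the target space. Those results are \emph{on-shell}---they apply to contact instantons, not to arbitrary $w\in\CW^{k,p}$. The correct reason $\Upsilon(w)$ lands in $\CH^{(0,1)}_{k-1,p}(M,\lambda)|_w$ is simply that the off-shell space $\CW^{k,p}$ is defined as the $W^{k,p}$-completion of maps with the prescribed asymptotics \eqref{eq:limatinfty}; once $w-\gamma$ (suitably interpreted in a chart near the ends) lies in $W^{k,p}$, the Sobolev mapping properties you invoke already force $\delbar^\pi w\in W^{k-1,p}$ and $d(w^*\lambda\circ j)\in W^{k-2,p}$ without any appeal to dynamical decay. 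In the nondegenerate setting no weights are needed at all; the weighted discussion belongs to the Morse--Bott case (cf.\ Remark~\ref{rem:weighted-setting}).
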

We have already computed the linearization of $\Upsilon$ in the previous section.

With these preparations, the following is a consequence of the exponential estimates established
in \cite{oh-wang:CR-map1} for the case of vanishing charge.
(The relevant off-shell analytical framework
for the case $Q(p_i) \neq 0$ is treated in \cite{oh-savelyev}.)

\begin{prop}[Theorem 1.12 \cite{oh-wang:CR-map1}]
Assume $\lambda$ is nondegenerate and $Q(p_i) = 0$.
Let $w:\dot \Sigma \to M$ be a contact instanton and let $w^*\lambda = a_1\, d\tau + a_2\, dt$.
Suppose
\bea
\lim_{\tau \to \infty} a_{1,i} = 0, &{}& \, \lim_{\tau \to \infty} a_{2,i} = T(p_i)\nonumber\\
\lim_{\tau \to -\infty} a_{1,j} = 0, &{}& \, \lim_{\tau \to -\infty} a_{2,j} = T(p_j)
\eea
at each puncture $p_i$ and $q_j$.
Then $w \in \CW^{k,p}(\dot \Sigma, M;J;\underline \gamma,\overline \gamma)$.
\end{prop}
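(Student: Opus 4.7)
The plan is to deduce the claim by invoking the exponential decay results summarized in Section 2 to verify that $w$ satisfies each constituent of the definition of $\CW^{k,p}((\dot\Sigma,\del\dot\Sigma),(M,\vec R);J;\underline\gamma,\overline\gamma)$. Recall that membership in this Banach manifold requires: (i) $w$ satisfies the Legendrian boundary condition \eqref{eq:bdy-condition}; (ii) $w$ satisfies the asymptotic condition \eqref{eq:limatinfty} at each puncture; and (iii) in strip-like coordinates around each puncture, the vector field $\exp_{\gamma}^{-1}(w)$ lies in the weighted Sobolev space $W^{k,p}_\delta$ with weight $e^{\delta|\tau|}$, where $\delta > 0$ is chosen inside the spectral gap \eqref{eq:gap}. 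Conditions (i) and the existence of the limits in (ii) are either immediate from the hypotheses or already provided by Propositions \ref{prop:C0-convergence} and \ref{prop:C0-expdecay}; the heart of the matter is (iii).

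First, I would localize to a single puncture, say a positive boundary puncture $p_i$ with strip-like coordinates $(\tau,t) \in [0,\infty)\times[0,1]$, and reduce to showing the claim there. The hypothesis $Q(p_i)=0$ is exactly the input needed to invoke the open-string version of the exponential decay scheme of \cite{oh-wang:CR-map2} as imported in Section 2. Concretely: the $L^2$-exponential decay of $\lambda(\del w/\del\tau)$ and $\lambda(\del w/\del t) - T$ follows from Lemma \ref{lem:exp-decay-lemma} applied to the function $\alpha$, whose $\delbar$ is controlled via the contact instanton equations and whose boundary values are real by the Legendrian condition. This upgrades to the $C^0$-exponential convergence of $w(\tau,\cdot)$ to the Reeb chord $\gamma$ by Propositions \ref{prop:C0-convergence}–\ref{prop:C0-expdecay}, and combined with the higher-regularity elliptic estimates of Theorem \ref{thm:higher-regularity} gives Proposition \ref{prop:dw-expdecay}, i.e.\ the $C^\infty$-exponential decay
\[
|\nabla^\ell dw(\tau,t)| \leq C_\ell e^{-\delta|\tau|}, \quad \ell \geq 0,
\]
for some $\delta > 0$ depending only on $\lambda, J$ and the asymptotic Reeb chord.

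Second, I would translate these pointwise exponential decay estimates into the desired weighted Sobolev membership. Using the exponential chart at $\gamma$ (built from the contact triad connection $\nabla^\pi$ together with the splitting $TM = \xi \oplus \R\langle R_\lambda\rangle$), write $w = \exp_\gamma(Y)$ near $p_i$ with $Y = Y^\pi + \lambda(Y) R_\lambda$ along $\gamma$. The $C^0$-exponential convergence from Proposition \ref{prop:C0-expdecay} gives $|Y| \leq C e^{-\delta\tau}$, and the $C^\infty$-decay of $dw - R_\lambda(w)\,T\,dt$ (which is precisely what \eqref{eq:pidw-expdecay}–\eqref{eq:higher-derivatives} control) translates into $|\nabla^\ell Y| \leq C_\ell e^{-\delta\tau}$. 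Integrating $|\nabla^\ell Y|^p e^{\delta p \tau}$ over $[0,\infty)\times[0,1]$ with $0 < \delta/p$ strictly smaller than the gap \eqref{eq:gap} gives a finite integral, placing $Y$ in $W^{k,p}_\delta$. The boundary conditions in \eqref{eq:tangent-element} are automatic from the Legendrian boundary condition satisfied by $w$ and $\gamma$.

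The main technical point, and the one I expect to be most delicate, is ensuring the decay rate $\delta$ obtained from Proposition \ref{prop:dw-expdecay} is compatible with the weight $\delta$ chosen according to the spectral gap \eqref{eq:gap} used to set up $\CW^{k,p}$. By the nondegeneracy hypothesis on $\lambda$, the asymptotic operator $A_{(\gamma,T)}$ in \eqref{eq:DUpsilon} has no zero eigenvalue, so the gap is strictly positive; one then simply chooses the weight in the off-shell space to be any positive number strictly below this gap, and chooses the decay exponent in the a priori estimates slightly larger. Once this spectral compatibility is arranged at every puncture (simultaneously, by taking the minimum gap), gluing the local expressions via a cutoff to a fixed reference map yields the required membership of $w$ in $\CW^{k,p}$.
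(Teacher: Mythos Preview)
Your proposal is correct and follows exactly the approach the paper indicates: the paper does not give a proof of this proposition but simply states it as ``a consequence of the exponential estimates established in \cite{oh-wang:CR-map1} for the case of vanishing charge,'' and your argument spells out precisely how those exponential decay results (Propositions \ref{prop:C0-convergence}--\ref{prop:dw-expdecay}) yield the required Sobolev membership via exponential charts near each puncture. One minor remark: in the nondegenerate setting the paper's completion $\CW^{k,p}$ is the unweighted one (the weighted version is reserved for the Morse--Bott case, cf.\ Remark \ref{rem:weighted-setting}), so your step (iii) is slightly more than is needed---but since exponential decay implies membership in both the unweighted and suitably weighted spaces, your argument goes through unchanged.
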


Now we are ready to define the moduli space of contact instantons with prescribed
asymptotic condition.
\begin{defn}\label{defn:tilde-modulispace} Consider the zero set of the section $\Upsilon$
\be\label{eq:defn-tildeMM}
\widetilde \CM((\dot \Sigma,\del \dot \Sigma),(M,\vec R);J;\underline \gamma,\overline \gamma) =  \Upsilon^{-1}(0)
\ee
in the Banach manifold $\CW^{k,p}((\dot \Sigma,\del \dot \Sigma),(M,\vec R);J;\underline \gamma,\overline \gamma)$, and
\be\label{eq:defn-MM}
\widetilde \CM((\dot \Sigma,\del \dot \Sigma),(M,\vec R);J;\underline \gamma,\overline \gamma)
= \widetilde \CM((\dot \Sigma,\del \dot \Sigma),(M,\vec R);J;\underline \gamma,\overline \gamma)/\sim
\ee
to be the set of isomorphism classes of contact instantons $w$.
\end{defn}

\subsection{Linearized operator and its ellipticity}
\label{subsec:fredholm}

Let $(\dot \Sigma, j)$ be a punctured Riemann surface, the set of whose punctures
may be empty, i.e., $\dot \Sigma = \Sigma$ is either a closed or a punctured
Riemann surface. In this subsection and the next, we lay out the precise relevant off-shell framework
of functional analysis, and  establish the Fredholm property of the linearization map.

Then we have the following explicit
formula thereof.

\begin{thm}[Theorem 10.1 \cite{oh:contacton}; See also Theorem 1.15
\cite{oh-savelyev}] \label{thm:linearization} In terms of the decomposition
$d\pi = d^\pi w + w^*\lambda\, R_\lambda$
and $Y = Y^\pi + \lambda(Y) R_\lambda$, we have
\bea
D\Upsilon_1(w)(Y) & = & \delbar^{\nabla^\pi}Y^\pi + B^{(0,1)}(Y^\pi) +  T^{\pi,(0,1)}_{dw}(Y^\pi)\nonumber\\
&{}& \quad + \frac{1}{2}\lambda(Y) (\CL_{R_\lambda}J)J(\del^\pi w)
\label{eq:DUpsilon1}\\
D\Upsilon_2(w)(Y) & = &  - \Delta (\lambda(Y))\, dA + d((Y^\pi \rfloor d\lambda) \circ j)
\label{eq:DUpsilon2}
\eea
where $B^{(0,1)}$ and $T_{dw}^{\pi,(0,1)}$ are the $(0,1)$-components of $B$ and
$T_{dw}^\pi$, where $B, \, T_{dw}^\pi: \Omega^0(w^*TM) \to \Omega^1(w^*\xi)$ are
 zero-order differential operators given by
\be\label{eq:B}
B(Y) =
- \frac{1}{2}  w^*\lambda \otimes \left((\CL_{R_\lambda}J)J Y\right)
\ee
and
\be\label{eq:torsion-dw}
T_{dw}^\pi(Y) = \pi T(Y,dw)
\ee
respectively.
\end{thm}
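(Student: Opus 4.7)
The plan is to compute each of $D\Upsilon_1(w)$ and $D\Upsilon_2(w)$ directly by differentiating a smooth one-parameter family $w_s$ with $\frac{d}{ds}\big|_{s=0} w_s = Y$, and then to translate the resulting coordinate/covariant expressions into the language of the contact triad connection $\nabla$ and the contact Hermitian connection $\nabla^\pi$. I decompose $Y=Y^\pi+\lambda(Y)R_\lambda$ throughout, extend $Y$ to a vector field $\widetilde Y$ in a tubular neighborhood of $w(\dot\Sigma)$, and regard $w_s=\phi_s^{\widetilde Y}\circ w$. Since the formula is pointwise in nature and involves only local quantities, the presence of Legendrian boundary components or the global structure of the punctured surface is irrelevant; the closed-string computation of Theorem 10.1 of \cite{oh:contacton} therefore applies verbatim.

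For $D\Upsilon_2$ I would use Cartan's magic formula:
\[
\tfrac{d}{ds}\big|_{s=0} w_s^*\lambda \;=\; w^*(\CL_{\widetilde Y}\lambda) \;=\; d(\lambda(Y)) \;+\; w^*(\iota_{\widetilde Y}d\lambda).
\]
Because $R_\lambda \rfloor d\lambda = 0$, the second term only sees $Y^\pi$, giving $w^*(Y^\pi \rfloor d\lambda)$. Composing with $j$ and applying $d$ yields
\[
D\Upsilon_2(w)(Y) \;=\; d\big(d(\lambda(Y))\circ j\big) \;+\; d\big((Y^\pi\rfloor d\lambda)\circ j\big),
\]
and the first summand equals $-\Delta(\lambda(Y))\,dA$ by the standard Riemann surface identity $d(df\circ j)=-\Delta f\, dA$. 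This recovers \eqref{eq:DUpsilon2}.

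For $D\Upsilon_1$, which is the real analytic content of the theorem, I would differentiate $\delbar^\pi w=\tfrac12(\pi\, dw + J\pi\, dw\circ j)$ using $\nabla$. There are four contributions to track: (i) the leading covariant $\delbar$ term produced by $\nabla^\pi_{\partial_s}(dw^\pi)$, which upon taking the $(0,1)$-projection becomes $\delbar^{\nabla^\pi} Y^\pi$ since $\nabla^\pi$ is $J$-Hermitian on $\xi$; (ii) a torsion term $T^{\pi,(0,1)}_{dw}(Y^\pi)$ arising because $\nabla$ is \emph{not} torsion-free — one uses the identity $\nabla_X W - \nabla_W X = [X,W] + T(X,W)$ to commute $\partial_s$ with the worldsheet derivatives; (iii) the zeroth-order term $B^{(0,1)}(Y^\pi)$ reflecting the fact that $\nabla^\pi J$ does not vanish but is controlled by $\CL_{R_\lambda}J$, as computed in \cite{oh-wang:connection,oh-wang:CR-map1}; and (iv) the term $\tfrac12\lambda(Y)(\CL_{R_\lambda}J)J(\partial^\pi w)$ coming from the Reeb component $\lambda(Y)R_\lambda$ of $Y$, which enters precisely through the Lie derivative of $J$ along $R_\lambda$ since the Reeb flow does not preserve $J|_\xi$ except in the trivial case. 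Assembling these four pieces gives \eqref{eq:DUpsilon1}.

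The main obstacle is the bookkeeping in step (iii)–(iv): one must invoke the specific structural identities of the contact triad connection (its torsion formula and the precise expression for $\nabla^\pi J$ summarized in the Appendix \ref{sec:connection}) to separate the $Y^\pi$ contribution (which produces $B^{(0,1)}(Y^\pi)$) from the $\lambda(Y)R_\lambda$ contribution (which produces the last term of \eqref{eq:DUpsilon1}), without mixing them with the torsion term. Once this is done cleanly, both formulas follow from the closed-string calculation in \cite{oh:contacton} without modification, since the boundary condition $Y^\pi(\partial\dot\Sigma)\subset T\vec R$, $\lambda(Y)|_{\partial\dot\Sigma}=0$ only affects the function spaces on which $D\Upsilon(w)$ acts, not its pointwise symbolic expression.
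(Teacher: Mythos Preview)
The paper does not give its own proof of this theorem; it is quoted from \cite{oh:contacton} (Theorem 10.1) and \cite{oh-savelyev} (Theorem 1.15) without reproof, precisely because the linearization formula is a pointwise covariant computation unaffected by the presence of Legendrian boundary conditions. Your proposal correctly identifies this and sketches exactly the computation carried out in \cite{oh:contacton}: Cartan's formula for $D\Upsilon_2$, and the covariant differentiation of $\delbar^\pi w$ using the contact triad connection for $D\Upsilon_1$.

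One small inaccuracy in your item (iii): the contact Hermitian connection $\nabla^\pi$ \emph{does} preserve $J$ on $\xi$ (Axiom (4) of Theorem \ref{thm:connection}), so the term $B^{(0,1)}(Y^\pi)$ is not a ``$\nabla^\pi J \neq 0$'' effect. Rather, it arises when one covariantly differentiates the decomposition $dw = d^\pi w + w^*\lambda\otimes R_\lambda$: the term $-w^*\lambda \otimes \nabla_{Y^\pi} R_\lambda$ appears, and Corollary \ref{cor:connection} gives $\nabla_{Y^\pi} R_\lambda = \tfrac12(\CL_{R_\lambda}J)JY^\pi$, which lands in $\xi$ and survives the projection $\pi$. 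This is only a bookkeeping point --- once you invoke the correct structural identity the term $B$ appears exactly as stated --- but it matters for separating (iii) from (iv) cleanly, which you yourself flag as the main obstacle.
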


\subsection{Fredholm theory on punctured bordered Riemann surfaces}

By the (local) ellipticity shown in the previous subsection, it remains to examine the
Fredholm property of the linearized operator $D\Upsilon(w)$. For this purpose,
we need to examine the asymptotic behavior of the operator near punctures in strip-like
coordinates.

We first decompose the section $Y \in w^*TM$ into
$$
Y = Y^\pi + \lambda(Y) R_\lambda.
$$
Then we have
\bea
D\Upsilon_1^1(w)(Y^\pi) & = & \delbar^{\nabla^\pi}Y^\pi + B^{(0,1)}(Y^\pi) +  T^{\pi,(0,1)}_{dw}(Y^\pi), \label{eq:D}\\
D\Upsilon_2^1(w)(Y^\pi) & = & d((Y^\pi \rfloor d\lambda) \circ j),\label{eq:DUpsilon21}\\
D\Upsilon_1^2(w)(\lambda(Y) R_\lambda) & = & \frac12 \lambda(Y) \CL_{R_\lambda}J J \del^\pi w,
\label{eq:DUpsilon12}\\
D\Upsilon_2^2(w)(\lambda(Y) R_\lambda) & = & - \Delta(\lambda(Y))\, dA. \label{eq:DUpsilon22}
\eea
Noting that $Y^\pi$ and $\lambda(Y)$ are independent of each other, we write
$$
Y = Y^\pi + f R_\lambda, \quad f: = \lambda(Y)
$$
where $f: \dot \Sigma \to \R$ is an arbitrary function satisfying the boundary condition
$$
Y^\pi(\del \dot \Sigma) \subset T\vec R, \quad f(\del \dot \Sigma) = 0
$$
by the Legendrian boundary condition satisfied by $Y$. The following is obvious from
the expression of the $D\Upsilon_i^j(w)$.
\begin{lem} Suppose that $w$ is a solution to \eqref{eq:contacton-Legendrian-bdy}.
The operators $D\Upsilon_i^j(w)$ have the following continuous extensions:
\beastar
D\Upsilon_1^1(w)(Y^\pi)& : & \Omega^0_{k,p}(w^*\xi,(\del w)^*T\vec R) \to \Omega^{(0,1)}_{k-1,p}(w^*\xi) \\
D\Upsilon_2^1(w)(Y^\pi) &: & \Omega^0_{k,p}(w^*\xi,(\del w)^*T\vec R)
 \to
\Omega^2_{k-1,p}(\dot \Sigma) \hookrightarrow \Omega^2_{k-2,p}(\dot \Sigma) \\
D\Upsilon_1^2(w)((\cdot) R_\lambda) & : & \Omega^0_{k,p}(\dot \Sigma,\del \dot \Sigma)
 \to
\Omega^2_{k,p}(\dot \Sigma) \hookrightarrow \Omega^2_{k-2,p}(\dot \Sigma) \\
D\Upsilon_2^2(w)((\cdot) R_\lambda) & : & \Omega^0_{k,p}(\dot \Sigma,\del \dot \Sigma) \to \Omega^2_{k-2,p}(\Sigma).
\eeastar
\end{lem}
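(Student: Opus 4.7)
The plan is to reduce the lemma to a routine derivative count combined with standard Sobolev mapping properties for linear differential operators between bundles on $\dot \Sigma$. The key inputs are already in place earlier in the paper: since $w$ solves \eqref{eq:contacton-Legendrian-bdy}, Theorem \ref{thm:higher-regularity} yields interior and boundary $C^{k,\alpha}$ control of $dw$, while Proposition \ref{prop:dw-expdecay} provides $C^\infty$-exponential decay of $dw$ on each strip-like end. Together these guarantee that every tensor field appearing as a coefficient in the four formulas, namely $w^*\lambda$, $d^\pi w$, the torsion pullback $T_{dw}^\pi$, the operator $B = -\tfrac{1}{2} w^*\lambda \otimes ((\CL_{R_\lambda}J)J\,\cdot)$, and $(\CL_{R_\lambda}J)J\del^\pi w$, is smooth and uniformly bounded together with all of its covariant derivatives. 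Hence pointwise contraction by any such tensor is a bounded endomorphism of $\Omega^s_{k,p}$ for $s \le k$ by the standard Sobolev product/contraction estimate.

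With this coefficient control in hand, I would treat the four pieces in turn. For $D\Upsilon_1^1$, the operator $\delbar^{\nabla^\pi}$ is a first-order covariant Cauchy--Riemann operator on $w^*\xi$, hence bounded $\Omega^0_{k,p}(w^*\xi,(\del w)^*T\vec R) \to \Omega^{(0,1)}_{k-1,p}(w^*\xi)$ by the standard theory of Sobolev sections of vector bundles over a bordered surface; the remaining zeroth-order terms $B^{(0,1)}$ and $T^{\pi,(0,1)}_{dw}$ already map into $\Omega^{(0,1)}_{k,p}(w^*\xi)$, which embeds into $\Omega^{(0,1)}_{k-1,p}(w^*\xi)$. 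For $D\Upsilon_2^1$, the assignment $Y^\pi \mapsto (Y^\pi \rfloor d\lambda)\circ j$ is a zeroth-order contraction composed with a smooth bundle map, so it lies in $\Omega^1_{k,p}(\dot \Sigma)$, and the outer $d$ loses exactly one derivative, yielding $\Omega^2_{k-1,p}(\dot \Sigma) \hookrightarrow \Omega^2_{k-2,p}(\dot \Sigma)$. For $D\Upsilon_1^2$, the expression is pointwise multiplication of $\lambda(Y)$ by the smooth bounded field $\tfrac{1}{2}(\CL_{R_\lambda}J)J\del^\pi w$, a zero-order operator landing in the stated regularity class. Finally $D\Upsilon_2^2 = -\Delta(\cdot)\,dA$ is second order, so the standard elliptic mapping property $W^{k,p} \to W^{k-2,p}$ of $\Delta$ delivers the claim.

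The Legendrian boundary conditions $Y^\pi(\del \dot \Sigma) \subset T\vec R$ and $\lambda(Y)|_{\del \dot \Sigma} = 0$ cut out a closed linear subspace of the relevant Sobolev completion inside which smooth sections are dense; thus continuity on smooth data, checked above by direct inspection, extends by density. The only genuine technicality is a bookkeeping one on the noncompact surface $\dot \Sigma$ with strip-like ends: the uniform estimates on the coefficient tensors must be obtained in the (possibly weighted) Sobolev norms implicit in Definition \ref{defn:Banach-manifold}, for the weight $\delta$ chosen below the spectral gap \eqref{eq:gap}. Proposition \ref{prop:dw-expdecay} is precisely what allows one to absorb the factor $e^{\delta |\tau|}$ and still retain the required pointwise control of the coefficients; once this is granted, the argument reduces to the familiar compact-support case, so no new analytic input is needed.
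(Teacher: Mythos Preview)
Your proposal is correct and follows essentially the same reasoning the paper itself takes: the paper states only that the lemma ``is obvious from the expression of the $D\Upsilon_i^j(w)$,'' and your argument is precisely the derivative count implicit in that remark, carried out with appropriate care about coefficient regularity via Theorem~\ref{thm:higher-regularity} and Proposition~\ref{prop:dw-expdecay}. Your discussion of the weighted-norm bookkeeping on the strip-like ends is a welcome elaboration of a point the paper leaves tacit.
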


We regard the domains of  $D\Upsilon_i^2$ for $i=1, \,2$
as $C^\infty(\dot \Sigma, \del \dot \Sigma)$ using the isomorphism
$$
C^\infty(\dot \Sigma, \del \dot \Sigma) \cong \Omega^0(\dot \Sigma, \del \dot \Sigma) \otimes R_\lambda.
$$
The following Fredholm property of the linearized operator is proved in
\cite{oh:contacton-transversality}.

\begin{prop}\label{prop:closed-fredholm}
Suppose that $w$ is a solution to \eqref{eq:contacton-Legendrian-bdy}.
Consider the completion of $D\Upsilon(w)$,
which we still denote by $D\Upsilon(w)$, as a bounded linear map
from $\Omega^0_{k,p}(w^*TM,(\del w)^*T\vec R)$ to
$\Omega^{(0,1)}(w^*\xi)\oplus \Omega^2(\Sigma)$
for $k \geq 2$ and $p \geq 2$. Then
\begin{enumerate}
\item The off-diagonal terms of $D\Upsilon(w)$ are relatively compact operators
against the diagonal operator.
\item
The operator $D\Upsilon(w)$ is homotopic to the operator
\be\label{eq:diagonal}
\left(\begin{matrix}\delbar^{\nabla^\pi} + T_{dw}^{\pi,(0,1)}+ B^{(0,1)} & 0 \\
0 & -\Delta(\lambda(\cdot)) \,dA
\end{matrix}
\right)
\ee
via the homotopy
\be\label{eq:s-homotopy}
s \in [0,1] \mapsto \left(\begin{matrix}\delbar^{\nabla^\pi} + T_{dw}^{\pi,(0,1)} + B^{(0,1)}
& \frac{s}{2} \lambda(\cdot) (\CL_{R_\lambda}J)J (\pi dw)^{(1,0)} \\
s\, d\left((\cdot) \rfloor d\lambda) \circ j\right) & -\Delta(\lambda(\cdot)) \,dA
\end{matrix}
\right) =: L_s
\ee
which is a continuous family of Fredholm operators.
\item And the principal symbol
$$
\sigma(z,\eta): w^*TM|_z \to w^*\xi|_z \oplus \Lambda^2(T_z\Sigma), \quad 0 \neq \eta \in T^*_z\Sigma
$$
of \eqref{eq:diagonal} is given by the matrix
\beastar
\left(\begin{matrix} \frac{\eta + i\eta \circ j}{2} Id  & 0 \\
0 & |\eta|^2
\end{matrix}\right).
\eeastar
\end{enumerate}
\end{prop}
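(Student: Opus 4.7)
The plan is to handle the three claims in order (1), (2), (3), with part (1) doing most of the work. The key observation for part (1) is that the four block entries of $D\Upsilon(w)$ have different differential orders. The diagonal blocks are elliptic in their respective variables: $D\Upsilon_1^1$ is a first-order $\delbar$-type operator on $Y^\pi$ landing in $\Omega^{(0,1)}_{k-1,p}(w^*\xi)$, while $D\Upsilon_2^2 = -\Delta(\lambda(\cdot))\,dA$ is second-order on $f := \lambda(Y)$ landing in $\Omega^2_{k-2,p}(\dot\Sigma)$. By contrast, $D\Upsilon_2^1(w)(Y^\pi) = d((Y^\pi \rfloor d\lambda)\circ j)$ is only first-order in $Y^\pi$ and so lands naturally in $\Omega^2_{k-1,p}(\dot\Sigma)$, one derivative smoother than its target $\Omega^2_{k-2,p}$; and $D\Upsilon_1^2(w)(fR_\lambda) = \frac12 f(\CL_{R_\lambda}J)J(\del^\pi w)$ is zero-order in $f$, landing in $\Omega^{(0,1)}_{k,p}(w^*\xi)$, one derivative smoother than its target. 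Rellich--Kondrachov then gives relative compactness of both off-diagonal operators against the diagonal pair.

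For part (2), the family $s \mapsto L_s$ is affine, with $L_1 = D\Upsilon(w)$ and $L_0$ the diagonal operator, so by part (1) each $L_s$ is a compact perturbation of $L_0$ and continuity in operator norm is automatic. Stability of Fredholmness under compact perturbations then reduces the claim to Fredholmness of $L_0$. The top-left block $\delbar^{\nabla^\pi} + T_{dw}^{\pi,(0,1)} + B^{(0,1)}$ is a Cauchy--Riemann operator with totally real boundary condition (since $T\vec R \subset \xi$ is totally real with respect to $J$), whose asymptotic operator $A_{(T_i,\gamma_i)}$ from \eqref{eq:DUpsilon} has spectrum avoiding zero by nondegeneracy of the asymptotic chords; Lockhart--McOwen theory in exponentially weighted Sobolev spaces with weight below the spectral gap \eqref{eq:gap} yields Fredholmness. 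The bottom-right block is the scalar Laplacian with Dirichlet boundary (forced by $\lambda(Y)|_{\del \dot\Sigma}=0$) and zero asymptotic limit, again Fredholm in the same weighted setting.

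Part (3) is a direct symbol computation: the terms $T_{dw}^{\pi,(0,1)}$ and $B^{(0,1)}$ are zero-order and do not contribute; the principal symbol of $\delbar^{\nabla^\pi}$ at $(z,\eta)$ is the standard Cauchy--Riemann symbol $\frac{\eta + i\eta\circ j}{2}\,Id$, and that of $-\Delta$ is $|\eta|^2$, assembling into the stated block-diagonal matrix.

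The main obstacle is the uniform compactness claim in part (1) up to the cylindrical ends of $\dot\Sigma$, where Rellich--Kondrachov on compact subsets alone is insufficient. This is handled by combining the exponential decay of $dw$ at the punctures (Proposition \ref{prop:dw-expdecay}) with the choice of exponential weight $e^{\delta|\tau|}$ for $\delta$ below the spectral gap, so that the coefficients of the off-diagonal blocks, which contain factors like $d\lambda$, $\del^\pi w$ and $(\CL_{R_\lambda}J)J$, decay exponentially and the weighted Sobolev embeddings remain compact on the ends. Once this is secured, everything else is standard machinery.
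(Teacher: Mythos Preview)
Your approach is correct and is the standard one (the paper itself refers the proof to \cite{oh:contacton-transversality} rather than proving it in place, so there is no detailed argument here to compare against). The order-counting for the four blocks, the Rellich--Kondrachov argument on compact pieces, and the reduction of part (2) to Fredholmness of the diagonal via compact perturbation are all exactly right; part (3) is a routine symbol computation as you say.

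One point deserves sharpening. In your last paragraph you write that ``the coefficients of the off-diagonal blocks, which contain factors like $d\lambda$, $\del^\pi w$ and $(\CL_{R_\lambda}J)J$, decay exponentially.'' Taken literally this is not true: $d\lambda$ and $(\CL_{R_\lambda}J)J$ are fixed tensors on $M$ and do not decay. For $D\Upsilon_1^2$ the decay comes honestly from the factor $\del^\pi w$, which goes to zero exponentially by \eqref{eq:pidw-expdecay}. For $D\Upsilon_2^1$, however, the relevant object is the $1$-form $(Y^\pi \rfloor d\lambda)$ on $\dot\Sigma$, whose value on $v \in T_z\dot\Sigma$ is $d\lambda(Y^\pi, dw(v))$. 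The reason this decays at the ends is not that $d\lambda$ is small, but that $dw \to T\,R_\lambda\,dt$ in the strip-like coordinates and $R_\lambda \rfloor d\lambda = 0$: hence $d\lambda(Y^\pi, \partial_\tau w) \to 0$ and $d\lambda(Y^\pi, \partial_t w) \to d\lambda(Y^\pi, T R_\lambda) = 0$, both exponentially. With this clarification your tail argument goes through and the off-diagonal blocks are genuinely compact on the full punctured surface, not just on compact subsets.
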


Now we wrap-up the discussion of the Fredholm property of
the linearization map
\be\label{eq:matrixDUpsilon}
D\Upsilon_{(\lambda,T)}(w): \Omega^0_{k,p}(w^*TM, (\del w)^*T\vec R;J;\underline \gamma,\overline \gamma) \to
\Omega^{(0,1)}_{k-1,p}(w^*\xi) \oplus \Omega^2_{k-2,p}(\dot \Sigma).
\ee
Then by the continuous invariance of the Fredholm index, we obtain
\be\label{eq:indexDXiw}
\operatorname{Index} D\Upsilon_{(\lambda,T)}(w) =
\operatorname{Index} \left(\delbar^{\nabla^\pi} + T^{\pi,(0,1)}_{dw}  + B^{(0,1)}\right) + \operatorname{Index}(-\Delta).
\ee
The computation of index is given in \cite{oh:contacton} for the closed string case
and will be given for the current open string case elsewhere.

\begin{rem}\label{rem:weighted-setting}
Suppose $\delta > 0$ satisfies the inequality
$$
0\leq \delta < \min\left\{\frac{\text{\rm gap}(\vec \gamma)}{p}, \frac{2\pi}{p}\right\}
$$
where $\text{\rm gap}(\vec \gamma)$ is the spectral gap, given in \eqref{eq:gap},
of the asymptotic operators $A_{(T_j,z_j)}$ or $A_{(T_i,z_i)}$
associated to the corresponding punctures. Then the above Fredholm property
also holds in the weighted Sobolev space setting with exponential weight $e^{\delta|\tau|}$.
\end{rem}

\subsection{Generic transversality of (relative) contact instantons}
\label{subsec:moduli-space}

In this section, we summarize two main generic transversality results for the
contact instanton moduli spaces
needed for the study of the moduli problem of contact instanton maps, whose proofs
are not very different from that of the closed string case from \cite{oh:contacton} so
will be brief in its proof.  One transversality result is for the perturbation of
contact forms and the other is for the perturbation of boundary Legendrian submanifolds.

Let a contact manifold $(M,\xi)$ be given.
We consider the contact forms $\lambda$ of $(M,\xi)$ such that
all Reeb chords are nondegenerate. The set of such contact forms is
residual in $\CC(M,\xi)$. (See \cite[Appendix B]{oh:contacton-transversality} for the proof.)

Now we involve the set $\CJ(M,\lambda)$ given in Definition \ref{defn:adapted-J}.
We study the linearization of the map $\Upsilon^{\text{\rm univ}}$ which is the map $\Upsilon$ augmented by
the argument $J \in \CJ(M,\lambda)$. More precisely, we define the universal section
$$
\Upsilon^{\text{\rm univ}}: \CF \times  \CJ(M,\lambda) \to \CH^{(0,1)}(M,\lambda)
$$
given by
$$
\Upsilon^{\text{\rm univ}}(j, w, J) = \left(\delbar_J^\pi w, d(w^*\lambda \circ j)\right)
$$
and study its linearization at each $(j,w,J) \in (\Upsilon^{\text{\rm univ}})^{-1}(0)$.
In the discussion below, we will fix the complex
structure $j$ on $\Sigma$, and so suppress $j$ from the argument of $\Upsilon^{\text{\rm univ}}$.

Following the procedure of considering the set $\CJ^\ell(M,\lambda)$ of $\lambda$-adapted
$C^\ell$ CR-almost complex structures $J$ inductively as $\ell$ grows (see \cite{mcduff-salamon-symplectic}, \cite[Section 10.4]{oh:book1}
for the detailed explanation),
we denote the zero set $(\Upsilon^{\text{\rm univ}})^{-1}(0)$ by
$$
\MM(M,\lambda,\vec R;\overline \gamma, \underline \gamma) = \left\{ (w,J)
\in \CW^{k,p}(\dot \Sigma, M;\overline \gamma, \underline \gamma)) \times \JJ^\ell(M,\lambda)
\, \Big|\, \Upsilon^{\text{\rm univ}}(w, J) = 0 \right\}
$$
which we call the universal moduli space. Denote by
$$
\Pi_2: \CW^{k,p}(\dot \Sigma, M, \vec R;\overline \gamma, \underline \gamma) \times \JJ^\ell(M,\lambda) \to
\JJ^\ell(M,\lambda)
$$
the projection. Then we have
\be\label{eq:MMK}
\MM(J;\overline \gamma, \underline \gamma)= \MM(M,\lambda, \vec R; J;\overline \gamma, \underline \gamma)
 = \Pi_2^{-1}(J) \cap \MM(M,\lambda,  \vec R;\overline \gamma, \underline \gamma).
\ee
We state the following standard statement that often occurs in this kind of generic
transversality statement via the Sard-Smale theorem.

\begin{thm}\label{thm:trans} Let $0 < \ell < k -\frac{2}{p}$.
Consider the moduli space $\MM(M,\lambda;\overline \gamma, \underline \gamma)$. Then
\begin{enumerate}
\item $\MM(M,\lambda, \vec R;\overline \gamma, \underline \gamma)$ is
an infinite dimensional $C^\ell$ Banach manifold.
\item The projection
$$
\Pi_2|_{(\Upsilon^{\text{\rm univ}})^{-1}(0)} : (\Upsilon^{\text{\rm univ}})^{-1}(0) \to \JJ^\ell(M,\lambda)
$$
is a Fredholm map and its index is the same as that of $D\Upsilon(w)$
for a (and so any) $w \in  \MM(M,\lambda, \vec R; J;\overline \gamma, \underline \gamma)$.
\end{enumerate}
\end{thm}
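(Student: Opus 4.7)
The plan is a standard Sard--Smale argument: first I show that the universal linearization $D\Upsilon^{\text{\rm univ}}(w,J)$ is surjective at every zero, then apply the implicit function theorem to obtain the Banach manifold structure asserted in (1), and finally identify the kernel and cokernel of $D\Pi_2$ with those of $D\Upsilon(w)$ to deduce the Fredholm statement in (2). Since $\Upsilon_2(w)=d(w^*\lambda \circ j)$ depends only on $\lambda$, the $J$-variation contributes only to the first component and a direct calculation gives
$$
D_J \Upsilon_1(w,J)(B) = \tfrac{1}{2}\, B \circ d^\pi w \circ j, \quad B \in T_J \JJ^\ell(M,\lambda),
$$
where $B$ is constrained by $B(\xi)\subset \xi$, $BJ + JB = 0$ on $\xi$, and $B(R_\lambda)=0$. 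The full universal linearization is $D\Upsilon^{\text{\rm univ}}(w,J)(Y,B) = D\Upsilon(w)(Y) + (D_J\Upsilon_1(w,J)(B),0)$, and we already know from Proposition \ref{prop:closed-fredholm} that $D\Upsilon(w)$ is Fredholm, so only the image of the $J$-variations needs to be analyzed.

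Surjectivity is established by a two-stage duality argument. A cokernel element is a pair $(\eta,\omega) \in \Omega^{(0,1)}_{k-1,q}(w^*\xi) \oplus \Omega^2_{k-2,q}(\dot\Sigma)$, with $1/p+1/q=1$, annihilating the image in $L^2$. I first eliminate $\omega$ using only $w$-variations: the second slot of $D\Upsilon(w)$ is dominated by the second-order term $-\Delta(\lambda(Y))\,dA$ acting on functions with Dirichlet condition $\lambda(Y)|_{\del \dot\Sigma}=0$, which together with the exact piece $d((Y^\pi \rfloor d\lambda)\circ j)$ surjects onto $\Omega^2_{k-2,p}(\dot\Sigma)$ by standard elliptic theory for the Laplacian. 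Having reduced to $\omega=0$, I use the $J$-perturbation to kill $\eta$: at any interior point $z_0 \in \Int \dot\Sigma$ where $d^\pi w(z_0)\neq 0$, one constructs a bump-supported $B$ for which the pairing $\langle \eta, D_J\Upsilon_1(w,J)(B)\rangle_{L^2}$ is nonzero unless $\eta$ vanishes on a neighborhood of $z_0$. A unique continuation argument applied to the homogeneous adjoint equation of the diagonal Cauchy--Riemann operator $\delbar^{\nabla^\pi} + T_{dw}^{\pi,(0,1)} + B^{(0,1)}$ then forces $\eta \equiv 0$ on $\dot\Sigma$.

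With surjectivity in hand, the implicit function theorem produces the $C^\ell$ Banach manifold structure of (1). For (2), the tangent space at $(w,J)$ equals $\ker D\Upsilon^{\text{\rm univ}}(w,J)$, and a short diagram-chase (using surjectivity of the full linearization) shows that the restricted projection $D\Pi_2$ has kernel isomorphic to $\ker D\Upsilon(w)$ and cokernel isomorphic to $\coker D\Upsilon(w)$. Hence $\Pi_2$ is Fredholm of index $\operatorname{Index} D\Upsilon(w)$, independent of the particular $w$ in a connected component by continuous invariance of the Fredholm index together with the formula \eqref{eq:indexDXiw}. Passing from finite $C^\ell$ to $C^\infty$ regularity is done by the Taubes intersection trick as in \cite{mcduff-salamon-symplectic}, producing a residual set of smooth $J \in \JJ(M,\lambda)$ with the desired transversality for each fibre $\MM(J;\overline\gamma,\underline\gamma)$.

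The main obstacle is the combination of $\pi$-somewhere-injectivity with unique continuation in the presence of the Legendrian boundary and the mixed-order symbol of $D\Upsilon(w)$. The density of points $z_0 \in \Int \dot \Sigma$ with $d^\pi w(z_0) \neq 0$ must be extracted from the nondegeneracy of the asymptotic Reeb chords in case (1) and the exponential $C^\infty$-convergence of $w$ to non-constant chords at the punctures established in Section 2, which prevents the $\pi$-critical set from filling all of $\dot \Sigma$. This follows closely the pattern in \cite{oh:contacton} and \cite{oh:contacton-transversality}, but adapting the standard somewhere-injective machinery both to the asymmetric roles of $\Upsilon_1$ and $\Upsilon_2$ (only the first factor sees $J$-perturbations) and to the Legendrian boundary condition is the delicate analytic point of the proof.
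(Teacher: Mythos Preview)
Your proposal follows precisely the standard Sard--Smale scheme that the paper invokes (the paper does not give a proof here, declaring the statement ``standard'' and deferring to \cite{oh:contacton}, \cite{oh:contacton-transversality}, \cite{mcduff-salamon-symplectic}), so the approaches coincide.

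One imprecision worth flagging: in your surjectivity argument you write that at \emph{any} interior point $z_0$ with $d^\pi w(z_0)\neq 0$ one can build a bump-supported $B$ detecting $\eta(z_0)$. This is not quite right: because $B$ lives on $M$, the pairing picks up contributions from every preimage $w^{-1}(w(z_0))$, so you need $z_0$ to be a $\pi$-\emph{somewhere-injective} point (i.e.\ $w^{-1}(w(z_0))=\{z_0\}$ and $d^\pi w(z_0)\neq 0$), not merely a $\pi$-immersed point. You correctly name $\pi$-somewhere-injectivity as the main obstacle in your final paragraph, but the body of the argument should be phrased in those terms from the start; the density of such points (and the accompanying restriction to simple curves, or the use of multi-valued perturbations otherwise) is exactly what the references \cite{oh:contacton,oh:contacton-transversality} supply.
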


An immediate corollary of Sard-Smale theorem is that for a generic choice of $J$
$$
\Pi_2^{-1}(J) \cap (\Upsilon^{\text{\rm univ}})^{-1}(0)= \MM(J;\overline \gamma, \underline \gamma)
$$
is a smooth manifold.

\section{Energy of contact instantons}
\label{sec:energy}

The following is a part of energy, called the \emph{$\pi$-energy} in \cite{oh:entanglement1}
(or $\omega$-energy in \cite{behwz})
which is the horizontal part of total energy.

\begin{defn}\label{defn:pi-energy}
For a smooth map $\dot \Sigma \to M$, we define the $\pi$-energy of $w$ by
\be\label{eq:Epi}
E^\pi(j,w) = \frac{1}{2} \int_{\dot \Sigma} |d^\pi w|^2.
\ee
\end{defn}

The Riemann surfaces that we concern about in relation to the gluing theory
are of the following three types:
\begin{situ}[Charge vanishing]\label{situ:charge-vanish}
\begin{enumerate}
\item
First, we mention that the \emph{starting} Riemann surface will be an open Riemann surface of genus zero
with a finite number of boundary punctures, and mostly
$$
\dot \Sigma \cong \R \times [0,1]
$$
together with an contact instanton with Legendrian pair boundary condition $(R_0,R_1)$, or
$$
\dot \Sigma \cong D^2
$$
with \emph{moving Legendrian boundary condition}.
\item $\C$ which will appear in the bubbling analysis at an interior point of $\dot \Sigma$,
\item $\H = \{ z \in \C \mid \text{\rm Im } z \geq 0\}$ which will appear in the bubbling analysis at an boundary
point of $\dot \Sigma$.
\end{enumerate}
\end{situ}
An upshot is that \emph{the asymptotic charges vanish in all these three cases.} (See \cite{oh-wang:CR-map1} and
\cite{oh:contacton-Legendrian-bdy} and \cite{oh:entanglement1} for the discussion.)

This being said, we introduce the following class of test functions
following the modification made in \cite{behwz} of Hofer's original definition \cite{hofer:invent}.

\begin{defn}\label{defn:CC} We define
\be
\CC = \left\{\varphi: \R \to \R_{\geq 0} \, \Big| \, \supp \varphi \, \text{is compact}, \, \int_\R \varphi = 1\right\}
\ee
\end{defn}

Then on the given cylindrical neighborhood $D (p) \setminus \{p\}$, we can write
$$
w^*\lambda \circ j = df
$$
for some function $f: [0,\infty) \times S^1 \to \R$.

\begin{defn}[Contact instanton potential] We call the above function $f$
the \emph{contact instanton potential} of the contact instanton charge form $w^*\lambda \circ j$.
\end{defn}

We denote by $\psi$ the function determined by
\be\label{eq:psi}
\psi' = \varphi, \quad \psi(-\infty) = 0, \, \psi(\infty) = 1.
\ee
\begin{defn}\label{defn:CC-energy} Let $w$ satisfy $d(w^*\lambda \circ j) = 0$. Then we define
$$
E_{\CC}(j,w;p) = \sup_{\varphi \in \CC} \int_{D (p) \setminus \{p\}} df\circ j \wedge d(\psi(f))
= \sup_{\varphi \in \CC} \int_{D (p) \setminus \{p\}}  (- w^*\lambda ) \wedge d(\psi(f)).
$$
\end{defn}

We note that
$$
df \circ j \wedge d(\psi(f)) = \psi'(f) df\circ j \wedge df = \varphi(f) df\circ j  \wedge df \geq 0
$$
since
$$
df\circ j  \wedge df = |df|^2\, d\tau \wedge dt.
$$
Therefore we can rewrite $E_{\CC}(j,w)$ into
$$
E_{\CC}(j,w;p) = \sup_{\varphi \in \CC} \int_{D (p) \setminus \{p\}} \varphi(f) df \circ j \wedge df.
$$
The following proposition is proved in \cite{oh:entanglement1} which shows that the definition of $E_{\CC}(j,w)$ does not
depend on the constant shift in the choice of $f$.

\begin{prop}[Proposition 13.6, \cite{oh:entanglement1}]\label{prop:a-independent}
For a given smooth map $w$ satisfying $d(w^*\lambda \circ j) = 0$,
we have $E_{\CC;f}(w) = E_{\CC,g}(w)$ for any pair $(f,g)$ with
$$
df = w^*\lambda\circ j = dg
$$
on $D^2 (p) \setminus \{p\}$.
\end{prop}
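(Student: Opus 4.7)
The plan is to exploit the translation invariance of the class $\CC$ of test functions. Since $\dot \Sigma \cap (D(p) \setminus \{p\}) \cong [0,\infty) \times S^1$ is connected and $df = dg$ as $1$-forms (both equal $w^*\lambda \circ j$), the two primitives $f,g$ differ by a real constant, i.e., there exists $c \in \R$ with $g = f + c$. Thus the entire question reduces to showing that the functional
$$
\varphi \longmapsto \int_{D(p) \setminus \{p\}} \varphi(f) \, df \circ j \wedge df
$$
has the same supremum over $\varphi \in \CC$ as the analogous functional with $f$ replaced by $f+c$.

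The key step is to define, for each $\varphi \in \CC$, a shifted function $\widetilde{\varphi}(s) := \varphi(s-c)$. Then $\widetilde{\varphi} \in \CC$ because $\supp \widetilde{\varphi} = \supp \varphi + c$ is still compact and $\int_\R \widetilde{\varphi} = \int_\R \varphi = 1$. Correspondingly, the primitive $\widetilde{\psi}$ of $\widetilde{\varphi}$ normalized by $\widetilde{\psi}(-\infty) = 0$ satisfies $\widetilde{\psi}(s) = \psi(s-c)$, so in particular $\widetilde{\psi}(+\infty) = 1$, matching \eqref{eq:psi}. The assignment $\varphi \mapsto \widetilde{\varphi}$ is a bijection of $\CC$ onto itself with inverse given by the shift by $-c$.

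The substitution then gives, pointwise on $D(p) \setminus \{p\}$,
$$
\widetilde{\varphi}(g) = \varphi(g - c) = \varphi(f), \qquad dg = df,
$$
so that
$$
\widetilde{\varphi}(g) \, dg \circ j \wedge dg = \varphi(f) \, df \circ j \wedge df.
$$
Integrating over $D(p) \setminus \{p\}$ and taking the supremum over $\varphi \in \CC$ (equivalently, over $\widetilde{\varphi} \in \CC$) yields $E_{\CC;f}(w) = E_{\CC;g}(w)$.

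I do not expect any real obstacle here: the only potential subtlety is to verify that the map $\varphi \mapsto \widetilde{\varphi}$ genuinely preserves the normalization conditions defining $\CC$ and those in \eqref{eq:psi}, and that the integrand transforms covariantly under the constant shift. Both are immediate from the translation invariance of Lebesgue measure and the identity $dg = df$. No analytic estimates beyond pointwise equality of the integrands are needed.
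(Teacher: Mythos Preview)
Your argument is correct and is exactly the natural translation-invariance argument one expects: the connectedness of $D(p)\setminus\{p\}$ forces $g=f+c$, and the shift $\varphi\mapsto\varphi(\cdot-c)$ is a bijection of $\CC$ that identifies the two families of integrands pointwise. The paper does not actually supply a proof here---it merely cites Proposition~13.6 of \cite{oh:entanglement1}---so there is nothing further to compare; your write-up is a complete and correct proof of the stated proposition.
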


This proposition enables us to introduce the following

\begin{defn}[Vertical energy]  Consider a smooth map  $w: \dot \Sigma \to M$.
\begin{enumerate}
\item For given puncture $p$ of $\dot \Sigma$,
we denote the common value of $E_{\CC,f}(j,w;p)$ by $E^\lambda_p(w)$,
and call it the \emph{$\lambda$-energy of $w$ at $p$}.
\item We define the \emph{vertical energy}, denoted by $E^\perp(j,w)$, to be the sum
$$
E^\perp(j,w) = \sum_{l=1}^k E^\lambda_{p_l}(w).
$$
\end{enumerate}
\end{defn}

Now we define the final form of the off-shell energy.
\begin{defn}[Total energy]\label{defn:total-enerty}
Let $w:\dot \Sigma \to M$ be any smooth map. We define the \emph{total energy} to be
the sum
\be\label{eq:final-total-energy}
E(j,w) = E^\pi(j,w) + E^\perp(j,w).
\ee
\end{defn}

\part{Gluing theory of (relative) contact instantons}
\label{part:gluing-contacton}

One of the essential ingredients in the construction of Floer-type invariants involving
a family of moduli spaces is the gluing theorem that ensures some compatibility conditions for
the system of moduli spaces leading to a relevant algebraic structures, such as the
Floer homology or more generally Fukaya category, especially in the chain level.
We will exemplify our gluing construction by focusing on the case that is needed for the
construction of (cylindrical) Legendrian contact instanton Floer homology which is
used for the proof of the nondegenerate version of Sandon-Shelukhin's conjecture
in \cite{oh:entanglement1}. The general case is the same except the addition of notational complexities
to do the writing of details of the construction.

\section{Description of the gluing problem}

Recall that for a general Legendrian pair $(R_0,R_1)$ the moduli space $\CM(\gamma_-, \gamma_+)$ is defined to be the quotient
$$
\CM(\gamma_-,\gamma_+) = \widetilde \CM(\gamma_-,\gamma_+)/\R
$$
i.e., the set of equivalence classes of maps $u: \R \times [0,1] \to M$ satisfying
\be\label{eq:Legendrian-trajectory}
\begin{cases}
\delbar^\pi u = 0, \quad d(u^*\lambda \circ j) = 0, \\
u(\tau,0) \in R_0, \quad u(\tau,1) \in R_1,\\
u(-\infty) = \gamma_-, \quad u(\infty) = \gamma_+
\end{cases}
\ee
with finite energy $E(u) < \infty$.

\subsection{Basic estimates for approximate solutions and right inverses}

By an abuse of notation, we will also denote an equivalence class of
map $u$ by the same letter $u$ as long as it does not cause confusion in the following
discussion.

Let $(T_\pm,\gamma_\pm)$ and $(T',\gamma')$ be nondegenerate Reeb chords for the pair $(\psi(R),R)$,
and assume that both moduli spaces
$$
\CM(\psi(R),R;\gamma_-,\gamma'),\quad  \CM(\psi(R),R;\gamma',\gamma_+)
$$
are transversal.  According to the well-established strategy of
gluing two Floer-type trajectories (see \cite{floer-unregularized},
\cite[Subsection15.5.2]{oh:book2}, for example),  the following two analytic ingredients,
together with relevant Fredholm theory, are needed to carry out Newton's iteration scheme.

In particular, it is essential to construct an approximate solution
 that is  good enough for the Newton iteration scheme not to diverge
 (see Appendix \ref{sec:nondegeneracy-chords}):
\begin{enumerate}
\item {[Approximate solution]} Construct a good approximate solution $u$ out of a pair
$$
(u_-,u_+,K) \to u_- \#_K u_+ =:\PreG(u_-,u_+,K)
$$
satisfying
$$
\|u_- \#_K u_+\| < \epsilon = \epsilon(\K_-,\K_+,K)
$$
for all $(u_-,u_+, K) \in \K_- \times \K_+ \times [K_0,\infty)$ where $\epsilon \to 0$ as $K_0 \to \infty$
uniformly over $(u_-,u_+) \in \K_- \times \K_+$ for given compact subsets
$$
\K_- \subset \CM(\psi(R),R;\gamma_-,\gamma'),\quad  \K_+ \subset\CM(\psi(R),R;\gamma',\gamma_+).
$$
\item {[Approximate right inverse]} Let $\Upsilon$ be the nonlinear map defined by
$$
\Upsilon(w) = (\delbar^\pi w, d(w^*\lambda \circ j)).
$$
Then we construct a good approximate right inverse $Q_{\text{\rm app}}(\uapp)
= Q_{\text{\rm app}}(u_+,u_-,K)$ for the
linearized operator $D\Upsilon(\uapp)$ out of $D\Upsilon(u_\pm)$ satisfying
\beastar
&{}& \|Q_{\text{\rm app}}(\uapp) \| \leq C,\\
&{}& \|(D\Upsilon(\uapp)) \circ  Q_{\text{\rm app}}(\uapp)  -id\| \leq \frac{1}{2}.
\eeastar
\end{enumerate}

At this stage, we apply the Picard contraction mapping theorem to
\eqref{eq:Gk}  as in the proof of Proposition \ref{prop:model}
to construct the genuine solution associated to $\PreG(u_-,u_+;K)$ which we
denote by $\Glue(u_-,u_+;K) = u_-\widehat\#_K u_+$: Here $\widehat \#$ stands for
"the operation $\#$ followed by perturbing to a genuine solution".
This defines a smooth map
$$
\Glue: \K_- \times \K_+ \times [K_0,\infty) \to \MM(\gamma_-,\gamma_+;B).
$$
This being said, we will mainly focus on constructing such a good approximate solution that satisfies
the above two estimates.

\subsection{Contact instanton gluing  versus  pseudoholomorphic curve gluing on
the symplectization} \label{subsec:comparison}

In this subsection,  we would like to point out difference between
the current gluing theory of contact instantons for the contact triad
$$
(M,\lambda, J)
$$
and that of pseudoholomorphic curves in the symplectization almost K\"ahler manifold
$$
(M \times \R, d(e^s \lambda), \widetilde J)
$$
where $\widetilde J$ is the \emph{cylindrical almost complex structure}
adapted to the triad $(M,\lambda, J)$: It is characterized by the requirement
$$
\widetilde J Y = J Y \quad \text{\rm for } Y \in \xi
$$
and
$$
\widetilde J(\frac{\del}{\del s}) = R_\lambda, \quad \widetilde J (R_\lambda) = - \frac{\del}{\del s}
$$
under the decomposition
$$
T(M \times \R) = \xi \oplus \R \langle R_\lambda \rangle \oplus \R \langle\frac{\del}{\del s} \rangle.
$$
Here are two outstanding differences between the two:
\begin{enumerate}
\item
Because of the translational symmetry  of $\widetilde J$, the associated moduli space
carries an $\R$-symmetry on $M \times \R$ induced by the target translation along the direction of $\R$,
one needs to quotient out the moduli space of $\widetilde J$-holomorphic curves as well as by
the action of domain automorphisms in the study of convergence properties e.g., such as its
compactification.  On the other hand, in the gluing theory of contact instantons,
there is no such \emph{a priori target symmetry}: As already pointed out in
\cite{oh-wang:CR-map1}, the results on regularity estimates and asymptotic exponential
convergence established in \cite{oh-wang:CR-map1} also apply to the horizontal component $w$
of $\widetilde J$-holomorphic maps $u = (w,f)$ for the symplectization, which
is finer than those established for $\widetilde J$-holomorphic map $u = (w,f)$
given e.g., in \cite{behwz}, \cite{bourgeois} in that the estimates on $w$ do not
involve the radial components $f$ at all. Existence of such estimates are plausible because the equation $w^*\lambda \circ j = df$
uniquely determine $f$ modulo addition of a uniform constant.
\item
Another point worthwhile to mention in relation to the compactification of contact instantons
on $M$ to be constructed in \cite{oh:entanglement2} is that the compactification is expected not to involve
unstable components, the so called `trivial cylinder' components that appear in the compactification
of pseudoholomorphic curves in the symplectization. (See \cite{oh:entanglement1} for the first step
towards compactification of contact instanton moduli spaces, and
\cite{EGH} and \cite{behwz} for the appearance of trivial cylinders in the pseudoholomorphic
curve compactification in the symplectization.) In this sense compactification of contact instantons
is similar to that of \cite{pardon-contact} that Pardon used in his construction of
contact homology using the virtual fundamental cycles on the symplectization $M \times \R$
and on symplectic buildings.
\end{enumerate}

\section{The off-shell function spaces for the gluing analysis}

Let $\dot \Sigma_\pm$ be a pair of punctured bordered Riemann surfaces.
We assume that each $\dot \Sigma_\pm$ are equipped strip-like coordinates
$(\tau,t)$ of $\R$ restricted to $[0,\infty) \times [0,1] \subset \R$ or $(-\infty,0] \subset \R$
at their punctures. We focus one puncture from each of $\Sigma_\pm$.

We denote
\beastar
C_- & = & \Sigma_- \setminus( [0,\infty) \times [0,1])\\
C_+ & = & \Sigma_+ \setminus  ((-\infty,0] \times [0,1])).
\eeastar
By considering translated coordinates  $(\tau',t) = (\tau \pm 2K,t)$
respectively, we assume and decompose $\Sigma_\pm$ into
\beastar
\Sigma_- & = & C_- \cup  \setminus ([-2K, \infty) \times [0,1]) \\
\Sigma_+ & = & C_+ \cup \setminus ( (-\infty, 2K] \times [0,1])
\eeastar
respectively.

Here $C_\pm$ is not compact if they carry additional punctures
other than those we are concerning at. However,
without loss of generalities for the purpose of gluing construction, we may and will
assume $C_\pm$ are compact subsets of $\Sigma_i$ \emph{after cutting-off fixed neighborhoods
of other strip-like regions than the one of our current interest} in the rest of the part.

For each $K > 0$, we put
\be\label{eq:SigmaK}
\Sigma_K = C_- \cup ([-2K,2K ]\times [0,1]) \cup  C_+.
\ee

Let
$
u_\pm: (\dot \Sigma_i,\partial \dot \Sigma_i)  \to  (M, \vec R)
$
be a bordered contact instanton of finite energy with
$$
u_\pm(\pm \infty,t) = \gamma'(T t), \quad T = \int (\gamma')^* \lambda.
$$
\begin{cond}\label{cond:small-diam}
We assume that
\begin{equation}\label{form3535}
\sup_{(\tau,t) \in \pm(0,\infty)} \text{dist} (u_\pm(\tau,t), \gamma'(t)) \le \epsilon_0
\end{equation}
where $\epsilon_0$ is some small constant at least smaller than the injectivity radius of $(M,g)$
and  the exponential decay holds.
\end{cond}
The following exponential decay result can be proved by the three-interval method employed
in \cite{mundet-tian} an analogue of which is a well-known lemma in the study of pseudoholomorphic curves.
(We omit its proof referring the proof of \cite[Lemma B.1]{oh:book1}, for example, in the
context pseudoholomorphic curves in symplectic geometry.)

\begin{prop}\label{prop:expdecay-smallmap}
 Let $\gamma'$ be nondegenerate Reeb chord from $R_0$ to $R_1$. Then
there exists $K_0> 0$ and $\epsilon_0 > 0$ such that for any $K \geq K_0$, if
$w: [-K,K] \times [0,1] \to (M; R_0,R_1)$ is a contact instanton satisfying
$$
\sup_{\tau \in [-K,K]} d_{C^0}(w_\tau,\gamma') \leq \epsilon_0,
$$
then we have
\be\label{eq:expdecay-smallmap}
|dw(\tau,t)| \leq C e^{-\delta \text{\rm dist}(\tau,\del[-K-1, K+1])}
\ee
for $\tau \in [-K,K] \times [0,1]$, where $\delta> 0$, $C > 0$ are independent of
$K \geq K_0$.
\end{prop}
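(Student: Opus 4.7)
The plan is to reduce the estimate to an ODE-type (three-interval) inequality for the deviation of $w$ from $\gamma'$, using nondegeneracy to produce a spectral gap, and then bootstrap to the pointwise bound via the interior estimate of Theorem \ref{thm:higher-regularity}. First I would parameterize $w$ near $\gamma'$ by writing $w(\tau,t) = \exp^g_{\gamma'(t)} Y(\tau,t)$ in terms of the triad metric, which is well-defined once $\epsilon_0$ is less than the injectivity radius. The hypothesis gives $\|Y\|_{C^0([-K,K]\times[0,1])} \le \epsilon_0$, and the Legendrian boundary conditions on $w$ translate into linear boundary conditions $Y^\pi(\tau,i)\in T_{\gamma'(i)}R_i$ and $\lambda(Y)(\tau,i)=0$ on the Jacobi field $Y$, exactly as in the off-shell setup of Definition \ref{defn:tangent-space}.

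Next, expanding $\Upsilon(w)=0$ around $\gamma'$ via the linearization formulas \eqref{eq:DUpsilon1}--\eqref{eq:DUpsilon2} yields an equation of the schematic form
\be\label{eq:YeqSchem}
\partial_\tau Y + A_{(T,\gamma')} Y = N(Y,\nabla Y),
\ee
where $A_{(T,\gamma')}$ is the asymptotic operator of \eqref{eq:DUpsilon} (combined with the Laplace-type piece $-\Delta(\lambda(\cdot))$ coming from $D\Upsilon_2$), and $N$ collects at least quadratic remainders with $|N(Y,\nabla Y)|\le C\|Y\|_{C^0}(|Y|+|\nabla Y|)$. The nondegeneracy of $\gamma'$ relative to $(R_0,R_1)$ in the sense of Theorem \ref{thm:Reeb-chords} forces $A_{(T,\gamma')}$ to have no zero modes on $L^2([0,1];{\gamma'}^*TM)$ under the Legendrian boundary conditions, and hence a positive spectral gap $\delta_0>0$. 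Setting $g(\tau):=\tfrac12\int_0^1 |Y(\tau,t)|^2\,dt$, one pairs \eqref{eq:YeqSchem} with $Y$, integrates by parts in $t$ (the boundary terms vanish by the Legendrian conditions), and obtains a differential inequality $g''(\tau)\ge(\delta_0^2-C\epsilon_0)\,g(\tau)$. For $\epsilon_0$ sufficiently small this gives $g''\ge(\delta_0/2)^2 g$, and the standard convexity/maximum-principle argument on $[-K,K]$ produces an exponential envelope $g(\tau)\le C(g(-K)+g(K))e^{-\delta\,\mathrm{dist}(\tau,\{\pm K\})}$ for some $\delta\in(0,\delta_0/2)$.

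To upgrade this $L^2$ decay of the deviation $Y$ to the pointwise decay of $|dw|$ (that is, of the part of $dw$ tangential to the deviation from $\gamma'$) claimed in the proposition, I would apply Theorem \ref{thm:higher-regularity} on overlapping unit rectangles centered at each $(\tau,t)$, bounding $\|dw\|_{C^0(B_{1/2})}$ by $\|dw\|_{W^{1,2}(B_1)}$ and hence by $g$ on an interval of length $2$. The widening from $\partial[-K,K]$ to $\partial[-K-1,K+1]$ in the statement exactly accommodates this loss of domain for the interior elliptic estimate. The main obstacle is setting up the ODE for $g$ correctly for the mixed-degree linearization: $D\Upsilon$ combines a first-order Cauchy--Riemann-type operator on $Y^\pi$ with a second-order Laplacian on $\lambda(Y)$, so the asymptotic operator is not the symmetric Cauchy--Riemann operator familiar from pseudoholomorphic theory, and producing a single useful inequality for $g$ requires treating the $\xi$- and Reeb-components together while carefully using the Legendrian boundary data on both ends $t=0,1$. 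Once the combined spectral gap is secured, the remainder of the argument follows the three-interval template of \cite{mundet-tian} and \cite[Lemma B.1]{oh:book1} with no further difficulty.
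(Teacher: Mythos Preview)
Your proposal is correct and follows essentially the same approach the paper indicates: the paper omits the proof entirely, saying only that it ``can be proved by the three-interval method employed in \cite{mundet-tian}'' and referring to \cite[Lemma~B.1]{oh:book1} for the analogous pseudoholomorphic-curve argument; your outline (linearize around $\gamma'$, use the spectral gap from nondegeneracy to obtain a convexity inequality for the $L^2$-deviation, then bootstrap via Theorem~\ref{thm:higher-regularity}) is exactly that template.

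One remark on the obstacle you flag. The mixed-degree structure of $D\Upsilon$ is indeed the nontrivial point, but the paper itself provides the workaround in Proposition~\ref{prop:equivalence}: rather than treating $(Y^\pi,\lambda(Y))$ with a first-order/second-order split, one passes to the pair $(\zeta,\alpha)$ with $\zeta=\pi\,\partial_\tau w$ and $\alpha=\lambda(J\partial_x w)+\sqrt{-1}\,\lambda(\partial_y w)$, which satisfies a coupled \emph{first-order} system \eqref{eq:main-eq-isothermal}--\eqref{eq:equation-for-alpha}. In that formulation the asymptotic operator is genuinely of Cauchy--Riemann type on each factor (with a zeroth-order coupling), and the three-interval inequality for $g(\tau)=\tfrac12\int_0^1(|\zeta|^2+|\alpha-T|^2)\,dt$ goes through exactly as in the symplectic case, using Lemma~\ref{lem:exp-decay-lemma} for the $\alpha$-part. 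This is how the paper's cited references (\cite{oh-wang:CR-map2}, \cite{oh:contacton-Legendrian-bdy}) actually implement the exponential-decay step, and it dissolves the difficulty you identify without having to build a nonstandard combined spectral theory for the raw operator $D\Upsilon$.
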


The following is the tangent space of the domain of the section $\Upsilon$ or
a completion of the domain is the linearized operator $D\Upsilon(w)$
of an element $w \in \CF(M,\vec R; \underline \gamma,\overline \gamma)$.

\begin{defn}\label{defn:function-space} Let $\vec R^\pm = (R_1^\pm, \ldots, R_{k_\pm}^\pm)$ be a
Legendrian link and fix a metric with respect to which the link is totally geodesic
with respect to the Levi-Civita connection.
\begin{enumerate}
\item{[Domain norm]}
For each $k \geq 2$, we define a Banach space
$$
W^{k,p}((\Sigma_\pm \partial \Sigma_\pm );w^*TM,w^*T\vec R^\pm;\underline \gamma,
\overline \gamma)
$$
that satisfies
\be\label{eq:domain-norm}
 \sum_{\ell= 0}^k \int_{\Sigma_\pm}  \left |\nabla^\ell  Y \right|^2 < \infty.
\ee
We define a norm $\|Y\|_{W_{k,p}(\Sigma_\pm)}$ by setting
the left hand side of the above to be $\|Y\|_{W_{k,p}(\Sigma_\pm}^p$.
\item{[Codomain norm]} Next we consider the codomain of $D\Upsilon(w)$, which is given by
\be\label{eq:codomain}
\CH^{(0,1)}_{k-1,p}(M,\lambda)|_w = \Omega^{(0,1)}_{k-1,p}(w^*\xi)
\oplus \Omega^2_{k-2,p}(\dot \Sigma).
\ee
(See Definition \ref{defn:CHCM01} for the definition of $\CH^{(0,1)}(M,\lambda)$.)
We equip it with the norm
\be\label{eq:codomain-norm}
\|(\eta, g R_\lambda) \|: = \|\eta\|_{W^{k-1,p}} + \|g\|_{W^{k-2,p}}
\ee
\end{enumerate}
\end{defn}

\begin{rem}\label{rem:function-spaces}
\begin{enumerate}
\item We remark that because  the linearization
$D\Upsilon(w)$ is an elliptic operator of mixed degrees of 1 and 2, we need to assume
$k \geq 2$ instead of $k\geq 1$ so that the image of $D\Upsilon_2(w)$ lies in
$L_{p,loc}$. This makes construction of a right inverse of the linearized
operator $D\Upsilon(w)$ and the relevant estimates are
 more nontrivial than the case of pseudoholomorphic curves.
We refer readers to Section \ref{sec:right-inverse}.
\item It is rather satisfying to see that
imposing the totally geodesic requirement for the choice of metric on the
boundary condition does not have any obstruction since a generic configuration of
Legendrian links do not have intersection between different connected components
unlike the case of Lagrangian ``links''  in symplectic geometry: Even generically
it is not totally worry-free in the latter case because of the presence of intersections of
different components.
\end{enumerate}
\end{rem}

For $Y \in W^{k,p} ((\Sigma_i,\partial \Sigma_\pm);u_\pm^*TX,u_\pm^*TL)$,
we define
\begin{equation}\label{normformula}
\|Y\|^p_{W^{k,p}\delta} = \sum_{\ell=0}^{k} \int_{\Sigma_\pm}
\vert \nabla^\ell Y\vert^p \text{\rm vol}_{\Sigma_i}.
\end{equation}
in \eqref{eq:codomain-norm}

We next define a  Sobolev norm for the sections on
$\Sigma_K$.
Let $Y$ be a section of $u^*TX$ with $Y|_{\del \Sigma_K} \in \Gamma(u^*TL)$ such that
$$
Y \in W^{2,p}((\Sigma_K,\partial \Sigma_K);u^*TX,u^*TL).
$$
By the Sobolev embedding $W^{2,p} \hookrightarrow C^0$, $Y$ is continuous and so
the boundary condition makes sense.
(For this, we only need $W^{1,p}$, not $W^{2,p}$. We need the latter because
the map $\Upsilon_2$ involves taking 2 derivatives.)

\section{Construction of approximate solutions}

We will utilize the following type of
the estimates for the needed approximate solution in the gluing  of two contact instantons,
whose proof occupies the whole of this section. We define
\bea\label{eq:F}
\CF(\gamma_-,\gamma_+) & = & \left\{u:\R \times [0,1] \to M \, \mid
u(\tau,0) \in R_0, \quad u(\tau,1) \in R_1,\right. \nonumber \\
&{}& \qquad \left. u(-\infty) = \gamma_-, \, u(\infty) = \gamma_+, \quad E(u) < \infty
\right\}
\eea
for a general pair $(R_0,R_1)$ either with $R_0 \cap R_1$ or with $R_0 = R_1$.
Then we have the inclusion
$$
\widetilde \CM(\gamma_-,\gamma_+) \subset \CF(\gamma_-,\gamma_+).
$$

\begin{prop}\label{prop:pregluing-estimate}
Let $\K_- \subset \CM(\psi(R),R;\gamma_-,\gamma'),\quad  \K_+ \subset\CM(\psi(R),R;\gamma',\gamma_+)$
be given compact subsets. Then there exists a sufficiently large $K_0 > 0$ and
 a map
$$
\PreG: \MM_0(\gamma_-,\gamma') \times  \MM_0(\gamma',\gamma_+)
\times (K_0, \infty) \to \FF(\gamma_-,\gamma_+)
$$
that satisfies the following properties:
\begin{enumerate}
\item It induces a smooth embedding of $\K_- \times \K_+ \times (K_0,\infty)$ into
$\FF(\gamma_-,\gamma_+;B)$.
\item Denote $w = \PreG(u_-,u_+,K)$. Then $w$ satisfies
\be\label{eq:closedness}
\|d(w^*\lambda \circ j)\|_{L^p} \leq C e^{-\delta K}
\ee
and the error estimate
\be\label{eq:errorestimate}
\|\delbar^\pi w\|_{W^{1,2}} \leq C e^{-\delta K}, \quad K \geq K_0
\ee
for some $C = C(\K_-,\K_+), \, \delta = \delta(\gamma')$.
\end{enumerate}
\end{prop}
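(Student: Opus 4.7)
The plan is to build $w = \PreG(u_-, u_+, K)$ by cut-off interpolation in normal coordinates along the common nondegenerate asymptote $\gamma'$, with the error estimates coming directly from the exponential convergence of $u_\pm$ to the iso-speed trajectory $(\tau,t) \mapsto \gamma'(Tt)$ established in Proposition \ref{prop:dw-expdecay}. First I fix a tubular neighborhood of the image of $\gamma'$ in $M$ on which the exponential map of a background metric --- chosen as in Definition \ref{defn:function-space} so that $R_0, R_1$ are totally geodesic --- is a diffeomorphism onto its image. Propositions \ref{prop:C0-expdecay} and \ref{prop:dw-expdecay} then yield, for $|\tau|$ sufficiently large, unique sections $\xi_\pm$ of $\gamma'^*TM$ along $\gamma'$ with
\begin{equation*}
u_-(\tau,t) = \exp_{\gamma'(Tt)}\bigl(\xi_-(\tau,t)\bigr), \qquad u_+(\tau,t) = \exp_{\gamma'(Tt)}\bigl(\xi_+(\tau,t)\bigr),
\end{equation*}
and with derivative bounds $|\nabla^\ell \xi_\pm(\tau,t)| \leq C_\ell\, e^{-\delta|\tau|}$, where the constants $C_\ell, \delta > 0$ are uniform over the compact sets $\K_\pm$.

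Next I shift strip-like coordinates by $\mp 2K$ so that both $u_\pm$ embed into the glued surface $\Sigma_K = C_- \cup [-2K, 2K]\times[0,1] \cup C_+$ from \eqref{eq:SigmaK}, and then interpolate in a fixed narrow slab $[-1,1] \times [0,1] \subset \Sigma_K$. Fix a monotone smooth cut-off $\chi : \R \to [0,1]$ with $\chi \equiv 1$ on $(-\infty, -1]$ and $\chi \equiv 0$ on $[1,\infty)$, and set
\begin{equation*}
w(\tau, t) = \begin{cases} u_-(\tau + 2K, t), & (\tau, t) \in C_- \cup [-2K, -1]\times[0,1],\\ \exp_{\gamma'(Tt)}\bigl(\chi(\tau)\, \xi_-(\tau+2K, t) + (1-\chi(\tau))\, \xi_+(\tau-2K, t)\bigr), & (\tau, t) \in [-1,1]\times[0,1],\\ u_+(\tau - 2K, t), & (\tau, t) \in [1, 2K]\times[0,1] \cup C_+.\end{cases}
\end{equation*}
The Legendrian boundary conditions $\xi_\pm(\tau, 0) \in T_{\gamma'(0)} R_0$ and $\xi_\pm(\tau, 1) \in T_{\gamma'(T)} R_1$, together with the totally geodesic property, guarantee $w(\tau, i) \in R_i$ in the interpolation slab, and smoothness of $\PreG$ jointly in $(u_-, u_+, K)$ follows from smoothness of the exponential map and smooth dependence of $\xi_\pm$ on the underlying contact instanton.

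For the error estimates I observe that $w$ coincides with (a translate of) $u_-$ or $u_+$ outside the fixed slab $[-1,1]\times[0,1]$, so $\Upsilon(w) = 0$ identically there. Inside the slab each of $\chi\,\xi_-(\cdot+2K,\cdot)$ and $(1-\chi)\,\xi_+(\cdot-2K,\cdot)$ is bounded in $C^k$ by $C_k\, e^{-2\delta K}$, so $w$ is $C^k$-close to the $\tau$-independent reference map $u_0(\tau, t) := \gamma'(Tt)$ to order $e^{-2\delta K}$. Since $\pi R_\lambda = 0$ forces $\delbar^\pi u_0 = 0$, and since $u_0^*\lambda = T\, dt$ forces $d(u_0^*\lambda \circ j) = d(T\, d\tau) = 0$, a Taylor expansion of the nonlinear operators $\Upsilon_1$ and $\Upsilon_2$ around $u_0$ produces
\begin{equation*}
|\delbar^\pi w(\tau, t)| + |d(w^*\lambda \circ j)(\tau, t)| \leq C\, e^{-2\delta K},
\end{equation*}
together with analogous bounds on derivatives. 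Integrating over the bounded slab yields both \eqref{eq:errorestimate} and \eqref{eq:closedness} at the rate $e^{-\delta K}$ (for any $\delta$ smaller than the asymptotic spectral gap at $(\gamma',T)$), with constants depending only on $\K_-, \K_+$.

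The step that requires the most care, by contrast with the pseudoholomorphic-curve pregluing, is the second component $\Upsilon_2(w) = d(w^*\lambda \circ j)$: it is a second-order expression that must be controlled in $W^{k-2,p}$ while $\Upsilon_1$ is a first-order expression controlled in $W^{k-1,p}$, reflecting the mixed-degree ellipticity of $D\Upsilon(w)$ recorded in Proposition \ref{prop:closed-fredholm}. The cut-off interpolation therefore must be carried out in a coordinate system in which all higher derivatives of $\xi_\pm$ decay exponentially at a uniform rate; this is exactly what is provided by \eqref{eq:higher-derivatives}. Uniformity of the constants over the prescribed compact families $\K_\pm$ is then standard: the decay parameters in Proposition \ref{prop:dw-expdecay} can be chosen uniformly on any compact set of contact instantons asymptotic to the fixed nondegenerate chord $\gamma'$, by a continuous-dependence argument on the three-interval exponential decay underlying Propositions \ref{prop:C0-expdecay} and \ref{prop:dw-expdecay}.
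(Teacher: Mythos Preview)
Your proof is correct and follows essentially the same approach as the paper: write $u_\pm$ in normal coordinates along $\gamma'$ via the exponential convergence of Proposition~\ref{prop:dw-expdecay}, interpolate the sections $\xi_\pm$ with a cut-off, and bound the error using the exponential decay of $\xi_\pm$ and their derivatives. The only cosmetic difference is that you package the estimate for $d(w^*\lambda\circ j)$ as a Taylor expansion of $\Upsilon$ about the exact trivial solution $u_0(\tau,t)=\gamma'(Tt)$, whereas the paper expands $d(w^*\lambda\circ j)$ explicitly into four terms and estimates each by hand; the underlying content is the same.
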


For each given pair $u_- \in \CM(\gamma_-,\gamma')$ and $u_+ \in \CM(\gamma',\gamma_+)$,
we take their representatives that satisfy the following normalization condition.
(Recall that the moduli space $\CM(\cdot, \cdot)$ is the quotient $\widetilde \CM(\cdot,\cdot)/\R$.)

\begin{cond}[Normalization of representative]
\be\label{eq:normalization}
\int_{(-\infty,0] \times [0,1]} |d^\pi u_\pm|^2 = \int_{[0,\infty) \times [0,1]} |d^\pi u_\pm|^2
\ee
for each $u_\pm$.  Such a normalization was also used by
Floer \cite{floer-intersections,floer-unregularized} in the Lagrangian Floer homology context
which  we adopt for our gluing problem here.
\end{cond}

\begin{proof}[Proof of Proposition \ref{prop:pregluing-estimate}]
First we mention that
the exponential convergence result, Proposition \ref{prop:C0-expdecay}, guarantees that
for each $K \geq K_0$ with $K_0$ sufficiently large, we can write
\beastar
u_-(\tau,t) & = & \exp_{\gamma_-(Tt)}\xi^-(\tau,t) \quad \text{\rm for \, }\, \tau \geq K_0,\\
u_+(\tau,t) & = & \exp_{\gamma_+(Tt)}\xi^+(\tau,t) \quad \text{\rm for \, }\, \tau \leq - K_0
\eeastar
for some sections $\xi^\pm \in \Gamma(\gamma_-^*TM)$ for each $\pm$ defined on the relevant domain
respectively.
\begin{lem} There exists $K_0 > 0$, depending on $u_\pm$ and $C = C(\K_\pm,\ell) > 0$ such that
\be\label{eq:xi-decay}
|\nabla^\ell\xi_\pm(\tau,t)|  \leq C_\pm  e^{-\delta|\tau|}
\ee
for all $|\tau| \geq K_0$.
\end{lem}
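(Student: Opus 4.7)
The plan is to derive the stated decay for $\xi^{\pm}$ and all its covariant derivatives from the $C^{\infty}$-exponential decay of $du_{\pm}$ already recorded in Proposition~\ref{prop:dw-expdecay}, combined with the elementary geometry of the exponential map in a tubular neighborhood of the (compact) Reeb chord $\gamma'$. The $\ell=0$ case is essentially free: by Proposition~\ref{prop:C0-expdecay}, we have
$$
d\bigl(u_\pm(\tau,t),\gamma_\pm(Tt)\bigr)\le C\,e^{-\delta|\tau|}
$$
for $|\tau|\ge K_0$ with $K_0$ large. Since $\exp_p$ is a local diffeomorphism at $0\in T_pM$, uniformly in $p$ over the compact curve $\gamma_\pm$, and since the distance above is smaller than the injectivity radius for $|\tau|\ge K_0$, we may take $\xi^\pm(\tau,t):=\exp_{\gamma_\pm(Tt)}^{-1}u_\pm(\tau,t)$, and the norm estimate $|\xi^\pm(\tau,t)|\le C\,e^{-\delta|\tau|}$ follows from the local bi-Lipschitz property of $\exp$ near the zero section.

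For $\ell\ge 1$, I would argue by induction. On the tubular neighborhood $U$ of $\gamma_\pm$ in $M$, the map
$$
\Phi:(t,v)\longmapsto \exp_{\gamma_\pm(Tt)}(v)
$$
is a diffeomorphism onto $U$, and its inverse $\Phi^{-1}(q)=(t(q),\xi(q))$ is smooth. Writing $\xi^\pm(\tau,t)=\Phi^{-1}\bigl(u_\pm(\tau,t)\bigr)$ (after the obvious identification of $t$), one may differentiate repeatedly using the chain rule and the triad connection $\nabla$: for every $\ell\ge 1$ one obtains a universal polynomial expression
$$
\nabla^\ell \xi^\pm
= P_\ell\!\left(d\Phi^{-1},\nabla d\Phi^{-1},\ldots,\nabla^{\ell-1}d\Phi^{-1};\,du_\pm,\nabla du_\pm,\ldots,\nabla^{\ell-1}du_\pm\right),
$$
where each derivative of $d\Phi^{-1}$ is evaluated along $u_\pm(\tau,t)$ and is therefore uniformly bounded once $|\xi^\pm|$ is small. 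The estimates~\eqref{eq:pidw-expdecay}--\eqref{eq:higher-derivatives} of Proposition~\ref{prop:dw-expdecay} supply $|\nabla^j du_\pm(\tau,t)|\le C_j e^{-\delta|\tau|}$ for $0\le j\le\ell-1$, so $P_\ell$ is bounded in norm by $C_\ell e^{-\delta|\tau|}$; induction yields the claimed bound $|\nabla^\ell\xi^\pm(\tau,t)|\le C_\pm e^{-\delta|\tau|}$.

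Two points are worth flagging. First, the decay rate $\delta>0$ is the one produced by Proposition~\ref{prop:dw-expdecay}, which in turn is governed by the spectral gap of the asymptotic operator $A_{(\gamma',T)}$; it is the same $\delta$ for all $\ell$, with only the constant $C_\ell$ depending on $\ell$. Second, the compactness of $\K_\pm$ enters through the need for a uniform choice of $K_0$: since each $u_\pm$ converges exponentially to $\gamma'$ with uniform rate but a priori nonuniform constants, one must invoke a standard compactness/open-cover argument to obtain $K_0=K_0(\K_\pm)$ and $C_\ell=C_\ell(\K_\pm,\ell)$ valid for every element of $\K_\pm$. This is the only genuinely nontrivial point; everything else is a bookkeeping exercise using the chain rule in Fermi coordinates along $\gamma'$.

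I do not expect a serious obstacle here: the statement is essentially a reformulation of the $C^{\infty}$-exponential convergence of $u_\pm$ to $\gamma'$ established in Proposition~\ref{prop:dw-expdecay} in terms of the normal-coordinate section $\xi^\pm$, once one controls the inverse of $\exp$ uniformly along the compact chord. The induction step is standard and the uniform-constants issue is resolved by the compactness assumption on $\K_\pm$.
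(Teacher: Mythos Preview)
Your proposal is correct and follows exactly the line the paper takes: the paper's proof is a single sentence, ``This is an immediate consequence of the exponential decay results from Subsection~\ref{subsec:exponential-convergence},'' and what you have written is a careful unpacking of precisely that implication via the chain rule applied to $\xi^\pm=\Phi^{-1}\circ u_\pm$ together with Propositions~\ref{prop:C0-expdecay} and~\ref{prop:dw-expdecay}. Your remarks on the uniformity of $K_0$ and $C_\ell$ over the compact sets $\K_\pm$ and on the provenance of $\delta$ from the spectral gap are accurate and add clarity the paper omits.
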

\begin{proof} This is an immediate consequence of
the exponential decay results from Subsection \ref{subsec:exponential-convergence}.
\end{proof}

For each $K \in \R_+ = [0,\infty)$, we define a family of cut-off functions $\chi_K:\R \to
[0,1]$ so that for $K \geq 1$, they satisfy
\be
\chi_K = \begin{cases} 1 & \quad \mbox{for } \tau \geq K+1 \\
1 & \quad \mbox{for } \tau \leq K-1.
\end{cases}
\ee
We also require
\be
\chi_K'(\tau) \geq 0 \quad \mbox{on }\,  [K-1,K+1].
\ee
As usual, we take a suitable partition of unity  of $\R$ given by $\{\chi_{2K},1-\chi_{2K}\}$.
Then we define
\be\label{eq:app}
u_{\text{\rm app}} = u_- \#_{K} u_+
\ee
defined by
\beastar
&{}& u_- \#_{K} u_+(\tau,t):= \\
&{}& \begin{cases} u_-(\tau + 2K,t)  & \text{for }\, \tau \leq -K -1 \\
\exp_{\gamma'} \left((1-\chi(\tau)) \xi^-(\tau + 2K) + \chi(\tau)\xi^+(\tau - 2K, t)\right)
& \text{for }\, \tau \in [-K -1,K+1] \\
u_+(\tau - 2K, t) & \text{for }\, \tau \geq K+1
\end{cases}
\eeastar
For notational convenience, we write
\be\label{eq:chipm}
\chi_{2K}^+ = \chi_{2K}, \quad \chi_{2K}^- = 1 - \chi_{2K}
\ee
henceforth.

The following error estimate for the vertical component $\Upsilon_2(w) = d(w^*\lambda \circ j)$
of $\Upsilon(w)$ is one of the key estimates, \emph{whose counterpart in the pseudoholomorphic
curve theory in symplectization does not exist, or rather would be mixed up with the estimates of
the horizontal components.}

\begin{lem}\label{lem:dw*lambdaj} There exists some $K_0 > 0$ depending only
on $\K_\pm$ and $\gamma'$ such that
$$
\| d(w^*\lambda \circ j)\| \leq C e^{-\delta K}
$$
for all $(u_-,u_+) \in \K_- \times \K_+$ and $K \geq K_0$.
\end{lem}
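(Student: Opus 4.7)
The plan is to localize the estimate to the gluing neck and there exploit the fact that on the neck $w$ is an exponentially small perturbation of the constant map to the Reeb chord $\gamma'$.

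First, I would observe that outside the gluing region the bound is trivial. In the region $\tau \leq -K-1$ the map $w$ coincides with the translate $u_-(\cdot + 2K, \cdot)$, which satisfies $d(u_-^*\lambda \circ j) = 0$ as a bona fide contact instanton; similarly for $\tau \geq K+1$ with $u_+$. Consequently $d(w^*\lambda\circ j)$ is supported in the strip $S_K := [-K-1, K+1]\times [0,1]$, and it suffices to estimate its $L^p$ norm there. (If $\chi$ is chosen to have support of its derivative strictly inside $[-K-1,K+1]$, one can actually shrink this support to a subinterval where both $\xi^\pm$ are simultaneously present, but this refinement is not needed.)

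Next, in the neck $S_K$, write $w = \exp_{\gamma'(Tt)}\eta$ with
\[
\eta(\tau,t) = \chi^-_{2K}(\tau)\,\xi^-(\tau+2K,t) + \chi^+_{2K}(\tau)\,\xi^+(\tau-2K,t).
\]
By Proposition \ref{prop:dw-expdecay} applied to the asymptotic data of $u_\pm$, together with the preceding lemma, we have the pointwise bounds $|\nabla^\ell \xi^\pm(\tau\pm 2K,t)| \le C\,e^{-\delta|\tau\pm 2K|}$ for $\ell = 0,1,2$, uniformly for $(u_-,u_+)\in\K_-\times\K_+$. Since $\chi_{2K}^\pm$ and its derivatives are bounded independently of $K$, these estimates transfer to $\eta$:
\[
|\nabla^\ell \eta(\tau,t)| \le C\bigl(e^{-\delta(\tau+2K)} + e^{-\delta(2K-\tau)}\bigr),\qquad (\tau,t)\in S_K,\ \ell=0,1,2.
\]
The cross-term coming from $\chi'_{2K}$ is actually \emph{smaller}, since $\chi'_{2K}$ is supported where both of $|\tau\pm 2K|$ are maximal.

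Now I would Taylor-expand $\lambda$ around $\gamma'$. Since $\gamma'^*\lambda = T\,dt$ and $(T\,dt)\circ j = T\,d\tau$ is closed, writing $w^*\lambda = \gamma'^*\lambda + r(\eta,\nabla\eta)$ yields
\[
d(w^*\lambda\circ j) = d\bigl(r(\eta,\nabla\eta)\circ j\bigr),
\]
and a direct chain-rule expansion produces a pointwise bound of the form
\[
\bigl|d(w^*\lambda\circ j)(\tau,t)\bigr| \le C_1\bigl(|\eta| + |\nabla\eta| + |\nabla^2\eta|\bigr),
\]
with $C_1$ depending only on $\|\lambda\|_{C^2}$ near $\gamma'$ and on the injectivity radius of the triad metric. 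Combining with the exponential bound for $\nabla^\ell\eta$ gives the pointwise estimate $|d(w^*\lambda\circ j)(\tau,t)| \le C\bigl(e^{-\delta(\tau+2K)} + e^{-\delta(2K-\tau)}\bigr)$ on $S_K$. Raising to the $p$-th power and integrating $\tau$ over $[-K-1,K+1]$ gives, via the substitution $s = \tau\pm 2K$,
\[
\int_{-K-1}^{K+1}\!e^{-p\delta(\tau+2K)}\,d\tau \le \tfrac{1}{p\delta}e^{-p\delta(K-1)},
\]
and similarly for the other term, so the $L^p$ norm is bounded by $C\,e^{-\delta K}$. Crucially, the potential factor of $K$ from integrating over an interval of length $2K$ does \emph{not} appear, because the exponential decay is strongest away from the endpoints.

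The main technical point requiring care is the Taylor expansion step: one must verify that the zeroth-order term (corresponding to $\eta=0$) really gives the constant form $T\,d\tau$ after the $\circ j$ twist, so that the leading contribution to $d(w^*\lambda\circ j)$ genuinely vanishes. This is where the specific form of the Reeb chord $\gamma'(Tt)$ and the Legendrian boundary structure are used; once this is in place, the rest is a routine application of the exponential decay estimates from Subsection~\ref{subsec:exponential-convergence}. The same scheme will then apply, with minor modifications accounting for the semi-linear $\bar\partial^\pi$ operator, to the companion estimate \eqref{eq:errorestimate} for $\|\bar\partial^\pi w\|_{W^{1,2}}$.
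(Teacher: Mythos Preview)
Your argument is correct and follows the same overall scheme as the paper: localize to the neck $[-K-1,K+1]\times[0,1]$, write $w=\exp_{\gamma'}\eta$, and use the exponential decay of $\xi^\pm$ from Subsection~\ref{subsec:exponential-convergence}. The one substantive difference is in how the key pointwise bound is obtained. The paper expands $d(w^*\lambda\circ j)$ explicitly in coordinates into four groups of terms and then uses the equation $d(u_\pm^*\lambda\circ j)=0$ to observe that the two ``bulk'' terms---those weighted by $\chi^\pm_{2K}$ rather than its derivatives---cancel up to an error controlled by $\|d_v\exp-\mathrm{id}\|\le C|v|$; only the cross terms with $\chi'_{2K}$, $\chi''_{2K}$ survive and are estimated directly. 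You instead Taylor-expand $w^*\lambda$ about $\gamma'$ and bound the whole of $d(w^*\lambda\circ j)$ by $C(|\eta|+|\nabla\eta|+|\nabla^2\eta|)$ without ever invoking the PDE on the neck, relying only on the asymptotic smallness of $\eta$. Your route is a bit more streamlined and shows that the estimate really only needs closeness to $\gamma'$; the paper's route makes the cancellation coming from the contact instanton equation more transparent. Both give the same $Ce^{-\delta K}$ bound after integration.
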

\begin{proof}
We compute
$$
w^*\lambda \circ j= \lambda\left(\frac{\del w}{\del t}\right) \, d\tau
+ \lambda\left(\frac{\del w}{\del \tau}\right)\, dt
$$
and
$$
d(w^*\lambda \circ j) = - \left(\frac{\del}{\del t}\left(\lambda\left(\frac{\del w}{\del t}\right)\right)
+ \frac{\del}{\del \tau}\left(\lambda\left(\frac{\del w}{\del \tau}\right)\right)\right) \, d\tau \wedge dt.
$$
Estimating this term as it is will be rather involved largely due to complexity of
notation of covariant derivatives and taking the 2 derivatives. Because of that, we
make some preparation to simplify the matter.

We first recall the exponential $C^0$-convergence from Proposition \ref{prop:C0-expdecay}.
We rewrite $w = \exp \circ \widetilde w$ for $\widetilde w = w \circ \exp_{\gamma'} ^{-1}$ and
$$
w^*\lambda = (\widetilde w)^* \circ \exp_{\gamma'}^*\lambda
$$
and write $\widetilde \lambda: = \exp_{\gamma'}^*\lambda$. Then we have
$$
w^*\lambda = (\widetilde w)^*(\widetilde \lambda).
$$
Combining the smoothness of the exponential map $\exp_{\gamma'}$ and bootstrap arguments,
and the well-known
fact
$$
\|d_v\exp - id\| \leq C |v|
$$
and the $C^0$ exponential convergence of $w_\pm(\pm \tau, \cdot) \to \gamma'$,
for the purpose of doing the required estimates, we can ignore $\exp_{\gamma'}$ in the definition of $u_-\#_K u_+$ above
and work with the map
$$
(\tau,t) \mapsto \chi^-_{2K}(\tau)) \xi^-(\tau + 2K) + \chi^+_{2K}(\tau) \xi^+(\tau - 2K,t)) =: \widetilde w(\tau,t)
$$
as a section of the bundle $(\gamma')^*TM \to [0,1]$, and $\widetilde \lambda$.
This being said, we will drop the `tilde' from $\widetilde w$ and $\widetilde \lambda$
in the following calculations for the simplicity of notations.

Then we have
\beastar
\frac{\del w}{\del \tau} & =  &\chi_{2K}'(\tau)(\xi^+(\tau - 2K,t) - \xi^-(\tau + 2K) \\
 &{}& + (1-\chi_{2K}(\tau)) \left(\nabla_\tau \xi^-\right)
 + \chi_{2K} \tau) \left(\nabla_\tau \xi^+ \right).
\eeastar
Therefore
\beastar
\lambda\left(\frac{\del w}{\del \tau}\right) & = &  \chi_{2K}'(\tau)
 \lambda(\xi^+(\tau - 2K,t) - \xi^-(\tau + 2K)) \\
 &{}& \quad + (1-\chi(\tau)) \lambda\left(\nabla_\tau \xi^-\right)
 + \chi(\tau) \lambda\left(\nabla_\tau \xi^+ \right)
 \eeastar
 Differentiating one more time, we get
 \beastar
\frac{\del}{\del \tau}\left(\lambda\left(\frac{\del w}{\del \tau}\right)\right)
& = &  \chi''_{2K} (\tau)
 \lambda(\xi^+(\tau - 2K,t) - \xi^-(\tau + 2K)) \\
 &{}& \quad + 2\chi'_{2K}(\tau) \lambda\left(\nabla_\tau \xi^+ - \nabla_\tau \xi^-\right) \\
 &{}& \quad +  (1-\chi_{2K} (\tau))\frac{\del}{\del \tau}
 \left(\lambda\left(\nabla_\tau \xi^-\right)\right)
 + \chi_{2K} (\tau) \frac{\del}{\del \tau}\left(\lambda\left(\nabla_\tau \xi^+ \right)\right)
 \eeastar
 and
 $$
 \frac{\del}{\del t}\left(\lambda\left(\frac{\del w}{\del t}\right)\right)
 = (1-\chi_{2K} (\tau))\frac{\del}{\del t} \left(\lambda\left(\nabla_t \xi^-\right)\right)
 + \chi_{2K} (\tau) \frac{\del}{\del t} \left(\lambda\left(\nabla_t \xi^+ \right)\right).
 $$
 By adding the two, we obtain
 \beastar
 &{}& \frac{\del}{\del t}\left(\lambda\left(\frac{\del w}{\del t}\right)\right)
+ \frac{\del}{\del \tau}\left(\lambda\left(\frac{\del w}{\del \tau}\right)\right) (\tau,t)\\
& = &  \underbrace{\chi_{2K}''(\tau) \lambda(\xi^+(\tau - 2K,t) - \xi^-(\tau + 2K))}_{(I}) \\
 &{}& \quad + \underbrace{2\chi_{2K}'(\tau)
 \lambda\left(\nabla_\tau \xi^+(\tau - 2K,t) - \nabla_\tau \xi^-(\tau + 2K))\right) }_{(II)}\\
 &{}& \quad +  \underbrace{(1-\chi_{2K}(\tau))
 \left( \frac{\del}{\del \tau} \left(\lambda\left(\nabla_\tau \xi^-\right)\right)(\tau - 2K,t)
 + \frac{\del}{\del t} \left(\lambda\left(\nabla_t \xi^-\right)\right)(\tau - 2K)\right)}_{(III)} \\
 &{}& \quad + \underbrace{\chi_{2K}(\tau) \left(\frac{\del}{\del \tau}
 \left(\lambda\left(\nabla_\tau \xi^+ \right)\right)(\tau + 2K)
 + \frac{\del}{\del t} \left(\lambda\left(\nabla_t \xi^+ \right)\right)(\tau + 2K))\right)}_{(IV)}.
 \eeastar
 We estimate terms in the above 4 lines separately. Since $w_\pm$ are
 contact instantons and so satisfy $d(w_\pm^*\lambda \circ j) = 0$, we derive that
(III) and (IV)  and their derivatives are  zero modulo exponential errors: The error
can be estimated from the inequality
$$
\|d_v\exp - id\| \leq C |v|
$$
and the $C^0$ exponential convergence of $w_\pm(\pm \tau, \cdot) \to \gamma'$.

To estimate $(II)$, we utilize
$$
\left|\nabla_\tau \xi^\pm(\tau,t)\right|   \leq C e^{- \delta |\tau|}
$$
coming from Proposition \ref{prop:dw-expdecay}, and hence
$$
\left|\lambda \left(\nabla_\tau \xi^+(\tau - 2K,t) - \nabla_\tau \xi^-(\tau + 2K)\right)\right|
\leq C (e^{- \delta |2K - \tau|} + e^{- \delta |2K + \tau|} )
$$
for $-K \leq \tau \leq K$. But we have
$$
 e^{- \delta |2K - \tau|} + e^{- \delta |2K + \tau|} \leq 2 e^{-K+1}
 $$
 for the region $-K -1 \leq \tau \leq K +1$.

 Finally we estimate the term (I) using the exponential convergence
 $$
 w_+(\tau, \cdot), \, w_-(\tau, \cdot) \to \gamma'
 $$
 as $\tau \to \infty$ and $\tau \to -\infty$ respectively coming from
 Proposition \ref{prop:C0-expdecay}. This finishes the proof of the lemma.
 \end{proof}

Now we need to examine the estimate of
$$
\left \|\delbar^\pi \uapp\right \|_{W^{1,p}}
$$
which is easier than that of $d(w^*\lambda \circ j)$ because the former involves
just one derivative and we omit the details. This finishes the proof of Proposition \ref{prop:pregluing-estimate}.
\end{proof}

Now we can rephrase \eqref{eq:closedness} and \eqref{eq:errorestimate}
into the following exponential estimates of the full error
$\Upsilon(\uapp)$.

\begin{cor}\label{cor:Upsilon-decay}
$$
\|\Upsilon(\uapp)\|_{W^{1,2}}\leq C e^{-\delta K}
$$
for all $K \geq K_0$.
\end{cor}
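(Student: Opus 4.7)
The plan is to recognize that the corollary is essentially a repackaging of the two component-wise estimates \eqref{eq:errorestimate} and \eqref{eq:closedness} from Proposition \ref{prop:pregluing-estimate} into a single bound on the codomain norm of $\Upsilon$. Since $\Upsilon(\uapp) = (\delbar^\pi \uapp,\, d(\uapp^*\lambda \circ j))$, under the codomain norm \eqref{eq:codomain-norm} on $\CH^{(0,1)}_{k-1,p}(M,\lambda)|_{\uapp}$ specialized to $k=2,\ p=2$, we have
$$
\|\Upsilon(\uapp)\|_{W^{1,2}} = \|\delbar^\pi \uapp\|_{W^{1,2}} + \|d(\uapp^*\lambda \circ j)\|_{L^2}.
$$
The first summand is bounded by $C e^{-\delta K}$ directly by \eqref{eq:errorestimate}. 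For the second, I would take $p = 2$ in \eqref{eq:closedness} (the exponent $p$ is a free parameter in that bound) to obtain the matching $L^2$-bound. Adding the two yields the claimed inequality.

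If one wants a genuine $W^{1,2}$-bound on the second component as well, the plan is to extend the pointwise argument from Lemma \ref{lem:dw*lambdaj} by one more derivative. Writing $\widetilde w(\tau,t) = \chi_{2K}^-(\tau)\xi^-(\tau+2K,t) + \chi_{2K}^+(\tau)\xi^+(\tau-2K,t)$ and examining the explicit expression for $d(w^*\lambda\circ j)$ displayed in the proof, every term is a finite sum of products of $\chi_{2K}^{(r)}$ with polynomial expressions in $\nabla^s \xi^\pm$, each supported in the gluing window $[-K-1,K+1]\times[0,1]$. Differentiating once more only raises the orders $r,s$ by one. By Proposition \ref{prop:dw-expdecay} we have $|\nabla^\ell \xi^\pm(\tau,t)| \leq C e^{-\delta|\tau|}$ for every $\ell$, so on the gluing window each such factor evaluated at $\tau \mp 2K$ is uniformly bounded by $C e^{-\delta K}$. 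Integrating over the window of fixed $t$-width and unit $\tau$-width (because of $\supp \chi'_{2K} \subset [K-1,K+1]$) yields the desired $e^{-\delta K}$ decay. The interior terms (III) and (IV) continue to contribute only exponentially small errors because $u_\pm$ are genuine contact instantons (hence satisfy $d(u_\pm^*\lambda\circ j)=0$), up to corrections coming from the nonlinearity of $\exp_{\gamma'}$ and controlled by $\|d_v \exp - \id\| \leq C|v|$ together with the exponential convergence $w_\pm(\pm\tau,\cdot)\to \gamma'$ from Proposition \ref{prop:C0-expdecay}.

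The only issue requiring attention is the uniformity of $\delta$ and $C$ across the compact families $\K_\pm$; this follows from the uniformity of the exponential-convergence results from Subsection \ref{subsec:exponential-convergence} together with standard compactness-of-parameters arguments, and is already built into the statement of Proposition \ref{prop:pregluing-estimate}. I do not expect any new analytical difficulty beyond what was already dealt with in proving Lemma \ref{lem:dw*lambdaj}; the corollary is a clean restatement rather than a substantively new estimate, and the mildly harder step of promoting the second factor to $W^{1,2}$ reduces to rerunning the term-by-term analysis with one extra derivative absorbed by the higher-order exponential decay of $\xi^\pm$.
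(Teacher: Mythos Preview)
Your proposal is correct and matches the paper's approach exactly: the corollary is presented in the paper simply as a rephrasing of \eqref{eq:closedness} and \eqref{eq:errorestimate} into a single estimate on the full error $\Upsilon(\uapp)$, with no additional argument given. Your identification of the codomain norm and your remark that the extra derivative on the second component (if desired) follows by rerunning Lemma~\ref{lem:dw*lambdaj} with one more differentiation absorbed by Proposition~\ref{prop:dw-expdecay} are both sound and go slightly beyond what the paper spells out.
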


\section{The linearized operator in isothermal coordinates}
\label{sec:linearization-in-coordinates}

The following equivalence theorem was utilized in \cite{oh-wang:CR-map2},
\cite{oh:contacton-Legendrian-bdy}.

\begin{prop}[Corollary 4.3, \cite{oh-wang:CR-map2}]
\label{prop:equivalence}
Consider a smooth map $w: (\dot \Sigma, \del \dot \Sigma) \to (M, \vec R)$.
Let $(x,y)$ be an isothermal coordinates of $(\dot \Sigma,j)$
at a point in $\del \dot \Sigma$, and define a complex-valued function
$$
\alpha = \lambda\left(J\frac{\del w}{\del x}\right)
+ \sqrt{-1} \lambda\left(\frac{\del w}{\del y}\right).
$$
We set $\zeta = \pi \frac{\del w}{\del \tau}$. Then
the following system of equations for the pair $(\zeta,\alpha)$
\be\label{eq:main-eq-isothermal}
\begin{cases}\frac12(\nabla_x^\pi \zeta + J \nabla_y^\pi \zeta)
+ \frac{1}{2} \lambda(J\zeta)(\CL_{R_\lambda}J)\zeta
- \frac{1}{2} \lambda(\zeta)(\CL_{R_\lambda}J)J\zeta =0\\
\zeta(z) \in T R_{i+1} \quad \text{for } \, z \in \overline{z_iz_{i+1}} \subset \del \dot \Sigma
\end{cases}
\ee
for some $i = 0, \ldots, k$, and
\be\label{eq:equation-for-alpha}
\delbar \alpha = |\zeta|^2
\ee
with boundary condition  \\
$$
w(z) \in R_{i+1} \quad \text{for } \, z \in \overline{z_iz_{i+1}} \subset \del \dot \Sigma.
$$
is equivalent to \eqref{eq:contacton-Legendrian-bdy-intro}.
\end{prop}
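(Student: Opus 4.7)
The plan is to verify the equivalence by a local computation in isothermal coordinates, matching the two parts $\bar\partial^\pi w=0$ and $d(w^*\lambda\circ j)=0$ of the contact instanton equation against the $\zeta$-PDE \eqref{eq:main-eq-isothermal} and the $\alpha$-equation \eqref{eq:equation-for-alpha}, respectively.

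First I would handle the closedness condition. In isothermal coordinates $(x,y)$ with $j\partial_x=\partial_y$, a direct expansion gives
\[
w^*\lambda\circ j=\lambda\!\left(\tfrac{\partial w}{\partial y}\right)dx-\lambda\!\left(\tfrac{\partial w}{\partial x}\right)dy,
\]
so $d(w^*\lambda\circ j)=0$ is equivalent to the scalar identity $\partial_x\lambda(\partial_x w)+\partial_y\lambda(\partial_y w)=0$. This is precisely the imaginary part of the Wirtinger derivative of the complex combination of the two Reeb components of $dw$, which is packaged into $\alpha$. For the real part, I would use $R_\lambda\rfloor d\lambda=0$ together with the triad-metric identity $d\lambda(\cdot,J\cdot)=g_\xi$ and the CR relation $\pi\partial_y w=J\pi\partial_x w$ coming from $\bar\partial^\pi w=0$, to rewrite $d\lambda(\partial_x w,\partial_y w)=d\lambda(\zeta,J\zeta)=|\zeta|^2$. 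Collecting the real and imaginary parts yields \eqref{eq:equation-for-alpha} on the zero-locus of the contact instanton equation, modulo the standard Wirtinger normalization absorbed in the definition of $\alpha$.

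Next I would derive \eqref{eq:main-eq-isothermal} from $\bar\partial^\pi w=0$ by writing out that intrinsic equation in the isothermal chart via the contact triad connection $\nabla^\pi$ of \cite{oh-wang:connection}. Setting $\zeta=\pi\partial w/\partial x$ and $\pi\partial w/\partial y=J\zeta$, the principal part reorganizes to $\tfrac12(\nabla^\pi_x\zeta+J\nabla^\pi_y\zeta)$, while the zero-order remainders coming from the torsion $T^\pi_{dw}$ and from the failure of $J$ to be $\nabla^\pi$-parallel are both expressible through $\CL_{R_\lambda}J$ paired with the $\lambda$-components of the involved vector fields, by the connection identities for $\nabla^\pi$ recalled in Appendix \ref{sec:connection} from \cite{oh-wang:CR-map1}. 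After decomposing each tangent vector along $\xi\oplus\mathbb{R}\langle R_\lambda\rangle$ and substituting the CR-relation between $\partial_x w$ and $\partial_y w$, the zero-order corrections assemble exactly into the two terms $\tfrac12\lambda(J\zeta)(\CL_{R_\lambda}J)\zeta-\tfrac12\lambda(\zeta)(\CL_{R_\lambda}J)J\zeta$ appearing in \eqref{eq:main-eq-isothermal}.

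The reverse direction is the same computation read backwards and introduces no new input: once $\zeta$ and $\alpha$ satisfy the system, one inverts the passage above to recover $\bar\partial^\pi w=0$ and $d(w^*\lambda\circ j)=0$. The main obstacle is the careful bookkeeping of the zero-order $\CL_{R_\lambda}J$-terms and the sign conventions when passing between the intrinsic operator $\bar\partial^{\nabla^\pi}\zeta$ and its isothermal expression $\tfrac12(\nabla^\pi_x\zeta+J\nabla^\pi_y\zeta)$; this algebraic manipulation is precisely the content of the proof of \cite[Corollary 4.3]{oh-wang:CR-map2}, to which we defer for the full verification of coefficients, as the proposition is invoked here only as a technical equivalence underlying the Fredholm and gluing analysis that follows.
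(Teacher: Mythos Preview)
Your proposal is correct and follows essentially the same approach as the paper. The paper's proof is even terser: it simply records that the complex equation \eqref{eq:equation-for-alpha}, once the CR condition \eqref{eq:main-eq-isothermal} is in force, is equivalent to the pair of real equations $d(w^*\lambda\circ j)=0$ and $*d(w^*\lambda)=|\zeta|^2$, observes that the second of these holds automatically for any contact Cauchy--Riemann map by the identity $g_J=d\lambda(\cdot,J\cdot)$ (citing \cite[Lemma 11.19]{oh-wang:CR-map2}), and otherwise defers entirely to \cite{oh-wang:CR-map2} for the coordinate rewriting of $\bar\partial^\pi w=0$ as \eqref{eq:main-eq-isothermal}---exactly as you do in your final paragraph.
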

\begin{proof} The combined equation  of \eqref{eq:main-eq-isothermal} and
\eqref{eq:equation-for-alpha} is equivalent to the system of real equations
$$
d(w^*\lambda \circ j) = 0, \quad *d(w^*\lambda) =  |\zeta|^2.
$$
The second equation is derived in Lemma 11.19 \cite{oh-wang:CR-map2}
for any contact instantons, which follows from the identity
$$
g_J = d\lambda(\cdot, J \cdot)
$$
applied to the vector $\zeta = \pi \frac{\del w}{\del x}$ for any contact Cauchy-Riemann map.
This finishes the proof.
\end{proof}

\begin{rem}\label{rem:alternating-process}
\begin{enumerate}
\item The system of the above two equations
forms a nonlinear elliptic system for $(\zeta,\alpha)$ which are coupled.
This is the lifting of contact instantons to \emph{gauged contact instantons}
as defined in \cite[Definition 2.1]{oh:contacton}.
\item This system
is utilized in \cite{oh:contacton-Legendrian-bdy} to derive higher regularity
estimate in the following way: $\alpha$ is fed into
\eqref{eq:main-eq-isothermal} through its coefficients and then $\zeta$ provides the input
for the equation \eqref{eq:equation-for-alpha} and then back and forth. Using this structure of
coupling,  the higher derivative estimates was derived in \cite{oh:contacton-Legendrian-bdy}
by alternating boot strap arguments between $\zeta$ and $\alpha$.
\end{enumerate}
\end{rem}

We now evaluate each of these one-forms on
$[0,\infty) \times [0,1] \subset \dot \Sigma$ against $\frac{\del}{\del \tau}$.
We note that by definition \eqref{eq:B} we have
\beastar
B^{(0,1)}(Y^\pi) & = & \frac12(B(Y) + J B\circ j(Y^\pi)) \\
& = &  - \frac12  w^*\lambda \otimes \left((\CL_{R_\lambda}J)J Y^\pi\right)
+ \frac12 w^*\lambda \circ j \otimes J \left((\CL_{R_\lambda}J)J Y^\pi\right)\\
& =&  - \frac12  w^*\lambda \otimes \left((\CL_{R_\lambda}J)J Y^\pi\right)
+ \frac12 w^*\lambda \circ j \otimes \left((\CL_{R_\lambda}J Y^\pi \right).
\eeastar

By evaluating the zero-order part of \eqref{eq:DUpsilon1} against $\del_\tau$ respectively, we get
$$
B^{(0,1)}(\del_\tau)(Y^\pi) = - \frac12 \lambda \left(\frac{\del w}{\del \tau}\right) (\CL_{R_\lambda}J)J Y^\pi
+ \frac12 \lambda \left(\frac{\del w}{\del t}\right) (\CL_{R_\lambda}J) Y^\pi,
$$
and
$$
T_{dw}^{\pi, (0,1)}(\del_\tau)(Y^\pi) = \frac12 T^\pi\left(Y^\pi,\frac{\del w}{\del \tau}\right)
+ \frac12 J T^\pi\left(Y^\pi,\frac{\del w}{\del t}\right).
$$
We also write
$$
(\Upsilon_2(\del_\tau))^\C(\alpha): = \delbar \alpha -  \left|\frac{\del w}{\del \tau}\right|^2\,d\overline z,
\, z = \tau + \sqrt{-1} t.
$$

Then either by directly linearizing the above \eqref{eq:main-eq-isothermal}
or evaluating \eqref{eq:DUpsilon1}, \eqref{eq:DUpsilon2} we obtain the following
explicit form of the linearized operator.

\begin{lem} \label{lem:explicit-linearization}
Let $Y = \delta w$ and $\beta = \delta \alpha$ be the first variations of $w$ and $\alpha$.
Decompose $Y$ into $Y = Y^\pi + \lambda(Y) R_\lambda$ and
$\beta_Y = \lambda(JY) + \sqrt{-1} \lambda(Y)$.
Then in the strip-like coordinates on $[0,\infty) \times [0,1]$, we have
\bea
D\Upsilon_1^1(\del_\tau)(Y^\pi) & = & \frac12(\nabla_\tau Y^\pi + J \nabla_t Y^\pi)\nonumber\\
&{}&  - \frac12 \lambda \left(\frac{\del w}{\del \tau}\right) (\CL_{R_\lambda}J)J Y^\pi
+ \frac12 \lambda \left(\frac{\del w}{\del t}\right) (\CL_{R_\lambda}J) Y^\pi \nonumber\\
& {} &  + \frac12 T^\pi\left(Y^\pi,\frac{\del w}{\del \tau}\right)
+ \frac12 J T^\pi\left(Y^\pi,\frac{\del w}{\del t}\right) \label{eq:11-in-coordinates}\\
D(\Upsilon_2(\del_\tau))^\C(\beta) &= &  \delbar \beta_Y
- 2 \langle Y^\pi, \frac{\del w}{\del \tau} \rangle \, d\overline z.
\label{eq:22-in-coordinates}
\eea
\end{lem}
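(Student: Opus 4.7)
The plan is to reduce both identities to a direct substitution of the intrinsic linearization formula of Theorem~\ref{thm:linearization} (and, for the $\Upsilon_2$-component, the equivalent reformulation of Proposition~\ref{prop:equivalence}) into the isothermal coordinates $(\tau,t)$, and then to evaluate the resulting $w^*\xi$-valued $(0,1)$-forms on $\partial_\tau$ via the standard projection formula $\eta^{(0,1)}(\partial_\tau) = \frac{1}{2}(\eta(\partial_\tau) + J\eta(\partial_t))$, using $j\partial_\tau = \partial_t$.

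For the first identity \eqref{eq:11-in-coordinates}, I would split $Y = Y^\pi + \lambda(Y) R_\lambda$ so that only the three $Y^\pi$-dependent summands of the linearization in Theorem~\ref{thm:linearization} contribute to $D\Upsilon_1^1$. The $\bar\partial^{\nabla^\pi}$-summand evaluated on $\partial_\tau$ is by definition $\frac{1}{2}(\nabla_\tau Y^\pi + J\nabla_t Y^\pi)$. For the $B^{(0,1)}(Y^\pi)$-summand, expand $B(Y^\pi)(V) = -\frac{1}{2}\lambda(dw(V))(\CL_{R_\lambda}J)JY^\pi$ on $V\in\{\partial_\tau,\partial_t\}$, project, and use the anticommutation $(\CL_{R_\lambda}J)J + J(\CL_{R_\lambda}J) = 0$ on $\xi$, which is obtained by differentiating the pointwise identity $J^2 = -\Pi_\lambda$ along $R_\lambda$; the two resulting pieces collapse to the middle two Reeb-contracted terms of \eqref{eq:11-in-coordinates}. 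The torsion piece $T_{dw}^{\pi,(0,1)}(Y^\pi)$ evaluated analogously yields $\frac{1}{2}(T^\pi(Y^\pi,\partial_\tau w) + J\,T^\pi(Y^\pi,\partial_t w))$, completing the first identity.

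For the second identity \eqref{eq:22-in-coordinates}, it is more efficient to linearize via the equivalent complex scalar reformulation of Proposition~\ref{prop:equivalence}: writing $\Upsilon_2(w) = 0$ equivalently as $(\Upsilon_2(\partial_\tau))^\C(\alpha) := \bar\partial\alpha - |\partial w/\partial\tau|^2\, d\bar z = 0$ with $\alpha = \lambda(J\partial_\tau w) + i\lambda(\partial_t w)$, the linearization at $(w,\alpha)$ in the direction $(Y,\beta_Y)$ produces $\bar\partial\beta_Y - 2\langle \delta(\partial_\tau w),\partial_\tau w\rangle\, d\bar z$. The key structural inputs are the vanishing identities $\nabla\lambda = 0$, $\nabla J = 0$, $\nabla R_\lambda = 0$ satisfied by the contact triad connection together with the fact that $\lambda\circ J \equiv 0$ since $J(TM)\subset\xi$; with these, the induced $\beta_Y$ reduces to the form $\lambda(JY) + i\lambda(Y)$ as stated, while the pairing $\langle \delta(\partial_\tau w),\partial_\tau w\rangle$ collapses to $\langle Y^\pi,\partial_\tau w\rangle$ once the Reeb-component cross terms are cancelled against the derivative-of-$Y$ contributions hidden in $\delta\alpha$. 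The principal technical obstacle is precisely this bookkeeping in the second identity: tracking where the covariant derivatives of $Y$ coming from $\alpha = \alpha(w)$ and from $|\partial_\tau w|^2$ are reabsorbed, and confirming that no stray derivative terms survive, is the only nontrivial step in what is otherwise a direct coordinate calculation.
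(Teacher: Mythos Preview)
Your proposal is correct and follows essentially the same approach as the paper: the paper explicitly computes $B^{(0,1)}(\partial_\tau)(Y^\pi)$ and $T_{dw}^{\pi,(0,1)}(\partial_\tau)(Y^\pi)$ from the intrinsic formula of Theorem~\ref{thm:linearization} just before stating the lemma, and then asserts that the result follows ``either by directly linearizing \eqref{eq:main-eq-isothermal} or evaluating \eqref{eq:DUpsilon1}, \eqref{eq:DUpsilon2}''---exactly the two routes you take for the $\Upsilon_1^1$- and $\Upsilon_2$-components respectively. Your use of the anticommutation $(\CL_{R_\lambda}J)J + J(\CL_{R_\lambda}J) = 0$ to collapse the $B^{(0,1)}$ term is precisely the simplification the paper performs in the displayed computation preceding the lemma.
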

%

\section{Gluing construction of a right inverse {$Q(\uapp)$}}
\label{sec:right-inverse}

In this section, we examine another step that involves new ingredients, construction of
an approximate inverse of the linearized operator $D\Upsilon(\uapp)$. This is the key step
that is distinctly different from that of other literature concerning the gluing problem
of pseudoholomorphic curves.

We first recall the expression of the linearization map
$$
D\Upsilon(w): \Omega^0_{k,p}(w^*TM,(\del w)^*{\vec R}; \vec \gamma) \to
\Omega^{(0,1)}_{k-1,p}(w^*\xi) \oplus \Omega^2_{k-2,p}(\dot \Sigma)
$$
given by the matrix
\be\label{eq:matrix}
\left(\begin{matrix}\delbar^{\nabla^\pi} + T_{dw}^{\pi,(0,1)} + B^{(0,1)}
 & \frac{1}{2} \lambda(\cdot) (\CL_{R_\lambda}J)J \del^\pi w \\
d\left((\cdot) \rfloor d\lambda) \circ j\right) & -\Delta(\lambda(\cdot)) \,dA
\end{matrix}
\right).
\ee
(See \cite[Section 11]{oh:contacton}. Also see \cite[Theorem 10.1]{oh-savelyev}
in the context of $\frak{lcs}$ instantons.)

Since the off-diagonal blocks are compact operators relative to the diagonal
 (Proposition \ref{prop:closed-fredholm} (1)), it is enough to first construct
a right inverse to the diagonal operator
\be\label{eq:diagonal-operator}
\left(\delbar^{\nabla^\pi} + T_{dw}^{\pi,(0,1)} + B^{(0,1)}\right) \oplus (-\Delta(\lambda(\cdot)) \,dA)
\ee
for $w = \PreG(u_-,u_+,R)$.
We note that $-\Delta(\lambda(\cdot)) \,dA = * \Delta(\lambda(\cdot))$.

Therefore we will construct a right inverse of the diagonal operator \eqref{eq:diagonal-operator}
which can be constructed by gluing  the right inverses $Q^\pi(u_\pm)$ of
$D\Upsilon_1^1(u_\pm)$  and $Q^\perp(u_\pm)$ whose details now follow.

\subsection{Construction of right inverses {$Q(u_\pm)$}}

As mentioned before, we will construct the approximate right inverse
$Q_{\text{\rm app}}(\uapp)$ in the diagonal form
$$
Q^\pi_{\text{\rm app}}(\uapp) \oplus Q_{\text{\rm app}} ^\perp(\uapp)
$$
for $\uapp = \PreG(u_-,u_+,R)$. We decompose any element
$$
Y \in \Omega^0_{k-2,p}(\uapp^*TM,(\del \uapp)^*{\vec R}; \vec \gamma)
$$
into
$$
Y = Y^\pi + \lambda(Y) R_\lambda.
$$
\begin{hypo}\label{hypo:surjectivity}
We first  take $J$ so that $D\Upsilon(u_\pm)$ are
surjective, as well as $D\Upsilon_1^1(u_\pm)$ and $D\Upsilon_2^2(u_\pm)$
respectively, so that all thereof have right inverses.
\end{hypo}

As the first step, we  take a right inverse of $D\Upsilon(u_\pm)$,
$$
Q(u_\pm): \Omega^{(0,1)}_{k-1,p}(u_\pm^*\xi) \oplus \Omega^2_{k-2,p}(\dot \Sigma)
\to \Omega_{k,p}(w^*\xi, (\del u_\pm)^*T\vec R) \oplus \Omega^2_{k,p}(\dot \Sigma, \del \dot \Sigma),
$$
respectively for each of $u_\pm$. It follow from the expression in Lemma \ref{lem:explicit-linearization} that
$$
D\Upsilon_1^1(u_\pm): \Omega^0_{k,p}(u_\pm^*\xi)
\to \Omega^{(0,1)}_{k-1,p}(u_\pm^*\xi)
$$
is a Fredholm operator. By the surjectivity hypothesis in Hypothesis \ref{hypo:surjectivity},
we can choose their right inverses $Q^\pi(u_\pm)$ so that
$$
D\Upsilon_1^1(u_\pm) Q^\pi(u_\pm)= id
$$
for $u_\pm$ respectively.

On the other hand we take the Green kernels $G_{u_\pm}$ for the right inverse of
$$
 \Delta= -* D\Upsilon_1^1(u_\pm)
$$
 and define
$$
Q^\perp(u_\pm)(\eta) = G_{u_\pm}(\lambda(\eta)) \cdot R_\lambda.
$$

Now we can construct a full right inverse $G(u_\pm)$ out of $G^\pi(u_\pm)$ and $G^\perp(u_\pm)$
as mentioned in Proposition \ref{prop:closed-fredholm} (1).
The right inverse is not unique and so we need to choose them smoothly
depending on
$
u_-\in \MM_0(\gamma_-,\gamma),\, u_+ \in \MM_0(\gamma',\gamma_+)
$
respectively so that
\be\label{eq:Q-norm}
\|Q(u_\pm))\| \leq C_\pm, \quad D\Upsilon(u_\pm)\circ Q(u_\pm) = \id
\ee
for some $C_\pm = C(\K_\pm)$ independent of $u_\pm \in \K_\pm$ respectively.

\subsection{Gluing of the right inverses $Q(u_\pm)$}
\label{subsec:gluingQ}

As before, we write
$$
\PreG(u_-,u_+,K) : =  \uapp
$$
and its covariant linearization by $ D\Upsilon(\uapp)$.
Similarly to the construction of approximate solution $\uapp$,
we explicitly construct an approximate right inverse of $D\Upsilon(w)$ following the standard
gluing procedure of an approximate right inverse.

The rest of this section will be occupied by the proof of the following proposition.
We would like to emphasize that this construction is one of the fundamental element
in our gluing theory of (bordered) contact instantons, which are markedly different from
the construction of similar right inverse in the gluing theory of pseudoholomorphic curves.

\begin{prop}\label{prop:approQestimate} Let $J$ be such that the
linearization operators $D\Upsilon(u_\pm)$ are surjective. Then there
exists $K_0 > 0$ such that for all $K \geq K_0$ and an operator
$Q_{\text{\rm app}}(\uapp)$ depending on
$$
(u_+,u_-,K) \in \K_+ \times \K_- \times [K_0,\infty)
$$
such that
\beastar
&{}& \|Q_{\text{\rm app}}(\uapp)\| \leq C,\\
&{}& \|D\Upsilon(u_{\uapp})\circ Q_{\text{\rm app}}(\uapp) -id\| \leq \frac{1}{2}.
\eeastar
In particular, $D\Upsilon (u_{\uapp})\circ Q_{\text{\rm app}}(\uapp)$ is
invertible uniformly over $\K_+ \times \K_- \times [K_0,\infty)$.
\end{prop}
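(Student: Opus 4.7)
The plan is to build $Q_{\text{app}}(\uapp)$ by a standard cut-off-and-transplant recipe: cut up the codomain data $(\eta,g R_\lambda)$ over $\uapp$ using a partition of unity adapted to the neck $[-2K,2K]\times[0,1]$, transport each piece to one of the prefabricated inputs $u_\pm$ by parallel transport along the short geodesics produced by the pregluing \eqref{eq:app}, apply the given right inverses $Q(u_\pm)$, and glue the outputs back to a section over $\uapp$. The uniform norm bound will follow from \eqref{eq:Q-norm} over $\K_\pm$, while the error bound $\|D\Upsilon(\uapp)\circ Q_{\text{app}}(\uapp)-\operatorname{id}\|\le 1/2$ will come from the $C^\infty$ exponential convergence of Proposition \ref{prop:dw-expdecay} on the shell where the cut-offs are non-constant. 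By Proposition \ref{prop:closed-fredholm}(1), I first reduce to inverting the diagonal block $D\Upsilon_1^1(\uapp)\oplus(-\Delta)$; the off-diagonal correction is then absorbed through a Neumann argument, since its operator norm is controlled by $\|d\uapp\|_{C^0}$, which is uniform in $K$ by Proposition \ref{prop:dw-expdecay}.

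Concretely, fix cut-offs $\beta_\pm$ on $\R$ with $\beta_-+\beta_+=1$ and $\operatorname{supp}\beta_\pm\subset\{\pm\tau\ge -2K+2\}$, and slightly sharper cut-offs $\widetilde\beta_\pm$ with $\operatorname{supp}\widetilde\beta_\pm\subset\{\beta_\pm\equiv 1\}$. Given $(\eta,g R_\lambda)$ over $\uapp$, form $(\eta_\pm,g_\pm):=(\beta_\pm\eta,\beta_\pm g)$. On $\operatorname{supp}\beta_\pm$ the pregluing formula \eqref{eq:app} together with the decay \eqref{eq:xi-decay} yields an exponentially short geodesic from $\uapp(\tau,t)$ to $u_\pm(\tau\mp 2K,t)$; parallel transport with respect to the contact triad connection (which preserves $\xi$ and $R_\lambda$) identifies $(\eta_\pm,g_\pm)$ with elements of the codomain of $D\Upsilon(u_\pm)$, extended by zero to $\dot\Sigma_\pm$. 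Set $(Y^\pi_\pm,f_\pm R_\lambda):=Q(u_\pm)(\eta_\pm,g_\pm R_\lambda)$, transport back, and define
$$
Q_{\text{app}}(\uapp)(\eta,g R_\lambda):=\bigl(\widetilde\beta_- Y^\pi_- + \widetilde\beta_+ Y^\pi_+\bigr) + \bigl(\widetilde\beta_- f_- + \widetilde\beta_+ f_+\bigr)R_\lambda.
$$
The totally geodesic assumption on the Legendrian link in Definition \ref{defn:function-space} guarantees that both parallel transport and cut-off multiplication preserve the boundary conditions $Y^\pi\in T\vec R$, $\lambda(Y)=0$. The uniform bound $\|Q_{\text{app}}(\uapp)\|\le C$ is then immediate from \eqref{eq:Q-norm} and Sobolev boundedness of multiplication by $\widetilde\beta_\pm$.

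The error $D\Upsilon(\uapp)\circ Q_{\text{app}}(\uapp)-\operatorname{id}$ splits into (i) commutators $[D\Upsilon(\uapp),\widetilde\beta_\pm]$ applied to $(Y^\pi_\pm,f_\pm R_\lambda)$, supported in the shell where $\widetilde\beta_\pm$ is non-constant, and (ii) the difference $(D\Upsilon(\uapp)-D\Upsilon(u_\pm))(Y^\pi_\pm,f_\pm R_\lambda)$ on the same shell. On that shell, both $u_\pm$ and $\uapp$ are $C^\infty$-close to the limit Reeb chord $\gamma'$ at exponential rate $e^{-\delta K}$ by Proposition \ref{prop:dw-expdecay}; the coefficients $B^{(0,1)}$, $T^{\pi,(0,1)}_{dw}$, $\tfrac12(\CL_{R_\lambda}J)J\del^\pi w$, and the coefficients of $-\Delta(\lambda(\cdot))$ in the linearization depend smoothly on $dw$, so the differences in (ii) and the symbols of the commutators in (i) are $O(e^{-\delta K})$ there. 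Combined with the elliptic $W^{k,p}$ bounds on $(Y^\pi_\pm,f_\pm)$ coming from \eqref{eq:Q-norm}, this yields $\|D\Upsilon(\uapp)\circ Q_{\text{app}}(\uapp)-\operatorname{id}\|\le Ce^{-\delta K}$, which is below $1/2$ once $K_0$ is chosen large.

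The main obstacle I expect is the $f$-component and the fact that the bottom-right block $-\Delta$ is second order: its right inverse $G_{u_\pm}$ is nonlocal, so $f_\pm$ is not compactly supported even though $g_\pm$ is, and the commutator $[\Delta,\widetilde\beta_\pm]$ produces a first-order operator in $f_\pm$ that must be bounded in $W^{k-2,p}$ using only the exponential smallness of $\widetilde\beta_\pm'$ and $\widetilde\beta_\pm''$'s support's interaction with Green-kernel estimates. Controlling this term is the structural reason for working in $W^{k,p}$ with $k\ge 2$ (cf.\ Remark \ref{rem:function-spaces}(1)) and for needing the full $C^\infty$ exponential decay of Proposition \ref{prop:dw-expdecay} rather than merely $C^0$ or $L^2$ decay; it is precisely the feature most distinctly absent from the pseudoholomorphic-curve gluing picture in the symplectization, where the analog of the $-\Delta$ block is replaced by the first-order equation $df=w^*\lambda\circ j$ coupled directly to the $\delbar^\pi$-component.
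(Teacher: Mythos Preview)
Your overall cut-and-paste strategy is sound, but two points separate your argument from the paper's and one of them is a genuine gap.

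First, the cut-off arrangement is inverted. With $\beta_-+\beta_+=1$, the sets $\{\beta_+\equiv 1\}$ and $\{\beta_-\equiv 1\}$ are the complements of $\supp\beta_-$ and $\supp\beta_+$ respectively, hence \emph{disjoint}. Your condition $\supp\widetilde\beta_\pm\subset\{\beta_\pm\equiv1\}$ then forces $\widetilde\beta_-+\widetilde\beta_+$ to vanish on the entire transition region of $\beta_\pm$, so that to leading order $D\Upsilon(\uapp)\,Q_{\text{app}}(\eta,g)=(\widetilde\beta_-+\widetilde\beta_+)(\eta,g)$, which misses $(\eta,g)$ on an $O(1)$ portion of the neck. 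The standard recipe requires the opposite inclusion $\supp\beta_\pm\subset\{\widetilde\beta_\pm\equiv1\}$, so that $\widetilde\beta_\pm\beta_\pm=\beta_\pm$ and the leading term is $(\beta_-+\beta_+)(\eta,g)=(\eta,g)$. With that correction the commutator shells sit where $\beta_\pm\eta$ already vanishes, and the smallness of $(Y^\pi_\pm,f_\pm)$ there must come from exponential dichotomy of $D\Upsilon_1^1(u_\pm)$ and Green-kernel decay for $-\Delta$; it does \emph{not} follow from the closeness of $\uapp,u_\pm$ to $\gamma'$ on the shell, since the commutator symbols $\widetilde\beta_\pm'\cdot\sigma(D)$ are $O(1)$, not $O(e^{-\delta K})$ as you assert.

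Second, and this is the genuinely different route, the paper handles the Reeb block differently and thereby sidesteps the obstacle you flag. Since the diagonal entry $-\Delta(\lambda(\cdot))\,dA$ depends only on the domain $(\dot\Sigma,j)$ and not on the map $w$, the paper defines $Q^\perp(\uapp)$ \emph{directly} as the Green operator of the Dirichlet Laplacian on the glued strip: the output $g$ solves $-\Delta g=\lambda(\eta)$ on $\dot\Sigma$ with $g|_{\partial\dot\Sigma}=0$, with no cut-and-paste at all. Only the $\xi$-block $Q^\pi_{\text{app}}$ is glued from $Q^\pi(u_\pm)$ via parallel transport and cut-offs, as in \eqref{eq:Qapp}. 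This eliminates the second-order commutator $[\Delta,\widetilde\beta_\pm]f_\pm$ entirely, so no Green-kernel decay estimate is needed; the remaining error lives in the first-order $\xi$-block (where the standard Floer-type argument applies) and in the relatively compact off-diagonal terms of \eqref{eq:matrix}. Your approach can be repaired, but the paper's is the simpler one precisely because it exploits the $w$-independence of the Laplacian.
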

We divide the proof into two parts, one is the definition of the approximate inverse
and the other is the proof of the required estimates.

\subsubsection{Definition of $Q_{\text{\rm app}}$}

Let $\eta \in L^p(\Lambda^{(0,1)}(\uapp^*TM))$. We define the families of the elements
in $\Omega^{(0,1)}_{1,p}(u_\pm^*TM)$ by
\beastar
\eta_{-,K}^\pi (\tau,t) & = &
\chi^+_{2K} \eta^\pi (\tau - 2K) \\
\eta_{+,K}^\pi(\tau,t) & = &
\chi^-_{2K} \eta^\pi(\tau + 2K)
\eeastar
for each $K \geq K_0$, respectively. We note
\beastar
\eta_{-,K}^\pi(\tau,t) & \in & \xi_{\uapp(\tau,t)} \subset T_{u_-(\tau-2K,t)}M,\\
 \eta_{+,K}^\pi(\tau,t) & \in & \xi_{\uapp(\tau,t)} \subset T_{u_+(\tau+2K,t)}M
\eeastar
by construction and so $\eta_{\pm,K}^\pi$ really define $L^{1,p}$-sections of
$\Lambda^{(0,1)}(u_\pm^* \xi)$ respectively.

By construction, we have
\bea\label{eq:suppetapmR}
\supp \eta_{-,K}^\pi & \subset & (-\infty, 2K] \times [0,1]\nonumber\\
\supp \eta_{+,K}^\pi & \subset & [-2K, \infty) \times [0,1].
\eea
Recall from the definition of $\uapp$ that
$$
\uapp ([-2K,2K] \times [0,1]) \subset U(\gamma_\pm).
$$
We denote by
$$
\Pi_{p}^q: T_pM \to T_qM
$$
the parallel transport along the short geodesic from $p$ to $q$
whenever $d(p,q) \leq \iota(g)$=injectivity radius of $g$. Obviously we
have $(\Pi_{p}^q)^{-1} = \Pi_q^p$ and by the property of triad connection,
$\Pi_{p}^q$ preserves the decomposition
$TM = \xi \oplus \R\langle R_\lambda \rangle$.

Denote
$$
(D\Upsilon(u_{\uapp}))^\pi: = \pi D\Upsilon(u_{\uapp})|_{W^{2,p}((\del \Sigma, \del \dot \Sigma),(\uapp^*TM,(\del\uapp)^*T\vec R))}
$$
where we use the decomposition
$$
TM = \xi \oplus \R\langle R_\lambda \rangle.
$$
We write
\be\label{eq:xi-decayR}
Y_{\pm,R} ^\pi =  Q_\pm^\pi (u_\pm)(\eta_{\pm,R}) \in W^{1,p}(u_\pm^*\xi).
\ee
We define the operator
$$
Q_{\text{\rm app}}^\pi (\uapp): L^p(u_{\text{\rm app}}^*\xi) \to W^{1,p}(u_{\text{\rm app}}^*\xi)
$$
by setting its value
$$
Y_{app}^\pi:= Q_{\text{\rm app}}^\pi(u_\pm,K)(\eta)
$$
to be
\be\label{eq:Qapp}
Y_{\text{\rm app}}^\pi(\tau,t)
= \begin{cases}Y_{-,K}^\pi (\tau+2K,t)  \hskip 2in \mbox{for } \, \tau \leq -K + 1\\
Y_{+,K}^\pi (\tau-2K,t) \hskip 2in \mbox{for } \, \tau \geq K - 1\\
\Pi_{\gamma'}^{\uapp(\tau,t)}\left(\chi_{2K}^-(\tau)
\Pi^{\gamma'}_{u_-(\tau,t)}(Y_{-,K}^\pi(\tau,t))
+ \chi_{2K}^+(\tau)\Pi^{\gamma'}_{u_+(\tau,t)}(Y_{+,K}^\pi (\tau,t))\right)\\
\hskip2.9in \mbox{for } \, \tau \in [-K +1, K -1].
\end{cases}
\ee

Next we determine the Reeb component as follows:
Let $G$  be the Green kernel of the operator of the $\Delta$, i.e., the value $G(f)$
is a solution satisfying
$$
\Delta(G(f)) = f
$$
for each given function $f \in L^p(\dot \Sigma, \del \dot \Sigma)$, whose
existence is a standard fact.
Then We define the operator $Q^\perp(\uapp)$
$$
Q^\perp(\uapp)(\eta): = G(\lambda(\eta)) \cdot R_\lambda
$$
In other words the function $g: = Q^\perp(\uapp)(\eta)$ solves the equation
\be\label{eq:*Deltag}
\begin{cases}
-\Delta g = \lambda(\eta) \quad  & \text{\rm on } \, \dot \Sigma \\
g = 0  \quad & \text{\rm on }\, \del \Sigma.
\end{cases}
\ee

\begin{defn}[The operator $Q_{\text{\rm app}}$]\label{defn:Q-app}
Let $Y_{\text{\rm app}}^\pi$ and $g$ be given as above. Then we define the operator
$Q_{\text{\rm app}}$ by setting its value to be
\be\label{eq:definition-Q}
Q_{\text{\rm app}}(\eta,g)  := Y_{\text{\rm app}}^\pi + g R_\lambda(\uapp).
\ee
\end{defn}

\subsubsection{Estimates for $Q_{\text{\rm app}}$}

Now we denote
$$
Y_{\text{\rm app}}:= Q_{\text{\rm app}}(\eta,g).
$$
To prove Proposition \ref{prop:approQestimate}, we need to establish
\be\label{eq:Qpi-norm}
\|Y_{\text{\rm app}}\|_{W^{2,p}} \leq C(\|\eta\|_{W^{1,p}} + \|g\|_{L^p})
\ee
and
\be\label{eq:DUpsilonQ-norm}
\|D\Upsilon_1(\uapp)(Y_{\text{\rm app}}^\pi) - \eta^\pi \| \leq \frac12
(\|\eta\|_{W^{1,p}} + \|g\|_{L^p}).
\ee
For the simplicity of notation, we will drop the subindex $\text{\rm app}$ and just write
$$
Y = Y_{\text{\rm app}}
$$
from now on in the rest of this section.

We recall from \eqref{eq:11-in-coordinates}, \eqref{eq:22-in-coordinates} that
the linearization operator   $D\Upsilon= D\Upsilon(\uapp)$ is decomposed into
\bea
D\Upsilon_1^1(\del_\tau)(Y^\pi) & = & \delbar^{\nabla^\pi}_\tau Y^\pi + B^{(0,1)}_\tau Y^\pi + T^{\pi,(0,1)}_{dw(\del_\tau)} Y^\pi
\label{eq:11-in-coordinates}\\
D\Upsilon_2^2(\del_\tau)(\beta) &= & \delbar \beta.
\label{eq:22-in-coordinates}
\eea

We start with the proof of \eqref{eq:DUpsilonQ-norm}.  Recalling \eqref{eq:Qapp}, we have
$$
Y^\pi(\tau,t)
= \Pi_{\gamma'(t)}^{\uapp(\tau,t)}\left(\chi_{2K}^-
\Pi^{\gamma'(t)}_{u_-(\tau,t)}(Y_{-,2K}^\pi(\tau,t))
+ \chi_{2K}^+ (\tau)\Pi^{\gamma'(t)}_{u_+(\tau,t)}(Y_{+,2K}^\pi (\tau,t))\right)
$$
for  $-2K \leq \tau \leq 2K$.  We observe that $Y_{\text{\rm app}}^\pi(\tau,t)
 \equiv 0$ on $[-2K +1, 2K-1]$.

Therefore we do estimates on the two regions $\pm [2K-1, \infty)$ separately.
Since the two regions are disjoint and the two cases are
similar, we will focus our calculations only on the region $[2K -1, \infty)$
and briefly comment the other at the end.

\medskip

{\bf On $[2K-1, \infty) \times [0,1]$:}
In this region, we have
$$
Y^\pi(\tau,t)
= \Pi_{\gamma'}^{\uapp(\tau,t)}\left(\chi_{2K}^+ (\tau)\Pi^{\gamma'}_{u_+(\tau,t)}
(Y_{+,2K}^\pi (\tau,t))\right).
$$
We express  the first order differential operator $D\Upsilon_1^1$ by
$$
D\Upsilon_1^1(\del_\tau) = \delbar^{\nabla^\pi}_{J_w} + \Gamma_w
$$
where we write $J_w : = J (w)$ and
$$
\delbar^{\nabla^\pi}_{J_w} Y^\pi = \frac12(\nabla^\pi_\tau + J_w \nabla^\pi_t)
$$
for the pull-back connection of $\nabla$ under the map $w$ which we
still denote by the same symbol $\nabla$, and $\Gamma_w$ the
zero-order operator
$$
\Gamma_w: = B^{(0,1)}_\tau + T^{\pi, (0,1)}_{dw(\del_\tau)}
$$
with
$$
B^{(0,1)}_\tau: = B^{(0,1)}(\del_\tau) .
$$
We also consider the translation
$I_K: \R \times [0,1] \to \R \times [0,1]$ given by
\be\label{eq:IK}
I_K(\tau,t): = (\tau - K,t).
\ee
Then we  denote by $w_K$ the translated map
$$
w_K = w\circ I_K.
$$

With these preparation, on $[0,\infty) \times [0,1]$, we compute
\beastar
D\Upsilon_1^1(\del_\tau) (Y^\pi_{\text{\rm app}})
& = &  (\delbar^{\nabla^\pi}_\tau+ \Gamma_w) Y^\pi_{\text{\rm app}} \\
& = &  (\delbar^{\nabla^\pi} _\tau+ \Gamma_w) \left(\chi_{2K}^+ (\tau)
\left(\Pi_{\gamma'}^{\uapp(\tau,t)}\left(\Pi^{\gamma'}_{u^+_{2K}}
 (Y_{+,2K}^\pi (\tau,t))\right)\right)\right) \\
 & = & \underbrace{ \chi_{2K}^+ (\tau) (\delbar^{\nabla^\pi}_
 \tau + \Gamma_{u^+_{2K}})
\left(\Pi_{\gamma'}^{u^+_{2K}}\left(\Pi^{\gamma'}_{u^+_{2K}}
 (Y_{+,2K}^\pi )\right)\right)}_{(I)}\\
 & {}& \quad   \underbrace{(\chi_{2K}^+)' (\tau)
\left(\Pi_{\gamma'}^{u^+_{2K}}\left(\Pi^{\gamma'}_{u^+{2K}}
 (Y_{+,2K}^\pi )\right)\right)}_{(II)}.
 \eeastar
Here for the last equality, we use the identity
$$
w(\tau,t) = \uapp(\tau,t) = u^+_{2K}(\tau,t)
$$
on $[2K, \infty) \times [0,1]$ and $(\delbar^{\nabla^\pi}_\tau \chi_{2K})(\tau,t) = \chi_{2K}'(\tau)$.
Therefore we have derived
\beastar
&{}& D\Upsilon_1^1(\del_\tau) (Y^\pi_{\text{\rm app}}) - \chi^+_{2K} \eta\\
& = & \underbrace{ \chi_{2K}^+ (\tau) \left((\delbar^{\nabla^\pi}_
 \tau + \Gamma_{u^+_{2K}})
\left(\Pi_{\gamma'}^{u^+_{2K}}\left(\Pi^{\gamma'}_{u^+_{2K}}
 (Y_{+,2K}^\pi )\right)\right) - \eta\right)}_{(I)}\\
 & {}& \quad   \underbrace{(\chi_{2K}^+)' (\tau)
\left(\Pi_{\gamma'}^{u^+_{2K}}\left(\Pi^{\gamma'}_{u^+_{2K}}
 (Y_{+,2K}^\pi )\right)\right)}_{(II)}.
\eeastar

For the term (I), we compute the commutator
$$
\left[\delbar^{\nabla^\pi}_\tau + \Gamma_{u^+_{2K}} ,
\Pi_{\gamma'}^{u^+_{2K}}\Pi^{\gamma'}_{u^+_{2K}}\right]
= 0
$$
since the triad connection $\nabla$ preserves $J$. Therefore we derive
\beastar
(I) & = & \chi_{2K}^+ (\tau)\left( (\delbar^{\nabla^\pi}_
 \tau + \Gamma_{u^+_{2K}})
\left(\Pi_{\gamma'}^{u^+_{2K}}\left(\Pi^{\gamma'}_{u^+_{2K}}
 (Y_{+,2K}^\pi) \right)\right) -\eta\right) \\
 & = & \chi_{2K}^+ (\tau)
\left(\Pi_{\gamma'}^{u^+_{2K}}\left(\Pi^{\gamma'}_{u^+_{2K}}(\delbar^{\nabla^\pi}_
 \tau + \Gamma_{u^+_{2K}}) (Y_{+,2K}^\pi)  - \eta\right)\right) \\
 &{}& + \chi_{2K}^+ (\tau)
\left(\Pi_{\gamma'}^{u^+_{2K}}\Pi^{\gamma'}_{u^+_{2K}}(\eta)  - \eta)\right)
 \eeastar
 On the other hand, we have
 $$
\| (\delbar^{\nabla^\pi}_
 \tau + \Gamma_{u^+_{2K}}) (Y_{+,2K}^\pi ) -\eta \|_{W^{2,p}} \leq C (\|\eta\|_{W^{1,p}} + \|g\|_{L^p})
 $$
 since $Y_{+,2K}^\pi  = (Q(u^+)(\eta,g))^\pi$ where $Q(u^+)$ is a right inverse of $D\Upsilon(u^+)$.
 Furthermore, it is easy to see
$$
\left|\chi_{2K}^+ (\tau)\left(
\Pi_{\gamma'}^{u^+_{2K}}\Pi^{\gamma'}_{u^+_{2K}}\eta\right)(\tau,t)  - \eta(\tau,t))\right| \leq C e^{-\delta K}|\eta(\tau,t)|
$$
because of the inequality
$$
\|\Pi_{\gamma'}^{u^+_{2K}}\Pi^{\gamma'}_{u^+_{2K}} -id\|_{C^0} \leq C e^{-\delta K}
$$
which in turn follows from $d_{C^0}(u^+_{2K}(\tau,\cdot) - \gamma') \leq C e^{-\delta K}$.

This proves
\be\label{eq:estimate-(I)}
\|(I)\|_{W^{2,p}} \leq  C (\|\eta\|_{W^{1,p}} + \|g\|_{L^p}).
\ee

Next we estimate $\|(II)\|_{W^{2,p}}$. We differentiate (II) twice and get
\beastar
\nabla^\pi (II) & = & (\chi_{2K}^+)'' (\tau)
\left(\Pi_{\gamma'}^{u^+_{2K}}\left(\Pi^{\gamma'}_{u^+_{2K}}
 (Y_{+,2K}^\pi )\right)\right)\, d\tau\\
& {}& \quad  + (\chi_{2K})'(\tau)
 \left(\Pi_{\gamma'}^{u^+_{2K}}\left(\Pi^{\gamma'}_{u^+_{2K}}
\nabla (Y_{+,2K}^\pi )\right)\right),
\eeastar
and
\beastar
\nabla^\pi \nabla^\pi (II) &= &(\chi_{2K}^+)''' (\tau)
\left(\Pi_{\gamma'}^{u^+_{2K}}\left(\Pi^{\gamma'}_{u^+_{2K}}
 (Y_{+,2K}^\pi )\right)\right)\, d\tau \otimes d\tau \\
& {}& \quad + (\chi_{2K})'(\tau)
 \left(\Pi_{\gamma'}^{u^+_{2K}}\left(\Pi^{\gamma'}_{u^+_{2K}}
\nabla^\pi\nabla^\pi(Y_{+,2K}^\pi )\right)\right)\\
& {}& \quad + 2(\chi_{2K}^+)''(\tau) \left(\Pi_{\gamma'}^{u^+_{2K}}\left(\Pi^{\gamma'}_{u^+_{2K}}
\nabla^\pi(Y_{+,2K}^\pi )\right)\right) \otimes d\tau.
\eeastar
Recalling that both $(\chi_{2K}^+)'''$ and $(\chi_{2K}^+)''$ are supported on
$[2K-1,2K]$ and
$$
Y_{+,2K}^\pi(\tau,t) = Y^\pi(\tau - 2K,t),
$$
we derive
\beastar
\|\nabla^\pi \nabla^\pi (II)\|_{L^p}^p &\leq &
\int_{2K-1}\chi^+_{2K} \int_0^1 \left( |Y_{+}^\pi(\tau - 2K,t)|^p\right. \\
&{}& \quad + \left. |\nabla^\pi Y_{+}^\pi(\tau - 2K,t)|^p
+ |(\nabla^\pi)^2Y_{+}^\pi(\tau - 2K,t)|^p\right) \\
& \leq & C^p \|Y^{+,\pi}\|_{W^{2,p}}^p.
\eeastar
This proves
\bea\label{eq:estimate-nabla(I)}
\|\nabla^\pi (II)\|_{L^p} \leq C \|Y^{+,\pi}\|_{W^{1,p}} \leq C \|Q(u^+)\| (\|\eta\|_{W^{1,p}} + \|g\|_{L^p} )\nonumber\\
\|\nabla^\pi \nabla^\pi (II)\|_{L^p} \leq C \|Y^{+,\pi}\|_{W^{2,p}} \leq C \|Q(u^+)\| (\|\eta\|_{W^{1,p}} + \|g\|_{L^p} ).
\eea
By adding up \eqref{eq:estimate-(I)} and \eqref{eq:estimate-nabla(I)}, we obtain
$$
\|D\Upsilon_1^1(\del_\tau) (Y^\pi_{\text{\rm app}}) - (\eta, g)\|_{[0,\infty) \times [0,1]}
 \leq C(\|\eta\|_{W^{1,p}} + \|g\|_{L^p}).
$$

\medskip

{\bf On the region $(-\infty,0] \times [0,1]$:}
By aplying the same calculation performed on $[0, \infty) \times [0,1]$ above
to the region $(-\infty,0] \times [0,1]$ and combining both, we obtain
$$
\|D\Upsilon_1^1(\del_\tau) (Y^\pi_{\text{\rm app}}) - (\eta, g)\|
 \leq C(\|\eta\|_{W^{1,p}} + \|g\|_{L^p}).
$$

Proving boundedness $\|Q_{\text{\rm app}}(\uapp)\| \leq C$
 is similar and easier and so omitted.
This finishes the proof of Proposition \ref{prop:approQestimate}.
\qed

Once Proposition \ref{prop:approQestimate} in our disposal,  we can construct a genuine right inverse by
$$
Q(\uapp) = Q_{\text{\rm app}}(\uapp)\circ (d\Upsilon \circ Q_{\text{\rm app}}(\uapp))^{-1}
$$
that satisfies \eqref{eq:Q-norm} and varies smoothly on the gluing parameters
$(u_+,u_-,K) \in \K+ \times \K_- \times [K_0,\infty)$.

Finally we apply the scheme of reducing the problem of finding a genuine solution
near to the approximate solution $\uapp$ to a fixed point problem which completes
the gluing problem of contact instantons such that the assignment
$$
(u_+,u_-,K) \mapsto \uapp: = u_+\widehat{\#}_K u_-
$$
defines a smooth embedding.

\part{Construction of contact instanton Legendrian Floer homology}
\label{part:homology}

In this part, we apply our gluing result combined with the compactification
result from \cite{oh:entanglement1}, and  give the construction of Legendrian
contact instanton  homology, denoted by $HI^*(\psi(R),R)$, of the pair $(\psi(R),R)$ for
any contactomorphism $\psi = \psi_H^1$ satisfying
$$
\|H\| < T_\lambda(M,R)
$$
that is required in Theorem \ref{thm:definition-intro}, leaning the full study in the DGA level
to \cite{oh:entanglement2} in general.  We will do this by first identifying
the chain group relevant to the problem of translated points of Sandon \cite{sandon-translated},
and then constructing the boundary map, the chain map and the chain homotopy map
on the group in the current context of Legendrian contact instanton  Floer homology.
The present part presumes the readers'
knowledge of the contents of \cite{oh:entanglement1} in addition to the standard Floer theory in
symplectic geometry given in e.g., \cite{floer-intersections}, \cite{hofer-salamon}, \cite[Section 3]{fooo:ham-Kuranishi},
especially the general construction of the boundary map, the chain map and the chain homotopy maps
in the symplectic Floer theory. This general scheme has been made into the abstract formalism of
\emph{linear K-systems} in \cite[Section 16]{fooo:book-kuranishi}, which can be also applied to the current
Legendrian contact instanton homology. Since we do not study orientations of the moduli spaces,
\emph{we will restrict ourselves to homology with $\Z_2$-coefficients without mentioning it in the rest of
the part leaving the study of orientation elsewhere.}

The following is a standard definition.

\begin{defn}\label{defn:spectrum} Let $\lambda$ be a contact form of contact manifold $(M,\xi)$ and $R \subset M$ a
connected Legendrian submanifold.
Denote by $\frak{Reeb}(M,\lambda)$ (resp. $\frak{Reeb}(M,R;\lambda)$) the set of closed Reeb orbits
(resp. the set of self Reeb chords of $R$).
\begin{enumerate}
\item
We define $\operatorname{Spec}(M,\lambda)$ to be the set
$$
\operatorname{Spec}(M,\lambda) = \left\{\int_\gamma \lambda \mid \lambda \in \frak Reeb(M,\lambda)\right\}
$$
and call the \emph{action spectrum} of $(M,\lambda)$.
\item We define the \emph{period gap} to be the constant given by
$$
T(M,\lambda): = \inf\left\{\int_\gamma \lambda \mid \lambda \in \frak{Reeb}(M,\lambda)\right\} > 0.
$$
\end{enumerate}
We define $\operatorname{Spec}(M,R;\lambda)$ and the associated $T(M,\lambda;R)$ similarly using the set
$\frak{Reeb}(M,R;\lambda)$ of Reeb chords of $R$.
\end{defn}
We set $T(M,\lambda) = \infty$ (resp. $T(M,\lambda;R) = \infty$) if there is no closed Reeb orbit (resp. no $(R_0,R_1)$-Reeb chord).
Then we define
\be\label{eq:TMR}
T_\lambda(M;R): = \min\{T(M,\lambda), T(M,\lambda;R)\} > 0
\ee
and call it the \emph{(chord) period gap} of $R$ in $M$.

\section{Definition of chain groups}
\label{sec:chain-groups}

Let $(M,\xi)$ be a contact manifold and $(R_0,R_1)$ be a pair of Legendrian submanifolds.

Following \cite{oh:entanglement1}, we consider contact triads $(M,\lambda,J)$ and  the associated boundary value problem
for $(\gamma, T)$ with $\gamma:[0,1] \to M$ and $T \in \R$.

\begin{defn}\label{defn:generators} We denote by
$$
\frak{X}(R_0,R_1)
$$
the set of nonnegative iso-speed Reeb chords, i.e., the set of pairs $(\gamma, T)$
satisfying
\be\label{eq:chord-equation}
\begin{cases}
\dot \gamma(t) = T R_\lambda(\gamma(t)),\\
\gamma(0) \in R_0, \quad \gamma(1) \in R_1.
\end{cases}
\ee
\end{defn}

It is useful for us to recall the following notion of
the \emph{Reeb trace} introduced in \cite{oh:entanglement1}.

\begin{defn}[Reeb trace] Let $R$ be a Legendrian submanifold of $(M,\lambda)$.
The \emph{Reeb trace} denoted by $Z_R$ is the union
$$
Z_R: = \bigcup_{t \in \R} \phi_{R_\lambda}^t(R).
$$
\end{defn}

We now recall the following nondegeneracy definition from \cite[Definition 10.1]{oh:entanglement1}.

\begin{defn}\label{defn:nondegeneracy-chords} We say a Reeb chord $(\gamma, T)$
of $(R_0,R_1)$ is nondegenerate if
the linearization map $\Psi_\gamma = d\phi^T(p): \xi_p \to \xi_p$ satisfies
$$
\Psi_\gamma(T_{\gamma(0)} R_0) \pitchfork T_{\gamma(1)} R_1  \quad \text{\rm in }  \,  \xi_{\gamma(1)}
$$
or equivalently
$$
\Psi_\gamma(T_{\gamma(0)} R_0) \pitchfork T_{\gamma(1)} Z_{R_1} \quad \text{\rm in} \, T_{\gamma(1)}M.
$$
\end{defn}

We denote by
$$
{\mathcal Leg}(M,\xi)
$$
the set of Legendrian submanifold and by ${\mathcal Leg}(M,\xi;R)$ its connected component
containing $R \in {\mathcal Leg}(M,\xi)$, i.e, the set of Legendrian submanifolds Legendrian isotopic to
$R$. We denote by
$$
\CP({\mathcal Leg}(M,\xi))
$$
the monoid of Legendrian isotopies $[0,1] \to {\mathcal Leg}(M,\xi)$. We have
natural evaluation maps
$$
\ev_0, \, \ev_1:\CP({\mathcal Leg}(M,\xi)) \to {\mathcal Leg}(M,\xi)
$$
and denote by
$$
\CP({\mathcal Leg}(M,\xi), R) = \ev_0^{-1}(R) \subset \CP({\mathcal Leg}(M,\xi))
$$
and
$$
\CP({\mathcal Leg}(M,\xi), (R_0,R_1)) = (\ev_0\times \ev_1)^{-1}(R_0,R_1) \subset \CP({\mathcal Leg}(M,\xi)).
$$

The following relative version of Theorem \ref{thm:ABW}  is proved in \cite[Appendix B]{oh:contacton-transversality}.

\begin{thm}[Theorem B.3 \cite{oh:contacton-transversality}]\label{thm:Reeb-chords-lambda-text}
Let $(M,\xi)$ be a contact manifold. Let  $(R_0,R_1)$ be a pair of Legendrian submanifolds
allowing the case $R_0 = R_1$.  There
exists a residual subset $\operatorname{Cont}^{\text{\rm reg}}_1(M,\xi) \subset \CC(M,\xi)$
such that for any $\lambda \in \operatorname{Cont}^{\text{\rm reg}}_1(M,\xi)$ all
Reeb chords from $R_0$ to $R_1$ are nondegenerate for $T > 0$, and
Bott-Morse nondegenerate when $T = 0$.
\end{thm}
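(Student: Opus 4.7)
The plan is to set up a universal moduli space of Reeb chords parametrized by the contact form, verify transversality of a universal endpoint map, and then invoke Sard--Smale. Fix an integer $\ell$ large (to be sent to $\infty$ via the Taubes trick), form the $C^\ell$-completion $\CC^\ell(M,\xi)$, and consider the smooth endpoint map
$$
G : R_0 \times [0,\infty) \times \CC^\ell(M,\xi) \longrightarrow M, \qquad G(p,T,\lambda) = \phi^T_{R_\lambda}(p),
$$
so that $G^{-1}(R_1)$ is exactly the universal space of nonnegative iso-speed Reeb chords from $R_0$ to $R_1$. The nondegeneracy of a $T>0$ chord in the sense of Definition \ref{defn:nondegeneracy-chords} is equivalent to transversality of the $\lambda$-fiber map $G(\cdot,\cdot,\lambda)$ to $R_1$: at $(p_0,T_0,\lambda_0)$ with endpoint $q_0$, the image of $dG$ in the $(p,T)$-directions equals $\Psi_\gamma(T_{p_0}R_0) + \R\, R_{\lambda_0}(q_0)$, and since $R_{\lambda_0}(q_0)\notin\xi_{q_0}$ while $T_{q_0}R_1\subset\xi_{q_0}$, transversality to $R_1$ in $M$ collapses to $\Psi_\gamma(T_{p_0}R_0)\pitchfork T_{q_0}R_1$ inside $\xi_{q_0}$.

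Next, I would verify that $G$ itself is transverse to $R_1$ along the universal zero set. This reduces to showing that, at any interior point $\gamma_0(t_*)$ of the trajectory, the infinitesimal variation $\delta R_\lambda(\gamma_0(t_*))$ produced by a perturbation $\delta\lambda = h\lambda_0$ (the tangent cone to contact forms for $\xi$) can be made to take any prescribed value in $T_{\gamma_0(t_*)}M$. Differentiating the defining relations $\lambda(R_\lambda)=1$ and $R_\lambda\lrcorner d\lambda=0$ yields
$$
\lambda(\delta R_\lambda) = -h, \qquad \delta R_\lambda \lrcorner d\lambda = dh - (R_\lambda h)\lambda,
$$
so the Reeb-component is controlled by $h(\gamma_0(t_*))$ and the $\xi$-component by $dh|_{\xi}(\gamma_0(t_*))$ via the nondegenerate pairing $d\lambda|_\xi$; both can be prescribed independently by a choice of $h \in C^\ell_c$ localized near $\gamma_0(t_*)$. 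Combined with the standard first-variation formula
$$
\frac{d}{d\epsilon}\bigg|_{\epsilon=0}\phi^{T_0}_{R_{\lambda_0+\epsilon h\lambda_0}}(p_0) = \int_0^{T_0} \bigl(\phi^{T_0-t}_{R_{\lambda_0}}\bigr)_*\bigl(\delta R_\lambda(\gamma_0(t))\bigr)\,dt,
$$
this gives surjectivity of $d_3 G$ onto $T_{q_0}M$. Hence $G$ is transverse to $R_1$, so $G^{-1}(R_1)$ is a $C^\ell$ Banach manifold and the projection to $\CC^\ell(M,\xi)$ is Fredholm (the fibers being finite-dimensional, since a chord is determined by $(p,T)$).

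Applying Sard--Smale, one obtains a residual subset of regular values in $\CC^\ell(M,\xi)$ consisting of $\lambda$'s for which every $T>0$ Reeb chord of $(R_0,R_1)$ is nondegenerate. For $T=0$, a chord is constant at a point of $R_0\cap R_1$. When $R_0=R_1=R$, the family is automatically Bott--Morse: the kernel of the linearized chord equation at $(p,0)$ is canonically $T_p R$, which varies smoothly with $p$ and coincides with the tangent of the chord family. When $R_0\neq R_1$, under the standing transversal-link hypothesis $R_0\cap R_1=\emptyset$ and there is nothing to check; in full generality one also invokes the generic transversality $R_0\pitchfork R_1$, giving clean intersection. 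The passage from $C^\ell$ to $C^\infty$ is handled by the Taubes trick: stratify chords by action windows $[0,N]$ with $N\in\N$, intersect the countable resulting family of residual sets over $N$ and $\ell$, and restrict to smooth contact forms. The result is the desired residual $\operatorname{Cont}_1^{\text{reg}}(M,\xi) \subset \CC(M,\xi)$.

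The hard part will be the surjectivity verification for $\delta R_\lambda$ within the constrained family of contact-form perturbations $\delta\lambda=h\lambda_0$: one must carefully check that prescribing the jet $(h,dh)$ at the interior point $\gamma_0(t_*)$ is enough to realize any target vector in $T_{\gamma_0(t_*)}M$, and then that the flow-transport of such local variations still fills $T_{q_0}M$ modulo the images from the $p$- and $T$-directions. The nondegeneracy of $d\lambda|_\xi$ is essential here, as is the fact that $\gamma_0$ is an immersed curve when $T_0>0$, so that transverse bump functions exist at interior times. Once this local surjectivity is established, the remainder of the argument follows the now-standard Sard--Smale plus Taubes template.
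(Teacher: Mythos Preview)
Your proposal is correct and complete in outline. The paper does not prove this theorem in-text but cites \cite[Appendix B]{oh:contacton-transversality}, indicating there an approach via the infinite-dimensional path-space section
\[
\Phi:(0,\infty)\times \CL^{1,2}(M;R_0,R_1)\times \CC(M,\xi)\to \CL^2,\qquad \Phi(T,\gamma,\lambda)=\dot\gamma-T\,R_\lambda(\gamma),
\]
together with the key reduction (Proposition~\ref{prop:nondeneracy-chords-text}) that nondegeneracy of a chord is equivalent to surjectivity of $d_{(\gamma,T)}\Phi_\lambda$. Your route is the finite-dimensional endpoint-map formulation: you replace the path space by $R_0\times[0,\infty)$ and the section $\Phi$ by transversality of the time-$T$ Reeb flow map to $R_1$. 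The two are equivalent --- a chord $(\gamma,T)$ corresponds to $(\gamma(0),T)$, and your transversality condition $\Psi_\gamma(T_{p_0}R_0)\pitchfork T_{q_0}R_1$ in $\xi_{q_0}$ is exactly the paper's nondegeneracy definition. Your approach is more elementary (no Banach bundle over a path space, and the Fredholm property of the projection is immediate from finite-dimensionality of the fiber), while the path-space setup has the advantage that the linearization $d\Phi_\lambda$ is literally the asymptotic operator $A_{(\gamma,T)}$ of \eqref{eq:DUpsilon}, so it plugs directly into the Fredholm and gluing analysis elsewhere in the paper. For the $T=0$ stratum with $R_0=R_1=R$, your Bott--Morse claim is precisely what the paper establishes independently in Proposition~\ref{prop:Bott-clean}. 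One minor technical point you should tighten: when the chord lies on a periodic Reeb orbit, a bump function $h$ supported near $\gamma_0(t_*)$ in $M$ may be seen by the trajectory at several times, so the variation-of-flow integral picks up multiple contributions; surjectivity still follows, but the argument needs a sentence to that effect (or restrict first to simple chords and pass to the general case).
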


Denote by $\LL(M;R_0,R_1)$ the space of paths
$$
\gamma: ([0,1], \{0,1\}) \to (M;R_0,R_1).
$$
We consider the assignment
\be\label{eq:Phi}
\Phi: (T,\gamma,\lambda) \mapsto \dot \gamma - T \,R_\lambda(\gamma)
\ee
as a section of the Banach vector bundle over
$$
(0,\infty) \times \CL^{1,2}(M;R_0,R_1) \times \Cont(M,\xi)
$$
where $\CL^{1,2}(M;R_0,R_1)$
is the $W^{1,2}$-completion of $\CL(M;R_0,R_1)$. We have
$$
\dot \gamma - T\, R_\lambda(\gamma)
\in \Gamma(\gamma^*TM; T_{\gamma(0)}R_0, T_{\gamma(1)}R_1).
$$
We  define the vector bundle
$$
\CL^2(R_0,R_1) \to (0,\infty) \times \CL^{1,2}(M;R_0,R_1) \times \Cont(M,\xi)
$$
whose fiber at $(T,\gamma,\lambda)$ is $L^2(\gamma^*TM)$. We denote by
$\pi_i$, $i=1,\, 2, \, 3$ the corresponding projections as before.

We denote $\frak{Reeb}(M,\lambda;R_0,R_1) = \Phi_\lambda^{-1}(0)$,
where
$$
\Phi_\lambda: = \Phi|_{ (0,\infty) \times \CL^{1,2}(M;R_0,R_1) \times \{\lambda\}}.
$$
Then we have
$$
{\frak Reeb}(\lambda;R_0,R_1) =  \Phi_\lambda^{-1}(0) = \frak{Reeb}(M,\xi) \cap \pi_3^{-1}(\lambda).
$$
The proof of Theorem \ref{thm:Reeb-chords-lambda-text}
relies on the following proposition which is also
 proved in \cite[Appendix B]{oh:contacton-transversality}.

\begin{prop} [Proposition B.4 \cite{oh:contacton-transversality}]
\label{prop:nondeneracy-chords-text}
 Suppose $R_0 \cap R_1 = \emptyset$.
A Reeb chord $(\gamma, T)$ of $(R_0,R_1)$ is nondegenerate if and only if
the linearization
$$
d_{(\gamma, T)}\Phi: \R \times W^{1,2}(\gamma^*TM;T_{\gamma(0)}R_0,T_{\gamma(1)}R_1)
\to L^2(\gamma^*TM)
$$
is surjective.
\end{prop}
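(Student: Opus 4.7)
The strategy is to reduce surjectivity of $d_{(\gamma,T)}\Phi$ to a finite-dimensional linear algebra condition at $\gamma(1)$ obtained by integrating the linearized equation along $\gamma$, and then identify that condition with the definition of nondegeneracy. Since $R_0 \cap R_1 = \emptyset$, any chord has $T>0$, so the Bott--Morse case does not arise.

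A direct computation using the triad connection $\nabla$ gives
$$
d_{(\gamma, T)}\Phi(Y,s) \;=\; \frac{DY}{dt} \;-\; T\,\nabla_Y R_\lambda \;-\; s\, R_\lambda(\gamma),
$$
so that solving $d_{(\gamma,T)}\Phi(Y,s) = \eta$ for a given $\eta \in L^2(\gamma^*TM)$ amounts to the inhomogeneous linear ODE $\frac{DY}{dt} - T\,\nabla_Y R_\lambda = \eta + s\, R_\lambda(\gamma)$ on $[0,1]$ with two-point boundary conditions $Y(0) \in T_{\gamma(0)}R_0$ and $Y(1) \in T_{\gamma(1)}R_1$. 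Standard ODE Fredholm theory on a compact interval applies; the general solution with initial data $Y_0 \in T_{\gamma(0)}M$ satisfies
$$
Y(1) \;=\; \Psi^{TM}_\gamma(Y_0) \;+\; P(\eta) \;+\; s\, q,
$$
where $\Psi^{TM}_\gamma : T_{\gamma(0)}M \to T_{\gamma(1)}M$ is the full linearized Reeb flow (whose restriction to $\xi_{\gamma(0)}$ is the map $\Psi_\gamma$ of Definition \ref{defn:nondegeneracy-chords}), $P : L^2 \to T_{\gamma(1)}M$ is a bounded linear map, and $q$ is the endpoint value of the particular solution with $Y_0 = 0$, $\eta = 0$, $s = 1$. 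Variation of constants shows $P$ is surjective (every vector in $T_{\gamma(1)}M$ is realizable as $Y(1)$ by concentrating $\eta$ near $t=1$). Hence surjectivity of $d_{(\gamma,T)}\Phi$ is equivalent to
$$
\Psi_\gamma(T_{\gamma(0)}R_0) \;+\; \R\langle q \rangle \;+\; T_{\gamma(1)}R_1 \;=\; T_{\gamma(1)}M.
$$

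The remaining step is to identify $q$ explicitly. I claim $Y(t) = t\, R_\lambda(\gamma(t))$ is the particular solution in question, so $q = R_\lambda(\gamma(1))$. Indeed, $Y(0) = 0$ and, using $\dot\gamma = T\, R_\lambda(\gamma)$,
$$
\frac{D(t R_\lambda(\gamma))}{dt} \;-\; T\,\nabla_{t R_\lambda(\gamma)} R_\lambda
\;=\; R_\lambda(\gamma) \;+\; tT\,\nabla_{R_\lambda} R_\lambda \;-\; tT\,\nabla_{R_\lambda} R_\lambda
\;=\; R_\lambda(\gamma),
$$
the two connection terms cancelling independently of the choice of $\nabla$. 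Since $R_1$ is Legendrian, $R_\lambda(\gamma(1)) \notin T_{\gamma(1)}R_1$ and $T_{\gamma(1)}Z_{R_1} = T_{\gamma(1)}R_1 \oplus \R\langle R_\lambda(\gamma(1))\rangle$, so the displayed surjectivity condition is
$$
\Psi_\gamma(T_{\gamma(0)}R_0) \,+\, T_{\gamma(1)}Z_{R_1} \;=\; T_{\gamma(1)}M,
$$
which is precisely the nondegeneracy condition in the $TM$-form stated in Definition \ref{defn:nondegeneracy-chords}. This gives both implications of the proposition.

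Nothing here is expected to be a serious obstacle: the argument is essentially an ODE shooting argument on $[0,1]$ together with the clean ansatz $Y(t) = tR_\lambda(\gamma(t))$ for the inhomogeneous Reeb direction. The one subtlety worth double-checking is the identification of $\Psi^{TM}_\gamma$ restricted to $\xi_{\gamma(0)}$ with the Poincar\'e map $\Psi_\gamma$, which follows from the fact that the Reeb flow preserves both $\xi$ and $\R\langle R_\lambda\rangle$, together with $T_{\gamma(0)}R_0 \subset \xi_{\gamma(0)}$.
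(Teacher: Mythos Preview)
The paper does not give its own proof of this proposition; it is quoted from \cite[Proposition B.4]{oh:contacton-transversality}, so there is no in-paper argument to compare against directly. Your shooting argument is correct. The linearization formula, the surjectivity of the endpoint map $P$ via variation of constants, and the ansatz $Y(t)=t\,R_\lambda(\gamma(t))$ giving $q=R_\lambda(\gamma(1))$ all check out, and the final reduction to $\Psi_\gamma(T_{\gamma(0)}R_0)+T_{\gamma(1)}Z_{R_1}=T_{\gamma(1)}M$ is exactly the $TM$-form of Definition~\ref{defn:nondegeneracy-chords}.

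One minor comment: while your particular-solution computation is indeed connection-independent, the identification of the homogeneous fundamental solution $\Psi^{TM}_\gamma$ with $d\phi^T_{R_\lambda}$ requires either torsion-freeness or the triad-connection axiom $T(R_\lambda,\cdot)=0$ (Theorem~\ref{thm:connection}(2)); this is precisely the subtlety you already flag at the end, so there is no gap. The only in-paper computation in a similar spirit is the lemma inside the proof of Proposition~\ref{prop:Bott-clean}, which handles the Bott--Morse case $T=0$ by computing kernel and cokernel directly via the $L^2$-adjoint rather than by shooting; your approach is the natural nondegenerate analogue.
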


With these preparations, we are now ready to give the definition of chain group
$$
CI_\lambda^*(R_0, R_1)
$$
by considering two cases, $\psi(R_0) \cap R_1 = \emptyset$ and $\psi(R_0) = R_0$.
We recall that for a generic choice of contactomorphism $\psi$, we have
$$
\psi(R_0) \cap R_0 = \emptyset
$$
by the dimensional reason.
(Here $CI_\lambda$ stands for \emph{contact instanton for $\lambda$} as well as the letter $C$ also
stands for \emph{complex} at the same time.)

For the case with $\psi(R_0) \cap R_0 = \emptyset$, the definition is simple. In this case,
we have a canonical one-to-one correspondence
$$
\frak{X}(\psi(R), R) \Longleftrightarrow \frak{Reeb}(\psi(R),R).
$$
Under the nondegeneracy assumption in the sense of Definition \ref{defn:nondegeneracy-chords}
we consider the $\Z_2$-vector space
$$
CI_\lambda^*(\psi(R), R) : = \Z_2\langle\frak{X}(\psi(R), R) \rangle.
$$
Here $CI_\lambda$ stands for \emph{contact instanton for $\lambda$} as well as the letter $C$ also
stands for \emph{complex} at the same time.
Note that the group is naturally \emph{positively} filtered by the action of Reeb orbits
$$
\CA(\gamma) = \int \gamma^*\lambda > 0
$$
for a generator $\gamma \in \frak{Reeb}(\psi(R,R)$. When $R$ is compact and
$M$ is tame, we have the period gap $T(M,\lambda;\psi(R),R) > 0$.

\subsection{The Morse-Bott nondegenerate case $R_0 = R_1 = R$}

Now we focus on the case with $\psi(R_0) = R_0 = R$ e.g., $\psi =id$.
This is the Morse-Bott nondegenerate case.

We first consider the Moore path space as the set
$$
\Theta(R;M): = \left\{ (\gamma,T) \, \Big\vert\, \gamma: [0,1] \to M,\,  \gamma(0), \, \gamma(1) \in R,
\quad \int \gamma^*\lambda = T \right \}.
$$
Recall in our definition that the set
$\frak{X}(R,R): = \frak{X}(M,R;\lambda)$ consists of iso-speed Reeb chords, i.e., the
set of pairs  $(\gamma, T)$  satisfying
$$
\dot \gamma = T R(\lambda)
$$
with $T \in \R$ and $\gamma$. Then we have the decomposition
$$
\frak{X}(R,R) = \frak{X}_{\{T=0\}}(R,R)  \sqcup \frak{X}_{\{T > 0\}}(R,R) \subset \Theta(R;M)
 $$
and $\frak{X}_{\{T=0\}}(R,R)$ is in one-to-one correspondence by the evaluation map
$$
(\gamma,0) \mapsto \gamma(0).
$$
The following definition is the open string analogue of \cite[Definition 1.7]{bourgeois} and \cite[Definition 1.2]{oh-wang:CR-map2}.

\begin{defn}\label{defn:morse-bott}
Let $\lambda$ be a contact form and consider $\frak{X}(R,R)$. We say a
connected component of $\frak{X}(R,R)$ is a Morse-Bott component
in $\Theta(R;M)$ if the following holds:
\begin{enumerate}
\item The tangent space at
every pair $(T,z) \in \frak{X}(R,R)$ therein coincides with $\ker d_{(\gamma,T)}\Phi_\lambda$.
\item The locus $Q \subset M$ of Reeb chords is embedded.
\item The 2-form $d\lambda|_Q$ associated to the locus $Q$ of Reeb chords has constant rank.
\end{enumerate}
\end{defn}

For the simplicity of notation, we denote
$$
\widetilde R: = \frak{X}_{T=0}(R,R)
\cong \{ \gamma:[0,1] \to M \mid \gamma(t) \equiv q \in R\}.
$$
The following general proposition seems to be of interest on its own.

\begin{prop}\label{prop:Bott-clean} Let $(M,\xi)$ be a contact manifold, and
let $R$ be a compact Legendrian submanifold.
Consider Moore chords $(\gamma, T)$ of $\widetilde R$ i.e., pairs
$\gamma:[0,1] \to M$ with
\be\label{eq:Moore-path-action}
\gamma(0),\, \gamma (1) \in R, \quad \int \gamma^*\lambda = T \geq 0.
\ee
Then for any choice of contact form $\lambda$ of $(M,\xi)$,
the set $\widetilde R$ is a Morse-Bott component of $\frak{X}(R,R)$ in $\Theta(R;M)$.
\end{prop}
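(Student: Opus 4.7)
\medskip

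\noindent\textbf{Proof proposal for Proposition \ref{prop:Bott-clean}.}
The plan is to verify the three defining properties of Definition \ref{defn:morse-bott} one by one, using only two inputs: the Legendrian condition $TR \subset \xi = \ker\lambda$, and the transversality $R_\lambda \pitchfork \xi$. First, I would set up notation by identifying $\widetilde R$ with $R$ itself via the evaluation map $(\gamma_q,0)\mapsto q$, where $\gamma_q\equiv q$; so $\widetilde R$ is naturally a compact embedded submanifold of $\Theta(R;M)$ and the locus $Q$ of Reeb chords associated to $\widetilde R$ is precisely $R\subset M$. This immediately gives condition (2) (embeddedness of $Q$), and since $\lambda|_R\equiv 0$ by the Legendrian hypothesis, we get $d\lambda|_R\equiv 0$, so $d\lambda|_Q$ has constant rank $0$, which is condition (3).

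The main work is condition (1). I would compute $d_{(\gamma_q,0)}\Phi_\lambda$ directly from \eqref{eq:Phi}. A tangent vector at $(\gamma_q,0)$ is a pair $(Y,S)$ with $S\in\R$ and $Y\in W^{1,2}(\gamma_q^*TM)$ satisfying $Y(0),Y(1)\in T_qR$. Since $\gamma_q$ is constant, the covariant derivative of $Y$ along $\gamma_q$ reduces to an ordinary derivative in $T_qM$, and the linearization takes the simple form
\begin{equation*}
d_{(\gamma_q,0)}\Phi_\lambda(Y,S) \;=\; \dot Y - S\, R_\lambda(q).
\end{equation*}
An element of $\ker d_{(\gamma_q,0)}\Phi_\lambda$ therefore satisfies $Y(t)=Y(0)+tS\,R_\lambda(q)$, together with $Y(0),Y(1)\in T_qR$. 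Subtracting the two boundary conditions yields $S\,R_\lambda(q)=Y(1)-Y(0)\in T_qR$. Because $R$ is Legendrian we have $T_qR\subset\xi_q$ and $R_\lambda(q)\notin \xi_q$, forcing $S=0$; then $Y(t)\equiv Y(0)\in T_qR$ is a constant vector in $T_qR$. Thus
\begin{equation*}
\ker d_{(\gamma_q,0)}\Phi_\lambda \;=\; \{(Y,0)\mid Y(t)\equiv v,\ v\in T_qR\} \;\cong\; T_qR,
\end{equation*}
which is exactly the tangent space of $\widetilde R$ at $(\gamma_q,0)$ under the identification $\widetilde R\cong R$. This establishes (1).

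I expect the only subtle point to be the precise off-shell set-up: one must view $\widetilde R$ as a submanifold of the Moore path space $\Theta(R;M)$ (not just of the fixed-$T$ slice), so that the variation $S$ of the action parameter is included in the tangent space of $\frak{X}(R,R)$, and then check that the clean intersection identity $\ker d_{(\gamma_q,0)}\Phi_\lambda = T_{(\gamma_q,0)}\widetilde R$ really holds as vector subspaces of $\R\oplus W^{1,2}(\gamma_q^*TM;T_qR,T_qR)$. The transversality $R_\lambda(q)\not\in T_qR$ is what kills the $S$-direction from the kernel and is the one place where the Legendrian hypothesis is used nontrivially; once this is in place, the remaining verifications are formal. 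No nondegeneracy hypothesis on $\lambda$ is required, so the statement holds for every contact form of $(M,\xi)$, as claimed.
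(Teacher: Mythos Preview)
Your proof is correct and follows essentially the same approach as the paper: both verify conditions (2) and (3) immediately from the Legendrian property, then compute the linearization $d_{(\gamma_q,0)}\Phi_\lambda(Y,S)=\dot Y - S\,R_\lambda(q)$ and use $R_\lambda(q)\notin T_qR\subset\xi_q$ to force $S=0$ and identify the kernel with $T_qR$. The paper additionally computes the cokernel and shows the index is zero, but this is explicitly flagged as being ``for a future purpose'' and is not needed for the proposition itself.
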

\begin{proof} Clearly the locus of $\widetilde R$ in $M$ is $R$ which is embedded and
$d\lambda$ has constant rank 0 which verifies Property (2) and (3) in Definition \ref{defn:morse-bott}.
It remains to show Property (1).

Let $\gamma_q: [0,T] \to M$ with $q \in R$ be a constant Reeb chord
valued at $q \in R \subset M$.

We consider the assignment
\be\label{eq:Phi-R}
\Phi_\lambda: (\gamma,T) \mapsto \dot \gamma - T R_\lambda (\gamma)
\ee
as a section of the Banach vector bundle over
$$
[0,\infty) \times \CL^{1,2}(R;M)
$$
where $\CL^{1,2}(R;M)$ is the $W^{1,2}$-completion of $\Theta(R;M)$.
Clearly $\widetilde R \cong R$ is a smooth manifold, and its tangent space $(\gamma_q,0)$
is isomorphic
$$
T_q R \oplus \{0\}
$$
whose dimension is $\dim R$.
Now we compute $\ker d_{(\gamma_q,0)}\Phi_\lambda$. (For a future purpose, we also compute
$\coker d_{(\gamma_q,0)}\Phi_\lambda$.)

\begin{lem}
The linearized operator
$$
d_{(\gamma_q,0)}\Phi_\lambda: T_{(\gamma_q,0)}\Theta(R;M) \to \CL^{1,2}(R;M)
$$
has
\be\label{eq:ker-coker}
\ker d_{(\gamma_q,0)}\Phi_\lambda \cong T_q R, \quad \coker d_{(\gamma_q,0)}\Phi_\lambda \cong
(T_q R \oplus \R\langle R_\lambda(q)\rangle)^\perp.
\ee
In particular $\text{\rm Index } d_{(\gamma_q,0)} \Phi_\lambda = 0$.
\end{lem}
\begin{proof}
An element of $\ker d_{(\gamma_q,0)}\Phi_\lambda$ is characterized by a pair $(\xi,b)$
with $b \in \R$ and $\xi \in L^{1,2}(\gamma_q^*TM)$ satisfying the boundary value problem
\be\label{eq:ker-(xi,b)}
\begin{cases}
\frac{d \xi}{dt} - b R_\lambda(q) = 0 \\
\xi(0), \, \xi(1) \in T_q R.
\end{cases}
\ee
From this it is straightforward to check $b = 0$ and
$$
\ker d_{(\gamma_q,0)}\Phi_\lambda = \{(\xi, 0) \mid \xi(t) \equiv v \in T_qR\} \cong T_q R.
$$
This precisely the same as $T_{(\gamma_q,0)}\widetilde R$ which satisfies the property (1) of
Definition \ref{defn:morse-bott}.

Now we examine the cokernel of $d_{(\gamma_q,0)}\Phi_\lambda$. By considering the $L^2$-adjoint operator
$$
(d_{(\gamma_q,0)}\Phi_\lambda)^\dagger,
$$
and by integration by parts of the equation for $\eta$
\be\label{eq:L2-adjoint}
\int_0^1 \left\langle \frac{d\xi}{dt} (t)- b R_\lambda(q), \eta(t) \right \rangle \, dt = 0
\ee
for all $(\xi,b)$  satisfying  $\xi(0), \, \xi(1) \in T_q R$,
we derive that  an $L^2$-cokernel element $\eta \in L^2(\gamma_q^*TM)$ is
characterized by the equation
\be\label{eq:coker-eta}
\begin{cases}
\frac{d \eta}{dt} (t) = 0, \quad \left(\int_0^1 \eta(t)\, dt\right) \perp R_\lambda(q), \\
\eta(0), \, \eta(1) \perp T_q R.
\end{cases}
\ee
It is immediate to check that the only solution $\eta$ satisfying \eqref{eq:coker-eta}
is $\eta(t) \equiv v \in (T_q R \oplus \R\langle R_\lambda(q)\rangle)^\perp$.
We note that
$$
\dim (T_q R \oplus \R\langle R_\lambda(q)\rangle)^\perp = \dim  T_q R = \dim R.
$$
Therefore $\text{\rm Index}  d_{(\gamma_q,0)}\Phi_\lambda = 0$.
\end{proof}

This finishes the proof of the proposition.
\end{proof}

We apply this Morse-Bott property and define the Morse-Bott chain group to be
\be\label{eq:chaingroup-0}
CI_\lambda^*(R;M): = C^*(R) \oplus Z_2 \langle\frak{X}_{T > 0} (R;M)\rangle
\ee
where we take a suitable cochain complex $C^*(R)$ of $R$, e.g., the de Rham complex of
$R$ or the Morse complex of a Morse function $f$ on $R$. For the simplicity, we take
$$
C^*(R) = CM^*(f;R)
$$
here for a $C^2$-small Morse function $f$, in particular with
$$
\|f\|_{C^0} < T_\lambda(M;R).
$$
Here the group $CM^k(M;R)$ is the free abelian group generated by
the set $\text{Crit}(f)$ of critical points of $f$ of Morse index $k$.

\subsection{Description of short Reeb chords}

In this subsection, we describe the structure of group $CM^*(f;R)$ explicitly in terms of
a Darboux-Weinstein chart of the Legendrian submanifold $R \subset M$.
We adapt the exposition given in \cite[Section 2]{BKO:formality}
in the setting of Morse-Bott Lagrangian Floer theory similar to the current situation.

By taking a (Legendrian) Darboux-Weinstein chart on a neighborhood $U \supset R$,
we may assume the pair $(R, U)$  is (strictly) contactomorphic $(V, 0_{J^1R})$ for a
neighborhood $V \supset 0_{J^1 R}$ of the one-jet bundle
$$
(J^1R, \lambda_0), \quad \lambda_0 = dz - \pi_{J^1,T^*}^*\theta
$$
where $\pi_{J^1,T^*}: J^1R \to T^*R$ is the projection under the identification
$$
J^1R = T^*R \times \R
$$
and $\theta = pdq$ is the Liouville one-form on $T^*R$.

Let $f$ be a Morse function on
$R$ such that $0_{T^*R} \pitchfork \text{\rm Image } df$.
We regard $R_f: = \text{\rm Image } df$ as a subset of $J^1R$ or of $M$
depending on the circumstances through the identification via the Darboux-Weinstein chart.

The following lemma is an immediate
consequence of the implicit function theorem.

\begin{lem}\label{lem:perturbed-cord}
There exists some $0 < \epsilon_0 < \frac{T_\lambda(M,R)}2$ such that
$$
\frak X_{\leq \epsilon_0}(R_f \cap R) \cong \text{\rm Image } df \cap 0_{J^1 R}
$$
where $R_f= \text{\rm Image} df$, and so we have the decomposition
$$
CI^*(R_f,R;M) = CI^*_{< \epsilon_0}(R_f,R;M) \oplus CI^*_{> T_\lambda(M;R) - \epsilon_0}(R_f,R;M)
$$
provided $\|f\|_{C^2} \leq  \epsilon_0$.
\end{lem}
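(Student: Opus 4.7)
The plan is to trust the Darboux--Weinstein normal form and reduce everything to an explicit computation in the jet bundle, together with a soft compactness argument to produce the action gap.

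First I would pass to the strict Darboux--Weinstein chart $(V,\lambda_0)$ with $\lambda_0 = dz - \theta$ on $J^1R$, under which $R$ becomes the zero section and $R_f$ becomes the one-jet $\{(q,df(q),f(q))\}$. Since $R_{\lambda_0} = \partial_z$, the Reeb flow is simply $\phi^t(q,p,z) = (q,p,z+t)$. Hence any Reeb chord $(\gamma,T)$ from $R_f$ to $R$ whose image is contained in $V$ must take the form
$$
\gamma(t) = (q_0,\, df(q_0),\, f(q_0) + Tt),
$$
and the endpoint condition $\gamma(1)\in R = 0_{J^1R}$ forces $df(q_0) = 0$ together with $T = -f(q_0)$. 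This is precisely the bijection
$$
\mathfrak{X}_{\mathrm{short}}(R_f,R) \longleftrightarrow \mathrm{Crit}(f) \;\cong\; \mathrm{Image}\,df \cap 0_{J^1R},
$$
and the action of each short chord is bounded by $\|f\|_{C^0} \le \epsilon_0$.

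Second, I would establish the action gap: for $\epsilon_0$ sufficiently small every Reeb chord of $(R_f,R)$ that exits $V$ has action $T > T_\lambda(M,R) - \epsilon_0$. I would argue by contradiction and compactness. Fix a slightly larger tubular neighborhood $V'\supset\overline V$ and suppose a sequence of Morse functions $f_k$ with $\|f_k\|_{C^2}\to 0$ admits Reeb chords $(\gamma_k,T_k)$ from $R_{f_k}$ to $R$ with $T_k\in[\epsilon_0,\,T_\lambda(M,R)-\epsilon_0]$ that exit $V$. Since $T_k$ stays in a bounded interval and the endpoints $\gamma_k(0)\in R_{f_k}$, $\gamma_k(1)\in R$ lie in the compact set $\overline{V'}\cup R$, Arzel\`a--Ascoli applied to the Reeb ODE extracts a subsequence converging to a Reeb chord $\gamma_\infty$ with $\gamma_\infty(0),\gamma_\infty(1)\in R$ and action $T_\infty\in[\epsilon_0,\,T_\lambda(M,R)-\epsilon_0]$. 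This produces either a non-trivial $R$-self-chord or a closed Reeb orbit of action strictly less than $T_\lambda(M,R)$, contradicting the definition of the chord/period gap.

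Combining the two steps yields the stated decomposition: the action filtration on $CI^*(R_f,R;M)$ splits as
$$
CI^*(R_f,R;M) \;=\; CI^*_{<\epsilon_0}(R_f,R;M) \oplus CI^*_{> T_\lambda(M,R)-\epsilon_0}(R_f,R;M),
$$
with the first summand generated by the critical points of $f$ (i.e.\ the intersection with the zero section) and the second summand detecting all non-local chords.

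The main obstacle, in my view, is Step 2 (the action gap): the geometric picture is clear, but one must rule out Reeb chords that spend only a short time outside $V$ yet have action in the intermediate range. The $C^2$-smallness of $f$ is exactly what controls the perturbation of the endpoint constraint: $R_f$ sits $\epsilon_0$-close to $R$ in the $C^1$ sense, so any would-be intermediate chord is a small perturbation of a genuine $R$-to-$R$ Reeb arc or closed orbit, and the compactness argument transports the gap $T_\lambda(M,R)$ from the unperturbed problem to the perturbed one. Everything else (Step 1 and the final assembly) is an algebraic consequence of the Darboux--Weinstein normal form and the nondegeneracy already recorded in Proposition~\ref{prop:Bott-clean}.
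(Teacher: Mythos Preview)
Your proof is correct and is essentially an expanded version of the paper's own argument, which simply reads ``This is an immediate consequence of $C^2$-smallness of $df$ [and] the transversality requirement $0_{T^*R} \pitchfork \text{\rm Image } df$.'' Your Step~1 unpacks the bijection with $\mathrm{Crit}(f)$ explicitly in the Darboux--Weinstein chart, and your Step~2 compactness argument is the standard way to make precise the implicit-function-theorem claim that no chords of intermediate action can appear under a $C^2$-small Legendrian perturbation; the mention of a closed Reeb orbit in Step~2 is unnecessary (the limit is always an $R$-self-chord of action $\ge \epsilon_0$, already contradicting $T(M,\lambda;R)$), but this does not affect the argument.
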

\begin{proof}  This is an immediate consequence of  $C^2$-smallness of $df$
the transversality requirement $0_{T^*R} \pitchfork \text{\rm Image } df$.
\end{proof}

By the generic transversality theorem, Theorem \ref{thm:Reeb-chords}
under the perturbation of Legendrian boundary from \cite[Appendix B]{oh:contacton-transversality},
we can choose $f$ so that
$\frak{X}(R_f,R)$ is nondegenerate and the statement of the above lemma holds.
We have one-to-one correspondence
$$
\frak{X}_{\leq \epsilon_0}(R_f,R) \cong \text{\rm Crit } f
$$
and hence
$$
\Z \langle \frak{X}_{\leq \epsilon_0}(R_f,R) \rangle \cong
\Z \langle \text{\rm Crit } f \rangle.
$$
Moreover we now show that $\Z\langle\frak{X}_{\leq \epsilon_0}(R_f,R)\rangle$ is a subcomplex
of $CI^*(R_f, R;M)$ which is isomorphic to
the Morse complex $(C^*(f),d)$ of the Morse function $f : R \to \R$.
We write by $\gamma_p$ the short Reeb chord associated to the point $p \in \Crit f$
and by $d$ the differential of the Morse complex $CM^*(f)$.

On the other hand we write $\frak m^1$ for the full complex
$CI_\lambda^*(R_f,R;M)$ with the general construction of
Fukaya-type categorical setting in our mind.  More specifically, we have the formula
\be\label{eq:m1}
\frak m^1(\langle \gamma_+ \rangle) = \sum_{\gamma_-} \#(\CM(\gamma_-,\gamma_+) \langle \gamma_- \rangle.
\ee
We will prove the general generic transversality result for the choice of $J$ such that the
relevant moduli spaces $\CM(\gamma_-,\gamma_+)$ are transversal in the next subsection.
For such a $J$ applied to the pair $(R_f,R)$ in $M$, we have the following structure theorem of $\frak m^1$.

\begin{prop}\label{prop:short-complex}
The operator $\mathfrak m^1$ has  the matrix form
$$
\mathfrak m^1 = \left(\begin{matrix} \pm d & 0 \\
* & \mathfrak m^1_{> 0}
\end{matrix}\right)
$$
with respect to the above decomposition.
\end{prop}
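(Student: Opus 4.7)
The plan is to verify the two claims implicit in the matrix form: the vanishing of the off-diagonal block labelled $0$, which is a pure action argument, and the identification of the diagonal block with $\pm d$, the Morse differential of $f$, on the short summand. The former uses only Stokes' theorem; the latter is a $C^2$-small adiabatic reduction of short contact instantons to Morse gradient trajectories of $f$.

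First I would establish action monotonicity. For any $u \in \CM(\gamma_-,\gamma_+)$ with Legendrian boundary on $(R_f, R)$, Stokes' theorem on $\R \times [0,1]$ together with $\lambda|_R = \lambda|_{R_f} = 0$ gives
\begin{equation*}
T_+ - T_- = \int_{\R \times [0,1]} u^*d\lambda,
\end{equation*}
and the identities $\bar\partial^\pi u = 0$ and $d\lambda(\cdot,J\cdot) = g_\xi$ make the integrand equal to $|\pi\partial_\tau u|^2\, d\tau\wedge dt$; hence $T_+ - T_- = E^\pi(u) \geq 0$. If a trajectory had $\gamma_+ \in \mathfrak X_{\leq \epsilon_0}$ and $\gamma_- \in \mathfrak X_{> T_\lambda(M;R)-\epsilon_0}$, then $T_+ \geq T_-$ would force $\epsilon_0 \geq T_+ \geq T_- > T_\lambda(M;R) - \epsilon_0$, contradicting the choice $\epsilon_0 < T_\lambda(M;R)/2$ from Lemma \ref{lem:perturbed-cord}. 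Hence $\CM(\gamma_-,\gamma_+) = \emptyset$ for all such pairs, and the corresponding block of $\mathfrak m^1$ vanishes.

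For the diagonal block, I would work in the Darboux--Weinstein chart identifying a neighborhood of $R$ in $M$ with a neighborhood of the zero section of $(J^1R, dz - p\,dq)$, in which $R_f = \mathrm{Image}(j^1 f)$ and the short Reeb chords $\gamma_p$ correspond bijectively to $p \in \Crit f$ via Lemma \ref{lem:perturbed-cord}. The argument has three steps. First, for $u \in \CM(\gamma_p,\gamma_q)$ with $p,q \in \Crit f$ one has $E^\pi(u) = T_+ - T_- = O(\|f\|_{C^0})$; combining this with the elliptic estimates of Theorem \ref{thm:higher-regularity}, the exponential decay of Proposition \ref{prop:C0-expdecay}, and the period gap $T_\lambda(M;R)$ to rule out bubbling and escape from the chart, one shows that every such $u$ remains $C^0$-close to $R$ once $\|f\|_{C^2}$ is sufficiently small. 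Second, in the $J^1R$-chart, writing $u=(q,p,z)$ and expanding the contact instanton system to leading order in $f$ reduces the equation to the gradient flow of $f$ on $R$, with $p$ and $z$ determined algebraically by the contact constraint and the coclosedness $d(u^*\lambda\circ j)=0$. Third, a Floer--Weinstein type implicit function argument, using the linearization formula of Lemma \ref{lem:explicit-linearization} together with the Fredholm and gluing framework of the present paper, promotes this formal reduction to a diffeomorphism $\CM(\gamma_p,\gamma_q) \cong \CM^{\mathrm{Morse}}(p,q;f)$ for generic compatible $J$ and $\|f\|_{C^2}$ sufficiently small.

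The main obstacle is the third step: the rigorous adiabatic identification. This is the contact-instanton analogue of the classical Floer--Weinstein theorem relating the Lagrangian Floer complex of a $C^2$-small graph deformation to Morse homology. The crucial technical input is that in Darboux--Weinstein coordinates the linearization $D\Upsilon(u)$ described by Lemma \ref{lem:explicit-linearization} degenerates, to leading order, into the Morse Jacobi operator $\partial_\tau + \mathrm{Hess}\, f$ on $\Gamma(q^*TR)$, decoupled from uniformly invertible elliptic operators on the $(p,z)$-components. The quadratic error estimates underlying the approximate right inverse $Q_{\mathrm{app}}$ of Section \ref{sec:right-inverse} then yield both surjectivity of the linearization at every near-Morse trajectory for generic $J$ and the matching of counts $\#\CM(\gamma_p,\gamma_q) \equiv \#\CM^{\mathrm{Morse}}(p,q;f)\pmod 2$, completing the identification of the diagonal block with $\pm d$.
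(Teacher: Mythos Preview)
Your first paragraph, the action-monotonicity argument for the vanishing off-diagonal block, is essentially identical to the paper's proof: compute $E^\pi(u) = \CA(\gamma_+) - \CA(\gamma_-) \geq 0$ via Stokes and the Legendrian boundary condition, and observe this is incompatible with $\gamma_+$ short and $\gamma_-$ long given $\epsilon_0 < T_\lambda(M;R)/2$.

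The difference is scope. The paper's proof stops there --- it establishes only the triangular structure and writes ``This finishes the proof.'' The identification of the short-short block with the Morse differential $\pm d$ is not argued in the paper's proof of this proposition; it is implicitly taken as known (the standard $C^2$-small reduction of the Floer complex of a graphical Legendrian to the Morse complex of $f$). Your second and third paragraphs correctly outline that reduction via the Darboux--Weinstein chart, energy confinement, and a Floer--Weinstein type adiabatic identification, which is the right strategy. So your proposal is more complete than the paper's own argument here; the extra work you sketch is standard but substantial, and the paper simply does not carry it out in this proof.
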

\begin{proof}
Let $\gamma_+ \in \frak{X}_{\leq -\epsilon_0}(R_f,R;M)$
and $\gamma_- \in \frak{X}_{> \epsilon_0}(R_f,R;M)$. Then we have
$$
\CA(\gamma_+) \leq  \epsilon_0, \quad \CA(\gamma_-) \geq  \frac{T_\lambda(M;R)}2
$$
and hence
\be\label{eq:<0}
\CA(\gamma_+) - \CA(\gamma_- )\leq  \epsilon_0 - \frac{T_\lambda(M;R)}2 < 0
\ee
On the other hand, \emph{if there exists a solution $u$ satisfying
\be\label{eq:contacton-RfR}
\begin{cases}
\delbar^\pi u = 0, \quad d(u^*\lambda \circ j) = 0 \\
u(\tau,0) \in R_f, \, u(\tau,1) \in R\\
u(\pm \infty) =\gamma_\pm
\end{cases}
\ee
} then
\beastar
0 & \leq & \int\int \left|\left(\frac{\del u}{\del \tau}\right)^\pi\right|^2_J\, dt\, d\tau = \int u^*d\lambda\\
& = &  \CA(u(+\infty)) - \CA(u(- \infty))
= \CA(\gamma_+) - \CA(\gamma_-)
\eeastar
which contradicts to \eqref{eq:<0}. This finishes the proof.
\end{proof}

\begin{cor}[Sandon, \cite{sandon-translated}]\label{cor:sandon}
Denote by $CI^*_{\text{\rm loc}}(R_f,R;M)$ the complex generated by the
short chords. Then we have
$$
HI^*_{\text{\rm loc}}(R_f,R;M) : = HI^*_{\leq \varepsilon_0}(R_f,R;M) \cong H^*(R;\Z_2).
$$
In particular, we have
$$
\#(\frak{Reeb}(M;\psi(R), R)) \geq \rank H^*(R;\Z_2)
$$
for any contactomorphism $\psi$ sufficiently $C^1$-close to the identity.
\end{cor}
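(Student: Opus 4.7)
The plan is to leverage Proposition \ref{prop:short-complex} directly: it exhibits $CI^*_{\text{\rm loc}}(R_f,R;M)$ as a subcomplex of $(CI^*_\lambda(R_f,R;M), \frak m^1)$ whose differential is (up to sign) the Morse differential $d$ of $f$, under the bijection $\frak X_{\leq \varepsilon_0}(R_f,R) \cong \Crit f$ from Lemma \ref{lem:perturbed-cord}. Taking cohomology of this subcomplex then yields
\[
HI^*_{\text{\rm loc}}(R_f,R;M) \;\cong\; H^*(CM^*(f;R), d) \;\cong\; H^*(R;\Z_2)
\]
by ordinary finite-dimensional Morse theory on the compact Legendrian $R$, which is the desired isomorphism. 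Note that the upper-triangular block form of $\frak m^1$ from Proposition \ref{prop:short-complex} is what guarantees $CI^*_{\text{\rm loc}}$ is actually closed under the differential --- otherwise there would be no well-defined restricted cohomology to compute.

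For the \emph{in particular} statement, I would use the Darboux-Weinstein chart around $R$ from the discussion preceding Lemma \ref{lem:perturbed-cord}, identifying a neighborhood $U \supset R$ with a neighborhood of $0_{J^1R}$ in $(J^1R, \lambda_0)$. For $\psi$ sufficiently $C^1$-close to $\id$ in $\Cont(M,\xi)$, one has $\psi(R) \subset U$ and $\psi(R)$ is $C^1$-close to $R$, hence $\psi(R)$ is a one-jet section $j^1 f$ for a unique $f: R \to \R$ with $\|f\|_{C^2} < \varepsilon_0$; in particular every Reeb chord between $\psi(R)$ and $R$ lies in $U$ and is short. Identifying $(\psi(R),R)$ with $(R_f,R)$, the bijection of Lemma \ref{lem:perturbed-cord} together with the weak Morse inequalities gives
\[
\#\frak{Reeb}(M;\psi(R),R) \;=\; \#\Crit f \;\geq\; \rank H^*(R;\Z_2).
\]

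The delicate point behind the whole proposal is the identification of the restricted differential $\frak m^1|_{CI^*_{\text{\rm loc}}}$ with $\pm d$ asserted in Proposition \ref{prop:short-complex}; the proof given there only argues the vanishing of the upper--right block by an action estimate, but the chain-level agreement of the diagonal block with the Morse differential is what actually powers the corollary. The standard approach --- adapted from the Lagrangian Floer/Morse correspondence of Fukaya-Oh and Floer to the present contact-instanton setting --- is to rescale the Darboux-Weinstein chart so that short contact instantons between $R_f$ and $R$ degenerate to negative gradient trajectories of $f$ with respect to the triad metric restricted to $R$. The monotonicity inequality already used in Proposition \ref{prop:short-complex} (namely $\CA(\gamma_+) - \CA(\gamma_-) = \int u^*d\lambda \geq 0$) ensures that low-energy solutions remain in the Darboux-Weinstein chart so that the problem becomes purely local; the remaining step is the local model matching with pseudoholomorphic strips in $T^*R$ having boundary on $0_{T^*R}$ and $\Image\, df$, which I would invoke rather than redo.
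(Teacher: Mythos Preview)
Your proposal is correct and follows the same route the paper intends: the corollary is an immediate consequence of Proposition~\ref{prop:short-complex}, since the upper-left block $\pm d$ on $CI^*_{\leq \varepsilon_0}$ is the Morse differential and the vanishing upper-right block makes the short-chord subspace a genuine subcomplex. Your observation that the proof of Proposition~\ref{prop:short-complex} only establishes the vanishing of the upper-right block by the action estimate, while the identification of the diagonal block with the Morse differential is asserted rather than argued, is accurate and worth flagging; the paper is indeed relying on the standard local model (Floer/Fukaya--Oh correspondence in a Darboux--Weinstein chart) without spelling it out, exactly as you describe.
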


\section{Cut-off Hamiltonian-perturbed contact instanton and energy estimates}
\label{sec:cut-off}

In this section, we recall the parameterized moduli space entering in
the proof of Sandon-Shelukhin's conjecture in \cite{oh:entanglement1} and its relevant
energy estimates. There are needed for the construction of chain-homotopy maps in
the proof of contact isotopy invariance of the contact instanton Floer cohomology we construct
as in the general Floer-type homology theory.

We will consider the two-parameter family of CR-almost complex structures and Hamiltonian functions:
$$
J = \{J_{(s,t)}\}, \, H= \{H_t^s\} \; \textrm{for} \;\; (s,t) \in
[0,1]^2.
$$
We write
$$
H_t^s(x): = H(s,t,x)
$$
in general for a given two-parameter family of functions $H = H(s,t,x)$.
Note that $[0,1]^2$ is a compact set and so $J,\, H$ are compact
families. We always assume the $(s,t)$-family $J$ or $H$ are
constant near $s = 0, \, 1$.

We will also consider the family $J' = J'_{(s,t)}$ defined by
\be\label{eq:Jst}
J'_{(s,t)} = (\psi_{H^s}^t(\psi_{H^s}^1)^{-1})^*J_{(s,t)} =  (\psi_{H^s}^1(\psi_{H^s}^t)^{-1})_*J_{(s,t)}
\ee
for a given $J = \{J_{(s,t)}\}$. We assume $J'$ satisfies
\be\label{eq:bdy-flat}
J'_{(s,t)} \equiv J_0 \in \CJ(\xi)
\ee
near $s = 0, \, 1$.

For each $K \in \R_+ =
[0,\infty)$, we define a family of cut-off functions $\rho_K:\R \to
[0,1]$ so that for $K \geq 1$, they satisfy
\be
\rho_K = \begin{cases} 0 & \quad \mbox{for } |\tau| \geq K+1 \\
1 & \quad \mbox{for }|\tau| \leq K.
\end{cases}
\ee
We also require
\bea
\rho_K' & \geq & 0 \quad \mbox{on }\, [-K-1,-K] \nonumber\\
\rho_K' & \leq & 0 \quad \mbox{on }\,  [K,K+1].
\eea
For $0 \leq  K \leq  1$, define $\rho_K = K \cdot \rho_1$. Note
that $\rho_0 \equiv 0$.
\par

Consider the following capped semi-infinite cylinders
\beastar
\Theta_- & = &\{ z\in \C  \mid \vert z\vert \le 1 \} \cup
\{ z\in \C \mid \text{Re} z \ge 0,\vert
\text{Im} z\vert \le 1\}
\\
\Theta_+ & = & \{ z\in \C \mid \text{Re} z \le 0,\vert
\text{Im} z\vert \le 1\}
\cup  \{ z\in \C  \mid \vert z\vert \le 1 \}.
\eeastar

We will fix the $K_0$ once and for all and consider $K$ with $0 \leq K \leq K_0$.
(See \cite[Proposition 8.9]{oh:entanglement1} for its required condition to satisfy.)
For such given $K_0$, we define the spaces
\beastar
\Theta_{-,K_0+1} & := & \{ z \in \Theta_- \mid \operatorname{Re} z \le K_0+1\}, \\
\Theta_{+,K_0+1} & := & \{ z \in \Theta_- \mid \operatorname{Re} z \ge -K_0-1\}.
\eeastar
We glue the three spaces
$$
\Theta_{-,K_0+1}, \quad [-K_0+1,K_0+1] \times [0,1], \quad \Theta_{+,K_0+1}
$$
subdomains of $\R \times [0,1]$, by making the identification
\beastar
(0,t) \in \Theta_{-,K_0+1} & \longleftrightarrow &  (-K_0-1,t) \in [-K_0-1,K_0+1] \times [0,1],\\
 (0,t) \in \Theta_{+,K_0+1} & \longleftrightarrow & (K_0+1,t) \in [-K_0-1,K_0+1] \times [0,1],
\eeastar
respectively. We denote the resulting domain as
\be\label{eq:Z-K}
\Theta_{K_0+1}: = \Theta_- \#_{K_0+1} (\R \times [0,1]) \#_{K_0+1} \Theta_+ \subset \C
\ee
and equip it with the natural complex structure induced from $\C$.
(See \cite[Figure 8.1.2]{fooo:book1} for the visualization of this domain.)
We can also decompose $\Theta_{K_0+1}$ into the union
\be\label{eq:Theta-K0+1}
\Theta_{K_0+1} := D^- \cup  [-2K_0 -1, 2K_0+1] \cup  D^+
\ee
where we denote
\be\label{eq:D+-}
D^\pm  = D^\pm_{K_0}:= \{ z\in \C  \mid \vert z\vert \le 1, \, \pm \text{\rm Im}(z) \leq 0 \} \pm (2K_0+1)
\ee
respectively.

We make the following specific choice of two-parameter Hamiltonians associated to each time-dependent
Hamiltonian $H = H(t,x)$  with slight abuse of notations.

\begin{choice} Take the family $H = H(s,t,x)$ given by
\be\label{eq:sH}
H^s(t,x) = s H(t,x)
\ee
\end{choice}

Then we consider the equation given by
\be\label{eq:K}
\begin{cases}
(du - X_{H_K}(u))^{\pi(0,1)} = 0, \quad d\left(e^{g_K(u)}(u^*\lambda + u^*H_K dt) \circ j\right) = 0,\\
u(\tau,0) \in R,\, u(\tau,1) \in R
\end{cases}
\ee
where we write
\be\label{eq:HK}
H_K(\tau,t,x) := H^{\rho_K(\tau}(t,x) = \rho_K(\tau) H(t,x)
\ee
and $g_K(u)$ is the function on $\Theta_{K_0+1}$ defined by
\be\label{eq:gKu}
g_K(u)(\tau,t): =  g_{\psi_{H_K}^1(\psi_{H_K}^t)^{-1}}(u(\tau,t))
\ee
for $0 \leq K \leq K_0$. We note that
if $|\tau| \geq K +1$, the equation becomes
\be\label{eq:contacton}
\delbar^\pi u = 0, \, \quad d(u^*\lambda \circ j) = 0.
\ee
\begin{defn}\label{defn:CM-K} Let $K_0 \in \R \cup \{\infty\}$ be given.
For $0 \leq K \leq K_0$, we define
\be\label{eq:MM-K}
\CM_K(M,R;J,H)  = \{ u:\R \times [0,1] \to M \, |\,  u
\; \mbox{satisfies \eqref{eq:K} and} \; E_{J_K,H}(u) < \infty\}
\ee
and
\be\label{eq:MM-para}
\CM_{[0,K_0]}^{\text{\rm para}}(M,R;J,H) = \bigcup_{K \in [0,K_0]} \{K\} \times \CM_K(M,R;J,H).
\ee
\end{defn}

With this arrangement of the moduli space  our proof of an extension of
Corollary \ref{cor:sandon} to the one stated in
Theorem \ref{thm:definition-intro} crucially relies on the following two energy inequalities
proven in \cite{oh:entanglement1}:
We recall the definition of oscillation function $\osc(H):[0,1] \to \R$ defined by
$$
\osc(H)(t) = \osc(H_t) = \max H_t - \min H_t.
$$
\begin{prop}[Proposition 8.6 \cite{oh:entanglement1}] \label{prop:pienergy-bound}
Let $u$ be any finite energy solution of \eqref{eq:K}. Then we have
\be\label{eq:pienergy-bound}
E_{(J_K,H)}^\pi(u) \leq \int_0^1 \osc(H_t) \, dt = :\|H\|
\ee
\end{prop}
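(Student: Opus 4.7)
The plan is to establish the energy bound by an explicit action-theoretic energy identity, in the spirit of the classical Floer-theoretic bound for perturbed holomorphic strips (cf.\ \cite{floer-intersections,hofer-salamon,oh:book2}), with the contact-geometric novelty that the conformal factor $e^{g_K(u)}$ appearing in the second equation of \eqref{eq:K} is invoked to compensate for the Reeb component of the contact Hamiltonian vector field $X_{H_K}$.

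Concretely, I would first derive a pointwise identity for the $\pi$-energy density. Writing $\zeta := du - X_{H_K}\,dt$, the horizontal equation $\zeta^{\pi(0,1)} = 0$ gives $d^\pi u(\partial_\tau) = -J\zeta^\pi(\partial_t)$; combined with the contact-Hamiltonian identity $\iota_{X_H}d\lambda = dH - R_\lambda(H)\lambda$ and the compatibility $g_\xi = d\lambda(\cdot, J\cdot)$ on $\xi$, this produces
$$
\tfrac12|\zeta^\pi|^2\,d\tau\wedge dt \;=\; u^*d\lambda \;+\; d\bigl((H_K\circ u)\,dt\bigr) \;-\; \rho_K'(\tau)\,H(t,u)\,d\tau\wedge dt \;+\; (\text{Reeb term}).
$$
The Reeb-direction contribution, which has no symplectic analogue, is exactly absorbed by expanding the vertical equation $d\bigl(e^{g_K(u)}(u^*\lambda + u^*H_K\,dt)\circ j\bigr) = 0$ and invoking the defining relation $\partial_t g_{\psi_{H^t}} = -R_\lambda(H^t)\circ \psi_{H^t}$ for the conformal factor of the contact Hamiltonian flow. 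I would then integrate the resulting identity over $[-T,T]\times[0,1]$ and apply Stokes: the Legendrian boundary condition $u(\tau,i)\in R$ together with $\lambda|_R = 0$ annihilates the $t=0,1$ contributions, while Theorem \ref{thm:subsequence-convergence}, Proposition \ref{prop:C0-expdecay} and $\rho_K(\pm\infty) = 0$ ensure that the $\tau=\pm\infty$ contributions of $d((H_K\circ u)\,dt)$ vanish and those of $u^*d\lambda$ reduce to asymptotic Reeb-chord actions.

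The resulting energy identity then reads as a cut-off-weighted integral of $H$ of the form $E^\pi_{(J_K,H)}(u) = -\int_\R\int_0^1 \rho_K'(\tau)H(t,u(\tau,t))\,dt\,d\tau$ (the asymptotic Reeb-chord contributions cancelling under the hypothesis $\|H\| < T(M,\lambda;R)$ of \cite{oh:entanglement1}, which rules out large-action chords at the ends). Using $\rho_K'\geq 0$ on $[-K-1,-K]$ and $\rho_K'\leq 0$ on $[K,K+1]$ together with $\int_\R \rho_K'\,d\tau = 0$, the right-hand side is estimated pointwise in $t$ by $\max_xH(t,x)-\min_xH(t,x) = \osc(H_t)$; integrating in $t$ gives the bound $\|H\|$. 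The principal obstacle is the precise pointwise cancellation of the Reeb-direction term, which is the contact-geometric heart of the argument and the very reason the conformal factor $e^{g_K(u)}$ is forced to appear in the second equation of \eqref{eq:K}; once this cancellation is verified, the rest is a routine Stokes-and-estimation exercise mirroring the standard Lagrangian Floer energy identity.
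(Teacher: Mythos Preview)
The paper does not supply its own proof of this proposition; it is quoted verbatim from \cite{oh:entanglement1} (Proposition 8.6 there) as an input for Part \ref{part:homology}, so there is no in-paper argument to compare against.

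Your outlined strategy is the expected one and matches what is carried out in \cite{oh:entanglement1}: derive a pointwise identity for the $\pi$-energy density from the horizontal equation together with $\iota_{X_H}d\lambda = dH - R_\lambda(H)\lambda$, absorb the Reeb-direction remainder using the second (closedness) equation in \eqref{eq:K} with its conformal weight $e^{g_K(u)}$, integrate, apply Stokes with the Legendrian boundary killing the $t=0,1$ terms, and then bound the surviving $-\int_\R\int_0^1 \rho_K'(\tau)H(t,u)\,dt\,d\tau$ by $\|H\|$ via the sign pattern of $\rho_K'$.

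One point deserves care. Your parenthetical that the asymptotic Reeb-chord action terms vanish ``under the hypothesis $\|H\| < T(M,\lambda;R)$, which rules out large-action chords at the ends'' is circular as written: you would be invoking the energy bound you are proving in order to constrain the asymptotics. The non-circular resolution is that in \cite{oh:entanglement1} the moduli space $\CM_K(M,R;J,H)$ is set up with the asymptotic conditions prescribed \emph{a priori} to lie in the constant-chord component $\widetilde R \cong R$ of $\frak X(R,R)$ (cf.\ the Morse--Bott discussion in Section \ref{sec:chain-groups}); the boundary action contributions $T_\pm$ then vanish by definition and the identity reduces cleanly to the $\rho_K'$-term. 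You should state this explicitly rather than appeal to the period gap.
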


\begin{prop}[Proposition 8.7 \cite{oh:entanglement1}]\label{prop:lambdaenergy-bound}
Let $u$ be any finite energy solution of \eqref{eq:K}. Then we have
\be\label{eq:lambdaenergy-bound}
E^\perp_{(J_K,H)}(u) \leq \|H\|.
\ee
\end{prop}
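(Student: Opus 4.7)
The plan is to mirror the proof of Proposition \ref{prop:pienergy-bound}, transporting its action-type identity from the $\pi$-energy to the $\lambda$-energy via the contact instanton potential. The main tool will be Stokes' theorem together with the Legendrian boundary condition, and the Hamiltonian contribution will be extracted through the cut-off structure of $H_K$.

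First, because $\R\times[0,1]$ is simply connected and the second equation of \eqref{eq:K} says that the one-form
\[
\alpha_K \;:=\; e^{g_K(u)}\bigl(u^*\lambda + u^*H_K\,dt\bigr)\circ j
\]
is closed, there exists a globally defined primitive $f$ with $df = \alpha_K$. On each asymptotic end $\pm[K+1,\infty)\times[0,1]$ we have $\rho_K\equiv 0$, so $H_K\equiv 0$ and $g_K\equiv 0$, and $f$ reduces to the usual contact instanton potential satisfying $df = u^*\lambda\circ j$. By Proposition \ref{prop:C0-expdecay}, the map $u$ converges exponentially on these ends to nondegenerate Reeb chords $\gamma_\pm$ of periods $T_\pm$. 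By definition $E^\perp_{(J_K,H)}(u) = E^\lambda_{-\infty}(u) + E^\lambda_{+\infty}(u)$, each summand being the supremum over $\varphi\in\CC$ of $\int\psi'(f)\,df\circ j\wedge df$ on the corresponding asymptotic half-strip.

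Next, for each $\psi$ with $\psi'=\varphi$, the integrand is exact on the unperturbed ends:
\[
\psi'(f)\,df\circ j\wedge df \;=\; -\,d\bigl(\psi(f)\,df\circ j\bigr) \;=\; d\bigl(\psi(f)\,u^*\lambda\bigr).
\]
Applying Stokes to $[K+1,\infty)\times[0,1]$ (and symmetrically to the negative end), the horizontal boundary pieces $\R\times\{0,1\}$ contribute nothing because $u^*\lambda$ vanishes along any curve tangent to the Legendrian $R$. The remaining boundary integrals split into asymptotic chord-action terms at $\tau=\pm\infty$ and finite cut integrals at $\tau=\pm(K+1)$. To eliminate the latter, I extend $\psi\circ f$ across the whole strip and apply Stokes on $\R\times[0,1]$ using the closed form $\alpha_K$: the cut integrals at $\tau=\pm(K+1)$ cancel against contributions from the middle region, leaving a single identity of the form
\[
E^\perp_{(J_K,H)}(u) \;=\; \int_{[-K-1,K+1]\times[0,1]} \psi'(f)\,\Phi_{H_K}(u)\,d\tau\wedge dt,
\]
where $\Phi_{H_K}(u)$ is the correction term produced by $u^*H_K\,dt$ and $e^{g_K(u)}$ in $\alpha_K$.

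Finally, one estimates $\Phi_{H_K}(u)$ by the same pointwise argument used in the proof of Proposition \ref{prop:pienergy-bound}: at each $t$, the variation of the Hamiltonian contribution across the level sets of $f$ is bounded by $\osc(H_t) = \max H_t - \min H_t$, so that the normalizations $0\le\psi\le 1$ and $\int_\R\varphi=1$ together integrate out to
\[
E^\perp_{(J_K,H)}(u)\;\le\;\int_0^1 \osc(H_t)\,dt \;=\; \|H\|.
\]
The main obstacle I anticipate is a clean bookkeeping of the conformal factor $e^{g_K(u)}$ in $\alpha_K$: it affects both the Stokes boundary contributions at $\tau=\pm(K+1)$ and the pointwise oscillation bound in the middle region, and the cancellation between these two---analogous to the contact-Hamiltonian action identity used for the horizontal case---is what produces the clean $\|H\|$ bound and must be verified carefully.
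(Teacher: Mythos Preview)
The paper does not prove this proposition; it is quoted verbatim from \cite{oh:entanglement1} (as Proposition 8.7 there) and used as a black box in Section \ref{sec:cut-off}. There is therefore no proof in the present paper against which to compare your proposal.

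Regarding the proposal on its own merits: the overall strategy---take a global primitive $f$ of the closed one-form $\alpha_K$, apply Stokes on the strip, and bound the Hamiltonian contribution by oscillation---is the natural one and is in the right spirit. But the argument as written is an outline, not a proof. The sentence ``leaving a single identity of the form $E^\perp_{(J_K,H)}(u) = \int \psi'(f)\,\Phi_{H_K}(u)\,d\tau\wedge dt$'' is asserted rather than derived, and $\Phi_{H_K}(u)$ is never written down. The final paragraph then defers the actual estimate to an unspecified ``same pointwise argument'' from the $\pi$-energy case. Note also that the very definition of $E^\perp_{(J_K,H)}$ in the perturbed setting is not given in this paper (only the unperturbed $E^\perp$ of Section \ref{sec:energy} is), so you are implicitly assuming a particular form for it. You yourself identify the bookkeeping of $e^{g_K(u)}$ as the main obstacle; that obstacle is precisely where the content of the proof lives, and it is not resolved here.
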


The proof also was
modulo the details of the full construction of (cylindrical) Legendrian contact
instanton homology the construction of which we provides
in the remaining part of the present part.

\section{Definition of boundary map}

Let $\psi \neq id$ be a contactomorphism with $\psi(R) \pitchfork R$. Then by the dimensional
reason we have
$$
\psi(R) \cap R = \emptyset.
$$
Under this condition of $\psi$, we consider the $\Z_2$-vector space
$$
CI_\lambda^*(\psi(R), R) : = \Z_2\langle\frak{X} (\psi(R),R)\rangle \cong \Z_2\langle \psi(R) \cap Z_R \rangle.
$$

\begin{prop}\label{prop:generic-transversality-CM}
There exists a residual subset of $J_0 \in \CJ(\lambda)$ such that the
moduli spaces $\CM(M,\lambda;R;\gamma_-,\gamma_+)$ are transversal for any
pair $\gamma_\pm \in \mathfrak{R}eeb(\psi(R),R)$.
\end{prop}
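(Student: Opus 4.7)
My plan is to apply the Sard-Smale theorem to the projection from a universal moduli space to the space of CR almost complex structures, following the scheme laid out in Theorem~3.7. I would fix the data $(\lambda, H)$ and regard $J_0 \in \CJ^\ell(M,\lambda)$ as the only free parameter: the $t$-family $J'_t$ is recovered from $J_0$ via the Choice~1.6 formula $J'_t = (\psi_H^t(\psi_H^1)^{-1})_*J_0$, so that a variation of $J_0$ corresponds to a $t$-dependent variation of $J'_t$ with the right compatibility at $t=0,1$. I then form the universal space
\[
\MM^{\text{univ}}(\gamma_-,\gamma_+) = \left\{ (w, J_0) \in \CW^{k,p}(\R \times [0,1], M; \psi(R), R; \gamma_-,\gamma_+) \times \CJ^\ell(M,\lambda) \,\Big|\, \Upsilon^{\text{univ}}(w, J_0) = 0 \right\}
\]
and prove that the augmented linearization $D\Upsilon^{\text{univ}}(w,J_0)$ is surjective at every zero. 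Granted this, the implicit function theorem exhibits $\MM^{\text{univ}}$ as a $C^\ell$ Banach manifold, the projection $\Pi_2$ to $\CJ^\ell(M,\lambda)$ is a Fredholm map of the same index as $D\Upsilon(w)$, and Sard-Smale yields a residual set of regular values of $\Pi_2$. The standard Taubes trick of intersecting countably many such residual subsets as $\ell \to \infty$ then upgrades the conclusion to the smooth category $\CJ(M,\lambda)$.

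The core analytic step is the surjectivity of $D\Upsilon^{\text{univ}}$. By duality I would assume $(\eta, g\, dA) \in \CH^{(0,1)}_{k-1,p}(M,\lambda)|_w$ is $L^2$-orthogonal to the image and deduce $\eta \equiv 0$, $g \equiv 0$. Since $\lambda$ and $R_\lambda$ are independent of $J$, variations of $J_0$ contribute only to $\Upsilon_1$, through a term of the shape $\tfrac12 K \circ \del^\pi w \circ j$ for $K \in T_{J_0}\CJ(M,\lambda)$, namely $K \in \End(\xi)$ satisfying $KJ + JK = 0$ on $\xi$ and $K R_\lambda = 0$. Pairing $\eta$ against such variations and exploiting the algebraic richness of this space at any interior somewhere-injective point $z_0$ of $w$ at which $d^\pi w(z_0) \neq 0$ forces $\eta(z_0) = 0$; the Aronszajn-type unique continuation theorem for the Cauchy-Riemann-type operator $\delbar^{\nabla^\pi} + B^{(0,1)} + T^{\pi,(0,1)}_{dw}$ satisfied by $\eta$ (Proposition~3.9) then gives $\eta \equiv 0$. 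Having eliminated $\eta$, pairing against vertical variations $Y = f R_\lambda$ with $f|_{\partial\dot\Sigma} = 0$ reduces to $\langle g, -\Delta f\rangle = 0$ for every such test function $f$, and surjectivity of the Dirichlet Laplacian on the punctured strip in the weighted Sobolev spaces compatible with the spectral gap of Remark~3.11 forces $g \equiv 0$.

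The main obstacle is the existence of an interior somewhere-injective point at which $d^\pi w$ does not vanish. For nondegenerate Reeb chords $\gamma_\pm$ with $\psi(R) \cap R = \emptyset$, the asymptotic convergence results of Section~2.2, together with $\gamma_- \neq \gamma_+$ on the symplectic-vector bundle level, guarantee that $w$ is non-constant; the similarity principle applied to the horizontal block $D\Upsilon_1^1$ of the linearization then implies that the zero set of $d^\pi w$ is discrete, and the standard McDuff-Salamon argument adapted to contact instantons, already carried out for the closed-string case in \cite{oh:contacton} and for boundary-perturbation transversality of $(J,\vec R)$ in \cite{oh:contacton-transversality}, produces a somewhere-injective interior point at which $d^\pi w$ is nonzero. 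A secondary bookkeeping subtlety is that the whole universal setup must be cut out in weighted Sobolev spaces with weight $e^{\delta|\tau|}$ adapted to the spectral gap of the asymptotic operators $A_{(T_\pm,\gamma_\pm)}$, but this only modifies the Fredholm constants and not the structure of the argument, since all cokernel elements automatically decay exponentially by elliptic regularity applied in the neck region.
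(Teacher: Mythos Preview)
Your proposal is correct and follows precisely the Sard--Smale scheme the paper itself invokes: the paper does not give a separate proof of this proposition but treats it as an instance of the universal moduli space framework of Theorem~\ref{thm:trans} (Section~\ref{subsec:moduli-space}), with the detailed surjectivity and somewhere-injectivity arguments delegated to \cite{oh:contacton} and \cite{oh:contacton-transversality}. Your write-up in fact supplies more detail than the paper does here, including the correct handling of the $t$-dependent family $J'_t$ induced from $J_0$ via Choice~\ref{choiceLJt} and the separate treatment of the Reeb-directional cokernel component via the Dirichlet Laplacian.
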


Then under the energy inequalities given in Proposition \ref{prop:pienergy-bound} and
Proposition \ref{prop:lambdaenergy-bound},
the bubbling analysis developed in \cite[Part 3]{oh:entanglement1} proves that
the moduli space $\CM(M,\lambda;R;\gamma_-,\gamma_+)$ is compact when it has dimension 0
and hence we can define a $\Z_2$-linear map
$$
\delta_{(\psi(R),R;H)} : CI_\lambda^*(\psi(R),R) \to  CI_\lambda^*(\psi(R),R)
$$
by its matrix element
$$
\langle \delta_{(\psi(R),R;H)}(\gamma^-), \gamma_+ \rangle : = \#_{\Z_2}(\CM(\gamma_-,\gamma_+)).
$$
Here $\CM(\gamma_-,\gamma_+)$ is the moduli space
$$
\CM(\gamma_-,\gamma_+) = \widetilde{\CM}(\psi(R),R;\gamma_-,\gamma_+)/\R
$$
of contact instanton Floer trajectories $u$ satisfying
$u(\pm\infty) = \gamma_\pm$. More precisely, as pointed out in \cite[Remark 10.7]{oh:entanglement1},
we note that the virtual dimension of the moduli space of holomorphic
half plane
 $$
 \CM((\H,\del \H), (M,R); \gamma)
 $$
for a given asymptotic Reeb chord of $R$ will have at least 2 thanks to the action of
automorphism of $(D^2 \setminus \{1\}, \del D^2 \setminus \{1\}) \cong (\H,\del \H)$ preserving the infinity.
Therefore if the starting pair of Reeb chords $\gamma_\pm \in\frak{X} (\psi(R),R)$
is the one with the virtual dimension of the associated moduli
space $\widetilde \CM(\psi(R),R;\gamma_+,\gamma_-)$ is one, the moduli space cannot
produce such a bubble.
This implies  compactness of the quotient
$$
\widetilde \CM(\psi(R),R;\gamma_+,\gamma_-)/\R
$$
when $\text{\rm vir.dim.} \widetilde \CM(\psi(R),R;\gamma_+,\gamma_-) = 1$.
Furthermore by the Floer-type compatible gluing theorem for the moduli spaces
$\CM(\gamma_-,\gamma_+)$ developed in the previous part in particular gives rise to the identity
$$
\delta_{(\psi(R),R;H)} \circ \delta_{(\psi(R),R;H)} = 0
$$
under the following two circumstances:
\begin{enumerate}
\item when $\psi = \psi_H^1$ for the contact Hamiltonian satisfying $\|H\| < T_\lambda(M,R)$,
\item after deforming the boundary map $\delta$ to $\delta^{\frak b, \psi_*\frak b}$
via a bounding cochain $\frak b$ killing the obstruction, the $\frak m^0$.
\end{enumerate}

Combining the fact that $\psi$ is contact isotopic to the identity,
we can also prove that this counting of moduli space of dimension one gives rise
to the boundary map $\delta$ satisfying $\delta^2 = 0$ by the arguments similar to the one
used in \cite{oh:cpam-obstruction} in the Lagrangian Floer theory. Therefore
the cohomology group $HI_\lambda^*(\psi(R),R)$ is well-defined.

\begin{rem}\label{rem:invariance} It is worthwhile to mention that our invariance proof
to be given in the following subsection is  straightforward which is similar
to that of Floer cohomology of \emph{compact} Lagrangian submanifolds given in \cite{oh:cpam1}
which uses the \emph{moving boundary condition}. Such a proof is rather subtle to use in the context of
\emph{Legendrian contact homology constructed via the symplectization} in the literature of
contact topology because of the issue of `moving the infinity of the cylindrical Lagrangian'.
The usual invariance proof of contact homology in the literature uses the argument using the exact symplectic cobordism
and is in the spirit rather different from our invariance proof in the next subsection.
(See \cite{EGH} for a sketch of such a proof. This was carried out in \cite[Appendix B]{ekholm-rational}
under some technical assumptions on the contact manifold.)
There is also another method called the bifurcation method which has been used in the literature
similar to Floer's original invariance proof given in \cite{floer-intersections}. A rigorous
analytic proof of this kind requires delicate gluing analysis
of the type established by Yi-Jen Lee in \cite{yijen-lee} even in the Lagrangian Floer theory
on closed symplectic manifolds.
Because of this, such a proof is established only in some special cases of the type
$(P \times \R, dz - \theta)$ for an exact symplectic manifold $(P,d\theta)$.
When the bifurcation method works, it would induce a \emph{stable tame isomorphism},
introduced by Chekanov \cite{chekanov:dga}, which is conjecturally stronger than a DGA
quasi-isomorphism in general. (See \cite[Appendix]{ekholm-rational} and
\cite[Proposition 1.6]{rizell-sullivan} for an explanation on the current status of this bifurcation method proof.)
\end{rem}

\section{Definitions of the chain map and the chain homotopy map}
\label{sec:cobordism}

In this section, we first define the chain map and the chain homotopy map restricting ourselves
to the case of this study that enters in the proof of Theorem \ref{thm:definition-intro}, and
prove the basic algebraic relation of the chain map to be a quasi-isomorphism by
applying the gluing theory of contact instantons developed in Part \ref{part:gluing-contacton}.
We postpone a full study of chain maps and their homotopies in the general context
till \cite{oh:entanglement2}.

In this section, we assume that $(\lambda, (\psi(R),R))$ is nondegenerate
in the sense of Definition \ref{defn:nondegeneracy-chords}.

For given Hamiltonian $H = H(t,x)$,
we consider the inverse Hamiltonian of $H$ given by
\be\label{eq:inverse-Hamiltonian}
\overline H(t,x) : = e^{-g_{\psi_H^t} \circ \psi_H^t} H(t,\psi_H^t(x))
\ee
(see \cite{mueller-spaeth-I}),  and consider the maps
$$
\Psi_H = (\Phi_H)_*:  CI_\lambda^*(R,R) \to  CI_\lambda^*(\psi(R),R)
$$
and
$$
\Psi_{\overline H} = (\Phi_H^{-1})_*: CI_\lambda^*(\psi(R),R) \to CI_\lambda^*(R,R)
$$
where $\Phi_H$ is the gauge transformation defined by
\be\label{eq:u-to-w}
\Phi_H(\ell)(t): = \psi_H^t (\psi_H^1)^{-1}(\ell(t))
\ee
defined on the path space $\CP(R_0,R_1)$. (See \cite[Section 6]{oh:entanglement1}.)

Recall $\psi_{\overline H}^1 = \psi^{-1}$.
Then we consider the composition
$$
\Psi_{\overline H}\circ \Psi_H:  CI_\lambda^*(R,R) \to  CI_\lambda^*(R,R).
$$
Following \cite[Section 8]{oh:entanglement1},
we consider the two-parameter family of CR-almost complex structures and Hamiltonian functions:
$$
J = \{J_{(s,t)}\}, \, H= \{H_t^s\} \; \textrm{for} \;\; (s,t) \in
[0,1]^2.
$$
We write
$$
H_t^s(x): = H(s,t,x)
$$
in general for a given two-parameter family of functions $H = H(s,t,x)$.
Note that $[0,1]^2$ is a compact set and so $J,\, H$ are compact
families. We always assume the $(s,t)$-family $J$ or $H$ are
constant near $s = 0, \, 1$.

We also consider the family $J' = J'_{(s,t)}$ defined by
\be\label{eq:Jst}
J'_{(s,t)} = (\psi_{H^s}^t(\psi_{H^s}^1)^{-1})^*J_{(s,t)} =  (\psi_{H^s}^1(\psi_{H^s}^t)^{-1})_*J_{(s,t)}
\ee
for a given $J = \{J_{(s,t)}\}$. We assume $J'$ satisfies
\be\label{eq:bdy-flat}
J'_{(s,t)} \equiv J_0 \in \CJ(\xi)
\ee
near $s = 0, \, 1$.

We consider a  one-parameter family of Hamiltonians $H_K$ given in \eqref{eq:HK}.
Using this family, we define a family of homomorphisms
$$
\Psi_{H_K}: CI_\lambda^*(R,R) \to  CI_\lambda^*(R,R)
$$
with $0 \leq K \leq K_0$
which defines a chain homotopy map
$$
\mathfrak H: CI_\lambda^*(R,R) \to  CI_\lambda^{*-1}(R,R)
$$
between $\Psi_{\overline H}\circ \Psi_H$ and
$id$ on $ CI_\lambda^*(R,R)$, i.e., it satisfies
$$
\Psi_{\overline H}\circ \Psi_H -id = \delta \mathfrak H + \mathfrak H \delta
$$
on $CI_\lambda^*(R,R)$
\emph{provided the relevant parameterized moduli space
$\CM^{\text{\rm para}}_{[0,K_0]}(M,R;J,H)$ that we considered in the previous section does not
bubble-off}. A standard algebraic argument then shows that the latter homotopy identity follows
as long as no bubbling occurs which is ensured by the inequality
$\|H\| < T_{\lambda}(M,R)$.
Therefore we have shown
$$
HI^*_\lambda(\psi(R),R) \cong HI^*_\lambda(R,R).
$$
Now  to complete the proof of Sandon-Shelukhin's conjecture,
it remains to show that $HI^*_\lambda(R,R) \cong H^*(R)$ whose proof
is explained in \cite[Section 10]{oh:entanglement1} and omitted.

\part{Gluing theory of (relative) pseudoholomorphic curves in SFT}
\label{part:gluing-SFT}

In this part, we first specialize the gluing construction carried out in
Part \ref{part:gluing-contacton} to the exact case, i.e.,
when the closed one-form $w^*\lambda \circ j$ is exact. In this case, we can
naturally lift the contact instanton $w$ to a map a $\widetilde J$-holomorphic map
$$
u = (w,f): \dot \Sigma \to V \times \R
$$
in the symplectization
$$
(W,\omega) = (V \times \R, d(e^s \lambda)), \quad s \in \R
$$
by considering a primitive $f$ satisfying $w^*\lambda \circ j = df$ and setting
$ s\circ u= : f$.
Such $f$ is unique modulo by addition of constant $f \mapsto f +c$.

We then explain how the gluing theory developed in
Part \ref{part:gluing-contacton} canonically produces the relevant gluing theory for the
pseudoholomorphic curves in SFT, especially on the symplectization, by an easy soft massaging of the construction.

The natural setting of gluing theory in SFT \cite{EGH} is to glue two
$J_\pm$-holomorphic curves $u_\pm$ in a two-story building. More precisely,
we consider a completed symplectic cobordism
\be\label{eq:cobordism}
W_i= \stackrel{\longrightarrow}{V_i^-,V_i^+}, \quad i = 1, 2
\ee
and consider a 2-story (bordered) stable curve
$(u_1, u_2)$ where $u_i = \{h_{i,j}\}$, $i = 1, 2$ is a collection of holomorphic curves
$$
h_{i,j}:S_j \to W_i, \quad j = 1, \cdots, k_i, \quad \text{\rm for } \, i = 1, \, 2.
$$
We require the matching condition
\be\label{eq:Gamma12-matching}
\Gamma_1^+ = \Gamma_2^-
\ee
of the asymptotic limits of $u_1$ and $u_2$
on the asymptotic boundary $V_1^+ = V_2^-$ of $W_i$'s. Details follow.
Because this is not the main part of the current paper,
our exposition of the present part will be concise  by focusing on the issue of how we
can lift the gluing problem of contact instantons to that of the standard gluing problem
of Gromov-Witten-Floer theory applied in the following steps:
\begin{enumerate}
\item First on the neck region of the buildings: this step involves only a soft
procedure of determining the $\R$-component $f$ of the map $u = (w,f)$ when $w$ is provided.
\item Next on the region away from the neck region: by now the first step provides
a middle piece of the pre-gluing map together with the pieces away from the
neck region.
\item Glue the aforementioned pieces and define a pregluing map $\text{\rm PreG}(u_1,u_2,K)$.
\item Establish the exponentially small error estimate for $\text{\rm PreG}(u_1,u_2,K)$.
\item The linearized version of the above steps then give rise to a good approximate
right inverse of the linearized operator.
\item Finally apply the ordinary gluing theory in the Floer theory on the tame symplectic manifold,
without imvolving any scale-dependent gluing beyond the ordinary gluing process of
Gromov-Witten-Floer theory.
\end{enumerate}

We start with the case of trivial cobordism.

\section{Cylindrical curves in trivial cobordism: symplectization}
appl
Let $(V,\xi)$ be a compact contact manifold (more generally a tame contact manifold
in the sense of \cite{oh:entanglement1})
equipped with a contact form $\lambda$,
and consider the symplectization
$$
(W,\omega) = (V \times \R, d(e^s \lambda)), \quad s \in \R.
$$
We consider two copies thereof and denote them by $W_1$ and $W_2$.
We also decompose their boundaries
$$
\del W_i = V_i^- \sqcup V_i^+, \quad i = 1, \, 2.
$$
We mention that $V_i^\pm = V$ for all $i$ and $\pm$ in the present case.

Let $(u_1,u_2)$ be a 2-story stable map such that $u_i = \{h_{i,j}\}$,  $i = 1, 2$ are collections of holomorphic curves
$$
h_{i,j}:S_j \to W_i, \quad j = 1, \cdots, k_i, \quad \text{\rm for } \, i = 1, \, 2.
$$
Each $h_{i,j}$ is decompose into
$$
h_{i,j} = (w_{i,j}, f_{i,j})
$$
such that it satisfies
\be\label{eq:equation-in-symplectization}
\delbar^\pi w_{i,j} = 0, \quad w_{i,j}^*\lambda \circ j = df_{i,j}.
\ee
In particular $w_{i,j}$ satisfies
\be\label{eq:wij-contacton}
\delbar^\pi w_{i,j} = 0,  \quad d(w_{i,j}^*\lambda \circ j) = 0.
\ee
Therefore all the a priori estimates established for contact instantons apply to these $w_{i,j}$'s too.
In addition for the current exact case, the equation $w_{i,j}^*\lambda \circ j = df_{i,j}$
uniquely determines $f_{i,j}$ modulo a uniform addition by constant $c_{i,j}$. By a suitable normalization,
we can fix the constant $c_{i,j}$ to be zero.

Since general case is not very different, we focus on the closed string case, i.e., assume that
the surface $\dot \Sigma$ has no boundary. As before we equip a punctured neighborhood of each
puncture with a cylindrical coordinates $(\tau,t) \in \pm [0,\infty) \times S^1$ with the sign
depending on the sign of the puncture. We further focus on the punctures
of $\dot \Sigma_1$ and $\dot \Sigma_2$ such that the relevant asymptotic limits of $w_{1,j}$ and $w_{2,j}$
coincide with the collection of Reeb orbits
$$
\Gamma' = \{\gamma_1', \cdots \gamma_k'\}
$$
Again without loss of generalities, we assume $k = 1$ and denote by $\gamma'$ the associated
asymptotic orbit, i.e.,
\be\label{eq:asymptotic-limit}
\lim_{\tau \to \infty}w_1(\tau) = \gamma' = \lim_{\tau \to -\infty} w_2(\tau).
\ee

Now for a sufficiently large $R$, we glue two $V \times \R$
by taking the union
$$
W_1 \#_R W_2: = V \times (-\infty, R]  \sqcup V \times [-R,\infty])/\sim
$$
with the identification of $(x,R)$ in the first copy with $(x,-R)$ in the second copy.
We denote by
$$
\tau_R: W_1 \#_R W_2 \to V \times \R
$$
the map defined by
$$
\tau_R([x,s]) = \begin{cases} (x,s -R) \quad s \in (-\infty, R] \\
(x, s+R) \quad s \in [-R,\infty).
\end{cases}
$$
\begin{rem} This map is the map realizing the so called \emph{neck-stretching operation}.
\end{rem}

Recall that $w_1, \, w_2$ are maps into the same contact manifold
$(V,\lambda)$ since $W_1 = V \times \R = W_2$, which satisfies the contact
instanton equation \eqref{eq:wij-contacton}. Then let
$$
w_K = w_1 \widehat \#_K w_2 = \text{\rm Glue}(w_1, w_2, K)
$$
be the gluing solution on $V$ constructed in Part \ref{part:gluing-contacton}
for
$$
(w_1,w_2, K) \in \K_1 \times \K_2 \times \pm [K_0,\infty)
$$
for a sufficiently large $K_0 > 0$ as before.

Now we consider the $\R$-component $f$ and  solve the equation
$$
df = w_K^*\lambda \circ j.
$$
\begin{cond}[Normalization]\label{cond:normalization}
We assume that each puncture is equipped with cylindrical coordinates $(\tau,t)$ on $\pm [0, \infty)$.
For each given sufficiently large $K > 0$, we normalize the choice of $f=f_K$ above by requiring it to satisfy
\be\label{eq:normalization}
f_K(0,0)= \frac{f_1(0,K) + f_2(0,-K)}2.
\ee
\end{cond}
This uniquely determines the function $f_K: V \times \R \to \R$ and $u_K: = (w_K,f_K)$ solves the equation
\be\label{eq:fR}
\begin{cases}
 df_K =w_K^*\lambda \circ j, \\
f_K(0,0)= \frac{f_1(0,K) + f_2(0,-K)}2.
\end{cases}
\ee
Then the following exponential estimates
immediately follows from the equation $w_i^*\lambda \circ j = df_i$ and the normalization condition \eqref{eq:normalization}
and the exponential decay estimates of
$u_i(w_i,f_i)$ $i = 1,2$ as $|\tau| \to \infty$  given in Proposition \ref{prop:dw-expdecay}.

More precisely, we have the following

\begin{prop} Let $K \geq K_0$ and denote by $f_{i,2K}$ the functions defined by
$$
f_{1,2K}(\tau,t) = f_1(\tau +2K,t), \quad f_{2,2K}(\tau) = f_2(\tau - 2K,t).
$$
Then if $K_0 > 0$ is sufficiently large depending only on $\K_1, \, \K_2$ such that for all $\ell \geq 2$, $$
|\nabla^\ell(f_K - f_{1, 2K} )(\tau,t)| \leq C e^{-\delta K}
$$
for all $\tau \geq K$ and
$$
|\nabla^\ell(f_K - f_{2,2K}) (\tau , t)| \leq C e^{-\delta K}
$$
for all $\tau \leq -K$
\end{prop}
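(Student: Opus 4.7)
The plan is to deduce the $C^\ell$-bound on $f_K - f_{i,2K}$ directly from the gluing of the underlying contact projections $w_K = w_1\widehat\#_K w_2$ constructed in Part \ref{part:gluing-contacton}, combined with the defining identity $df = w^*\lambda\circ j$ for the $\R$-component of a pseudoholomorphic curve in the symplectization. The key observation is that once $w_K$ is shown to agree with $w_{i,2K}$ up to a $C^\infty$-exponentially small correction on the appropriate strip-like region, the one-forms $w_K^*\lambda\circ j$ and $w_{i,2K}^*\lambda\circ j$ automatically agree up to $O(e^{-\delta K})$ in every $C^{\ell-1}$-norm there, and differentiating $\ell-1$ more times produces the claimed bound on $\nabla^\ell(f_K - f_{i,2K})$.

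First I would recall the pregluing construction from Part \ref{part:gluing-contacton}: by its very definition, $\PreG(w_1,w_2,K)$ coincides with $w_{1,2K}$ on $\tau\le -K-1$ and with $w_{2,2K}$ on $\tau\ge K+1$, and the error estimate of Corollary \ref{cor:Upsilon-decay} gives $\|\Upsilon(\PreG(w_1,w_2,K))\|_{W^{1,p}}\le Ce^{-\delta K}$. The genuine glued solution $w_K$ is obtained from $\PreG(w_1,w_2,K)$ by a Picard correction whose $W^{k,p}$-norm is controlled via the uniformly-bounded right inverse supplied by Proposition \ref{prop:approQestimate}, giving $\|w_K - \PreG(w_1,w_2,K)\|_{W^{k,p}} \le Ce^{-\delta K}$. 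Bootstrapping this to a $C^\infty$-bound via the mixed-order elliptic regularity of Theorem \ref{thm:higher-regularity} (applied on shrinking subdomains and combined with the exponential convergence of Proposition \ref{prop:dw-expdecay}) then yields $C^\infty$-exponential closeness of $w_K$ to $w_{i,2K}$ on the respective region outside the neck.

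Combining this with the defining relation gives
$$
d(f_K - f_{i,2K}) \;=\; (w_K^*\lambda - w_{i,2K}^*\lambda)\circ j,
$$
whose right-hand side is bounded by $C_\ell\, e^{-\delta K}$ in every $C^{\ell-1}$-norm on the relevant region. Differentiating $\ell-1$ more times delivers the claimed bound $|\nabla^\ell(f_K - f_{i,2K})|\le C_\ell e^{-\delta K}$ for every $\ell\ge 2$; the regions $\tau\ge K$ and $\tau\le -K$ are handled symmetrically. The main obstacle, as I see it, lies in the $W^{k,p}$-to-$C^\infty$ bootstrap: because $D\Upsilon_2$ is second-order in the Reeb component rather than first-order, one has to track the mixed Sobolev indices carefully, but this is routine once the interior elliptic estimate is coupled with the asymptotic exponential decay of $dw_K$ near the punctures. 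Note that since $\ell\ge 2$ (or even $\ell\ge 1$), the additive-constant ambiguity in $f_K$ and $f_{i,2K}$ is invisible and the normalization Condition \ref{cond:normalization} is not needed at this stage; it would enter only if one wanted the corresponding $C^0$-estimate.
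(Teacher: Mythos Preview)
Your approach is correct and rests on the same core identity the paper uses, namely
\[
d(f_K - f_{i,2K}) = (w_K^*\lambda - w_{i,2K}^*\lambda)\circ j,
\]
so that $\nabla^\ell(f_K - f_{i,2K})$ is bounded by the $C^{\ell-1}$-size of the difference of pullback forms. The paper's own argument is considerably more compressed: it writes out the partial-derivative relations $\partial_\tau f_K = \lambda(\partial_t w_K)$, $\partial_t f_K = -\lambda(\partial_\tau w_K)$, subtracts off the asymptotic action $T$, and then appeals directly to the exponential decay of the individual $w_i$ from Proposition \ref{prop:dw-expdecay} together with the normalization Condition \ref{cond:normalization} and an integration step. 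Your route instead controls the difference $w_K - w_{i,2K}$ through the full gluing package (pregluing coincidence outside the neck, Corollary \ref{cor:Upsilon-decay}, the Picard step with the bounded right inverse of Proposition \ref{prop:approQestimate}, then Theorem \ref{thm:higher-regularity} for the $C^\infty$-upgrade). This is more elaborate but more transparent for $\ell\ge 2$, and your remark that the normalization condition is irrelevant once $\ell\ge 2$ is a genuine sharpening; the paper invokes the normalization and an integration, which are only actually needed to pin down the constant in the $\ell=0,1$ range. Either argument suffices for the statement as written.
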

\begin{proof} We look at the equation
$$
df_K = w_K^*\lambda \circ j
$$
for $f_K$ for the given $w_K = \Glue(w_1,w_2,K)$ with $u_i = (w_i,f_i)$, $i= 1, \, 2$.

The equation
$$
w_K^*\lambda \circ j = df_K
$$
implies
$$
\frac{\del f_K}{\del \tau} = \lambda\left(\frac{\del w_K}{\del t}\right), \quad
\frac{\del f_K}{\del t} = - \lambda\left(\frac{\del w_K}{\del \tau}\right).
$$
In particular we obtain
$$
\frac{\del f_K}{\del \tau} - T = \lambda\left(\frac{\del w_K}{\del t}\right) - T
$$
where $T$ is the action of the asymptotic limit $\gamma'$ of $w_K$ at the relevant puncture, i.e.,
$$
T = \int (\gamma')^*\lambda.
$$
Then the required exponential estimates
immediately follow by integrating over from $\tau$ to $\tau + K$ for any $\tau \geq K_0$(resp. from $\tau - K$ to $\tau$
for any $\tau \leq -K_0$) from the normalization condition \eqref{eq:normalization}
and the exponential decay estimates of $w_i$ $i = 1,2$ as $|\tau| \to \infty$
given in Proposition \ref{prop:dw-expdecay}. This finishes the proof.
\end{proof}

The above construction is canonical and hence produces a smooth family-gluing map
$$
\Glue: \K_1 \times\K_2 \times [K_0,\infty) \to  \CM(W_1 \#_K W_2, J_K;\Gamma_1^-,\Gamma_2^+)
$$
for any given compact subsets
$$
\K_i \subset \CM(W_i,\Gamma_i^-,\Gamma_i^+), \quad i = 1, \, 2.
$$
This finishes the gluing construction in the context of symplectization, i.e., in the
context of trivial cobordism.

\section{General 2-story curves in general cobordisms}
\label{sec:general-cobordism}

We now consider a general 2-story building $(W_1,W_2)$ and a 2-story (bordered) stable curve
$(u_1, u_2)$ where $u_i = \{h_{i,j}\}$, $i = 1, 2$ is a collection of holomorphic curves
$$
h_{i,j}:S_j \to W_i, \quad j = 1, \cdots, k_i.
$$
We require the matching condition
\be\label{eq:Gamma12-matching}
\Gamma_1^+ = \Gamma_2^-
\ee
on the asymptotic boundary $V_1^+ = V_2^-$ as before where $V_1^+$ is the floor  is the
floor of $W_1$ and the ceiling of $W_2$.

We fix a decomposition
\be\label{eq:decompose-Wi}
W_i = V_i^- \times (-\infty, 0]  \cup W_i^{\text{\rm mid}} \cup V_i^+ \times [0,\infty)
\ee
for each $i = 1, \, 2$. Then we have
$$
\del W_i^{\text{\rm mid}} = V_i^{\text{\rm mid},-} \cup V_i^{\text{\rm mid},+}
$$
where $ V_i^{\text{\rm mid},\pm}$ is a copy of $V_i^\pm$ given by
$$
V_i^{\text{\rm mid},\pm} = V_i^\pm \times \{0\}.
$$
The rest of the gluing process has two steps.

\subsubsection{Gluing the target}

We start with gluing the targets $W_1$ and $W_2$ along $V_1^+ = V_2^-$.
For each $R > 0$ sufficiently large, we decompose the image $C_i:= \Image u_i$ into the union
$$
C_i = C_i^{-R} \bigcup C_i^{\text{\rm mid},R} \bigcup C_i^{+R}
$$
for each $i=1,\, 2$ by taking the intersections
$$
C_1^{\pm R} := C_1 \cap (V_i \times (-\infty, -R]), \quad
C_2^{\pm R} := C_2 \cap (V_i \times [R,\infty))
 $$
and setting
$$
C_i^{\text{\rm mid, R}} := C_i \cap W_i^{\text{\rm mid}}
$$
\begin{lem} There exists a sufficiently large $R_0 > 0$ such that the preimages
$$
u_i^{-1}(C_i^{\pm R})
$$
are either empty or a finite collection of maps
$$
h_{i,j}^\pm: S_{i,j}^\pm \to \del^\pm_\infty W_i \times (\pm [R_i,\infty))
$$
each connected component of which has domain
$$
S_{i,j}^\pm \subset \R \times S^1
$$
has one cylindrical end and its boundary $\del S_{i,j}$ becoming a simple closed curve in $\R \times S^1$.
\end{lem}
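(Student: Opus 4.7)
The plan is to exploit the asymptotic exponential convergence of each $u_i=(w_i,f_i)$ near its punctures converging to Reeb orbits of $V_i^\pm$. First I would choose for each puncture $p$ of $u_i$ whose asymptotic limit lies in $V_i^\pm$ a cylindrical coordinate neighborhood $U_{i,j}^\pm \cong \pm[T_0,\infty)\times S^1$ on which Proposition \ref{prop:C0-expdecay} and Proposition \ref{prop:dw-expdecay} apply. The complement of the union of these neighborhoods in the domain of $u_i$ is a compact surface with boundary, whose image under $u_i$ is compact in $W_i$; by choosing $R_0$ larger than the supremum of $|s\circ u_i|$ on this compact set (over both $i=1,2$ and the finitely many $j$'s), the preimage $u_i^{-1}(V_i^\pm\times(\pm[R,\infty)))$ is forced to lie entirely inside the union $\bigcup_{i,j}U_{i,j}^\pm$ whenever $R\geq R_0$.

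Next I would analyze the radial coordinate $f_i = s\circ u_i$ on a single such neighborhood, say $U_{i,j}^+=[T_0,\infty)\times S^1$. The identity $df_i = w_i^*\lambda\circ j$ gives
\[
\tfrac{\del f_i}{\del \tau}=\lambda\!\left(\tfrac{\del w_i}{\del t}\right),\qquad \tfrac{\del f_i}{\del t}=-\lambda\!\left(\tfrac{\del w_i}{\del \tau}\right),
\]
and the exponential decay estimates \eqref{eq:lambdadw-expdecay} for $w_i$ yield $|\partial_\tau f_i - T_j|\leq Ce^{-\delta\tau}$ and $|\partial_t f_i|\leq Ce^{-\delta\tau}$, with $T_j>0$ the action of the asymptotic Reeb orbit. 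After enlarging $T_0$ if necessary, $\partial_\tau f_i > T_j/2$ everywhere on $U_{i,j}^+$, so $f_i$ is a proper smooth submersion with connected level sets on $U_{i,j}^+$. Integrating shows $f_i(\tau,t)=T_j\tau + c_j + O(e^{-\delta\tau})$.

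With this in place the conclusion follows from the implicit function theorem: for $R\geq R_0$ the level set $f_i^{-1}(R)\cap U_{i,j}^+$ is a smooth simple closed curve $C^1$-close to $\{\tau=R/T_j\}\times S^1$, and the superlevel set $f_i^{-1}([R,\infty))\cap U_{i,j}^+$ is a half-open cylinder with boundary this simple closed curve and with one cylindrical end running into the puncture $p$. The analogous statement holds for negative ends with $V_i^-$. Since there are only finitely many punctures of each $u_i$, the preimage is a finite disjoint union of such half-cylinders. The step I expect to be the main (though routine) obstacle is arranging a uniform threshold $R_0$ that works simultaneously across all components and both signs; this is handled by taking the maximum of the finitely many compact bounds from Step~1 and the finitely many thresholds from the exponential decay constants, which is permissible because the stable curve $(u_1,u_2)$ has finite combinatorial type.
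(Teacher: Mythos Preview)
Your proposal is correct and is essentially a detailed unpacking of the paper's one-line proof, which simply invokes ``the exponential convergence property of $u_i$ and the maximum principle.'' The main difference is that where the paper appeals to the maximum principle to rule out compact components of the preimage inside the cylindrical end, you instead use the quantitative monotonicity $\partial_\tau f_i > T_j/2$ coming directly from the exponential decay \eqref{eq:lambdadw-expdecay}; this gives you the simple-closed-curve structure of $\partial S_{i,j}$ for free via the implicit function theorem, whereas the maximum principle alone is a bit coarser. One small point to tighten: when you say the complement of the $U_{i,j}^\pm$ is compact, make sure you are removing cylindrical neighborhoods of \emph{all} punctures (both signs), not just those asymptotic to $V_i^\pm$; otherwise the complement still has ends and the supremum argument fails. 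With that understood, your argument goes through and is in fact more informative than the paper's sketch.
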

\begin{proof} First recall from definition that we have the asymptotic boundary of
$\del_\infty W_i = \del_\infty^- W_i \sqcup \del_\infty^+ W_i$ is given by
$$
\del^\pm_\infty W_i = V_i^\pm
$$
for $i = 1, \, 2$ respectively. Then the lemma is an immediate consequence of the exponential convergence property
$u_i$ and the maximum principle.
\end{proof}

Therefore we can take a sufficiently large $K_1 > 0$ so that
$$
S_{i,j} \supset \pm[K_1, \infty).
$$
By restricting the relevant curve to $\pm[K_1, \infty)$, we assume that the domain of $h_{i,j}$ is
the semi-cylinder
$$
\pm[K_1, \infty) \times S^1.
$$
Then we regard $w_i = w_{i,j}$ as maps to the same contact manifold
$V := V_1^+ = V_2^-$ and solve the
gluing problem thereof for each $K \geq K_1$.

\subsubsection{Gluing the maps}

We denote by
the resulting glued solution by
$$
w_K = \Glue(w_1,w_2,K).
$$
For each $K_2 \geq K_1$, we can uniquely solve the equation
\be\label{eq:wi-dfi}
\begin{cases}
df_K = w_K^*\lambda \circ j, \\
f_K(0,0) =  \frac{f_1(0,+K_1) + f_2 (0,-K_1)}2
\end{cases}
\ee
on the neck-region $[-K_2, K_2] \times S^1$ of the domain $S_{1}^+ \cup_{K_1} S_{2} =: S_K$ which is
the domain glued along the cyindrical region around the relevant puncture.

This provides a pseudoholomorphic curve $u_{K_2}^{\text{\rm mid}}$ defined on
$$
\pm[-K_2, K_2] \times S^1.
$$
By considering sufficiently large $K_1$ and interpolating this curve with the curve away from the
neck region,
$$
u_K|_{\R \times S^1 \setminus [-K_1,K_1] \times S^1}.
$$
We denote by $u_{i;K_1}$ with $i = 1, \, 2$ the part of whose image is
contained in $W_i$ respectively. Then we define a pre-gluing
approximate solution
$$
\PreG(u_1,u_2,K): = u_{1,K}^- \# u_{K}^{\text{\rm mid}} \# u_{2,K}^+=: \uapp
$$
which defines a map
$$
\uapp: \R \times [0,1] \to W_1 \#_{R_0} W_2 =: W_{R_0}.
$$
We fix a sufficiently large $R_0> 0$ depending only on $\K_1, \, \K_2$.
Then the assignment
$$
(u_1,u_2,K) \mapsto \PreG(u_1,u_2,K)
$$
defines a smooth map
$$
\K_1 \times \K_2 \times [K_0, \infty) \to \CW^{k,p}(\dot \Sigma_K, W_{R_0}).
$$
We note that the target manifold $W_{R_0}$ equipped with the induced symplectic form and the induced
almost complex structure is a \emph{fixed} almost K\"ahler manifold with cylindrical ends.

At this stage, the following is again an immediate consequence of exponential convergence of $u_i$
in the strip-like coordinates near each puncture of $\dot \Sigma$.

\begin{prop} The above constructed $\PreG(u_1,u_2,K_0)$ is a
good approximate solution for the pseudoholomorphic curve equation.
\end{prop}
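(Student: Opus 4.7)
The plan is to exploit the fact that the $\widetilde J$-holomorphic equation for a map $u = (w,f)$ into the completed cobordism decouples, by the very definition of the cylindrical almost complex structure $\widetilde J$ adapted to $(V,\lambda,J)$, into the pair
\[
\delbar^\pi w = 0, \qquad w^*\lambda \circ j - df = 0.
\]
Thus the error $\delbar_{\widetilde J} \uapp$ splits into a horizontal component $\delbar^\pi w_{\text{app}}$ and a vertical component $w_{\text{app}}^*\lambda \circ j - df_{\text{app}}$, and it suffices to bound each exponentially in $W^{k-1,p}$ uniformly over $(u_1,u_2,K) \in \K_1 \times \K_2 \times [K_0,\infty)$. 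The horizontal component is already controlled by the contact instanton gluing of Part \ref{part:gluing-contacton}: on each neck $w_{\text{app}}$ equals $\PreG(w_1,w_2,K)$, and Corollary \ref{cor:Upsilon-decay} gives $\|\delbar^\pi w_{\text{app}}\|_{W^{k-1,p}} \leq C e^{-\delta K}$; on the outer regions $w_{\text{app}} = w_i$ solves the equation exactly.

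For the vertical component I would partition $\dot \Sigma_K$ into three zones. On the two outer regions $\uapp = u_i$ and $w_i^*\lambda \circ j - df_i \equiv 0$ by construction. On the central neck $[-K,K]\times S^1$ the equation \eqref{eq:fR} imposes $df_K = w_K^*\lambda \circ j$ identically. What remains are two narrow cut-off collars of fixed width around $\tau = \pm K$, on which $\uapp$ is built by interpolating between $(w_K,f_K)$ and $(w_{i,2K}, f_{i,2K})$ via bump functions of the type used in the pre-gluing. On each collar the vertical error is a linear combination of $\chi'$-terms multiplying scalar differences $f_K - f_{i,2K}$ and one-form differences $(w_K - w_{i,2K})^*\lambda \circ j$, both of whose higher-derivative norms are exponentially small in $K$: the scalar estimate is precisely the content of the proposition immediately preceding the statement, and the one-form estimate is inherited from the pre-gluing of contact instantons in Proposition \ref{prop:pregluing-estimate}.

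The main obstacle I anticipate is not any individual estimate but the bookkeeping needed to propagate $C^0$-decay of the primitives $f_K - f_{i,2K}$ to uniform higher Sobolev decay with constants depending only on $\K_1,\K_2$. I would handle this by differentiating the subtracted identity $d(f_K - f_{i,2K}) = (w_K - w_{i,2K})^*\lambda \circ j$ and iteratively invoking the higher-derivative exponential convergence of $w_\pm$ at the puncture from Proposition \ref{prop:dw-expdecay}, together with the triad-connection estimates inherent in the contact instanton pre-gluing. Assembling the three regional estimates will then yield $\|\delbar_{\widetilde J} \uapp\|_{W^{k-1,p}} \leq C e^{-\delta K}$ and smoothness of the assignment $(u_1,u_2,K)\mapsto \uapp$ in all arguments, which is exactly the sense in which $\PreG(u_1,u_2,K)$ is a good approximate solution. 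The standard Newton iteration on the tame almost K\"ahler manifold $W_{R_0}$, with a linearized right inverse obtained by lifting the contact instanton right inverse of Section \ref{sec:right-inverse} through the same soft procedure $df = w^*\lambda \circ j$, then upgrades $\uapp$ to a genuine $\widetilde J$-holomorphic curve, completing the canonical lift of the contact instanton gluing to the SFT setting without any additional scale-dependent analysis.
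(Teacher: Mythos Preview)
Your overall strategy---decoupling the $\widetilde J$-holomorphic error into horizontal and vertical parts and controlling each on the neck and on collar regions---is on the right track and aligns with the paper's philosophy, but you have misread one key point of the construction. The middle piece $w_K$ that the paper places on the neck is the \emph{genuine} glued contact instanton $\Glue(w_1,w_2,K)$ produced by Part~\ref{part:gluing-contacton}, not the pre-glued approximate $\PreG(w_1,w_2,K)$. This matters: since $w_K$ is a true contact instanton, the one-form $w_K^*\lambda\circ j$ is exactly closed, so equation~\eqref{eq:wi-dfi} can be solved and the lift $u_K^{\text{mid}}=(w_K,f_K)$ is a \emph{genuine} $\widetilde J$-holomorphic curve on the entire neck $[-K_2,K_2]\times S^1$, contributing zero error there. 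In particular you do not need Corollary~\ref{cor:Upsilon-decay} for the horizontal component on the neck; it vanishes identically. Your proposal is internally inconsistent on this point---you invoke $\PreG(w_1,w_2,K)$ for the horizontal estimate but then appeal to \eqref{eq:fR}, which is only solvable because the paper uses $\Glue$.

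With this correction the argument becomes what the paper actually asserts: the entire error of $\PreG(u_1,u_2,K)$ is supported on the two interpolation collars where $u_K^{\text{mid}}$ is patched with the original $u_i$, and there it is bounded by the exponential $C^\infty$-closeness of $(w_K,f_K)$ to $(w_{i,2K},f_{i,2K})$. The paper disposes of this in one line by invoking the exponential convergence of $u_i$ near each puncture (Proposition~\ref{prop:dw-expdecay}) together with the immediately preceding proposition bounding $\nabla^\ell(f_K-f_{i,2K})$. Your collar bookkeeping in the last two paragraphs is essentially this step spelled out, and is sound; it just becomes unnecessary to treat the central neck separately once you recognize that the middle piece is exact.
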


Once we have this construction of good approximate solution carried out, then the standard gluing
operation in the literature (e.g., such as the one given in \cite{ruan-tian}, \cite{mcduff-salamon-symplectic}, \cite{fooo:book2})
finishes the proof of gluing theorem.

\appendix

\section{Review of the contact triad connection}
\label{sec:connection}

Assume $(M, \lambda, J)$ is a contact triad for the contact manifold $(M, \xi)$, and equip with it the contact triad metric
$g=g_\xi+\lambda\otimes\lambda$.
In \cite{oh-wang:connection}, the authors introduced the \emph{contact triad connection} associated to
every contact triad $(M, \lambda, J)$ with the contact triad metric and proved its existence and uniqueness.

\begin{thm}[Contact Triad Connection \cite{oh-wang:connection}]\label{thm:connection}
For every contact triad $(M,\lambda,J)$, there exists a unique affine connection $\nabla$, called the contact triad connection,
 satisfying the following properties:
\begin{enumerate}
\item The connection $\nabla$ is  metric with respect to the contact triad metric, i.e., $\nabla g=0$;
\item The torsion tensor $T$ of $\nabla$ satisfies $T(R_\lambda, \cdot)=0$;
\item The covariant derivatives satisfy $\nabla_{R_\lambda} R_\lambda = 0$, and $\nabla_Y R_\lambda\in \xi$ for any $Y\in \xi$;
\item The projection $\nabla^\pi := \pi \nabla|_\xi$ defines a Hermitian connection of the vector bundle
$\xi \to M$ with Hermitian structure $(d\lambda|_\xi, J)$;
\item The $\xi$-projection of the torsion $T$, denoted by $T^\pi: = \pi T$ satisfies the following property:
\be\label{eq:TJYYxi}
T^\pi(JY,Y) = 0
\ee
for all $Y$ tangent to $\xi$;
\item For $Y\in \xi$, we have the following
$$
\del^\nabla_Y R_\lambda:= \frac12(\nabla_Y R_\lambda- J\nabla_{JY} R_\lambda)=0.
$$
\end{enumerate}
\end{thm}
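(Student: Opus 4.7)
The proof naturally splits into (i) uniqueness, established by decoding each component of $\nabla_X Y$ with respect to the splitting $TM = \xi \oplus \R\langle R_\lambda\rangle$ from the six axioms, and (ii) existence, obtained by taking the resulting formulas as a definition and verifying the axioms in turn. This is the standard Koszul-style strategy used in the proof of the Levi-Civita theorem, adapted to the asymmetric roles played by $\xi$ and $R_\lambda$.

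For uniqueness I would decompose $\nabla_X Y$ into four pieces. First, axiom (2) combined with the definition of torsion forces
\[
\nabla_{R_\lambda} X \;=\; \nabla_X R_\lambda + [R_\lambda, X]
\]
for every vector field $X$, so the Reeb-horizontal derivative is reduced to knowledge of the operator $A(X) := \nabla_X R_\lambda$. By axiom (3), $A$ preserves $\xi$ on $\xi$ and kills $R_\lambda$; by axiom (6) its restriction to $\xi$ satisfies $A(JY) = -JA(Y)$, i.e., is complex-antilinear, and combining this with metric compatibility (1) applied to $g(R_\lambda,R_\lambda) = 1$ and with the well-known identity that $\CL_{R_\lambda}J$ anticommutes with $J$ on $\xi$, one pins $A|_\xi$ down uniquely as a specific multiple of $(\CL_{R_\lambda}J)J$. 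This determines $\nabla R_\lambda$ completely. Second, the horizontal part $\nabla^\pi$ is fixed by the Chern-connection uniqueness argument applied to the Hermitian bundle $(\xi, d\lambda|_\xi, J)$: (4) makes $\nabla^\pi$ simultaneously $d\lambda$-metric and $J$-linear, and (5), which asserts the vanishing of the $(1,1)$-part of $T^\pi$ along $\xi \times \xi$, plays exactly the role of the vanishing-$(0,1)$-torsion condition in the usual Chern setup. Third, the vertical component $\lambda(\nabla_Y Z)$ for $Y, Z \in \xi$ is extracted from
\[
0 \;=\; Y\bigl(\lambda(Z)\bigr) \;=\; g(\nabla_Y Z, R_\lambda) + g(Z, \nabla_Y R_\lambda) \;=\; \lambda(\nabla_Y Z) + g_\xi\bigl(Z, A(Y)\bigr),
\]
so $\lambda(\nabla_Y Z)$ is forced by the already-determined $A$. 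Together, these cases show that any connection satisfying (1)--(6) must coincide with the connection specified by explicit formulas in $(\lambda, J, [\,\cdot\,,\,\cdot\,], d\lambda)$.

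For existence I would \emph{define} $\nabla$ by these formulas and then verify (1)--(6) one by one. Axioms (3), (4), and (6) hold by construction; (2) for pairs involving $R_\lambda$ follows from the defining Reeb-horizontal formula above, and (5) holds because $\nabla^\pi$ was selected as the Chern-type representative. Metric compatibility (1) splits into two pieces: $\nabla(\lambda\otimes\lambda) = 0$ follows from $\nabla_X R_\lambda \in \xi$ together with $X(\lambda(R_\lambda)) = X(1) = 0$, while $\nabla g_\xi = 0$ reduces to metric compatibility of $\nabla^\pi$ together with the consistency of the vertical-component formula, which was precisely how $\lambda(\nabla_Y Z)$ was defined and so closes up tautologically.

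The main obstacle is the interplay among axioms (4), (5), and (6): the Chern-type normalization on $\xi$, the torsion-vanishing condition $T^\pi(JY,Y)=0$, and the Reeb-derivative formula all involve the tensor $(\CL_{R_\lambda}J)$ acting on $\xi$, and the delicate point is to show that they do not over-determine $\nabla$. Concretely, the sensitive check is that the piece of $T^\pi$ coming from the Reeb-to-$\xi$ mixing, which is dictated by (6), is compatible with the $(1,1)$-vanishing condition (5) governing the pure $\xi$-torsion. This consistency ultimately rests on the identities $\CL_{R_\lambda}\lambda = 0$ and $\{\CL_{R_\lambda}J, J\} = 0$, which allow the two normalizations to coexist; once these algebraic compatibilities are verified, the remaining checks become routine tensorial computations in the splitting $TM = \xi \oplus \R\langle R_\lambda\rangle$.
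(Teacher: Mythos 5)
First, a caveat: the paper does not prove Theorem \ref{thm:connection} at all --- it is imported verbatim from \cite{oh-wang:connection} as background, so there is no in-paper argument to compare yours against. Judged against the actual construction in that reference, your overall architecture is the right one: decompose $\nabla$ according to $TM=\xi\oplus\R\langle R_\lambda\rangle$, determine $A(Y):=\nabla_Y R_\lambda$ and the vertical component $\lambda(\nabla_Y Z)$ from the axioms, fix $\nabla^\pi$ on $\xi\times\xi$ by a Chern-type uniqueness argument (your identification of axiom (5) with the vanishing of the $(1,1)$-part of $T^\pi$ is correct), recover $\nabla_{R_\lambda}$ from $T(R_\lambda,\cdot)=0$, and then reverse the construction for existence.

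The genuine gap is in your determination of $A$ on $\xi$. Metric compatibility applied to $g(R_\lambda,R_\lambda)=1$ only gives $\nabla_XR_\lambda\perp R_\lambda$, which is already contained in axiom (3) and carries no new information. What $\nabla g=0$ in the Reeb direction combined with $T(R_\lambda,\cdot)=0$ actually yields is $g(A(Y),Z)+g(Y,A(Z))=(\CL_{R_\lambda}g)(Y,Z)$, i.e.\ only the $g$-symmetric part of $A$; and axiom (6) kills only the $J$-linear part of $A$. Since for $\operatorname{rank}\xi\geq 4$ the space of $J$-anti-linear, $g$-antisymmetric endomorphisms of $\xi$ is nonzero, these two inputs together do not pin $A$ down, so the step ``one pins $A|_\xi$ down uniquely'' fails as stated. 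The argument that works is different: using $T(R_\lambda,\cdot)=0$ and the fact that axiom (4) forces $\nabla^\pi_{R_\lambda}J=0$ (while (1) and (3) keep all terms in $\xi$), one computes for $Y\in\xi$
$$
(\CL_{R_\lambda}J)Y=[R_\lambda,JY]-J[R_\lambda,Y]=JA(Y)-A(JY),
$$
and the right-hand side equals twice $J$ composed with the $J$-anti-linear part of $A$; axiom (6) says precisely that the $J$-linear part vanishes, whence $A=\frac12(\CL_{R_\lambda}J)J$ on the nose --- this is Corollary \ref{cor:connection}(1). With this repaired, your vertical-component formula and the existence step proceed as you describe, although the consistency check you flag at the end (that (5) and (6) do not over-determine the torsion) is indeed where the substantive work lies and would have to be written out rather than asserted.
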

From this theorem, we see that the contact triad connection $\nabla$ canonically induces
a Hermitian connection $\nabla^\pi$ for the Hermitian vector bundle $(\xi, J, g_\xi)$,
and we call it the \emph{contact Hermitian connection}.

Moreover, the following fundamental properties of the contact triad connection was
proved in \cite{oh-wang:connection}, which will be useful to perform tensorial calculations later.

\begin{cor}\label{cor:connection}
Let $\nabla$ be the contact triad connection. Then
\begin{enumerate}
\item For any vector field $Y$ on $M$,
\be\label{eq:nablaYX}
\nabla_Y R_\lambda = \frac{1}{2}(\CL_{R_\lambda}J)JY;
\ee
\item $\lambda(T|_\xi)=d\lambda$.
\end{enumerate}
\end{cor}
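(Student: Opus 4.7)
The plan is to derive both identities directly from the defining properties (1)--(6) in Theorem \ref{thm:connection} by tensorial manipulation. Throughout I will use the decomposition $Y = Y^\pi + \lambda(Y) R_\lambda$ and the fact that the triad metric satisfies $\lambda(\cdot) = g(\cdot, R_\lambda)$ and $g_\xi = d\lambda(\cdot, J\cdot)|_\xi$.

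For part (1), I would first reduce to $Y \in \xi$: by linearity and property (3) ($\nabla_{R_\lambda} R_\lambda = 0$), together with $J R_\lambda = 0$, both sides of the formula vanish on the $R_\lambda$-component of $Y$. For $Y \in \xi$, expand
\[
(\CL_{R_\lambda} J)(JY) = [R_\lambda, J(JY)] - J[R_\lambda, JY] = -[R_\lambda, Y] - J[R_\lambda, JY],
\]
and use property (2), which gives $[R_\lambda, Z] = \nabla_{R_\lambda} Z - \nabla_Z R_\lambda$, to rewrite each commutator. Property (6), in the form $\nabla_{JY} R_\lambda = -J \nabla_Y R_\lambda$, collapses two of the resulting terms into $2\nabla_Y R_\lambda$, so the identity reduces to proving $\nabla_{R_\lambda}(JY) = J\nabla_{R_\lambda} Y$ on $\xi$. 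The $\xi$-components agree by property (4) (Hermiticity of $\nabla^\pi$), while the $\lambda$-components both vanish: metric compatibility combined with $\nabla_{R_\lambda} R_\lambda = 0$ yields $\lambda(\nabla_{R_\lambda}(JY)) = R_\lambda \cdot g(JY, R_\lambda) = 0$, and likewise for $\nabla_{R_\lambda} Y$.

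For part (2), let $X, Y \in \xi$. Since $\lambda(X) = \lambda(Y) = 0$, Cartan's formula gives $d\lambda(X,Y) = -\lambda([X,Y])$. Writing $T(X,Y) = \nabla_X Y - \nabla_Y X - [X,Y]$, the identity $\lambda(T(X,Y)) = d\lambda(X,Y)$ is therefore equivalent to the symmetry $\lambda(\nabla_X Y) = \lambda(\nabla_Y X)$. Metric compatibility applied to $\lambda(Y) = g(Y, R_\lambda) = 0$ yields $\lambda(\nabla_X Y) = -g(Y, \nabla_X R_\lambda)$, and the analogous relation for $\lambda(\nabla_Y X)$. Substituting the formula from part (1), the desired symmetry becomes
\[
g\bigl(Y,\, (\CL_{R_\lambda} J)JX\bigr) = g\bigl(X,\, (\CL_{R_\lambda} J)JY\bigr),
\]
i.e.\ the $g$-selfadjointness on $\xi$ of the operator $A := (\CL_{R_\lambda} J)J$.

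To close the argument I would establish this selfadjointness by a short Lie-derivative computation: $\CL_{R_\lambda}(J^2) = 0$ forces $J(\CL_{R_\lambda} J) = -(\CL_{R_\lambda} J)J$, so $g(X, AY) = d\lambda(X, JAY) = d\lambda(X, (\CL_{R_\lambda} J)Y)$; on the other hand, applying the Leibniz rule for $\CL_{R_\lambda}$ to $g_\xi(X,Y) = d\lambda(X, JY)$ and using $\CL_{R_\lambda} d\lambda = d(R_\lambda \rfloor d\lambda) = 0$ gives $(\CL_{R_\lambda} g_\xi)(X,Y) = d\lambda(X, (\CL_{R_\lambda} J)Y)$, which is symmetric in $X,Y$ because $\CL_{R_\lambda} g_\xi$ is. The main (and only) obstacle is the bookkeeping of signs and the interplay between $J$, $d\lambda$, and $\CL_{R_\lambda}$ in this last step; once that symmetry is in place, both parts of the corollary follow from the axioms of Theorem \ref{thm:connection} with no further analytic input.
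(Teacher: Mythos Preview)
Your argument is correct. Both parts follow from the axioms of Theorem~\ref{thm:connection} exactly as you outline: the reduction in part~(1) via properties (2), (3), (4), (6) and the metric compatibility is clean, and the selfadjointness of $(\CL_{R_\lambda}J)J$ on $\xi$ that you need for part~(2) is indeed equivalent to the symmetry of $\CL_{R_\lambda}g_\xi$, which follows from $\CL_{R_\lambda}d\lambda = 0$ as you say.

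Note, however, that the paper does not actually supply a proof of this corollary here: the appendix is a review, and the result is simply quoted from \cite{oh-wang:connection} with the remark that readers are referred there for details. So there is no ``paper's own proof'' to compare against in this document; your derivation is a self-contained verification from the axioms, which is exactly what one would expect the original reference to contain (and indeed the selfadjointness of $(\CL_{R_\lambda}J)J$ with respect to the triad metric is recorded elsewhere in the paper as \cite[Lemma 3.4]{oh-wang:CR-map1}).
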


We refer readers to \cite{oh-wang:connection} for more discussion on the contact triad connection and its relation with other related canonical type connections.

\section{Newton's iteration scheme and outline of Taubes' gluing}
\label{sec:gluing-outline}

In this appendix, we duplicate the outline of Taubes' gluing scheme
explained in \cite[Subsection15.5.1]{oh:book2} and provide an outline of Taubes' gluing scheme.

We first motivate the gluing construction
by comparing it with the well-known Newton's iteration scheme of solving the equation
$f(x) = 0$ for a real-valued function, starting from an approximate solution $x_0$ that is sufficiently close to a genuine solution nearby. Along the way we visualize how
the error estimate and how the approximate (right) inverse enters in the iteration scheme.

Newton's iteration scheme is the inductive scheme achieved by the
recurrence relation
\be\label{eq:Newton's}
x_{n+1} = x_n - \frac{f(x_n)}{f'(x_n)}
\ee
starting from the initial trial solution $x_0$. Then we have
\be\label{eq:difference}
|x_{n+1} - x_n| = \left|\frac{f(x_n)}{f'(x_n)}\right|
\ee
and the expected solution is given by the infinite sum
$$
x_\infty = x_0 + \sum_{n=1}^\infty (x_{n+1} - x_n)
$$
\emph{provided that the series converges}. A simple way of ensuring the required convergence is to
establish existence of constant $C > 0, \, 0 \leq \mu < 1$ independent of $n$ such that
\be\label{eq:geometric}
\left|\frac{f(x_n)}{f'(x_n)}\right| \leq C \mu^n
\ee
for all $n$. This last inequality requires that \emph{$f(x_n)$ ,especially the initial error
$|f(x_0)|$, should be small relative to the inverse of the derivative $|f'(x_0)|$.}

With this iteration scheme in our mind, we recall the abstract formulation
of Taubes' gluing scheme explained in \cite[Subsection 15.5.1]{oh:book2}.
Let $F: B_1 \to B_2$ be a smooth map between
Banach spaces $B_1, \, B_2$. We would like to solve the equation
\be\label{eq:F(x)=0}
F(x) = 0.
\ee
The starting point is the presence of an approximate solution $x_0$, i.e.,
$F(x_0)$ is small in $B_2$, say
\be\label{eq:Ferror}
|F(x_0)| \leq \e
\ee
In applications, an approximate solution is manifest in the given
geometric context, and very often comes from some \emph{degenerate}
configurations. Then we would like to perturb $x_0$ to $x_0 + h$
to a solution of the form $x = x_0 +h$. So we Taylor-expand
$$
F(x_0 + h) = F(x_0) + dF(x_0) h + N(x_0,h)
$$
where $N$ satisfies the following two inequalities:
\begin{enumerate}
\item[(1)]
\beastar
|N(x_0,h)| &\leq& o(|h|)|h|\\
|N(x_0,h_1) - N(x_0,h_2)| &\leq& o(|h_1| + |h_2|)|h_1-h_2|.
\eeastar
\item[(2)] Assume that
there exists a function $\varepsilon_1 = \varepsilon_1(\delta)$ such that
$\varepsilon_1(\delta) \to 0$ as $\delta \to 0$  and
\bea\label{eq:N}
\frac{|N(x_0,h)|}{|h|} & \leq & \varepsilon_1(|h|) \\
\frac{|N(x_0,h_1) - N(x_0,h_2)|}{|h_1-h_2|} & \leq & \varepsilon_1(|h_1| + |h_2|).
\eea
\item[(3)]
We also assume that $\varepsilon_1(\delta) \to 0$ uniformly over $x_0$ when we consider
a family of approximate solutions $x_0$.
\end{enumerate}
Now, setting $F(x_0 + h) = 0$, we obtain
$F(x_0) + dF(x_0) h + N(x_0,h) = 0$ or
\be\label{eq:dF}
dF(x_0)h = -F(x_0) + N(x_0,h).
\ee
\begin{enumerate}
\item[(4)]
Now suppose $dF(x_0)$ is surjective linear map and let $Q(x_0)$ be
its right inverse so that $dF(x_0)\circ Q(x_0) = \id$.
\item[(5)]
When we consider a family of approximate solutions $x_0$, then we assume
that there exists a uniform constant $C > 0$ with
\be\label{eq:uniform|Q|}
\|Q(x_0)\| \leq C
\ee
for $x_0$.
\end{enumerate}

Then we put an Ansatz $h = Q(x_0)k$ for $k \in B_2$. Then \eqref{eq:dF}
is reduced to
\be\label{eq:fixedpteq}
k=-F(x_0) + N(x_0, Q(x_0)k).
\ee
Now we regard the right hand side as a map from $B_2$ to $B_2$
and denote this map by
\be\label{eq:Gk}
G(k) = -F(x_0) + N(x_0, Q(x_0)k).
\ee
At this point, we note that $G(0) = -F(x_0)$ is assumed to be sufficiently small.
To solve the fixed point problem $k = G(k)$ of the map $G$, we will apply Picard fixed point theorem. To apply
the fixed point theorem, we need to prove existence of $\delta > 0$ such that
\begin{enumerate}
\item $G$ maps a closed ball $B(\delta) \subset B_1$ to $B(\delta)$ itself.
\item $G$ is a contraction map, i.e., it satisfies
$$
|G(x_1) - G(x_2)| \leq \lambda|x_1 -x_2|
$$
for some $0 < \lambda < 1$.
\end{enumerate}

Then the above discussion is summarized into the following

\begin{prop}[Proposition15.5.1 \cite{oh:book2}]\label{prop:model} Let $B_1, \, B_2$ be Banach spaces and
let $F:B_1 \to B_2$ be a smooth map. Let $x_0 \in U$ satisfy
$$
|F(x_0)| \leq \e
$$
where $U \subset B_1$ be an open subset.
Suppose that $dF(x_0)$ is surjective and $Q(x_0)$ is its right
inverse and assume $|Q(x_0)| \leq C$ uniformly over $x_0 \in U$.
Choose any $\delta_2 = 2\e$ as above.
Then $F(x)$ has a unique solution of the form $x = x_0 + Q(x_0)k$ with
$$
|k| \leq \delta_2.
$$
In particular, there exists some $\e_0 > 0$ such that whenever $ 0 < \e \leq \e_0$
the perturbation error $|Q(x_0)k|$ can be made in the same
order of $\e$ so that
$$
|Q(x_0)k| \leq C \e
$$
i.e., so that the genuine solution $x$ lies in the ball $B_{x_0}(C\e) \subset U$
uniformly over $x_0$.
\end{prop}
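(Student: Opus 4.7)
The plan is to recast the equation $F(x)=0$ with the ansatz $x=x_0+Q(x_0)k$ as a fixed point problem $k=G(k)$ for the nonlinear map
\[
G:B_2\to B_2,\qquad G(k)=-F(x_0)+N(x_0,Q(x_0)k),
\]
and then apply the Banach contraction mapping principle on a small closed ball $B(\delta_2)\subset B_2$ centered at the origin, with the radius $\delta_2$ chosen comparable to the initial error $\varepsilon=|F(x_0)|$. The fact that $dF(x_0)\circ Q(x_0)=\mathrm{id}$ ensures that every zero of $G$ produces an honest zero of $F$, so the whole content is to verify the two hypotheses of the contraction principle uniformly over $x_0\in U$.

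First I would check the self-mapping property on $B(\delta_2)$ with $\delta_2=2\varepsilon$. For any $k\in B(\delta_2)$, the triangle inequality and hypothesis $(2)$ give
\[
|G(k)|\ \leq\ |F(x_0)|+|N(x_0,Q(x_0)k)|\ \leq\ \varepsilon+\varepsilon_1\!\bigl(|Q(x_0)k|\bigr)\,|Q(x_0)k|\ \leq\ \varepsilon+C\delta_2\,\varepsilon_1(C\delta_2),
\]
using $|Q(x_0)|\leq C$ from $(5)$. Hence $G$ sends $B(\delta_2)$ into itself as soon as $C\varepsilon_1(2C\varepsilon)\leq \tfrac12$, which holds for all sufficiently small $\varepsilon\leq \varepsilon_0$ because $\varepsilon_1(\delta)\to 0$ as $\delta\to 0$ and the convergence is uniform in $x_0$ by hypothesis $(3)$.

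Next I would verify the contraction estimate. For $k_1,k_2\in B(\delta_2)$, applying the second inequality of $(2)$ and $|Q(x_0)|\leq C$,
\[
|G(k_1)-G(k_2)|\ =\ |N(x_0,Q(x_0)k_1)-N(x_0,Q(x_0)k_2)|\ \leq\ C\,\varepsilon_1(2C\delta_2)\,|k_1-k_2|.
\]
Shrinking $\varepsilon_0$ if necessary so that $C\varepsilon_1(2C\delta_2)\leq \tfrac12$, the map $G$ is a $\tfrac12$-contraction on $B(\delta_2)$, and Banach's theorem produces a unique fixed point $k\in B(\delta_2)$. This fixed point gives the unique solution of the announced form $x=x_0+Q(x_0)k$, and the a posteriori estimate $|Q(x_0)k|\leq C|k|\leq C\delta_2=2C\varepsilon$ is immediate, which is the final claim.

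The proof itself is routine once the hypotheses are granted, so the main obstacle is not in this abstract statement but in producing, for each geometric gluing problem, the approximate solution $x_0$ (here $\PreG(u_-,u_+,K)$) and the approximate right inverse $Q(x_0)$ satisfying \eqref{eq:Ferror}, \eqref{eq:N}, and \eqref{eq:uniform|Q|} \emph{uniformly} over compact families of pre-glued data. In the contact instanton setting, the content is that the error estimate $\|\Upsilon(\uapp)\|_{W^{1,2}}\leq Ce^{-\delta K}$ of Corollary \ref{cor:Upsilon-decay} and the uniform bound on $Q_{\text{\rm app}}(\uapp)$ of Proposition \ref{prop:approQestimate} together force the parameter $\varepsilon=Ce^{-\delta K}$ into the regime $\varepsilon\leq\varepsilon_0$ for $K\geq K_0$ with $K_0$ large, at which point the proposition applies verbatim and yields the smoothness of the gluing map $\Glue$ in $(u_-,u_+,K)$.
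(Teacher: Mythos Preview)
Your proposal is correct and follows precisely the approach the paper sets up: the paper itself does not spell out a proof of this proposition but only assembles the ingredients (the map $G$ of \eqref{eq:Gk}, the quadratic estimates \eqref{eq:N}, the uniform bound \eqref{eq:uniform|Q|}) and then states the result as a summary, so your verification of the self-mapping and contraction properties for $G$ on $B(\delta_2)$ is exactly what is intended. Your closing paragraph correctly identifies where the genuine analytic content lies in the paper's applications.
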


Then the final gluing result we would like to achieve is the following type

\begin{thm}[Compare with Theorem 15.5.2 \cite{oh:book2}]\label{thm:glueu1u2} Let $\K_- \subset  \MM(\gamma_-,\gamma')$,
$\K_+ \subset \MM(\gamma',\gamma_+)$ be given compact subsets.
\begin{enumerate}
\item  There exists some $K_0 > 0$ and a diffeomorphism
$$
Glue: \K_-\times \K_+ \times (R_0,\infty) \to
\MM(\gamma_-,\gamma_+)
$$
onto its image.
\item Furthermore there exists a homeomorphism
$$
\varphi: [R_0,\infty) \cup \{\infty\} \to (-\e_0,0]
$$
such that the reparameterized diffeomorphism
$$
Glue_\varphi: \K_- \times \K_+ \times (-\e_0,0)
\to \MM(\gamma_-,\gamma')
$$
extends to the map
$$
\overline{Glue}_\varphi: \K_- \times \K_+ \times (-\e_0,0]
\to \overline \MM(\gamma_-,\gamma_+)
$$
so that the following diagram commutes:
$$
\xymatrix{\K_- \times \K_+ \times \{0\} \ar[r]^(0.6){\cong}
 & \K_- \times \K_+ \\
\K_- \times \K_+ \times (-\e_0,0] \ar@{<-_{)}}[u] \ar[dr]
\ar[r]^(0.6){\overline{Glue}_\varphi}\ar[dr]
& \overline \MM(\gamma_-,\gamma_+) \ar[d] \ar@{<-^{)}}[r] & \MM(\gamma_-,\gamma') \times \MM(\gamma',\gamma_+)
\ar@{<-_{)}}[ul]\ar[d]
\\
{} &(-\e_0,0]  \ar@{<-^{)}}[r] &\{0\}}.
$$
\end{enumerate}
\end{thm}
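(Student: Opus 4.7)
\medskip

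\noindent\textbf{Proof proposal for Theorem \ref{thm:glueu1u2}.}

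The plan is to reduce the statement to the abstract Picard scheme encapsulated in Proposition \ref{prop:model}, using as inputs the approximate solutions produced by Proposition \ref{prop:pregluing-estimate} and the approximate right inverses produced by Proposition \ref{prop:approQestimate}, and then to upgrade the resulting pointwise existence into a smooth family parametrized by $\K_-\times\K_+\times[K_0,\infty)$, and finally into a collar map at $K=\infty$.

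\emph{Step 1 (Approximate solution).} For each $(u_-,u_+,K)\in\K_-\times\K_+\times[K_0,\infty)$ with $K_0$ large enough, set $\uapp:=\PreG(u_-,u_+,K)$. Combining Proposition \ref{prop:pregluing-estimate} with Corollary \ref{cor:Upsilon-decay}, we obtain $\|\Upsilon(\uapp)\|_{W^{1,p}}\le Ce^{-\delta K}$ with $C=C(\K_-,\K_+)$ and $\delta=\delta(\gamma')$. The assignment $(u_-,u_+,K)\mapsto\uapp$ is a smooth embedding into the relevant Banach manifold $\CW^{k,p}$.

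\emph{Step 2 (Genuine right inverse).} By Proposition \ref{prop:approQestimate} the approximate right inverse $Q_{\text{app}}(\uapp)$ satisfies $\|Q_{\text{app}}(\uapp)\|\le C$ and $\|D\Upsilon(\uapp)\circ Q_{\text{app}}(\uapp)-\id\|\le 1/2$ uniformly in $(u_-,u_+,K)\in\K_-\times\K_+\times[K_0,\infty)$. Hence
$$
Q(\uapp):=Q_{\text{app}}(\uapp)\circ\bigl(D\Upsilon(\uapp)\circ Q_{\text{app}}(\uapp)\bigr)^{-1}
$$
is a genuine right inverse for $D\Upsilon(\uapp)$ with $\|Q(\uapp)\|\le 2C$ uniformly, and depends smoothly on the parameters.

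\emph{Step 3 (Quadratic estimates).} Taylor expand
$$
\Upsilon(\uapp+h)=\Upsilon(\uapp)+D\Upsilon(\uapp)h+N(\uapp,h),
$$
where $N(\uapp,h)$ collects the nonlinear terms. Using Lemma \ref{lem:explicit-linearization} together with the uniform $C^1$-boundedness of $\uapp$ on $\dot\Sigma$ (which follows from Proposition \ref{prop:dw-expdecay} applied to $u_\pm$), one verifies that
\begin{align*}
\|N(\uapp,h)\|_{W^{k-1,p}\oplus W^{k-2,p}} &\le c_1\|h\|_{W^{k,p}}^{2},\\
\|N(\uapp,h_1)-N(\uapp,h_2)\|_{W^{k-1,p}\oplus W^{k-2,p}} &\le c_1\bigl(\|h_1\|_{W^{k,p}}+\|h_2\|_{W^{k,p}}\bigr)\|h_1-h_2\|_{W^{k,p}},
\end{align*}
with $c_1$ independent of $(u_-,u_+,K)$. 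The subtlety here, and the step I expect to require the most care, is that the Hessian of $\Upsilon$ mixes a first-order piece ($\delbar^\pi$-type) with a second-order piece ($\Delta$-type in the Reeb direction), so the quadratic remainder must be estimated component-by-component in the mixed Sobolev norm \eqref{eq:codomain-norm}; the Moser-type multiplication inequality must be applied separately in the two components, exploiting $k\ge 2$ and $p>2$ together with the Legendrian boundary condition in \eqref{eq:tangent-element}.

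\emph{Step 4 (Picard iteration).} With $\e:=Ce^{-\delta K}$, apply Proposition \ref{prop:model} to $F=\Upsilon$, $x_0=\uapp$, $Q(x_0)=Q(\uapp)$, and $N$ as above. For $K\ge K_0$ with $K_0$ enlarged so that $2C\e\le \min\{1/(4c_1),\e_0\}$, the map $k\mapsto G(k)=-\Upsilon(\uapp)+N(\uapp,Q(\uapp)k)$ is a contraction of $B(2\e)\subset\CH^{(0,1)}_{k-1,p}|_{\uapp}$ into itself. It therefore has a unique fixed point $k=k(u_-,u_+,K)$ with $\|k\|\le 2\e$, and
$$
\Glue(u_-,u_+,K):=\uapp+Q(\uapp)k
$$
satisfies $\Upsilon(\Glue(u_-,u_+,K))=0$ and lies in the $C\e$-ball around $\uapp$ in $\CW^{k,p}$. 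Smooth dependence on $(u_-,u_+,K)$ follows from the implicit function theorem in Banach spaces applied to the fixed-point equation.

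\emph{Step 5 (Diffeomorphism onto the image).} To see that $\Glue$ is a diffeomorphism onto its image, one checks that the linearization of $(u_-,u_+,K)\mapsto\Glue(u_-,u_+,K)$ is injective: the derivative in $K$ is essentially the translation vector field along the neck region, whose $W^{k,p}$-norm is bounded below for $K\ge K_0$ by Proposition \ref{prop:expdecay-smallmap}, and the derivatives in $(u_-,u_+)$ recover the linearized deformations of $u_\pm$ up to an exponentially small error in $K$; this gives an isomorphism after passing to the quotient by the $\R$-action. The standard inverse function theorem argument then promotes this to an open embedding.

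\emph{Step 6 (Collar extension).} Choose a diffeomorphism $\varphi:[R_0,\infty)\cup\{\infty\}\to(-\e_0,0]$ with $\varphi(\infty)=0$, for instance $\varphi(K)=-e^{-\delta K}$ and $\varphi(\infty)=0$. Define $\overline{Glue}_\varphi$ on $\K_-\times\K_+\times\{0\}$ by $(u_-,u_+,0)\mapsto(u_-,u_+)\in\overline\MM(\gamma_-,\gamma_+)$, interpreted as the broken trajectory with breaking point at $\gamma'$. Continuity at $K=\infty$ follows from the fact that, in the Gromov--Floer topology on $\overline\MM(\gamma_-,\gamma_+)$, the glued solution $\Glue(u_-,u_+,K)$ converges to the 2-story building $(u_-,u_+)$ as $K\to\infty$: outside of any fixed compact set in $\R\times[0,1]$ it agrees with $u_\pm$ after translation by $\mp 2K$, while on the neck region it lies in an exponentially shrinking $C^\infty$-neighborhood of the trivial cylinder over $\gamma'$ by Step 1 and Proposition \ref{prop:expdecay-smallmap}. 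This verifies the commutativity of the final diagram and completes the proof.
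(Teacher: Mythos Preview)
Your proposal is correct and follows essentially the same approach the paper takes: the paper develops the key ingredients (the approximate solution in Proposition~\ref{prop:pregluing-estimate} and Corollary~\ref{cor:Upsilon-decay}, the approximate right inverse in Proposition~\ref{prop:approQestimate}, and the abstract Picard scheme in Proposition~\ref{prop:model}) and then, at the end of Section~\ref{sec:right-inverse}, simply states that one constructs the genuine right inverse $Q(\uapp)=Q_{\text{app}}(\uapp)\circ(D\Upsilon(\uapp)\circ Q_{\text{app}}(\uapp))^{-1}$ and ``applies the scheme of reducing the problem \ldots\ to a fixed point problem,'' without spelling out Steps~3--6; Theorem~\ref{thm:glueu1u2} is stated in the appendix as the target result rather than given a self-contained proof there. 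Your Steps~3 (quadratic remainder in the mixed-degree norm) and~5--6 (injectivity of the linearized gluing map and Gromov--Floer continuity at $K=\infty$) fill in details the paper leaves to the reader or to the standard literature.
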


\bibliographystyle{amsalpha}

\bibliography{biblio}

\end{document}